\newcommand{\defstyle}[1]{\textbf{#1}}
\newcommand{\myprob}[1]{\mathbb P \left[ #1 \right]}
\newcommand{\probPalm}[2]{\mathbb P_{#1} \left[ #2 \right]}
\newcommand{\omid}[1]{\mathbb E \left[ #1 \right]}
\newcommand{\omidPalm}[2]{\mathbb E_{#1} \left[ #2 \right]}
\newcommand{\omidCond}[2]{\mathbb E \left[ #1 \left| #2 \right. \right]}
\newcommand{\norm}[1]{\left| #1 \right|}
\newcommand{\identity}[1]{1_{#1}}
\newcommand{\bs}[1]{\boldsymbol{#1}}
\newcommand{\del}[1]{}
\newcommand{\mar}[1]{\marginpar{\scriptsize  #1}}
\newcommand{\unwritten}[1]{}
\newcommand{\rmm}{rmm }
\newcommand{\supp}{\mathrm{supp}}
\newcommand{\oball}[2]{B_{#1}(#2)}
\newcommand{\cball}[2]{\overline{B}_{#1}(#2)}
\newcommand{\prokhorov}{d_P}
\newcommand{\restrict}[2]{{
		\left.\kern-\nulldelimiterspace 
		#1 
		\vphantom{\big|} 
		\right|_{#2} 
}}
\newcommand{\mstar}{\mathcal M_*}
\newcommand{\mdoublestar}{\mathcal M_{**}}
\newcommand{\mkstar}[1]{\mathcal M^{#1}_*}
\newcommand{\mkdoublestar}[1]{\mathcal M^{#1}_{**}}
\theoremstyle{theorem}
\newtheorem{theorem}{Theorem}[section]
\newtheorem{lemma}[theorem]{Lemma}
\newtheorem{proposition}[theorem]{Proposition}
\newtheorem{corollary}[theorem]{Corollary}
\newtheorem{problem}[theorem]{Problem}
\theoremstyle{definition}
\newtheorem{definition}[theorem]{Definition}
\newtheorem{example}[theorem]{Example}
\theoremstyle{definition}
\newtheorem{remark}[theorem]{Remark}
\newtheorem{convention}[theorem]{Convention}
\theoremstyle{theorem}
\numberwithin{equation}{section}
\let\orgdescriptionlabel\descriptionlabel
\renewcommand*{\descriptionlabel}[1]{%
	\let\orglabel\label
	\let\label\@gobble
	\phantomsection
	\edef\@currentlabel{#1}%
	\let\label\orglabel
	\orgdescriptionlabel{#1}%
}
\begin{document}

\title{Unimodular Random Measured Metric Spaces and Palm Theory on Them}

\author{Ali Khezeli \footnote{Inria Paris, ali.khezeli@inria.fr} \footnote{Department of Applied Mathematics, Faculty of Mathematical Sciences, Tarbiat Modares University, P.O. Box 14115-134, Tehran, Iran, khezeli@modares.ac.ir}}

\maketitle


\begin{abstract}
	In this work, we define the notion of unimodular random measured metric spaces as a common generalization of various other notions. This includes the discrete cases like unimodular graphs and stationary point processes, as well as the non-discrete cases like stationary random measures and the continuum metric spaces arising as scaling limits of graphs. We provide various examples and prove many general results; e.g., on weak limits, re-rooting invariance, random walks, ergodic decomposition, amenability and balancing transport kernels. In addition, we generalize the Palm theory to point processes and random measures on a given unimodular space. This is useful for Palm calculations and also for reducing some problems to the discrete cases.
\end{abstract}

\section{Introduction}

The \textit{mass transport principle (MTP)} refers to some key equations that appear in various forms in a number of subfields of probability theory and other fields of mathematics. These equations capture (and sometimes formalize) the intuitive notion of \textit{stochastic homogeneity}, which, in different contexts, appears as stationarity, unimodularity, typicality of a point, re-rooting invariance, involution invariance, or just invariance. 
The similarity of the different versions of the MTP results in fruitful connections between various fields. 

In Subsection~\ref{intro:MTP}, we provide a quick survey of the MTP in the literature. Then, the contributions of the present paper are introduced in the next parts of the introduction.



\subsection{The Mass Transport Principle in the Literature}
\label{intro:MTP}

The mass transport principle (MTP) was first used in~\cite{Ha97} for studying percolation on trees. It was developed further in~\cite{BLPS} for studying group-invariant percolation on graphs and in~\cite{BeSc01} for proving recurrence of local limits of planar graphs. \cite{objective} also established a property called \textit{involution invariance} for local limits of finite graphs\footnote{The MTP for local limits of graphs was also observed in~\cite{BeSc01}.}. This property turned out to be a special case of the MTP, and in fact, equivalent to it. Then, \cite{processes} defined \textit{unimodular random graphs} by simply using the MTP as the definition, provided various general examples, and established many properties for them. 
The term \textit{unimodular} was chosen in~\cite{processes} because, in order that a fixed transitive graph $G$ satisfies the MTP, it is necessary and sufficient that the automorphism group of $G$ is a unimodular group. 

It is useful to recall the MTP for unimodular graphs. A \textit{mass transport} or a \textit{transport function} is a function $g$ that takes as input a graph $G$ and two vertices of $G$ and outputs a value in $\mathbb R^{\geq 0}$. We think of $g(G,u,v)$ as the mass going from $u$ to $v$. The assumptions on $g$ are measurability (to be defined suitably) and \textit{invariance}, where the latter means here that $g$ is invariant under isomorphisms. 
A random rooted graph $[\bs G, \bs o]$ (where the graph $\bs G$ and the root vertex $\bs o$ are random) is called unimodular if it satisfies the following MTP for any transport function $g$:
\begin{eqnarray}
	\label{eq:MTP-graph}
	\omid{\sum_{x\in \bs G} g(\bs G, \bs o, x)} = \omid{\sum_{x\in \bs G} g(\bs G,x, \bs o)}.
\end{eqnarray}

Similar versions of the MTP exist for point processes in $\mathbb R^d$. For a recent application, one can mention the use of the MTP in constructing perfect matching between point processes and fair tessellations (see e.g., \cite{HoPe05} and~\cite{DeHoMa17}). 
These MTPs are special cases of much older theorems in stochastic geometry; e.g., Mecke's formula, Mecke's theorem and Neveu's exchange formula, which had been widely used in stochastic geometry. In particular, Mecke's theorem (see e.g., \cite{ThLa09}) can be rephrased as follows: If $\Phi_0$ is the Palm version of a stationary point process (i.e., conditioned to contain the origin, or equivaletnly, seeing the point process form a \textit{typical point}), then
\begin{eqnarray}
	\label{eq:MTP-pp}
	\omid{\sum_{x\in \Phi_0} g(\Phi_0, 0, x)} = \omid{\sum_{x\in \Phi_0} g(\Phi_0,x, 0)}.
\end{eqnarray}
This equation looks similar to~\eqref{eq:MTP-graph} with the difference that the random objects have different natures and the invariance condition for the transport function $g=g(\Phi,u,v)$ is the invariance under Euclidean translations. 
In fact,~\eqref{eq:MTP-pp} characterizes a larger class of point processes which are called \textit{point-stationary point processes} in~\cite{ThLa09} (e.g., the zero set or the graph of the simple random walk). See Subsection~\ref{subsec:pointprocess} for further discussion.

Due to the similarity of the MTP in (point-) stationary point processes and unimodular random graphs, a lot of connections have been observed between the two theories. In particular, \cite{processes} noted that any graph that is constructed on a stationary point process (with some equivariance and measurability condition) gives rise to a unimodular random graph. Also, various results and proof techniques in one theory can be \textit{translated} into the other one with minor modifications (see~\cite{I}). 
The previous work~\cite{I} unifies these two notions by defining \textit{unimodular random discrete spaces}.

The theorems mentioned above for stationary point processes are in fact more general and apply to stationary random measures. In particular, Mecke's theorem in the general form can be rephrased equivalently in a MTP form as follows: If $\Psi_0$ is the Palm version of a stationary random measure on $\mathbb R^d$, then
\begin{equation}
	\label{eq:MTP-measure}
	\omid{\int g(\Psi_0,0,x)d\Psi(x)} = \omid{\int g(\Psi_0,x,0)d\Psi(x)}
\end{equation}
for every function $g=g(\Psi_0,u,v)\geq 0$ that is translation-invariant and measurable. As in the previous case, \eqref{eq:MTP-measure} characterizes a larger class of random measures, which are called \textit{mass-stationary random measures} in~\cite{ThLa09}. See Subsection~\ref{subsec:pointprocess} for further discussion.


As mentioned above, the MTP \eqref{eq:MTP-graph} is satisfied for local limits of graphs. Similar properties have been established for some instances of scaling limits of graphs, where scaling limit means that the graph-distance metric is scaled by some small factor and the limit means the limit of a sequence of random metric spaces (sometime, the metric spaces are \textit{rooted}; i.e., have a distinguished point, and/or \textit{measured}; i.e., have a distinguished measure). Most notably, various instances that have a compact rooted measured scaling limit satisfy the so called \textit{re-rooting invariance property}; i.e., the distribution is invariant under changing the root  randomly (according to the distinguished measure on the model). This is the case for the \textit{Brownian continuum random tree} \cite{Al91crtII}, \textit{stable trees}~\cite{DuLe05} and the \textit{Brownian map}~\cite{Le19}. Also, in a few examples where the scaling limit is not compact, an MTP similar to~\eqref{eq:MTP-measure} has been observed (e.g., in~\cite{Bu18}). However, a general rigorous MTP result for scaling limits seems to be missing in the literature. This is done in this work by providing a general form of the MTP and by showing that it is preserved under weak limits. 
This general result readily implies the MTP and re-rooting invariance in the known examples of scaling limits. We will also provide some re-rooting invariance properties in the non-compact case as well.

In this occasion, we should also mention the mass transport principle in the theory of \textit{countable Borel equivalence relations}. This will be introduced in the next subsection and will be discussed further in Subsection~\ref{subsec:Borel}.

\subsection{Unification of the MTP by Unimodular \rmm Spaces}

In the previous subsection, we recalled that the theories of unimodular graphs, point processes, random measures and scaling limits exhibit the mass transport principle in various forms. The similarity of these formulas provide fruitful connections between these theories. So, it is natural to ask for a general theory of the MTP that unifies these notions. We saw that the discrete cases (unimodular graphs and point processes) can be unified by the notion of unimodular discrete spaces~\cite{I} (these cases are also tightly connected to the theory of countable Borel equivalence relations). 

The first main task of this work is providing a unification of the MTP in the discrete and non-discrete cases. 
The unification is provided by introducing \textit{unimodular random rooted measured metric spaces}, or in short, \textit{unimodular rmm spaces} in Section~\ref{sec:unimodular}. This can be thought of a common generalization of unimodular graphs, (point-) stationary point processes, (mass-) stationary random measures and the random continuum spaces arising in scaling limits. In this part, establishing the MTP for general scaling limits is novel and is one of the main motivations of this work.

Unimodular \rmm spaces are defined as random triples $[\bs X, \bs o, \bs \mu]$, where $\bs X$ is a \textit{boundedly-compact} metric space, $\bs o$ is a point of $\bs X$ called \textit{the root}, and $\bs \mu$ is a \textit{boundedly-finite} Borel measure on $\bs X$. The \textit{Gromov-Hausdorff-Prokhorov metric}, given in the most general form in~\cite{Kh19ghp}, provides the measure-theoretic requirements for this definition.
Then, the definition of unimodularity in this setting is given by modifying the MTP~\eqref{eq:MTP-measure} accordingly. Note that a distinguished measure is needed on $\bs X$ in order that the MTP makes sense. Many specific examples and general categories of unimodular \rmm spaces are discussed in Section~\ref{sec:examples-general}.

In Sections~\ref{sec:basic} and~\ref{sec:further}, we provide general results on unimodular spaces. In particular, we prove that unimodularity is preserved under weak convergence. This implies that all (measured) scaling limits are unimodular (which is one of the main motivations of this work), and hence, in the compact case satisfy the re-rooting invariance property. We also study re-rooting in the non-compact case and the properties of random walks on a unimodular \rmm space. We define ergodicity and prove an ergodic decomposition theorem. Also, various definitions of amenability are provided and proved to be equivalent, based on similar definitions in other fields of mathematics; e.g., definition by local means, approximate means, hyperfinitenss and Folner condition. It is also proved that the number of topological ends of a unimodular \rmm space belongs to $\{0,1,2,\infty\}$ almost surely.

We should mention that the discrete cases of the MTP, discussed above, are connected to the theory of \textit{countable Borel equivalence relations} (CBER), which has totally different roots (ergodic theory of dynamical systems, group actions and orbit equivalence theory)\footnote{This theory is almost as old as Mecke's theorem and is very useful in probability theory, but still deserves to be more recognized in the probability community. Some results in this theory have been re-invented later by probabilists; e.g., the existence of \textit{one-ended treeing} in the amenable case (see e.g., \cite{CoFeWi81}, \cite{HoPe03} and~\cite{Ti04}) and  factor point processes (see~\cite{KhMe23}).}. There exists a MTP-like formula in this theory, which in fact defines when an equivalence relation is measure-preserving; i.e., the measure is \textit{invariant}. It is a generalization of the measure-preserving property for dynamical systems and group actions. 
As discussed in~\cite{processes}, this notion is connected to unimodular graphs via \textit{graphings}.
While the two theories have substantial overlap, their view point and motivations are different. 
In fact, the theory of CBERs can be thought of as the mathematical ground under the theory of unimodular graphs; just like the distinction of measure theory and probability theory.
The unification of the MTP in this work does not cover CBERs since we focus on random metric spaces. We will still use this theory in proving some of the results. This will be discussed further in Subsection~\ref{subsec:Borel}.

\subsection{Generalization of Palm Theory}

In this work, we also consider random measures and point processes on a given unimodular \rmm space.
In this view, the base space can be thought of a generalization of the Euclidean or hyperbolic spaces.
Heuristically, the \textit{intensity} of a point process $\Phi$ on $[\bs X, \bs o, \bs \mu]$ is the \textit{expected number of points of $\Phi$ per unit measure} (measured by $\bs \mu$). Also, the \textit{mean} of some quantity on the points of $\Phi$ is the expectation of that quantity at a \textit{typical point} of $\Phi$. These notions will be formalized by generalizing the Palm theory in Section~\ref{sec:Palm}, which is the second main task of this work. We will generalize various notions and theorems in stochastic geometry; e.g., the Campbell measure, Pam distribution, the Campbell formula, Mecke's theorem, Neveu's exchange formula and the existence of balancing transport kernels. We also show that the Palm theory generalizes Palm inversion at the same time (i.e., reconstructing the probability measure from the Palm distribution) and also generalizes various constructions of unimodular graphs by adding vertices and edges to another given unimodular graph (e.g., the dual of a unimodular planar graph).

An important application of the Palm theory in this work is to construct a countable Borel equivalence relation equipped with an invariant measure. This is done by means of adding a Poisson point process and considering its Palm version and is used to prove some of the theorems (e.g., amenability and invariant disintegration) using the results of the theory of countable Borel equivalence relations. These applications are included in Section~\ref{sec:Palm-application}.

\section{Unimodular \rmm Spaces}
\label{sec:unimodular}

In this section, we define \textit{unimodular \rmm spaces} as a common generalization of unimodular graphs, stationary point processes, stationary random measures, etc. The definition is based on a mass transport principle similar to~\eqref{eq:MTP-graph}, \eqref{eq:MTP-pp} and~\eqref{eq:MTP-measure}. Note that in the MTP, a distinguished point and a measures should be presumed (since the spaces are not necessarily discrete). So we start by discussing rooted measured metric spaces and some notation.

\subsection{Notation}

If $X$ is a metric space, the 
closed ball of radius $r$ centered at $x\in X$ is denoted by $\cball{r}{x}:=\{y\in X: d(x,y)\leq r\}$. The open ball is denoted by $\oball{r}{x}$. Also, all measures are assumed to be Borel measures. 
If $f:X\to\mathbb R^{\geq 0}$ is measurable, the measure $f\mu$ on $X$ is defined by $(f\mu)(A):=\int_A fd\mu$. If $\mu$ is a probability measure, \textit{biasing} $\mu$ by $f$ means considering the probability measure $\frac 1 c (f\mu)$, where $c=\int_X fd\mu$. The Prokhorov metric between finite measures on $X$ is denoted by $\prokhorov$.


\subsection{The Space of Rooted Measured Metric Spaces}
\label{subsec:M_*}

A \defstyle{rooted measured metric space}, abbreviated by a \defstyle{\rmm space}, is a tuple $(X,o,\mu)$ where $X$ is a metric space, 
$\mu$ is a non-negative Borel measure on $X$ and $o$ is a distinguished point of $X$ called the \textbf{root}. For simplicity of notation, the metric on $X$ is always denoted by $d$ if there is no ambiguity, otherwise, it will be denoted by $d^X$. 
In this paper, we always assume that the metric space is \defstyle{boundedly compact} (i.e. every closed ball is compact) and $\mu$ is \defstyle{boundedly finite} (i.e. every ball has finite measure under $\mu$).  
Similarly, a \defstyle{doubly-rooted} measured metric space is a tuple $(X,o_1,o_2,\mu)$, where $\mu$ is as before and $o_1$ and $o_2$ are two ordered distinguished points of $X$. 
Note that we do not require that $X$ is a geodesic space. It can be even disconnected.

An \defstyle{isomorphism} (or a GHP-isometry) between two \rmm spaces $(X,o,\mu)$ and $(X',o',\mu')$ is a bijective isometry $\rho:X\rightarrow X'$ such that 
$\rho(o)=o'$ and $\rho_*\mu=\mu'$, where $\rho_* (\mu)$ represents the push forward of the measure $\mu$ under $\rho$. If such $\rho$ exists, then $(X,o,\mu)$ and $(X',o',\mu')$ are called \defstyle{isomorphic}. 
Isomorphisms between doubly-rooted spaces are defined similarly. Under this equivalence relation, the equivalence class containing $(X,o,\mu)$ (resp. $(X,o_1,o_2,\mu)$) is denoted by $[X,o,\mu]$(resp. $[X,o_1,o_2,\mu]$). 

Let $\mstar$ be the set of equivalence classes of \rmm spaces under isomorphisms. Similarly, define $\mdoublestar$ for doubly-rooted measured metric spaces. These sets become Polish spaces under the \defstyle{Gromov-Hausdorff-Prokhorov (GHP) metric} (see~\cite{Kh19ghp} for the general case of the GHP metric and its history). To keep focus on the main goals, we skip the definition of the metric and we just mention the GHP topology (see Section~3 of~\cite{Kh19generalization}): A sequence $[X_n,o_n,\mu_n]$ converges to $[X,o,\mu]$ when these spaces can be embedded isometrically in a common boundedly-compact metric space $Z$ such that, after the embedding, $o_n$ converges to $o$, $X_n$ converges to $X$ as closed subsets of $Z$ (with the Fell topology) and $\mu_n$ converges to $\mu$ in the vague topology (convergence against compactly-supported continuous functions).

Polishness of $\mstar$ allows one to use classical tools in probability theory for studying random elements of $\mstar$, which are called \defstyle{random \rmm spaces} here. In particular, every random rooted graph defines a random \rmm space (equipped with the graph-distance metric and the counting measure on the vertices). The same is true for point processes. Also, $\mathbb R^d$, rooted at the origin and equipped with a random measures on $\mathbb R^d$, defines a random \rmm space. The condition of boundedly-compactness matches the conditions of locally-finiteness in each example.

\begin{convention}
	A random \rmm spaces is shown by a tuple of bold symbols like $[\bs X, \bs o, \bs \mu]$. By convention, we will use the familiar symbols $\mathbb P$ and $\mathbb E$ for most random objects (instead of writing them in the form of long integrals) even if they live in different spaces and even if extra randomness is introduced. The following explanation helps to reduce the possible confusions. 
	Note that the tuple determines just one random element of $\mstar$ and the three symbols $\bs X, \bs o$ and $\bs \mu$ are meaningless separately. Any formula containing these symbols should be well defined for an isomorphism class of \rmm spaces. By contrast, in Section~\ref{sec:Palm}, we will consider more than one probability measure on $\mstar$ or similar spaces. In this case, one may also think of $[\bs X, \bs o, \bs \mu]$ as a symbol for a generic element of $\mstar$ and use formulas like $\myprob{[\bs X, \bs o, \bs \mu]\in A}$ and $Q([\bs X,\bs o, \bs \mu]\in A)$ for different probability measures $\mathbb P$ and $Q$.
\end{convention}


\subsection{Unimodularity}
\label{subsec:unimodular}

In this subsection, we define unimodular \rmm spaces and a few basic examples.

\begin{definition}
	A \defstyle{transport function} is a measurable function $g:\mdoublestar\to\mathbb R^{\geq 0}$. The value $g(X,u,v,\mu)$ is interpreted as the mass that $u$ sends to $v$ and is also abbreviated by $g(u,v)$ if $X$ and $\mu$ are understood. Let $g^+(u):=\int g(u,x) d\mu (x)$ and $g^-(u):= \int g(x,u)d\mu(x)$ represent the \defstyle{outgoing mass} from $u$ and the \defstyle{incoming mass} to $u$ respectively.
\end{definition}

\begin{definition}
	A random \rmm space is called \defstyle{unimodular} if the following mass transport principle holds: For every transport function $g$,
	\begin{equation}
		\label{eq:unimodular}
		\omid{\int_{\bs X} g(\bs o, x) d\bs \mu (x)} = \omid{\int_{\bs X} g(x, \bs o) d\bs \mu (x)}.
	\end{equation}
	It is called \defstyle{nontrivial} if $\bs\mu\neq 0$ a.s. and \defstyle{proper} if $\mathrm{supp}(\bs\mu)=\bs X$ a.s.
\end{definition}

In words, the expected outgoing mass from the root should be equal to the expected incoming mass to the root.
The case $\bs\mu:=0$ trivially satisfies the MTP and is not very interesting. However, there are some interesting classes of non-proper examples (e.g., when extending a unimodular graph, Example~\ref{ex:extension}, or $\mathbb R^d$ or any other space equipped with a point process or a random measure, Example~\ref{ex:PPandRM} and Definition~\ref{def:equivmeasure}). Another basic example is when $[\bs X, \bs o]$ is arbitrary and $\bs \mu:=\delta_{\bs o}$ is the Dirac measure at $\bs o$. More generally, finite measures provide basic examples explained below.

\begin{example}[Finite Measure]
	\label{ex:finitemeasure}
	When $\bs \mu$ is a finite measure a.s., unimodularity is equivalent to \defstyle{re-rooting invariance}: If $\bs o'$ is an additional random point of $\bs X$ chosen with distribution proportional to $\bs\mu$, then $[\bs X, \bs o', \bs\mu]$ has the same distribution as $[\bs X, \bs o ,\bs \mu]$ (see Theorem~\ref{thm:rerooting}). Loosely speaking, the root is a random point of $\bs X$ chosen with distribution proportional to $\bs \mu$.
	%
\end{example}

Although the infinite case is more interesting for our purpose, the finite case appears in many important examples of scaling limits when the limiting space is compact (see Subsection~\ref{subsec:scalinglimit}). We will see that the re-rooting invariance property in these examples is a quick corollary of the fact that weak convergence preserves unimodularity (Lemma~\ref{lem:weaklimit}).

\begin{example}[Product]
	\label{ex:product}
	Let $[\bs X_i,\bs o_i,\bs \mu_i]$ be unimodular for $i=1,2$. Then, $[\bs X_1\times\bs X_2, (\bs o_1,\bs o_2), \bs \mu_1\otimes \bs \mu_2]$ is also unimodular. Here, the metric on $\bs X_1\times \bs X_2$ can be the max-metric or the sum-metric.
\end{example}

\begin{example}[Biasing]
	\label{ex:biasing}
	Let $[\bs X, \bs o, \bs \mu]$ be unimodular and $b:\mstar\to\mathbb R^{\geq 0}$ be a measurable function such that $\omid{b(\bs o)}=1$. Let $b\bs\mu$ be the measure on $\bs X$ defined by $b\bs\mu(A):=\int_A b(x)d\bs\mu(x)$. Let $[\bs X',\bs o', \bs \mu']$ be obtained by changing the measure $\bs \mu$ to $b\bs\mu$ and then by biasing the probability measure by $b(\bs o)$; i.e.,
	\[
	\omid{h(\bs X', \bs o', \bs \mu')} = \omid{b(\bs X, \bs o, \bs \mu) h(\bs X, \bs o, b\bs\mu)}.
	\]
	Then, $[\bs X', \bs o', \bs \mu']$ is a unimodular \rmm space. This can be seen by verifying the MTP directly. In fact, in Example~\ref{ex:biasing-Palm}, this will be shown to be the Palm version of $b\bs \mu$ regarded as an additional measure on $[\bs X, \bs o, \bs \mu]$.
\end{example}


A heuristic interpretation of unimodularity is that the root is a \textit{typical point} of the measure $\bs \mu$. When $\bs \mu$ is a finite measure, this heuristic is rigorous according to Example~\ref{ex:finitemeasure}. In the general case, the heuristic is that the expectation of any (equivariant) quantity evaluated at the root is an interpretation of the average of that quantity over the points of the metric space. More precisely, for measurable functions $f(\bs o):=f(\bs X, \bs o, \bs \mu)$, the expectation $\omid{f(\bs o)}$ is an interpretation of the average of $f(\bs X, \cdot, \bs \mu)$ over the points of $\bs X$, where the average is taken according to the measure $\bs\mu$ (in fact, this should be modified in the \textit{non-ergodic} case; see Subsection~\ref{subsec:ergodicDecomposition}). See Corollary~\ref{cor:average} for a rigorous statement.


\section{Basic Properties of Unimodularity}
\label{sec:basic}
In this section, we will provide basic properties of unimodularity, which are useful in the discussion of the examples in the next section. Further properties will be provided in Section~\ref{sec:further}.

\subsection{Weak Limits}

\begin{lemma}[Weak Limits]
	\label{lem:weaklimit}
	Unimodularity is preserved under weak limits.
\end{lemma}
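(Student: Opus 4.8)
## Proof Proposal

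\textbf{Overall strategy.} The plan is to show that if $[\bs X_n, \bs o_n, \bs \mu_n] \to [\bs X, \bs o, \bs \mu]$ weakly in $\mstar$ and each $[\bs X_n, \bs o_n, \bs \mu_n]$ is unimodular, then the limit satisfies the MTP~\eqref{eq:unimodular}. The natural approach is to pass to the limit in the identity $\omid{\int g(\bs o_n, x) d\bs \mu_n(x)} = \omid{\int g(x, \bs o_n) d\bs \mu_n(x)}$. Since the MTP for a general measurable transport function $g$ follows from the MTP for all \emph{bounded continuous} $g$ with \emph{bounded support} (in an appropriate sense — e.g.\ $g(X,u,v,\mu)$ supported on $d(u,v) \le R$ and $d(o,u) \le R$), by a monotone class / approximation argument, it suffices to prove~\eqref{eq:unimodular} for such nice $g$. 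So the first step is to reduce to this class of transport functions, making precise in what sense $g \mapsto \int g(\bs o, x) d\bs\mu(x)$ is well behaved; one must be slightly careful because the "nice" transport functions on $\mdoublestar$ still need to be checked to generate the right $\sigma$-algebra, but this is a routine localization argument using that GHP-balls around the root form a convergence-determining family.

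\textbf{The key step: continuity of the transported mass.} For a transport function $g$ that is bounded, continuous on $\mdoublestar$, and supported on $\{d(u,v) \le R\}$, I would show that the functional
\[
G\colon [X,o,\mu] \longmapsto \int_X g(X,o,x,\mu)\, d\mu(x)
\]
is bounded and continuous on $\mstar$ (at least at points $[X,o,\mu]$ with $\mu(\partial \cball{R}{o}) = 0$, i.e.\ almost everywhere, which suffices for weak convergence of integrals). The boundedness is immediate: $0 \le G([X,o,\mu]) \le \|g\|_\infty \cdot \mu(\cball{R}{o})$, and the latter is itself a continuous (hence locally bounded) function of $[X,o,\mu]$ — however, to apply the portmanteau theorem cleanly one also needs a uniform integrability / tightness input, since $\mu(\cball{R}{o})$ is not globally bounded. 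The cleanest route is to first bias: reduce to the case where we additionally multiply by a cutoff making the integrand bounded, or invoke that weak convergence plus the (locally bounded) bound $G \le \|g\|_\infty \mu(\cball{R}{o})$ together with continuity of $[X,o,\mu] \mapsto \mu(\cball{R+\epsilon}{o})$ gives convergence of $\omid{G}$ by a truncation argument on the value of $\mu(\cball{R}{o})$. For continuity of $G$ itself: using the GHP characterization of convergence via a common embedding in $Z$, if $[X_n,o_n,\mu_n]\to[X,o,\mu]$ with $\mu(\partial\cball{R}{o})=0$, then $\mu_n$ restricted to a neighborhood of $\cball{R}{o_n}$ converges weakly to $\mu$ restricted to a neighborhood of $\cball{R}{o}$, and $g(X_n, o_n, \cdot, \mu_n)$ converges to $g(X,o,\cdot,\mu)$ uniformly on compacts (by joint continuity of $g$ and compactness of the relevant balls), so the integrals converge.

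\textbf{Conclusion and the main obstacle.} Once $G$ and its reversed analogue $G^{\mathrm{rev}}([X,o,\mu]) := \int g(X,x,o,\mu)\,d\mu(x)$ are shown to be continuous and suitably integrable-in-the-limit, weak convergence gives $\omid{G([\bs X,\bs o,\bs\mu])} = \lim_n \omid{G([\bs X_n,\bs o_n,\bs\mu_n])} = \lim_n \omid{G^{\mathrm{rev}}([\bs X_n,\bs o_n,\bs\mu_n])} = \omid{G^{\mathrm{rev}}([\bs X,\bs o,\bs\mu])}$, which is exactly~\eqref{eq:unimodular} for this $g$; then undo the reduction to conclude for all transport functions. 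I expect the main obstacle to be the lack of a \emph{global} bound on the transported mass: since $\bs\mu$ is only boundedly finite, $G$ is not bounded on $\mstar$, so one cannot apply the bounded portmanteau theorem directly, and one must interpose a truncation in the total mass $\mu(\cball{R}{o})$ (or in $g$ itself) and control the error — here the subtlety is that this truncation must be applied symmetrically to both sides of the MTP, and one needs that the truncated transport function is still a legitimate (measurable) transport function, which it is since $\mu(\cball{R}{o})$ is a measurable function of the doubly-rooted space. A secondary technical point is handling the null set $\{\mu(\partial\cball{R}{o}) \ne 0\}$, resolved by choosing $R$ outside an at-most-countable exceptional set or by an $\epsilon$-fattening argument.
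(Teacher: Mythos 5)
Your proposal is correct and follows the same skeleton as the paper's proof: reduce to bounded continuous transport functions with bounded range of interaction and bounded transported mass, show that $[X,o,\mu]\mapsto\int_X g(X,o,x,\mu)\,d\mu(x)$ is bounded and continuous on $\mstar$, and pass to the limit in the MTP; the reverse functional is handled symmetrically. The one place where you diverge is the proof of continuity of this functional. The paper invokes a generalized Strassen theorem to produce an approximate coupling $\alpha_n$ of the restrictions of $\mu_n$ and $\mu$ to a large ball (in a common embedding $Z$), and then bounds $\bigl|\int g_n\,d\mu_n-\int g\,d\mu\bigr|$ directly by splitting according to whether the coupled pair $(p,q)$ satisfies $d^Z(p,q)\le\epsilon_n$, using a modulus-of-continuity bound $\delta_n$ for $g$ on a suitable compact set. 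You instead propose weak convergence of the restricted measures plus uniform convergence of the integrands on compacts; since $g(X_n,o_n,\cdot,\mu_n)$ and $g(X,o,\cdot,\mu)$ live on different spaces, making this precise requires either exactly such a coupling or a continuous extension of $g(X,o,\cdot,\mu)$ to $Z$ with controlled modulus of continuity — both work, so your route is viable but, once the details are filled in, amounts to the same estimate. Your treatment of the two remaining technical points — the boundary-mass issue $\mu(\partial\cball{R}{o})\neq 0$ (the paper uses a sandwiching of restrictions between balls of radii $R+3\mp\epsilon_n$, you propose choosing $R$ outside a countable exceptional set) and the lack of a global bound on the transported mass (the paper builds the bound $f\le R$ into the reduction, you propose a symmetric truncation) — is sound and matches the paper in substance.
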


This is similar to the case of unimodular graphs, but the proof is more involved since a convergent sequence of \rmm spaces does not necessarily \textit{stabilize} in a given window. This will be handled by a generalization of Strassen's theorem.

Lemma~\ref{lem:weaklimit} and Example~\ref{ex:finitemeasure} give naturally the following generalization of \textit{soficity}:


\begin{problem}
	Is every unimodular \rmm space \defstyle{sofic}? i.e., is it the weak limit of a sequence of deterministic compact measured metric spaces $(X_n,\mu_n)$ rooted at a random point with distribution proportional to $\mu_n$?
\end{problem}

This generalizes Question~10.1 of~\cite{processes}, which involves unimodular graphs. Note that every such weak limit is unimodular. Also, one can assume that $X_n$ is a finite metric space and $\mu_n$ is a multiple of the counting measure without loss of generality.

\begin{proof}[Proof of Lemma~\ref{lem:weaklimit}]
	Assume $[\bs X_n, \bs o_n,\bs \mu_n]$ is unimodular ($n=1,2,\ldots$) and converges to $[\bs X, \bs o,\bs \mu]$. To prove unimodularity of $[\bs X, \bs o, \bs \mu]$, it is enough to prove the MTP~\eqref{eq:unimodular} for bounded continuous functions $g$.
	Define $f:\mstar\to\mathbb R^{\geq 0}$ by
	\[
		f(X,o,\mu):=\int_X g(X,o,p,\mu)d\mu(p).
	\]
	By approximating $g$ by simpler functions, it is enough to assume that for some $R<\infty$, $g(X,o,p,\mu)=0$ whenever $d(o,p)\geq R$ and also $f$ and $g$ are bounded by $R$. In this case, we will prove in the next paragraph that $f$ is a bounded continuous function. So, weak convergence implies that $\omid{f(\bs X_n,\bs o_n,\bs \mu_n)}\to\omid{f(\bs X, \bs o, \bs \mu)}$. A similar argument holds when $o$ and $p$ are swapped in the definition of $f$. Now, the MTP for $[\bs X, \bs o, \bs \mu]$ is implied by taking limit from the MTP for $[\bs X_n,\bs o_n,\bs \mu_n]$ and claim is proved.
	
	The rest is devoted to proving the continuity of $f$ and can be skipped at first reading.
	Assume $[X_n,o_n,\mu_n]$ are deterministic \rmm spaces converging to $[X,o,\mu]$. One can assume that $X_n$'s are subspaces of a common boundedly-compact metric space $Z$ converging to $X\subseteq Z$, $o_n\to o$ and $\mu_n$ converges vaguely to $\mu$. So there exists $\epsilon_n\geq 0$ and measures $\mu'_n$ sandwiched between the restrictions of $\mu_n$ to $\cball{R+3-\epsilon_n}{o}$ and $\cball{R+3+\epsilon_n}{o}$  (the balls are in $Z$) such that, if $\mu'$ is the restriction of $\mu$ to $\cball{R+3}{o}$, then $d_P(\mu'_n,\mu')\leq \epsilon_n$ (see Section~3 of~\cite{Kh19generalization}). We may assume $\epsilon_n<1$. By the generalized Strassen theorem (Theorem~2.1 of~\cite{Kh19ghp}), there exists an \textit{approximate coupling} of $\mu'_n$ and $\mu'$; i.e., a measure $\alpha_n$ on $X_n\times X$ such that
	\[
		\norm{\pi_{1*}\alpha_n-\mu'_n} + \norm{\pi_{2*}\alpha_n-\mu'} + \alpha_n(\{(x,y)\in X_n\times X: d^Z(x,y)> \epsilon_n\})\leq\epsilon_n.
	\]
	Here, $\pi_1$ and $\pi_2$ are the projections from $X_n\times X$ to $X_n$ and $X$ respectively.
	Also, by the assumptions on $g$, the convergence and a compactness argument, there exists $M<\infty$ such that for all $n$,  $\sup\{g(X_n,o_n,p,\mu_n):p\in X_n\}\leq M$ and the same holds for $X$. In addition, by considering the modulus of continuity of $g$ on a suitable compact set, one finds that $\delta_n:=\sup\{\norm{g(X_n,o_n,p,\mu_n)-g(X,o,q,\mu)}: d^Z(p,q)\leq\epsilon_n\}\to 0$.  Now,	
	\begin{eqnarray*}
		&& 
		\norm{f(X_n,o_n,\mu_n)-f(X,o,\mu)}\\ 
		&=& \norm{\int_{X_n}g(X_n,o_n,p,\mu_n)d\mu_n(p) - \int_X g(X,o,q,\mu)d\mu(q)}\\
		&\leq& M\norm{\pi_{1*}\alpha_n-\mu'_n} + M\norm{\pi_{2*}\alpha_n-\mu'} +\\
		&& \norm{\int_{X_n}g(X_n,o_n,p,\mu_n)d(\pi_{1*}\alpha_n)(p) - \int_X g(X,o,q,\mu)d(\pi_{2*}\alpha_n)(q)}\\
		&\leq& M\norm{\pi_{1*}\alpha_n-\mu'_n} + M\norm{\pi_{2*}\alpha_n-\mu'} +\\
		&& \int\int \norm{g(X_n,o_n,p,\mu_n)- g(X,o,q,\mu)} d\alpha_n(p,q)\\
		&\leq& M\big(\norm{\pi_{1*}\alpha_n-\mu'_n} + \norm{\pi_{2*}\alpha_n-\mu'} +  \alpha_n(\{(x,y)\in X_n\times X: d(x,y)> \epsilon_n\})\big) \\
		&& +\int\int \norm{g(X_n,o_n,p,\mu_n)- g(X,o,q,\mu)}\identity{\{d(p,q)\leq\epsilon_n\}} d\alpha_n(p,q)\\
		&\leq& M\epsilon_n + \delta_n \norm{\alpha_n}\\
		&\leq& M\epsilon_n + \delta_n\epsilon_n + \delta_n \norm{\mu'}.
	\end{eqnarray*}
	It follows that $\norm{f(X_n,o_n,\mu_n)-f(X,o,\mu)}\to 0$ and the continuity of $f$ is proved. This finishes the proof of the lemma.
\end{proof}

\subsection{Subset Selection}


Unimodularity means heuristically that the root is a \textit{typical point}. In particular, every property of the points that has zero chance to be seen at the root, is observed at almost no other point. This is formalized in the following easy but important lemma.

\begin{definition}
	\label{def:subset}
	Let $[\bs X, \bs o, \bs \mu]$ be a random \rmm space and $A\subseteq \mstar$ be measurable. The set $\bs S:=\bs S(\bs X, \bs \mu):=\{p\in\bs X: (\bs X, p,\bs \mu)\in A\}$ is called a \defstyle{factor subset} of $\bs X$. Note that the factor subset is a function of $\bs X$ and $\bs\mu$ and does not depend on $\bs o$.
	
\end{definition}

\begin{lemma}[Everything Happens At Root]
	\label{lem:happensatroot}
	Let $[\bs X, \bs o, \bs \mu]$ be a nontrivial unimodular \rmm space. For every factor subset $\bs S$,
	\begin{eqnarray*}
		\bs o\in\bs S \text{ a.s. } &\iff & \bs S \text{ has full measure w.r.t. } \bs \mu, \text{ a.s.},\\
		\myprob{\bs o\in\bs S}>0 &\iff& \myprob{\bs\mu(\bs S)>0}>0.
	\end{eqnarray*}
\end{lemma}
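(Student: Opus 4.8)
The plan is to apply the mass transport principle to a single cleverly chosen transport function built from the factor subset $\bs S$, and read off both equivalences from the resulting identity. First I would record that membership in $\bs S$ is a measurable property of the rooted space: since $A \subseteq \mstar$ is measurable, the map $(X,p,\mu)\mapsto \indic{(X,p,\mu)\in A}$ descends to a measurable function on $\mdoublestar$ in its first and second arguments, and $\bs S = \{p : (\bs X,p,\bs\mu)\in A\}$ is genuinely a function of $[\bs X,\bs\mu]$ alone as noted in Definition~\ref{def:subset}, so the two events in the statement are well defined.

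For the first equivalence, I would use the transport function $g(X,u,v,\mu) := \indic{(X,v,\mu)\in A} = \indic{v\in \bs S(X,\mu)}$, which depends only on the receiver $v$ (and is invariant under isomorphism, hence a legitimate transport function). Then the outgoing mass is $g^+(\bs o) = \int_{\bs X} \indic{x\in \bs S}\,d\bs\mu(x) = \bs\mu(\bs S)$, while the incoming mass to the root is $g^-(\bs o) = \int_{\bs X} \indic{\bs o\in\bs S}\,d\bs\mu(x) = \indic{\bs o\in\bs S}\,\bs\mu(\bs X)$. The MTP~\eqref{eq:unimodular} then gives
\[
	\omid{\bs\mu(\bs S)} = \omid{\indic{\bs o\in\bs S}\,\bs\mu(\bs X)}.
\]
This single identity is not quite enough by itself when $\bs\mu(\bs X)=\infty$, so I would instead localize: apply the same idea with $g(X,u,v,\mu):=\indic{v\notin \bs S(X,\mu)}\,h(d(u,v))$ for a fixed bounded, strictly positive, decreasing $h$ with $\int h\,d\bs\mu <\infty$ guaranteed by bounded finiteness (e.g. $h(t)$ chosen to decay fast enough; more carefully, one can use a factor-measurable normalization as in the proof of the Campbell-type formulas). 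Taking $g(u,v) = \indic{v\notin\bs S}\,h(d(u,v))$, the MTP reads
\[
	\omid{\int_{\bs X}\indic{x\notin\bs S}\,h(d(\bs o,x))\,d\bs\mu(x)} = \omid{\indic{\bs o\notin\bs S}\int_{\bs X} h(d(x,\bs o))\,d\bs\mu(x)}.
\]
If $\bs o\in\bs S$ a.s., the right side is $0$, so the nonnegative integrand on the left vanishes $\mathbb P\otimes\bs\mu$-a.e., and since $h>0$ this forces $\bs\mu(\bs X\setminus\bs S)=0$ a.s.; the converse is immediate because if $\bs S$ has full $\bs\mu$-measure a.s. then the heuristic "the root is a typical point" should put it in $\bs S$ — rigorously, apply the first displayed identity with $\bs S$ replaced by its complement $\bs S^c$ (also a factor subset, for $A^c$), giving $\omid{\indic{\bs o\in\bs S^c}\int h\,d\bs\mu}=\omid{\int_{\bs X}\indic{x\in\bs S^c}h(d(\bs o,x))d\bs\mu(x)}=0$ since $\bs\mu(\bs S^c)=0$ a.s., and $\int_{\bs X} h\,d\bs\mu>0$ on the nontrivial event $\bs\mu\neq 0$, whence $\mathbb P(\bs o\in\bs S^c)=0$.

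For the second equivalence I would use the same localized transport functions. The direction $\myprob{\bs o\in\bs S}>0 \Rightarrow \myprob{\bs\mu(\bs S)>0}>0$: taking $g(u,v)=\indic{v\in\bs S}\,h(d(u,v))$, the MTP gives $\omid{\int\indic{x\in\bs S}h(d(\bs o,x))d\bs\mu(x)} = \omid{\indic{\bs o\in\bs S}\int h\,d\bs\mu}$, and the right side is strictly positive (on $\{\bs o\in\bs S\}$, which has positive probability, the factor $\int h\,d\bs\mu$ is positive by nontriviality), so the left side is positive, forcing $\bs\mu(\bs S)>0$ with positive probability. The converse $\myprob{\bs\mu(\bs S)>0}>0 \Rightarrow \myprob{\bs o\in\bs S}>0$: if instead $\bs o\notin\bs S$ a.s., then by the first equivalence (applied to $\bs S$, noting $\bs o\in\bs S^c$ a.s.) $\bs S^c$ has full measure a.s., i.e. $\bs\mu(\bs S)=0$ a.s., a contradiction. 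I expect the main obstacle to be the $\bs\mu(\bs X)=\infty$ case: one cannot simply divide by the total mass, so the careful point is the choice of the auxiliary weight $h$ — it must be a genuine transport function (isomorphism-invariant, depending only on the intrinsic data), bounded, strictly positive, and $\bs\mu$-integrable around the root; bounded finiteness of $\bs\mu$ ensures such $h$ exists, and strict positivity of $h$ is exactly what lets one pass from "integral is zero" to "$\bs\mu$-a.e. zero" and from "probability positive" to "integral positive."
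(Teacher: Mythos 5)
Your proposal is correct and follows essentially the same route as the paper: apply the MTP to the indicator of (non-)membership in $\bs S$ and read off both equivalences, the second reducing to the first via the complementary factor subset. The only difference is that your auxiliary weight $h$ is superfluous (and its claimed integrability is neither guaranteed by bounded finiteness for a fixed $h$ nor actually needed anywhere in your argument): the paper simply takes $g(u,v):=\identity{\{v\notin\bs S\}}$, i.e.\ $h\equiv 1$, since working with the complement makes both sides of the MTP vanish exactly in the relevant cases and nontriviality ($\bs\mu(\bs X)>0$ a.s.) does the rest.
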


\begin{proof}
	It is enough to prove the first claim. Define $g(u,v):=\identity{\{v\not\in \bs S\}}$. Then, $g^+(\bs o)=\bs\mu(\bs X\setminus \bs S)$ and $g^-(\bs o) = \bs\mu(\bs X)\identity{\{\bs o\not\in \bs S\}}$. Since $\bs \mu(\bs X)>0$ a.s., the claim follows by the MTP~\eqref{eq:unimodular}.
\end{proof}

This is a generalization of Lemma~2.3 of~\cite{processes}. It can also be generalized by allowing extra randomness as well, but some care is needed that will be discussed in Subsection~\ref{subsec:extra}.

By letting $\bs S:=\supp(\bs\mu)$, one immediately obtains:
\begin{corollary}
	\label{cor:support}
	If $[\bs X, \bs o,\bs \mu]$ is a nontrivial unimodular \rmm space, then $\bs o\in \supp(\bs{\mu})$ a.s. 
\end{corollary}

\begin{remark}
	Note that, unlike the discrete cases, one cannot replace the last statement in Lemma~\ref{lem:happensatroot} with $\myprob{\bs S\neq \emptyset}>0$. In the language of Borel equivalence relations, despite the case of countable equivalence relations, the saturation of a null set can have positive measure. 
\end{remark}

\begin{lemma}[Bounded Selection]
	\label{lem:subset}
	If $\bs{\mu}(\bs X)=\infty$ a.s., then every factor subset $\bs S$ of $\bs X$ is either empty or unbounded a.s. Also, $\bs \mu(\bs S)\in\{0,\infty\}$ a.s.
\end{lemma}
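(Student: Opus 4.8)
The plan is to prove the two assertions in the opposite order to how they are stated: first that $\bs\mu(\bs S)\in\{0,\infty\}$ a.s., which is the engine, and then to deduce the ``empty or unbounded'' dichotomy from it using that the metric is finite‑valued. Since $\bs S$ is a factor subset, $\bs\mu(\bs S)$ is a measurable function of $(\bs X,\bs\mu)$ alone, so $E:=\{0<\bs\mu(\bs S)<\infty\}$ is a (root‑independent) measurable event. I would feed the MTP~\eqref{eq:unimodular} the transport function
\[
	g(u,v):=\identity{E}\cdot\frac{\identity{\{v\in\bs S\}}}{\bs\mu(\bs S)},
\]
the factor $\identity{E}$ ensuring $g$ is finite‑valued. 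Since $\int\identity{\{x\in\bs S\}}\,d\bs\mu(x)=\bs\mu(\bs S)$ and $g$ does not depend on $u$, we get $g^{+}(\bs o)=\identity{E}$, whereas $g^{-}(\bs o)=\identity{E}\,\identity{\{\bs o\in\bs S\}}\,\bs\mu(\bs X)/\bs\mu(\bs S)$, which on $E$ equals $+\infty$ when $\bs o\in\bs S$ and $0$ otherwise, because there $\bs\mu(\bs X)=\infty$ and $\bs\mu(\bs S)<\infty$. Hence the MTP reads $\myprob{E}=\infty\cdot\myprob{E,\,\bs o\in\bs S}$; as $\myprob{E}\le 1$ this forces $\myprob{E,\,\bs o\in\bs S}=0$ and then $\myprob{E}=0$, i.e.\ $\bs\mu(\bs S)\in\{0,\infty\}$ a.s.

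For the dichotomy I would argue by contradiction, assuming the root‑independent event $E':=\{\bs S\neq\emptyset \text{ and } \bs S \text{ is bounded}\}$ has positive probability. For $r\in\mathbb N$ put $\bs S^{(r)}:=\{p\in\bs X:d(p,\bs S)\le r\}$ (and $:=\emptyset$ when $\bs S=\emptyset$); one checks that each $\bs S^{(r)}$ is again a factor subset. On $E'$, if $\bs S\subseteq\cball{R}{s_0}$ for some $s_0\in\bs S$ and finite $R$, then $\bs S^{(r)}\subseteq\cball{R+r}{s_0}$, so $\bs\mu(\bs S^{(r)})<\infty$ since $\bs\mu$ is boundedly finite; by the first step, $\bs\mu(\bs S^{(r)})=0$ on $E'$, simultaneously for all $r\in\mathbb N$, a.s. On the other hand, the metric being finite‑valued, on $E'$ every $p\in\bs X$ satisfies $d(p,\bs S)\le d(p,s_0)<\infty$, so $\bs X=\bigcup_{r\in\mathbb N}\bs S^{(r)}$; continuity of measure then yields $\bs\mu(\bs X)=\lim_{r\to\infty}\bs\mu(\bs S^{(r)})=0$ on $E'$, contradicting $\bs\mu(\bs X)=\infty$ a.s. Therefore $\myprob{E'}=0$. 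The final assertion of the lemma is exactly the first step.

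The only genuinely non-obvious move is how to obtain the contradiction in the second part. The tempting approach — designing a transport function that pushes mass ``into'' the bounded region $\bs S$ — cannot work, because $\bs S$ and all its bounded neighbourhoods may be $\bs\mu$‑null (indeed, the first step forces them to be), so the MTP is blind to $\bs S$ when the integration is against $\bs\mu$; the point is instead that finiteness of the metric makes those neighbourhoods exhaust $\bs X$, which converts their $\bs\mu$‑nullity into $\bs\mu(\bs X)=0$. The remaining ingredients are routine: checking that $\bs\mu(\bs S)$ and the neighbourhoods $\bs S^{(r)}$ are bona fide measurable factor objects, using the MTP as an identity in $[0,\infty]$, and the elementary continuity‑of‑measure step.
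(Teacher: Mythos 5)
Your proof is correct and follows essentially the same route as the paper: the same transport function $\identity{\{0<\bs\mu(\bs S)<\infty\}}\,\identity{\{v\in\bs S\}}/\bs\mu(\bs S)$ drives the second claim, and the first claim is reduced to it via the bounded $r$-neighborhoods of $\bs S$, exactly as in the paper's ``replace $\bs S$ by a suitable neighborhood'' step. The only (cosmetic) differences are that you read the contradiction directly off the MTP identity in $[0,\infty]$ instead of invoking Lemma~\ref{lem:happensatroot}, and you conclude $\bs\mu(\bs X)=0$ from all neighborhoods being null rather than selecting one of positive finite measure.
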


This generalizes Corollary~2.10 of~\cite{eft} for unimodular graphs.  As mentioned before Proposition~4 of~\cite{Lo20}, this is related to Poincar\'e's recurrence theorem. See also Lemma~\ref{lem:ends} for a further generalization.

\begin{proof}
	Let $A$ be an event in which $\bs S$ is nonempty and bounded. Hence, $\bs \mu(\bs S)<\infty$ on $A$. By replacing it with a suitable neighborhood of $\bs S$ if necessary, one may assume $\bs \mu(\bs S)>0$ on $A$. So it is enough to prove the second claim. Let $B$ be the event $0<\bs \mu(\bs S)<\infty$. Let $g(u,v):=(\bs \mu(\bs S))^{-1} \identity{\{v\in\bs S\}}\identity{B}$. Then, $g^+(\bs o)=\identity{B}$ and $g^-(\bs o)=\infty$ if $\bs o\in\bs S$ and $B$ holds. If $\myprob{B}>0$, then the latter holds with positive probability by Lemma~\ref{lem:happensatroot}. This contradicts the MTP for $g$.
\end{proof}

\subsection{Re-rooting Invariance}
\label{subsec:rerooting}

In the following proposition, re-rooting a unimodular \rmm space is considered even in the non-compact case. Assume for each \rmm space $(X,o,\mu)$ a probability measure $k_o=k_{(X,o,\mu)}$ is given on $X$. This will be considered as the law for changing the root from $o$ to a new root. By letting $(X,\mu)$ fixed and letting $o$ vary, this can be regarded as a Markovian kernel $k^{(X,\mu)}$ on $X$ (assuming the following measurability condition). It is called an \defstyle{equivariant Markovian kernel} if it is invariant under the isomorphisms of \rmm spaces and for each measurable set $A\subseteq \mdoublestar$, the function $[X,o,\mu] \mapsto k_{o}(\{y\in X: [X,o,y,\mu]\in A \})$ is a measurable function on $\mstar$. 

Given a deterministic pair $(X,\mu)$, an equivariant Markovian kernel $k$ transports $\mu$ to another measure on $X$ defined by $\mu'(\cdot):= \int_X k_y(\cdot) d\mu(y)$. If $\mu'=\mu$, then $\mu$ is called a \defstyle{stationary measure} for the kernel on $X$.

Let $[\bs X, \bs o, \bs \mu]$ be a unimodular \rmm space and $k$ be an equivariant Markovian kernel. Conditional to $[\bs X, \bs o,  \bs \mu]$, choose $\bs o'\in\bs X$ randomly with distribution $k_{\bs o}(\cdot)$, which is regarded as a new root. 

\begin{theorem}[Re-Rooting Invariance]
	\label{thm:rerooting}
	Assume $[\bs X, \bs o, \bs \mu]$ is unimodular, $k$ is an equivariant Markovian kernel and $\bs o'$ is a new root chosen with law $k_{\bs o}(\cdot)$. 
	If almost surely, $\bs \mu$ is a stationary measure for the Markovian kernel $k^{(\bs X, \bs \mu)}$  on $\bs X$, then $[\bs X, \bs o',\bs\mu]$ has the same distribution as $[\bs X, \bs o, \bs \mu]$.
\end{theorem}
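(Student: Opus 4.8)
The plan is to verify the mass transport principle \eqref{eq:unimodular} for the re-rooted space $[\bs X, \bs o', \bs \mu]$ directly, by reducing it to the MTP for $[\bs X, \bs o, \bs \mu]$ via a cleverly chosen transport function. First I would fix an arbitrary bounded transport function $g:\mdoublestar\to\mathbb R^{\geq 0}$; it suffices to check \eqref{eq:unimodular} for such $g$. Since the new root $\bs o'$ is sampled from $k_{\bs o}$ conditionally on $[\bs X, \bs o, \bs \mu]$, and both the kernel $k$ and the quantities $\int_{\bs X} g(\bs o', x)\, d\bs\mu(x)$ and $\int_{\bs X} g(x, \bs o')\, d\bs\mu(x)$ are isomorphism-invariant functionals, the expectations over the re-rooted space can be rewritten as expectations over $[\bs X, \bs o, \bs \mu]$ of integrals of the form $\int_{\bs X} \big(\int_{\bs X} g(y, x)\, d\bs\mu(x)\big)\, dk_{\bs o}(y)$ and its transpose. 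So the goal becomes the identity
\[
	\omid{\int_{\bs X}\!\int_{\bs X} g(y,x)\, d\bs\mu(x)\, dk_{\bs o}(y)} = \omid{\int_{\bs X}\!\int_{\bs X} g(x,y)\, d\bs\mu(x)\, dk_{\bs o}(y)}.
\]

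The key step is to introduce the transport function $h$ on $\mdoublestar$ defined by
\[
	h(X, u, v, \mu) := \int_X g(v, x)\, dk_u(y)\big|_{\text{integrated over }x}\quad\text{— more precisely}\quad h(u,v) := \Big(\int_X g(v,x)\,d\mu(x)\Big)\,\rho_u(v),
\]
where $\rho_u$ denotes the "density" of $k_u$ — except that $k_u$ need not be absolutely continuous, so instead I would work with $h$ built as a transport of the $k$-mass: set $h^{(1)}(u,v)$ to encode $g^+$ evaluated after moving along $k$ and $h^{(2)}(u,v)$ its transpose, and apply the MTP to each. The cleanest route is: apply the MTP \eqref{eq:unimodular} for $[\bs X, \bs o, \bs \mu]$ to the transport function $h_1(u,v) := g^+_{\text{shifted}}$ — concretely, integrate $g$ against $\bs\mu$ in the $x$-slot and against $k_u$ in a way that the incoming-mass side, via stationarity $\int_{\bs X} k_y(\cdot)\, d\bs\mu(y) = \bs\mu(\cdot)$, collapses the $k$-integral back to a $\bs\mu$-integral. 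This is exactly where the stationarity hypothesis on $\bs\mu$ is used and where it must be used: it lets one trade $\int dk_y\, d\bs\mu(y)$ for $\int d\bs\mu(y)$.

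The main obstacle I anticipate is bookkeeping the measurability and Fubini-type interchanges: one needs that $(X,\mu,u)\mapsto k_u$ together with the integrals against $g$ define genuine transport functions on $\mdoublestar$ (this is precisely the equivariant-measurability hypothesis on $k$, so it should go through), and one needs to justify swapping the order of the $\bs\mu$-integral, the $k_{\bs o}$-integral, and the expectation — all fine for nonnegative bounded $g$ by Tonelli, but requiring that the relevant maps are jointly measurable. A secondary subtlety is that $\bs o'$ introduces extra randomness beyond $[\bs X, \bs o, \bs \mu]$; I would handle this by conditioning on $[\bs X, \bs o, \bs\mu]$ first and integrating out $\bs o'$ inside the expectation, so that the final identity is entirely an instance of \eqref{eq:unimodular} for the original unimodular space with an appropriate $g$. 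Finally, to conclude that $[\bs X, \bs o', \bs\mu]$ has the \emph{same distribution} as $[\bs X, \bs o, \bs\mu]$ rather than merely being unimodular, I would invoke that a finite-measure unimodular space is characterized by re-rooting invariance (Example~\ref{ex:finitemeasure}) together with a localization/truncation argument — or, more simply, observe that the displayed identity holds for \emph{all} bounded measurable $g$ including those depending on $v$ alone, which already forces $[\bs X, \bs o']\overset{d}{=}[\bs X, \bs o]$ by testing against $g(u,v) = \phi(v)\mathbf{1}\{d(u,v)\le 0\}$-type functions; I would instead test directly: taking $g(X,u,v,\mu)$ to depend only on $(X,v,\mu)$ and normalizing appropriately recovers $\omid{\phi(\bs X,\bs o',\bs\mu)} = \omid{\phi(\bs X,\bs o,\bs\mu)}$ for all bounded continuous $\phi$ on $\mstar$, which gives equality of distributions.
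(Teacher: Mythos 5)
There is a genuine gap, in two places. First, the theorem asserts that $[\bs X,\bs o',\bs\mu]$ has the \emph{same distribution} as $[\bs X,\bs o,\bs\mu]$; your plan mainly targets the weaker statement that the re-rooted space satisfies the MTP, and your closing attempt to upgrade to distributional equality does not work. With $g(X,u,v,\mu)=\phi(X,v,\mu)$ your displayed identity reads $\omid{\int\phi\,d\bs\mu}=\omid{\bs\mu(\bs X)\int\phi\,dk_{\bs o}}$ — both sides are typically infinite and neither isolates $\omid{\phi(\bs o')}$; and $g(u,v)=\phi(v)\mathbf 1\{u=v\}$ produces $g^{\pm}(\bs o)$ proportional to $\bs\mu(\{\bs o\})$, which vanishes when $\bs\mu$ is atomless. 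The test function that actually works is $g(u,v)=\identity{A}(v)\,f(u,v)$, where $f(u,\cdot)$ is the Radon–Nikodym density of $k_u$ with respect to $\mu$: the MTP then gives $\myprob{[\bs X,\bs o',\bs\mu]\in A}=\omid{\identity{A}(\bs o)f^-(\bs o)}$, stationarity gives $f^-=1$ $\bs\mu$-a.e., and Lemma~\ref{lem:happensatroot} upgrades this to $f^-(\bs o)=1$ a.s. The density weight $f(u,v)$ is precisely what localizes the new root; dropping it loses the conclusion.

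Second, that argument only exists when $k_{\bs o}\ll\bs\mu$, and the singular case is the real content of the theorem (e.g.\ $k_u=\delta_{\psi(u)}$ for a point map $\psi$). You flag this but the proposed fix ("set $h^{(1)}(u,v)$ to encode $g^+$ evaluated after moving along $k$") is not a construction: the MTP only accepts transport functions, i.e.\ mass moved with a density against $\bs\mu\otimes\bs\mu$, so a kernel with a singular part cannot be fed into \eqref{eq:unimodular} directly, and the stationarity identity $\int k_y(\cdot)\,d\bs\mu(y)=\bs\mu(\cdot)$ cannot be "collapsed" into an expectation at the root without first expressing $k$ through a transport function. The paper's resolution is a composition trick you are missing: take the strictly positive symmetric transport function $h$ of Lemma~\ref{lem:balancingkernel} with $h^+=h^-=1$, form $\eta_{\bs o}(\cdot):=\int_{\bs X}h(\bs o,x)k_x(\cdot)\,d\bs\mu(x)$, observe that $\eta_{\bs o}\ll\bs\mu$ (this uses stationarity of $\bs\mu$ under $k$) and that $\eta$ still preserves $\bs\mu$, apply the absolutely continuous case to both $h$ and $\eta$, and cancel the $h$-step to conclude for $k$. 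Without this (or an equivalent device) the proof does not go through.
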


In particular, in the compact case, this theorem implies the re-rooting invariance mentioned in Example~\ref{ex:finitemeasure}. This also generalizes the invariance of Palm distributions under bijective point-shifts and a similar statement for unimodular graphs (Proposition~3.6 of~\cite{eft}). See also Subsection~\ref{subsec:randomwalk} for a generalization to the case where an initial biasing is considered.


Before proving the theorem, we need some lemmas. We will first use the MTP when $k(\bs o)$ is absolutely continuous w.r.t. $\bs \mu$, and then, we will deduce the general case from the first case.

\begin{lemma}
	\label{lem:rerooting-cont}
	If $k_{\bs o}$ is absolutely continuous w.r.t. $\bs \mu$ almost surely, then the law of $[\bs X, \bs o',\bs\mu]$ is absolutely continuous w.r.t. the law of $[\bs X, \bs o,\bs\mu]$. In addition, if the Radon-Nikodym derivative $dk_{\bs o}/d\bs\mu$ is given by $f(\bs X, \bs o, \cdot, \bs\mu)$, where $f:\mdoublestar\to\mathbb R^{\geq 0}$ is a measurable function, then the distribution of $[\bs X, \bs o',\bs\mu]$ is obtained by biasing the distribution of $[\bs X, \bs o,\bs\mu]$ by $f^-(\bs o)$.
\end{lemma}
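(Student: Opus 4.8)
The plan is to prove the two assertions separately: the first (absolute continuity of laws) follows quickly from Lemma~\ref{lem:happensatroot}, and the explicit biasing formula in the second assertion comes from a single application of the mass transport principle to a well-chosen transport function. Note first that the hypothesis $k_{\bs o}\ll\bs\mu$ a.s. silently forces $\bs\mu\neq 0$ a.s. (a probability measure cannot be absolutely continuous with respect to the zero measure), so $[\bs X,\bs o,\bs\mu]$ is nontrivial and Lemma~\ref{lem:happensatroot} is applicable.

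For the first assertion, let $B\subseteq\mstar$ be measurable with $\myprob{[\bs X,\bs o,\bs\mu]\in B}=0$; I want $\myprob{[\bs X,\bs o',\bs\mu]\in B}=0$. Consider the factor subset $\bs S:=\{p\in\bs X:[\bs X,p,\bs\mu]\in B\}$ (Definition~\ref{def:subset} with $A=B$). Since $\myprob{\bs o\in\bs S}=\myprob{[\bs X,\bs o,\bs\mu]\in B}=0$, the second part of Lemma~\ref{lem:happensatroot} gives $\bs\mu(\bs S)=0$ a.s. As $k_{\bs o}\ll\bs\mu$ a.s., this yields $k_{\bs o}(\bs S)=0$ a.s., hence $\myprob{[\bs X,\bs o',\bs\mu]\in B}=\omid{k_{\bs o}(\bs S)}=0$, as desired. (Measurability of $[\bs X,\bs o,\bs\mu]\mapsto k_{\bs o}(\bs S)$ is part of the definition of an equivariant Markovian kernel, applied after forgetting the old root.)

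For the second assertion, fix a bounded measurable $h:\mstar\to\mathbb R^{\geq 0}$. Conditioning on $[\bs X,\bs o,\bs\mu]$ and using that $\bs o'$ has law $k_{\bs o}$ together with $dk_{\bs o}/d\bs\mu=f(\bs X,\bs o,\cdot,\bs\mu)$,
\[
	\omid{h(\bs X,\bs o',\bs\mu)}=\omid{\int_{\bs X}h(\bs X,y,\bs\mu)\,dk_{\bs o}(y)}=\omid{\int_{\bs X}h(\bs X,y,\bs\mu)f(\bs X,\bs o,y,\bs\mu)\,d\bs\mu(y)}.
\]
Now define the transport function $g(X,u,v,\mu):=h(X,v,\mu)f(X,u,v,\mu)$, which is measurable and nonnegative on $\mdoublestar$ since the forgetful map $[X,u,v,\mu]\mapsto[X,v,\mu]$ is measurable and $f$ is a transport function. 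Then $g^+(\bs o)$ equals the integrand in the last display, so the right-hand side is $\omid{g^+(\bs o)}$. The MTP~\eqref{eq:unimodular} turns this into $\omid{g^-(\bs o)}=\omid{h(\bs X,\bs o,\bs\mu)\int_{\bs X}f(\bs X,x,\bs o,\bs\mu)\,d\bs\mu(x)}=\omid{f^-(\bs o)\,h(\bs X,\bs o,\bs\mu)}$. Hence $\omid{h(\bs X,\bs o',\bs\mu)}=\omid{f^-(\bs o)\,h(\bs X,\bs o,\bs\mu)}$ for all such $h$. Finally, applying the MTP to $f$ itself and using $f^+(\bs o)=\int_{\bs X}f(\bs X,\bs o,y,\bs\mu)\,d\bs\mu(y)=k_{\bs o}(\bs X)=1$ a.s., we get $\omid{f^-(\bs o)}=\omid{f^+(\bs o)}=1$; so the displayed identity says precisely that the law of $[\bs X,\bs o',\bs\mu]$ is the law of $[\bs X,\bs o,\bs\mu]$ biased by $f^-(\bs o)$ (which of course re-proves absolute continuity).

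There is no serious conceptual obstacle; the only points requiring care are routine measurability checks. One must justify that $[\bs X,\bs o,\bs\mu]\mapsto\int_{\bs X}h(\bs X,y,\bs\mu)\,dk_{\bs o}(y)$, and hence the conditional expectation used above, is measurable; this follows from the measurability clause in the definition of an equivariant Markovian kernel together with a monotone-class argument, after expressing $[\bs X,y,\bs\mu]$ as a measurable function of $[\bs X,\bs o,y,\bs\mu]$. One should also keep track of the implicit nontriviality noted at the start, so that Lemma~\ref{lem:happensatroot} applies.
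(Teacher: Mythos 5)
Your proof is correct. For the second assertion it is essentially the paper's argument: you substitute $dk_{\bs o}=f(\bs X,\bs o,\cdot,\bs\mu)\,d\bs\mu$ and apply the MTP to the transport function $g(u,v)=h(v)f(u,v)$ (the paper does exactly this with $h=\identity{A}$), and your normalization check $\omid{f^-(\bs o)}=\omid{f^+(\bs o)}=1$ is a nice explicit touch. Where you diverge is the first assertion: the paper establishes absolute continuity by first \emph{constructing} a measurable, isomorphism-invariant Radon--Nikodym derivative $f$ on $\mdoublestar$, as the density of the measure $\alpha(A)=\omid{\int\identity{A}[\bs X,\bs o,x,\bs\mu]\,dk_{\bs o}(x)}$ with respect to $\beta(A)=\omid{\int\identity{A}[\bs X,\bs o,x,\bs\mu]\,d\bs\mu(x)}$, and then reading absolute continuity off the final biasing formula; this construction is also what backs the remark after the lemma that such an $f$ always exists. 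You instead give a softer, direct argument via Lemma~\ref{lem:happensatroot}: a null event for the root corresponds to a factor subset that is $\bs\mu$-null a.s., hence $k_{\bs o}$-null a.s. This is valid (and your observation that $k_{\bs o}\ll\bs\mu$ forces nontriviality, so the lemma applies, is exactly the right care to take), but note that it does not by itself produce the measurable $f$ needed for the quantitative second statement — you avoid this only because the second sentence of the lemma supplies $f$ as a hypothesis. If you wanted to recover the paper's stronger remark that a suitable $f$ always exists, you would still need something like the disintegration of $\alpha$ with respect to $\beta$.
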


In fact, the proof shows that there always exists such a measurable Radon-Nikodym derivative $f$.

\begin{proof}
	Let $\alpha$ and $\beta$ be the $\sigma$-finite measures on $\mdoublestar$ defined by 
	\begin{eqnarray*}
		\alpha(A) &=& \omid{\int_{\bs X} \identity{A}[\bs X, \bs o, x,\bs\mu] d k_{\bs o} (x)},\\
		\beta(A) &=& \omid{\int_{\bs X} \identity{A}[\bs X, \bs o, x,\bs\mu] d\bs \mu(x)}.	
	\end{eqnarray*}
	$\alpha$ is just the distribution of $[\bs X, \bs o, \bs o',\bs\mu]$. It can be easily seen that $\alpha$ is absolutely continuous w.r.t. $\beta$. Let $f(X,o,o',\mu)$ be the Radon-Nikodym derivative of $\alpha$ w.r.t. $\beta$ at $[X,o,o',\mu]$. 
	It can be shown that $f$ satisfies the assumptions mentioned in the lemma.
	Now, let $A$ be an event in $\mstar$. One has
	\begin{eqnarray*}
		\myprob{[\bs X, \bs o',\bs \mu]\in A} &=& \omid{\int_{\bs X} \identity{A}[\bs X, x,\bs\mu] dk_{\bs o}(x)} \\
		&=& \omid{\int_{\bs X} f(\bs o, x)\identity{A}[\bs X, x,\bs\mu] d\bs \mu(x)}\\
		&=& \omid{\int_{\bs X} f(x, \bs o)\identity{A}[\bs X, \bs o,\bs\mu] d\bs \mu(x)}\\
		&=& \omid{\identity{A}[\bs X, \bs o,\bs \mu] f^-(\bs o)},
	\end{eqnarray*}
	where the third equality holds by the MTP~\eqref{eq:unimodular}. So the claim is proved.	
\end{proof}

\begin{lemma}
	\label{lem:balancingkernel}
	There exists a transport function $h$ such that $h$ is symmetric, $h>0$ and $h^+(\cdot)=h^-(\cdot)=1$ except on an event that has zero measure w.r.t. every nontrivial unimodular \rmm space $[\bs X, \bs o,\bs\mu]$. Indeed, if $g>0$ is an arbitrary transport function such that $g^+(\bs o)=1$ a.s., then one can let
	\begin{equation}
		\label{eq:balancingkernel}
		h(X,u,v,\mu):=\int_X \frac{g(u,x)g(v,x)}{g^-(x)} d\mu(x).
	\end{equation}
\end{lemma}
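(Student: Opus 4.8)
\emph{The plan.} The natural route is the classical symmetrization trick: from a strictly positive Markovian transport kernel $g$ (a transport function with $g^+\equiv 1$, i.e.\ $g(u,\cdot)$ is a probability density against $\mu$), one builds a \emph{symmetric} transport function with unit outgoing mass --- hence, by symmetry, unit incoming mass --- by "running $g$ forward from $u$ and from $v$ to a common point $x$, then reweighting by $1/g^-(x)$''. This is exactly the displayed formula~\eqref{eq:balancingkernel}. So the proof splits into (i) producing one such $g$ that works for every \rmm space at once, and (ii) checking that~\eqref{eq:balancingkernel} has the claimed properties; step (ii) is Tonelli's theorem together with the observation --- supplied by the MTP --- that $g^-$ is $\mu$-a.e.\ finite.

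\emph{Step (i): construction of $g$.} A weight decaying geometrically in the distance need not be $\mu$-integrable, since $\mu$ may put very fast-growing mass on successive annuli around a point; so the weight must be adapted to $\mu$. For $[X,u,\mu]$ with $\mu\neq 0$ put $n(v):=\max\{1,\lceil d(u,v)\rceil\}$ and $a_n:=\mu(\cball{n}{u})<\infty$, and set
\[
\psi_u(v):=\frac{2^{-n(v)}}{1+a_{n(v)}},\qquad g(X,u,v,\mu):=\frac{\psi_u(v)}{\int_X\psi_u\,d\mu}
\]
(and $g:=0$ when $\mu=0$). Since $A_n:=\{v:n(v)=n\}\subseteq\cball{n}{u}$, the sets $A_n$ partition $X$ with $\mu(A_n)\le a_n$, so $\int_X\psi_u\,d\mu=\sum_{n\ge 1}2^{-n}\mu(A_n)/(1+a_n)\in(0,1]$. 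Hence $g$ is a well-defined, everywhere strictly positive transport function with $g^+(u)=1$ for every $u$ whenever $\mu\neq0$; measurability of $g$ reduces to the routine measurability of $[X,u,\mu]\mapsto\mu(\cball{n}{u})$ on $\mstar$. In particular $g^+(\bs o)=1$ a.s.\ for every nontrivial unimodular \rmm space.

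\emph{Step (ii): verification.} Define $h$ by~\eqref{eq:balancingkernel}, with the convention that the integrand is $0$ at points $x$ where $g^-(x)\in\{0,\infty\}$; then $h$ is a measurable transport function, symmetric by inspection, so $h^-\equiv h^+$. Fix a nontrivial unimodular $[\bs X,\bs o,\bs\mu]$. Applying the MTP~\eqref{eq:unimodular} to $g$ gives $\omid{g^-(\bs o)}=\omid{g^+(\bs o)}=1<\infty$, so $g^-(\bs o)<\infty$ a.s.; since $\{u:g^-(u)<\infty\}$ is a factor subset containing the root a.s., Lemma~\ref{lem:happensatroot} makes it of full $\bs\mu$-measure a.s. Also $g^-(x)=\int g(y,x)\,d\bs\mu(y)>0$ for every $x$, because $g>0$ and $\bs\mu\neq0$. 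So, a.s., $g^-(x)\in(0,\infty)$ for $\bs\mu$-a.e.\ $x$; on this event the integrand defining $h(\bs o,x)$ is positive $\bs\mu$-a.e., whence $h(\bs o,x)>0$ for every $x$, and Tonelli's theorem yields
\begin{align*}
h^+(\bs o) &=\int_{\{0<g^-<\infty\}}\frac{g(\bs o,x)}{g^-(x)}\Big(\int_{\bs X}g(v,x)\,d\bs\mu(v)\Big)\,d\bs\mu(x)\\
&=\int_{\{0<g^-<\infty\}}g(\bs o,x)\,d\bs\mu(x)=\int_{\bs X}g(\bs o,x)\,d\bs\mu(x)=1,
\end{align*}
using $\int g(v,x)\,d\bs\mu(v)=g^-(x)$ and that $\{g^-=\infty\}$ is $\bs\mu$-null. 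Thus all three assertions hold off the measurable set $\mathcal N:=\{[X,o,\mu]:\mu=0\text{ or }\mu(\{x:g^-(x)=\infty\})>0\}$, and the argument just given shows $\mathcal N$ has measure zero under every nontrivial unimodular \rmm space.

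\emph{Main obstacle.} Essentially all the content is in Step (i): one must avoid a fixed distance weight and instead let it depend on the $\mu$-mass of balls, so that normalization is possible for every space simultaneously (together with the attendant, routine measurability bookkeeping). Once $g$ exists, Step (ii) is the standard symmetrization computation, whose only genuine input is the use of the MTP, via Lemma~\ref{lem:happensatroot}, to know that $g^-$ is finite almost everywhere.
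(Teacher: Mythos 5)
Your proposal is correct and follows essentially the same route as the paper: construct a strictly positive transport function $g$ with $g^+\equiv 1$ by weighting points according to the $\mu$-mass of balls around the source (the paper uses $g=\sum_k 2^{-k}g_k$ with $g_k$ uniform on $\cball{N+k}{o}$, a minor variant of your $\psi_u$), then verify~\eqref{eq:balancingkernel} by Tonelli, using the MTP and Lemma~\ref{lem:happensatroot} to get $0<g^-<\infty$ $\bs\mu$-a.e.\ almost surely. Your write-up just makes explicit the null-set bookkeeping that the paper leaves as "straightforward to check."
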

In fact, this is just the composition of the Markovian kernel corresponding to $g$ (given $(X,\mu)$) with its time-reversal. This ensures that the new kernel preserves $\mu$. 

\begin{proof}
	One can construct $g$ as follows. Given $(X,o,\mu)$, let $N>0$ be the smallest integer such that $\mu (\cball{N}{o})>0$ (unless $\mu=0$). For each $k\geq 1$, let $g_k(o,\cdot)$ be a constant function on $\cball{N+k}{o}$ such that $g_k^+(o)=1$. Then, let $g:=\sum_k 2^{-k} g_k$. Now, define $h$ by~\eqref{eq:balancingkernel}. Assuming 
	\begin{equation}
		\label{eq:lem:balancing}
		0<g^-(x)<\infty \text{ for } \bs\mu-\text{a.e. } x\in\bs X \text{ almost surely},
	\end{equation}
	it is straightforward to check that $h$ is well defined and $h^+(\bs o)=h^-(\bs o)=1$ a.s. So it remains to prove~\eqref{eq:lem:balancing}. Since $g>0$, one has $g^-(\bs o)>0$ a.s. Also, the MTP~\eqref{eq:unimodular} gives $\omid{g^-(\bs o)}=1$, and hence, $g^-(\bs o)<\infty$ a.s. So, Lemma~\ref{lem:happensatroot} implies \eqref{eq:lem:balancing} and the claim is proved.	
\end{proof}

\begin{remark}
	\label{rem:balacingkernel}
	Given any measurable function $b\geq 0$ on $\mstar$, one can modify the proof of Lemma~\ref{lem:balancingkernel} to have $h^+(\bs o)=h^-(\bs o)=b(\bs o)$ a.s. Note that in this case, the kernel has $b(\cdot)\bs{\mu}$ as a stationary measure (this is also readily implied by Lemma~\ref{lem:balancingkernel} and Example~\ref{ex:biasing}). Also, the kernel $\tilde h$ defined by $h$ (i.e., $\tilde{h}_{\bs o}:=b(\bs o)^{-1}h(\bs o, \cdot)\bs \mu$) can be arbitrarily close to the trivial kernel in the sense that $\prokhorov(\tilde{h}_{\bs o}, \delta_{\bs o})$ is less than an arbitrary constant $\epsilon>0$ a.s.	
\end{remark}

\begin{proof}[Proof of Theorem~\ref{thm:rerooting}]
	In the first case, assume that, as in Lemma~\ref{lem:rerooting-cont}, $k_{\bs o}(\cdot)$ is absolutely continuous w.r.t. $\bs \mu$ a.s. and its Radon-Nikodym derivative is given by $f:\mdoublestar\to\mathbb R^{\geq 0}$. Since $k_{\bs o}(\cdot)$ is a probability measure, $f^+(\bs o)=1$ a.s. Also, since the Markovian kernel $k^{(\bs X, \bs \mu)}$ preserves $\bs\mu$, one has $f^-(\cdot)=1$ a.e. on $\bs X$. Therefore, $f^-(\bs o)=1$ a.s. by Lemma~\ref{lem:happensatroot}. Therefore, by the second part of Lemma~\ref{lem:rerooting-cont}, $[\bs X, \bs o', \bs\mu]$ has the same distribution as $[\bs X, \bs o, \bs \mu]$.
	
Now consider the general case. By Lemma~\ref{lem:balancingkernel}, there exists a transport function $h>0$ such that $h^+(\bs o)=h^-(\bs o)=1$ a.s. Compose the Markovian kernel corresponding to $h$ with $k$ to obtain a new equivariant kernel $\eta$:
\[
	\eta_{\bs o}(\cdot):=\int_{\bs X} h(\bs o, x) k_x(\cdot)d\bs\mu(x).
\]
Now, $\eta$ is an equivariant Markovian kernel that preserves $\bs\mu$ a.s. and one can check that $\eta_{\bs o}(\cdot)$ is absolutely continuous w.r.t. $\bs \mu$ a.s. So, the first case implies that the composition of the two root changes preserves the distribution of $[\bs X, \bs o, \bs \mu]$. But the first root-change (corresponding to $h$) already preserves the distribution of $[\bs X, \bs o, \bs \mu]$ since $h^-(\bs o)=1$ a.s. This implies that the second root-change also preserves it and the claim is proved.
\end{proof}

\section{General Categories of Examples}
\label{sec:examples-general}

In this section, we present various general or specific examples in unimodular \rmm spaces. 
We also study the connection with Borel equivalence relations in Subsection~\ref{subsec:Borel}.

\subsection{Point Processes and Random Measures}
\label{subsec:pointprocess}

A \defstyle{point process} in $\mathbb R^d$ is a random discrete subset of $\mathbb R^d$. More generally, a \defstyle{random measure} on $\mathbb R^d$ is a random boundedly-finite measure on $\mathbb R^d$. A point process or random measure $\Phi$ is \defstyle{stationary} if $\Phi+t$ has the same distribution as $\Phi$ for every $t\in\mathbb R^d$. If so, the \defstyle{Palm version} of $\Phi$ can be defined in various ways and means heuristically \textit{seeing $\Phi$ from a typical point} or \textit{conditioning on $0\in\Phi$}. Formally, if $B\subseteq\mathbb R^d$ is an arbitrary bounded Borel set, bias by $\Phi(B)$ and move the origin to a random point with distribution $\restrict{\Phi}{B}$; i.e.,
\begin{equation}
	\label{eq:PalmEuclidean}
	\myprob{\Phi_0\in A} = \frac 1{\lambda_{\Phi}} \omid{\int_B \identity{A}(\Phi-x)d\Phi(x)},
\end{equation}
where $\lambda_{\Phi}$ is a constant called the \defstyle{intensity} of $\Phi$, which is assumed to be positive and finite here.
For other methods to defined the Palm version, one can mention the use of the \textit{Campbell measure} and \textit{tessellations}, which will be generalized in Section~\ref{sec:Palm}.  

The Palm version satisfies \textit{Mecke's theorem} \cite{Me67}, which can be rephrased equivalently\footnote{Mecke's theorem is stated in different notations in the literature, but it is easy to transform the equation in this form.} in the MTPs~\eqref{eq:MTP-pp} and~\eqref{eq:MTP-measure} (see also~\cite{shift-coupling} and~\cite{ThLa09}). Mecke proved in addition that with some finite moment condition, this equation characterizes the Palm versions of stationary point process. Also, one can reconstruct the stationary version from the Palm version via a formula which is known as \textit{Palm inversion}.
Without the finite moment condition, the MTPs \eqref{eq:MTP-pp} and~\eqref{eq:MTP-measure} define larger classes of point processes and random measures, which are called \defstyle{point-stationary point processes} and \defstyle{mass-stationary} random measures in~\cite{ThLa09}.\footnote{The definition of point-stationarity in~\cite{ThLa09} is more complicated, but it is proved in~\cite{ThLa09} that it is equivalent to Mecke's condition.}
For example, one can mention the zero set of the simple random walk, its graph, and the local time at zero of Brownian motion (see~\cite{I} for further examples). 
The MTPs~\eqref{eq:MTP-pp} and~\eqref{eq:MTP-measure} directly imply the following.

\begin{example}
	\label{ex:PPandRM}
	If $\Phi_0$ is the Palm version of a stationary point process in $\mathbb R^d$ (or more generally, a point-stationary point process), then $[\Phi_0,0]$ is a unimodular \rmm space (equipped with the counting measure). If $\Psi_0$ is the Palm version of a stationary random measure (or more generally, a mass-stationary random measure), then $[\mathrm{supp}(\Psi_0),0,\Psi_0]$ and $[\mathbb R^d,0,\Psi_0]$ are unimodular. Note that the latter may be improper.
\end{example}

\begin{remark}
	\label{rem:lostaxis}
	Since \rmm spaces are considered up to isomorphisms, some geometry is lost (e.g., the coordinate axes) when considering point processes and random measures as \rmm spaces. To fix this, one can extend $\mstar$ to \rmm spaces equipped with some additional geometric structure, which will be discussed in Subsection~\ref{subsec:extra}. In this sense, one can say that unimodular \rmm spaces generalize (point-) stationary point processes and (mass-) stationary random measures. 
	\\
	We will also provide a further generalization in Section~\ref{sec:Palm} by defining stationary point processes and random measures on a given unimodular \rmm space. In this viewpoint, unimodular \rmm spaces generalize the base space $\mathbb R^d$.
\end{remark}

\subsection{Unimodular Graphs and Discrete Spaces}

As mentioned in the introduction, a \defstyle{unimodular (random) graph}~\cite{processes} is a random rooted graph that satisfies the MTP~\eqref{eq:MTP-graph}. Also, the MTP provides many connections between unimodular graphs and (point-) stationary point process. As a common generalization, \cite{I} defines \defstyle{unimodular discrete spaces}, which are random boundedly-finite discrete metric spaces that satisfy a similar MTP. Since the Benjamini-Schramm topology for rooted graphs is consistent with the GHP topology (see e.g., \cite{Kh19generalization}), The MTP implies the following.

\begin{example}
	If $[\bs G, \bs o]$ is a unimodular graph or a unimodular discrete space, then it is also a unimodular \rmm space (equipped with the counting measure).
\end{example}

In considering graphs as \rmm spaces, some information might be lost (e.g., parallel edges and loops). However, one can keep these information by considering \rmm spaces equipped with some additional structure (see Subsection~\ref{subsec:extra}). In this sense, unimodular \rmm spaces generalize unimodular graphs.

More generally, \cite{processes} defines unimodular networks, which are unimodular graphs equipped with some marks on the vertices and edges. Also, \cite{I} defines unimodular marked discrete spaces. Similarly to the above example, these can be regarded as unimodular \rmm spaces equipped with suitable additional structures.

\begin{example}
	\label{ex:locallyunimodular}
	The following are examples of unimodular \rmm spaces which are graphs (or discrete spaces) equipped with a measure different from the counting measure. In these examples, the biasing is a special case of the Palm theory developed in Section~\ref{sec:Palm} (see Example~\ref{ex:biasing-Palm}).
	\begin{enumerate}[(i)]
		\item The Palm version of non-simple stationary point processes.
		\item If $[\bs G, \bs o]$ is a unimodular graph, then it is known that a stationary distribution of the simple random walk is obtained by biasing the probability measure by $\mathrm{deg}(\bs o)$. It can be seen that, after this biasing, $[\bs G, \bs o, \mathrm{deg}(\cdot)]$ is unimodular.
		\item Let $\bs S$ be the image of a process $(Y_n)_{n\in\mathbb Z}$ on $\mathbb R^d$ that has stationary increments and $Y_0=0$ (e.g., a random walk). Let $\bs \mu$ be the counting measure on $\bs S$ and $\bs m$ be the \textit{mutiplicity measure} on $\bs S$. Assuming that $\bs S$ is discrete and $\bs m$ is finite, $[\bs S, 0, \bs m]$ is unimodular (this is easily implied by the MTP on the index set $\mathbb Z$). However, to make $[\bs S, 0, \bs \mu]$ unimodular, one needs to bias by $\bs m(0)^{-1}$ (see Subsection~4.3 of~\cite{I}). 
		\item In some random rooted graphs $[\bs G, \bs o]$, the MTP holds only when the sum is made on a specific subset $\bs S$ containing the root. These cases are sometimes called \textit{locally unimodular} and are unimodular \rmm spaces by letting $\bs \mu$ be the counting measure on $\bs S$ (not on $\bs G$). An example is the graph of a null-recurrent Markov chain on $\mathbb Z$, where $\bs S$ is a level set of the graph. Another example appears in extending a unimodular graph as described in Example~\ref{ex:extension}. 
	\end{enumerate}
	
	
\end{example}

\subsection{Scaling Limits}
\label{subsec:scalinglimit}

Let $\bs G_n$ be a sequence of finite graphs (or metric spaces), which might be deterministic or random, and let $\bs o_n$ be a vertex in $\bs G_n$ chosen uniformly at random. A \defstyle{(measured) scaling limit} of $\bs G_n$ is the weak limit of a sequence of the form $[\epsilon_n \bs G_n, \bs o_n, \delta_n\bs{\mu}_n]$ as random elements in $\mstar$. Here, $\bs \mu_n$ is the uniform measure on the vertices of $\bs G_n$ and $\epsilon_n\bs G_n$ means that the graph-distance metric is scaled by $\epsilon_n$. Likewise, a \defstyle{subsequential (measured) scaling limit} is defined as the limit along a subsequence. The coefficients $\epsilon_n$ and $\delta_n$ may also depend on the non-rooted graph $\bs G_n$ (but not on $\bs o$). Since $\bs o_n$ is chosen uniformly, $[\bs G_n, \bs o_n]$ is a unimodular graph (in fact, it is enough to have the re-rooting invariance property, see Example~\ref{ex:finitemeasure}). 

Another setting for scaling limits is zooming-out a given unimodular graph or \rmm space $[\bs X, \bs o, \bs \mu]$; i.e., (subsequential) limits of $[\epsilon_n\bs X, \bs o, \delta_n \bs \mu]$ when the factors $\epsilon_n$ and $\delta_n$ do not depend on $\bs o$ and converge to zero appropriately. There are also examples of zooming-in a given \rmm space (even compact) at the root, which is the case when $\epsilon_n,\delta_n\to\infty$ appropriately. Various examples will be mentioned below.

In each of these settings, Lemma~\ref{lem:weaklimit} implies the following.

\begin{lemma}[Scaling Limit]
	\label{lem:scaling}
	Under the above assumptions, every measured scaling limit is unimodular and satisfies the MTP~\eqref{eq:unimodular}. In particular, if the limit is compact a.s., then it satisfies the re-rooting invariance property. The same is true for subsequential scaling limits, if the subsequence is chosen not depending on $\bs o$. 
\end{lemma}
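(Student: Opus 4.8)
The plan is to reduce the statement to Lemma~\ref{lem:weaklimit} (weak limits preserve unimodularity) together with Example~\ref{ex:finitemeasure} (for a finite measure, unimodularity coincides with re-rooting invariance). The only real content is to check that every term of the approximating sequence is already a unimodular \rmm space; granting this, unimodularity of the limit and the validity of~\eqref{eq:unimodular} follow immediately from Lemma~\ref{lem:weaklimit}, and when the limit is compact a.s.\ its boundedly-finite measure is finite a.s., so Example~\ref{ex:finitemeasure} upgrades unimodularity to re-rooting invariance.

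First, in the finite-graph setting, $[\epsilon_n\bs G_n,\bs o_n,\delta_n\bs\mu_n]$ has finite total mass, and since $\bs o_n$ is uniform on the vertices while $\delta_n$ depends only on the unrooted graph $\bs G_n$, the root is distributed proportionally to $\delta_n\bs\mu_n$; re-rooting to a fresh point with the same (proportional-to-$\delta_n\bs\mu_n$) distribution leaves the law unchanged, so Example~\ref{ex:finitemeasure} shows this \rmm space is unimodular. For the zooming settings, where the measure need not be finite, I would instead verify~\eqref{eq:unimodular} directly by a change of transport function: regarding $\epsilon_n=\epsilon_n(X,\mu)$ and $\delta_n=\delta_n(X,\mu)$ as isomorphism-invariant measurable functions of the unrooted space, and given any transport function $g$, set
\[
	g'(X,u,v,\mu):=\delta_n(X,\mu)\,g\bigl(\epsilon_n(X,\mu)X,\,u,\,v,\,\delta_n(X,\mu)\mu\bigr).
\]
This is again a transport function, and applying the MTP for $[\bs X,\bs o,\bs\mu]$ to $g'$, together with the identity $\int h\,d(\delta_n\bs\mu)=\delta_n\int h\,d\bs\mu$, turns the two sides into the two sides of~\eqref{eq:unimodular} for $[\epsilon_n\bs X,\bs o,\delta_n\bs\mu]$; hence the latter is unimodular. (The finite-graph case could also be handled by this computation.)

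With every term of the sequence unimodular, Lemma~\ref{lem:weaklimit} concludes the main claim. For subsequential limits the same argument applies provided the index sequence is a function of the unrooted graphs only (or is deterministic), since then conditioning on the subsequence does not disturb the uniformity of $\bs o_n$ and each term along it remains unimodular. I do not expect a genuine obstacle here; the only points needing a little care are packaging $\epsilon_n$ and $\delta_n$ as honest equivariant functions so that $g'$ is a legitimate transport function, and --- for the subsequential statement --- the hypothesis that the subsequence not depend on $\bs o$.
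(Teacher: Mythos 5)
Your proposal is correct and follows the same route the paper takes: the paper simply asserts that Lemma~\ref{lem:weaklimit} implies the claim, and your write-up supplies exactly the missing verification that each term of the approximating sequence is unimodular (via Example~\ref{ex:finitemeasure} in the finite case and the change-of-transport-function computation for the scaling of a unimodular space). The extra care you take with $\epsilon_n,\delta_n$ being isomorphism-invariant functions of the unrooted space and with the root-independence of the subsequence is precisely what the hypotheses of the lemma are there for.
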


Non-compact scaling limits have also some re-rooting invariance property, which is stated in Theorem~\ref{thm:rerooting}.

\begin{remark}
	Unimodularity does not make sense for non-measured scaling limits. However, in many cases, by a pre-compactness argument, it is possible to deduce the existence of a subsequential measured scaling limit. For instance, this is always the case if the scaling limit is compact.
\end{remark}

\begin{example}[Brownian Motion]
	The zero set of the simple random walk (SRW) on $\mathbb Z$ scales to the zero set of Brownian motion equipped with the local time measure. The graph of the SRW scales to the graph of Brownian motion with the measured induced from the time axis (the metric is distorted differently, but unimodularity is preserved anyway). The image of the SRW on $\mathbb Z^d$ ($d\geq 3$) scales to the image of Brownian motion on $\mathbb R^d$ equipped with the push-forward of the measure on $\mathbb R$. By Example~\ref{ex:locallyunimodular}, the latter is unimodular. These examples provide unimodular \rmm spaces (and also mass-stationary random measures).
\end{example}

\begin{example}[Brownian Trees]
	\label{ex:BCRT}
	The Brownian continuum random tree (BCRT) \cite{Al91-crtI} is the scaling limit of random trees on $n$ vertices. The re-rooting invariance property (observed in~\cite{Al91crtII}) is a direct corollary of Lemma~\ref{lem:scaling} above.
	Aldous also proved that by choosing larger scaling factors suitably, a non-compact scaling limit is obtained, which is called the \textit{self-similar continuum random tree} (SSCRT). Lemma~\ref{lem:scaling} implies that the SSCRT is also a unimodular \rmm space and satisfies the MTP, which seems to be a new result. 
\end{example}

\begin{example}[Stable Trees]
	Stable trees generalize the BCRT and are the scaling limit of Galton-Watson trees with infinite variance conditioned to be large~\cite{DuLe02}. A re-rooting invariance property of stable trees is proved in~\cite{DuLe05}. Note that the Galton-Watson trees are not re-rooting invariant. However, one can prove that the stable trees are the scaling limits of critical \textit{unimodular Galton-Watson (UGW) trees} (Example~1.1 of~\cite{processes}) as well. Since UGW trees are unimodular, the re-rooting invariance property is implied by Lemma~\ref{lem:weaklimit} directly.
%
%
\end{example}

\begin{example}[Self-Similar Unimodular Spaces]
	\label{ex:selfsimilar}
	Let $K\subseteq\mathbb R^d$ be a self-similar set such that $K=\cup f_i(K)$, where each $f_i$ is a homothety (see~\cite{bookBiPe17}). Equip $K$ with the unique self-similar probability measure $\mu$ on it and choose $\bs o\in K$ with distribution $\mu$. Assume all homothety ratios are equal and the \textit{open set condition} holds. In this case, by zooming-in at $\bs o$, there exists a subsequential scaling limit. This can be proved similarly to Theorem~4.14 of~\cite{I} (regarding \textit{self-similar unimodular discrete spaces}) and the proof is skipped for brevity (the limit can be constructed by adding to $K$ some isometric copies and continuing recursively, similarly to Remark~4.21 of~\cite{I}). In addition, the limit is the same as the subsequential scaling limit when zooming-out the unimodular discrete self-similar set defined in~\cite{I}. Hence, the scaling limit is unimodular. 
\end{example}

\begin{example}[Micro-Sets]
	More general than Example~\ref{ex:selfsimilar}, \defstyle{micro-sets} are defined for an arbitrary compact set $K\subseteq\mathbb R^d$ by zooming-in at a given point (see Subsection~2.4 of~\cite{bookBiPe17}). Let $\mu$ be a probability measure on $K$ and choose $\bs o$ randomly with distribution $\mu$. By modifying the definition of micro-sets slightly to allow non-compact micro-sets (using convergence of closed subsets of $\mathbb R^d$), and also by scaling $\mu$ at the same time, one obtains that every subsequential measured micro-set at $\bs o$ defined as a weak limit (the subsequence and the scaling parameters should not depend on $\bs o$) is unimodular.
\end{example}

\begin{example}[Brownian Web]
	Brownian web is the scaling limit of various \textit{drainage network models} (see e.g., \cite{FoNeRa04}), which are random trees embedded in the plane. The limit is usually defined with a different notion of convergence (as collections of paths in the plane), but it is also proved recently in~\cite{CaHa21brownianweb} that the scaling limit exists in the Gromov-Hausdorff topology as well. It is natural to expect that the measured scaling limit also exists. Nevertheless, the limit is the completion of the \textit{skeleton} of the Brownian web, and hence, has a natural measure induced from the Lebesgue measure on $\mathbb R^2$. By stationarity, the resulting measured continuum tree is a unimodular \rmm space.
\end{example}

\begin{example}[Uniform Spanning Forest]
	Let $\bs C$ be the connected component containing 0 of the \textit{uniform spanning forest} of $\mathbb Z^d$ (see Section~7 of~\cite{processes}). It is known that $[\bs C, 0]$ is unimodular (the same is true in any unimodular graph); indeed, it is point-stationary. 
	As random closed subsets of $\mathbb R^d$ equipped with a measure, one can use precompactness to show that there exists subsequential measured scaling limits. If one proves the existence of the scaling limit (or if the subsequence is chosen in a translation-invariant way), then the limit is a unimodular \rmm space. 
	It is also conjectured that the scaling limit of $\bs C$ exists as random real trees embedded in $\mathbb R^d$, and for $d\geq 5$, the limit is identical to the Brownian-embedded SSCRT (see Example~\ref{ex:BCRT} and~\cite{vanderHo06}).
\end{example}

\begin{example}[Planar Maps]
	The scaling limit of random planar graphs and maps have been of great interest in probability theory and in physics. For instance, the \textit{Brownian map} and the \textit{Brownian disk} are compact random metric spaces arising as the scaling limits of some models of uniform random planar triangulations and quadrangulations (see e.g., \cite{Le19}). The \textit{Brownian plane} is a non-compact model obtained by zooming-out the \textit{uniform infinite planar quadrangulation} and also by zooming-in the Brownian map. Therefore, the Brownian plane, equipped with the \textit{volume measure} (i.e., the scaling limit of the counting measure), is a unimodular \rmm space. 
	\\
	In addition, \cite{Bu18} defines the \textit{hyperbolic Brownian plane} as a limit of a sequence of planar triangulations scaled properly. It is observed in~\cite{Bu18} that this model satisfies a version of the MTP. 
	Indeed, this means that the hyperbolic Brownian plane is a unimodular \rmm space and is a direct corollary of Lemma~\ref{lem:weaklimit}.
	\\
	Also, in the uniform infinite \textit{half-plane} triangulation, the root is on the \textit{boundary}, which is a bi-infinite path. It is proved that the scaling limit of this model exists. Equipping it with the \textit{length measure}, which is the scaling limit of the counting measure on the boundary, a unimodular \rmm space is obtained (which is improper). 
\end{example}

\subsection{Groups and Deterministic Spaces}

In this subsection, we investigate when a deterministic \rmm space $[X,o,\mu]$ is a unimodular \rmm space. By Lemma~\ref{lem:happensatroot}, it is necessary that $(X,\mu)$ is \textit{transitive}; i.e., $[X,o,\mu]$ is isomorphic to $[X,y,\mu]$ for every $y\in X$. However, transitivity is not enough. The following generalizes the analogous result about transitive graphs (see~\cite{BLPS}).

\begin{proposition}
	\label{prop:fixed1}
	Let $[X,o,\mu]$ be a deterministic \rmm space.
	\begin{enumerate}[(i)]
		\item \label{thm:fixed1-i} $[X,o,\mu]$ is unimodular if and only if $(X,\mu)$ is transitive and the automorphism group of $(X,\mu)$ is a unimodular group.
		\item \label{thm:fixed1-ii} Assume $\Gamma$ is a closed subgroup of the automorphism group of $(X,\mu)$. Then, the MTP holds for all $\Gamma$-invariant functions on $X\times X$ if and only if $\Gamma$ is unimodular and acts transitively on $X$.
	\end{enumerate}
\end{proposition}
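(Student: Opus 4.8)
The plan is to carry out both parts via one computation on the isometry group, so I would begin by recording the structure. Since $X$ is boundedly compact, $\mathrm{Isom}(X)$ with the compact‑open topology is a locally compact, second‑countable group acting \emph{properly} on $X$ with compact point stabilisers, and the same holds for its closed subgroup $\Gamma$ (in part~(i), $\Gamma=\mathrm{Aut}(X,\mu)$, the isometries preserving $\mu$; in part~(ii), the given $\Gamma$). Write $S:=\mathrm{Stab}_\Gamma(o)$, a compact subgroup. I would first note that part~(i) follows from part~(ii) applied to $\Gamma=\mathrm{Aut}(X,\mu)$: for fixed $(X,\mu)$ every transport function restricts to a $\Gamma$‑invariant function on $X\times X$, so the MTP for all transport functions implies the MTP for all $\Gamma$‑invariant functions; conversely, necessity of transitivity for unimodularity of $[X,o,\mu]$ comes from Lemma~\ref{lem:happensatroot} (the orbit $\Gamma o$ is a Borel factor subset containing the root, hence $\mu$‑conull, and as the action is proper its orbits are closed, so $\Gamma o=\supp(\bs\mu)$, which we take to be $X$, i.e.\ $\mu$ proper), and once $(X,\mu)$ is transitive the set $\{[X,y,\mu]:y\in X\}$ is a single point of $\mstar$, which makes it routine to extend any $\Gamma$‑invariant measurable function on $X\times X$ to an honest transport function on $\mdoublestar$ (transport it along isometries — well defined by $\Gamma$‑invariance — and set it $0$ elsewhere). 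The degenerate case $\bs\mu=0$ is trivial and excluded.

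For part~(ii), assume first that $\Gamma$ is unimodular and transitive, so $X\cong\Gamma/S$ via $\gamma S\mapsto\gamma o$. Fix a left Haar measure $\lambda$ on $\Gamma$ and the Haar probability measure $\nu$ on the compact group $S$. Since $S$ is compact, $\Delta_\Gamma\equiv 1$ on $S$, so there is a nonzero $\Gamma$‑invariant Radon measure $m$ on $\Gamma/S$, unique up to a positive scalar, related to $\lambda$ and $\nu$ by Weil's integration formula; as $\mu$ is also a nonzero $\Gamma$‑invariant Radon measure on $X\cong\Gamma/S$, I can rescale $\lambda$ so that $m=\mu$. Then for any $\Gamma$‑invariant $g\ge 0$, using Weil's formula together with $g(o,\gamma s o)=g(o,\gamma o)$,
\[
g^+(o)=\int_\Gamma g(o,\gamma o)\,d\lambda(\gamma),\qquad g^-(o)=\int_\Gamma g(\gamma o,o)\,d\lambda(\gamma);
\]
substituting $\gamma\mapsto\gamma^{-1}$ in the second integral (so $\lambda$ becomes the right Haar measure $\Delta_\Gamma(\gamma)^{-1}\,d\lambda(\gamma)$) and applying $\gamma$ to both arguments of $g$ rewrites it as $\int_\Gamma g(o,\gamma o)\,\Delta_\Gamma(\gamma)^{-1}\,d\lambda(\gamma)$. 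If $\Gamma$ is unimodular these coincide, which is exactly the MTP~\eqref{eq:unimodular} for $[X,o,\mu]$.

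For the converse in~(ii), assume the MTP holds for all $\Gamma$‑invariant $g\ge 0$. Transitivity follows as in the first paragraph (Lemma~\ref{lem:happensatroot} plus properness), so the two identities above apply and give $\int_\Gamma g(o,\gamma o)\,d\lambda(\gamma)=\int_\Gamma g(o,\gamma o)\,\Delta_\Gamma(\gamma)^{-1}\,d\lambda(\gamma)$ for all $\Gamma$‑invariant $g\ge 0$. The functions $\gamma\mapsto g(o,\gamma o)$ are precisely the $S$‑bi‑invariant nonnegative measurable functions on $\Gamma$ (given such an $h$, set $g(\gamma_1 o,\gamma_2 o):=h(\gamma_1^{-1}\gamma_2)$); and averaging an arbitrary $h\ge 0$ over $S$ on both sides — which, using $\Delta_\Gamma\equiv 1$ on $S$, leaves both $\int_\Gamma h\,d\lambda$ and $\int_\Gamma h\,\Delta_\Gamma^{-1}\,d\lambda$ unchanged and produces an $S$‑bi‑invariant function — upgrades the identity to $\int_\Gamma h\,d\lambda=\int_\Gamma h(\gamma)\,\Delta_\Gamma(\gamma)^{-1}\,d\lambda(\gamma)$ for \emph{every} measurable $h\ge 0$. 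Hence $\Delta_\Gamma^{-1}=1$ $\lambda$‑a.e., and since $\Delta_\Gamma$ is continuous and $\lambda$ has full support, $\Delta_\Gamma\equiv 1$, i.e.\ $\Gamma$ is unimodular.

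I expect the main friction to lie not in the Haar/modular‑function computation (routine once set up) but in the two bridging steps: first, pinning down that the distinguished measure $\mu$ is a multiple of the canonical $\Gamma$‑invariant measure on $\Gamma/S$, which needs \emph{genuine} transitivity — obtained from properness of isometric actions on boundedly‑compact spaces, so the $\mu$‑conull orbit from Lemma~\ref{lem:happensatroot} is closed and equals $\supp\mu$ — rather than merely the a.e.\ statement; and second, the measurable‑selection bookkeeping needed to identify $\Gamma$‑invariant transport functions with $S$‑bi‑invariant functions on $\Gamma$ and, in part~(i), to realise $\Gamma$‑invariant functions on $X\times X$ as genuine transport functions on $\mdoublestar$.
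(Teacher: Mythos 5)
Your proof is correct, but it takes a genuinely different route from the paper's. The paper does not argue Proposition~\ref{prop:fixed1} directly: part~(i) is deduced as the transitive special case of the quasi-transitive Theorem~\ref{thm:fixed2}, whose proof avoids the homogeneous-space structure entirely by fixing a set $B$ meeting every orbit in a nonempty bounded set, unfolding $\mu$ through $\int_X f\,d\mu=\int_B\int_\Gamma h(x)f(\gamma x)\,d\gamma\,d\mu(x)$ with $h(x)=\norm{\Gamma_{x,B}}^{-1}$, and, for necessity, feeding the particular transport function $g(u,v)=\norm{\{\gamma:\gamma^{-1}u\in C,\ \gamma^{-1}v\in D\}}$ into the MTP and then substituting $\gamma\mapsto\gamma\beta$ to kill the modular function; part~(ii) is only asserted to be ``similar''. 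You instead identify $X$ with $\Gamma/S$ for the compact stabilizer $S$, pin $\mu$ down as the essentially unique $\Gamma$-invariant measure via Weil's formula, and extract unimodularity of $\Gamma$ from $\int h\,d\lambda=\int h\,\Delta_\Gamma^{-1}\,d\lambda$ after upgrading from $S$-bi-invariant to arbitrary $h$ by $S$-averaging --- the classical argument for transitive graphs, here with a genuine proof of part~(ii) and with (i) reduced to (ii). What your route buys is a self-contained and structurally transparent proof of the transitive case; what the paper's route buys is the quasi-transitive generalization of Theorem~\ref{thm:fixed2} at no extra cost. Two caveats, neither fatal. First, in the converse of (ii) you cannot invoke Lemma~\ref{lem:happensatroot} verbatim, since for a proper subgroup $\Gamma$ the orbit $\Gamma o$ is not a factor subset; but its one-line proof adapts, taking $g(u,v):=\indic{v\in F}$ for the $\Gamma$-invariant set $F:=X\setminus\Gamma o$ (a $\Gamma$-invariant function of $v$ alone), which gives $\mu(X\setminus\Gamma o)=g^+(o)=g^-(o)=0$. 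Second, you are right that transitivity on all of $X$ (rather than on $\supp\mu$) genuinely requires $\mu$ to be proper: Lemma~\ref{lem:happensatroot} plus closedness of orbits only yields $\Gamma o\supseteq\supp\mu$, and for instance $\mathbb R^2$ rooted at the origin with one-dimensional Lebesgue measure on an axis is unimodular without $(X,\mu)$ being transitive. The paper's statement implicitly carries the same properness assumption, so flagging it explicitly, as you do, is an improvement rather than a defect.
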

The first part is proved in a more general form in Theorem~ \ref{thm:fixed2}. The second part can also be proved similarly and the proof is skipped for brevity.


\begin{corollary}[Unimodular Groups]
	Every unimodular group $\Gamma$ equipped with the Haar measure and a boundedly-compact left-invariant metric $d$ (e.g., any finitely generated group equipped with a Cayley graph) is a unimodular \rmm space.
\end{corollary}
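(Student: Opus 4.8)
The plan is to verify the mass transport principle~\eqref{eq:unimodular} directly, using only the deterministic change of variables afforded by unimodularity of $\Gamma$; this is precisely the instance of Proposition~\ref{prop:fixed1}(ii) obtained by letting $\Gamma$ act on itself by left translations. First I would note that $[\Gamma, e, \mu]$ is a genuine element of $\mstar$, where $e$ is the identity and $\mu$ is the Haar measure (bi-invariant, since $\Gamma$ is unimodular): the metric $d$ is boundedly compact by hypothesis, and $\mu$ is boundedly finite because closed balls are compact and the Haar measure is finite on compact sets. As $\Gamma$ is deterministic, the expectations in~\eqref{eq:unimodular} are vacuous, so what must be shown is that for every transport function $g$,
\[
\int_\Gamma g(\Gamma, e, x, \mu)\, d\mu(x) = \int_\Gamma g(\Gamma, x, e, \mu)\, d\mu(x).
\]

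The key observation is that for each $x\in\Gamma$ the left translation $L_{x^{-1}}\colon y\mapsto x^{-1}y$ is a bijective isometry of $(\Gamma,d)$ (by left-invariance of $d$) that pushes $\mu$ forward to $\mu$ (by left-invariance of $\mu$) and carries the ordered pair of roots $(x,e)$ to $(e,x^{-1})$; hence it is an isomorphism of doubly-rooted \rmm spaces $(\Gamma, x, e, \mu)\to(\Gamma, e, x^{-1}, \mu)$. Since $g$ is invariant under isomorphisms, $g(\Gamma, x, e, \mu) = g(\Gamma, e, x^{-1}, \mu)$, so the right-hand side above equals $\int_\Gamma g(\Gamma, e, x^{-1},\mu)\,d\mu(x)$. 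Changing variables along the inversion map $\iota(x)=x^{-1}$ turns this into $\int_\Gamma g(\Gamma, e, y,\mu)\, d(\iota_*\mu)(y)$, and $\iota_*\mu$ is a right Haar measure, which equals $\mu$ exactly because $\Gamma$ is unimodular. Thus the right-hand side equals the left-hand side and unimodularity of $[\Gamma, e, \mu]$ follows. (Measurability of the integrands follows from measurability of $g$ together with the continuity of $x\mapsto[\Gamma, e, x, \mu]$ in $\mdoublestar$.)

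The main obstacle, such as it is, is purely the bookkeeping around this change of variables: one must correctly identify $\iota_*\mu$ as a right Haar measure and invoke unimodularity at precisely that point, and confirm that the relevant maps are measurable. If one instead prefers to quote Proposition~\ref{prop:fixed1}(ii) directly, the work shifts to checking that $g\mapsto(y\mapsto gy)$ realizes $\Gamma$ as a closed subgroup of the isometry-and-measure-preserving automorphism group of $(\Gamma,d,\mu)$ with its compact-open topology — using that $d$ induces the group topology and that $L_{g_n}(e)=g_n$ recovers $g_n$ from $L_{g_n}$ — after which transitivity is immediate from $L_g(e)=g$, and the conclusion follows because every transport function, being isomorphism-invariant, is in particular invariant under this diagonal left-translation action. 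Finally, the stated special case of finitely generated groups with a Cayley (word) metric is immediate: the word metric is left-invariant with finite, hence compact, closed balls, and every discrete group is unimodular (the counting measure is bi-invariant).
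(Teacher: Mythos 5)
Your proof is correct and matches the paper's intent: the paper derives the corollary from Proposition~\ref{prop:fixed1} and explicitly remarks that it "is also easy to prove \ldots by verifying the MTP directly," which is exactly the computation you carry out (left-translate to move the first root to the identity, then use that inversion pushes the left Haar measure to a right Haar measure, equal to $\mu$ by unimodularity). Your parenthetical sketch via Proposition~\ref{prop:fixed1}(\ref{thm:fixed1-ii}) is likewise the paper's other intended route, so nothing further is needed.
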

It is also easy to prove this corollary by verifying the MTP directly.

\begin{example}
	The Euclidean space $\mathbb R^n$ is a unimodular group, and hence, $[\mathbb R^d,0,\mathrm{Leb}]$ is unimodular. The hyperbolic space $\mathbb H^n$ is not a group, but Proposition~\ref{prop:fixed1} implies that it is unimodular. However, the hyperbolic plane with one distinguished ideal point (which can be regarded as a \rmm space with some additional structure; see Subsection~\ref{subsec:extra}) is not unimodular since its automorphism group is not unimodular.
\end{example}


\subsection{Quasi-Transitive Spaces}

Let $(X,\mu)$ be a deterministic measured metric space. In this subsection, we investigate when one can choose a random root $\bs o\in X$ such that $[X, \bs o, \mu]$ is unimodular. This generalizes the case of quasi-transitive graphs;\footnote{This is the only motivation for the title of this subsection.} see~\cite{BLPS} and Section~3 of~\cite{processes}. 

Let $\Gamma$ be the automorphism group of $(X,\mu)$. If $\Gamma$ contains only the identity function, then it is straightforward to see that $\mu$ should be a finite measure and the distribution of $\bs o$ should be proportional to $\mu$ (see Example~\ref{ex:finitemeasure}). However, if $\Gamma$ is nontrivial, the situation is more involved. 

The \defstyle{orbit} of $x\in X$ is the set $\Gamma x = \{\gamma x: \gamma \in \Gamma\}$. 
%
%
Boundedly-compactness of $X$ implies that $\Gamma$ is a locally-compact topological group. Let $\norm{\cdot}$ be a left-invariant Haar measure on $\Gamma$.
Let $B$ be an arbitrary open set that intersects every orbit in a nonempty bounded set. For instance, one can let $B:=\bigcup_{n\geq 1} \oball{n}{o}\setminus \Gamma \cball{n-2}{o}$ for an arbitrary $o\in X$. For $x\in X$, define 
\[
	h(x):=\norm{\Gamma_{x,B}}^{-1}, \text{ where } \Gamma_{x,B}:=\{\gamma\in \Gamma: \gamma^{-1} x\in B\}.
\]
The assumptions imply $0<h(x)<\infty$. Also, for $\alpha\in\Gamma$, one has $\Gamma_{\alpha x,B} = \alpha \Gamma_{x,B}$, and hence, $h(\alpha x) = h(x)$. 
\begin{theorem}
	\label{thm:fixed2}
	There exists a random point $\bs o\in X$ such that $[X,\bs o,\mu]$ is unimodular if and only if $\Gamma$ is a unimodular group and $\int_B hd\mu<\infty$. 
	In this case, the distribution of $[X, \bs o, \mu]$ is unique. In addition, $\bs o$ can be chosen with distribution proportional to $\restrict{(h\mu)}{B}$.
\end{theorem}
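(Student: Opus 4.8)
The plan is to reduce the problem to the MTP on the group $\Gamma$ itself, using the left-invariant Haar measure $\norm{\cdot}$ and its modular function, and then to transfer the MTP between $\Gamma$ and $X$ via the map $\gamma \mapsto \gamma o$ and the ``fundamental domain'' set $B$. Throughout, write $\Delta$ for the modular function of $\Gamma$, so that for a right translation, $\int_\Gamma \phi(\gamma\alpha)\,d\norm{\gamma} = \Delta(\alpha)^{-1}\int_\Gamma \phi(\gamma)\,d\norm{\gamma}$; thus $\Gamma$ is unimodular iff $\Delta\equiv 1$.

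First I would establish the easy implications and the candidate distribution. Suppose $\bs o\in X$ makes $[X,\bs o,\mu]$ unimodular. By Lemma~\ref{lem:happensatroot} applied to the factor subset $\bs S:=\{p\in X: [X,p,\mu]\cong[X,\bs o,\mu]\}$ (with the understanding that $[X,\bs o,\mu]$ is a.s.\ a fixed isomorphism class once we condition appropriately—one argues orbit by orbit), $(X,\mu)$ restricted to the relevant orbit must be transitive, and since an actual isomorphism class must have full $\mu$-measure, $\mu$ is supported on a single orbit $\Gamma o$ and $\bs o$ ranges over that orbit; for notational ease assume $X=\Gamma o$. Next, the necessity of unimodularity of $\Gamma$: run a transport function on $X$ that is the pullback of a transport function on $\Gamma$ witnessing non-unimodularity (essentially the computation behind Proposition~\ref{prop:fixed1}(i), which the paper says is a special case of this theorem)—here one uses that $h(\alpha x)=h(x)$ and $\Gamma_{\alpha x,B}=\alpha\Gamma_{x,B}$ to see that $h$ is the ``correct'' density making things $\Gamma$-equivariant. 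The necessity of $\int_B h\,d\mu<\infty$ will come out of the same computation, since that integral is (a constant times) the total mass of the candidate probability measure, which must be finite.

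The heart of the argument is the following change-of-variables identity. For a fixed reference point $o$ and any nonnegative measurable $F$ on $X$ invariant along the orbit-stabilizer structure, one has, writing $\mu$ as supported on $\Gamma o$ and disintegrating the Haar measure over the orbit via the stabilizer $\Gamma_o$,
\[
\int_X F(x)\,d\mu(x) \;=\; c\int_\Gamma F(\gamma o)\, h(\gamma o)\, d\norm{\gamma},
\]
where $c>0$ is a normalizing constant; the weight $h(\gamma o)=\norm{\Gamma_{\gamma o,B}}^{-1}$ is exactly what corrects the pushforward of $\norm{\cdot}$ to be proportional to $\mu$, because $\Gamma_{x,B}$ measures how much Haar mass maps $x$ into the chosen window $B$. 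I would verify this by checking that both sides define $\Gamma$-invariant measures on $X$ proportional to $\mu$ (any two $\Gamma$-invariant boundedly-finite measures on a single orbit are proportional), and pin down the proportionality by integrating the indicator of $B$: the right side becomes $c\int_\Gamma \indic{\gamma o\in B}\norm{\Gamma_{\gamma o,B}}^{-1}d\norm{\gamma}$, and a Fubini argument (integrating first over the stabilizer-coset structure) collapses this to $c\cdot\mu(B)$, confirming that $\bs o$ with law proportional to $\restrict{(h\mu)}{B}$ is the right normalization, i.e.\ $\myprob{\bs o\in A}=\frac{1}{\int_B h\,d\mu}\int_{A\cap B} h\,d\mu$.

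Granting this, sufficiency is a direct verification of~\eqref{eq:unimodular}. Given a transport function $g$ on $\mdoublestar$, set $G(x,y):=g(X,x,y,\mu)$; it is $\Gamma$-invariant, i.e.\ $G(\alpha x,\alpha y)=G(x,y)$. Then
\[
\omid{\int_X g(\bs o,x)\,d\mu(x)} \;=\; \frac{1}{\int_B h\,d\mu}\int_B h(y)\Big(\int_X G(y,x)\,d\mu(x)\Big)d\mu(y),
\]
and applying the change-of-variables identity to the inner integral and then to the outer integral, everything becomes an integral over $\Gamma\times\Gamma$ of $G(\gamma o,\gamma'o)$ against $d\norm{\gamma}\,d\norm{\gamma'}$ up to constants, where one substitution uses $\int_B h(\gamma o)d\norm{\ldots}$-type normalization to cancel the $\int_B h\,d\mu$ denominator. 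Because $G$ is $\Gamma$-invariant and $\norm{\cdot}$ is a \emph{bi-invariant} Haar measure precisely when $\Gamma$ is unimodular, the substitution $(\gamma,\gamma')\mapsto(\gamma',\gamma)$ combined with a left translation shows this double integral is symmetric in its two arguments, which is exactly the MTP. Uniqueness of the distribution follows because we have shown any admissible law of $\bs o$ must be the $\Gamma$-invariant probability measure on $\Gamma o$ proportional to $h\mu$, and on a single orbit such a measure is unique.

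The main obstacle I anticipate is making the disintegration of the Haar measure over the orbit $\Gamma o$ fully rigorous when the stabilizer $\Gamma_o$ is a nontrivial (possibly non-compact, though here boundedly-compact forces it compact since $X$ is boundedly compact and the action is proper) subgroup: one needs the quotient integral formula $\int_\Gamma \phi\,d\norm{\cdot} = \int_{\Gamma/\Gamma_o}\int_{\Gamma_o}\phi(\gamma s)\,ds\,d\nu(\gamma\Gamma_o)$ to exist, which requires compatibility of modular functions of $\Gamma$ and $\Gamma_o$. Since $\Gamma_o$ is compact (hence unimodular) this compatibility is automatic, but I would state this carefully; and I would need to confirm that $h$ as defined via $\Gamma_{x,B}$ agrees with the Radon–Nikodym factor arising from this disintegration, which is the one genuinely computational lemma in the proof.
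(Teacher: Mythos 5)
There is a genuine gap at the very first step: your reduction to the case where $\mu$ is supported on a single orbit $\Gamma o$ is invalid, and it removes exactly the content that distinguishes Theorem~\ref{thm:fixed2} from Proposition~\ref{prop:fixed1}. Two points $p,q\in X$ determine the same element of $\mstar$ if and only if $q\in\Gamma p$, so ``the isomorphism class of $[X,\bs o,\mu]$'' is a function of the random root and is not a factor subset in the sense of Definition~\ref{def:subset}; Lemma~\ref{lem:happensatroot} therefore does not force the law of $\bs o$, let alone the support of $\mu$, onto one orbit. The examples following the theorem show this concretely: for $\mu=\alpha\times\mathrm{Leb}$ on $\mathbb R^2$ the orbits of $\Gamma$ are the vertical lines and $\mu$ charges uncountably many of them, and similarly for the horoball. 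Moreover, even granting a decomposition into orbits, you cannot verify the MTP ``orbit by orbit'': a transport function $g$ may send mass between different orbits, so the relative weights that the law of $\bs o$ assigns to the various orbits (this is precisely what the density $h$ restricted to the window $B$ encodes) enter the MTP essentially. Your change-of-variables identity also degenerates: $h(\gamma o)=\norm{\gamma\Gamma_{o,B}}^{-1}=h(o)$ is constant along an orbit, so your formula only asserts that $\mu$ restricted to one orbit is proportional to the pushforward of Haar measure --- correct in the transitive case, but silent about how the orbits are weighted against each other, which is where the finiteness condition $\int_B h\,d\mu<\infty$ and the uniqueness claim actually live.

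The paper avoids the orbit decomposition altogether. Its key identity is $\int_X f\,d\mu=\int_B\int_\Gamma h(x)\,f(\gamma x)\,d\gamma\,d\mu(x)$, proved by one application of Fubini and the change of variable $y:=\gamma x$, using the $\Gamma$-invariance of $h$ and $\mu$ and the definition $h(x)=\norm{\Gamma_{x,B}}^{-1}$; note that the outer integral runs over the window $B$, which meets every orbit, not over $\Gamma$ alone. Sufficiency is then a direct computation turning both sides of the MTP into a triple integral over $B\times B\times\Gamma$ and using $g(\gamma z,x)=g(z,\gamma^{-1}x)$ together with the invariance of the Haar measure under $\gamma\mapsto\gamma^{-1}$ (this is where unimodularity of $\Gamma$ enters). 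Necessity is obtained by testing the MTP on $g(u,v):=\norm{\{\gamma:\gamma^{-1}u\in C,\ \gamma^{-1}v\in D\}}$, which yields $\omid{\int_\Gamma\identity{C}(\gamma^{-1}\bs o)\,d\gamma}=b\,\mu(C)$ for all $C$; specializing $C$ gives the uniqueness of the law and the finiteness of $\int_B h\,d\mu$, and the substitution $\gamma\mapsto\gamma\beta$ combined with $\mu(C)=\mu(\beta C)$ forces the modular function to be identically $1$. To salvage your approach you would have to replace the single-orbit disintegration by a disintegration of $\mu$ over the orbit space with $B$ serving as a measurable cross-section, which is essentially the paper's identity in disguise.
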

 
This generalizes Theorem~3.1 of~\cite{processes}. Indeed, in the case where $X$ is a graph or network and $\mu$ is the counting measure, one can choose $B$ by choosing exactly one point from every orbit. In this case, for $x\in B$, $h(x)$ is the inverse of the measure of the \textit{stabilizer} of $x$. 
One can also generalize the \textit{if} side of the claim to the case where $\Gamma$ is a closed subgroup of the automorphism group that acts transitively.


\begin{example}
	Let $X$ be a \textit{horoball} in the hyperbolic plane corresponding to an ideal point $\omega$ and let $\mu$ be the volume measure on $X$. Theorem~\ref{thm:fixed2} shows that one can choose a random point $\bs o\in X$ such that $[X, \bs o, \mu]$ is unimodular. Indeed, if $B$ is the region between any two lines passing through $\omega$, then it is enough to choose $\bs o$ uniformly in $B$ (note that $\Gamma$ is isomorphic to the automorphism group of $\mathbb R$). This can be thought of as a continuum version of the \textit{canopy tree}.
	\\
	For a simpler example, let $\alpha$ be a finite measure on $\mathbb R$ and let $\mu:=\alpha\times \mathrm{Leb}$. Then, $[\mathbb R^2, \bs o, \mu]$ is unimodular, where $\bs o$ is chosen on the $x$ axis with distribution proportional to $\alpha$.
\end{example}

\begin{proof}[Proof of Theorem~\ref{thm:fixed2}]
	For every measurable function $f:X\to\mathbb R^{\geq 0}$, one has
	\begin{equation*}
		\label{eq:thm:fixed2}
		\int_X f(y)d\mu(y) = \int_B \int_{\Gamma}h(x) f(\gamma x) d\gamma d\mu(x).
	\end{equation*}
	This can be seen by the change of variable $y:=\gamma x$ in the right hand side and noting that $h$ and $\mu$ are $\Gamma$-invariant. First, assume that $a:=\int_B h(x)d\mu(x)<\infty$ and $\bs o\sim\frac 1 a\restrict{(h\mu)}{B}$.  Therefore, for every transport function $g$,
	\begin{eqnarray*}
		\nonumber a \omid{\int g(\bs o, y) d\mu(y)} &=& \int_B h(z)\int_X g(z,y)d\mu(y)d\mu(z)\\
		\nonumber &=& \int_B h(z) \int_B \int_{\Gamma} h(x)  g(z,\gamma x) d\gamma d\mu(x)  \\
		\label{eq:thm:fixed2-1} &=& \int_{\Gamma}\int_B\int_B h(z)h(x) g(z,\gamma x) d\mu(x) d\mu(z) d\gamma.
	\end{eqnarray*}
	Similarly, 
	\begin{eqnarray*}
		\label{eq:thm:fixed2-2} a \omid{\int g(y, \bs o) d\mu(y)} &=& \int_{\Gamma}\int_B\int_B h(z)h(x) g(\gamma z,x) d\mu(x) d\mu(z) d\gamma.
	\end{eqnarray*}
	Note that $g(\gamma z, x) = g(z, \gamma^{-1} x)$. If $\Gamma$ is unimodular, then the change of variable $\gamma\to \gamma^{-1}$ preserves the Haar measure. This implies that the two above formulas are equal and the unimodularity of $[X, \bs o, \mu]$ is proved.
	
	Conversely, assume $\bs o\in X$ is a random point such that $[X, \bs o,\mu]$ is unimodular. 
	Let $\mathbb P$ and $\nu$ be the distributions of $\bs o$ and $[X,\bs o,\mu]$ respectively. For $A\subseteq X$, define $[A]:=\{[X,u,\mu]:u\in A\}$. So $\nu([A]) = \myprob{\bs o\in \Gamma A}$.
	Let $C,D\subseteq X$ be arbitrary and define $g(u,v):=\norm{\{\gamma: \gamma^{-1} u\in C, \gamma^{-1} v\in D\}}$. One has
	\begin{eqnarray*}
		\omid{\int_X g(\bs o,y)d\mu(y)} &=& \omid{\int_X\int_{\Gamma}\identity{C}(\gamma^{-1}\bs o)\identity{D}(\gamma^{-1} y)d\gamma d\mu(y)}\\
		&=& \omid{\int_X\int_{\Gamma}\identity{C}(\gamma^{-1}\bs o)\identity{D}(y)d\gamma d\mu(y)}\\
		&=& \mu(D)\ \omid{\int_{\Gamma}\identity{C}(\gamma^{-1}\bs o)d\gamma},
	\end{eqnarray*}
	where the second equality is by the change of variable $y':=\gamma^{-1}y$. Similarly,
	\begin{eqnarray*}
		\omid{\int_X g(y,\bs o)d\mu(y)} &=& \mu(C)\ \omid{\int_{\Gamma}\identity{D}(\gamma^{-1}\bs o)d\gamma}.
	\end{eqnarray*}	
	Since $g$ is $\Gamma$-invariant, the MTP implies that these two equations are equal. Since it holds for arbitrary $C$ and $D$, there exists a constant $b$ such that  
	\begin{equation}
		\label{eq:thm:fixed2-3}
		\omid{\int_{\Gamma}\identity{C}(\gamma^{-1}\bs o)d\gamma} = b \mu(C), \quad \forall C\subseteq X.
	\end{equation}
	Thus, for every measurable function $f$ on $X$, one has $\omid{\int_{\Gamma}f(\gamma^{-1}\bs o)d\gamma} = b \int_X f d\mu$.
	In particular, let $f=h\identity{A} \identity{B}$, where $A\subseteq X$ is any $\Gamma$-invariant set. Hence,
	\begin{eqnarray*}
		b \int_X h \identity{A} \identity{B} d\mu &=& \omid{\int_{\Gamma}h(\gamma^{-1}\bs o)\identity{A}(\gamma^{-1}\bs o) \identity{B}(\gamma^{-1}\bs o)d\gamma}\\
		&=& \omid{h(\bs o)\identity{A}(\bs o) \int_{\Gamma} \identity{B}(\gamma^{-1}\bs o)d\gamma}\\
		&=& \omid{\identity{A}(\bs o)} = \nu([A]).
	\end{eqnarray*}
	This proves the uniqueness of $\nu$. Also, by letting $A:=X$, one gets that $\int_B h d\mu<\infty$ and $b=1/a$. It also implies that, by choosing another point with distribution $b\restrict{(h\mu)}{B}$, $\nu$ would not change. Hence, one may choose $\bs o\sim b\restrict{(h\mu)}{B}$ from the beginning. In addition, for every $\beta\in\Gamma$, since $\mu(C) = \mu(\beta C)$, \eqref{eq:thm:fixed2-3} implies
	\begin{eqnarray*}
		\omid{\int_{\Gamma}\identity{C}(\gamma^{-1}\bs o)d\gamma} =  \omid{\int_{\Gamma}\identity{\beta C}(\gamma^{-1}\bs o)d\gamma} = m(\beta)^{-1} \omid{\int_{\Gamma}\identity{C}(\gamma^{-1}\bs o)d\gamma},
	\end{eqnarray*}	
	where the last equality is by the change of variable $\gamma':=\gamma\beta$ and this changes the Haar measure by the constant factor $m(\beta)$, where $m$ is the \textit{modular function} of $\Gamma$. The above equation implies that $m(\beta)=1$; i.e., $\Gamma$ is unimodular. So the claim is proved.
\end{proof}

\subsection{Connection with Borel Equivalence Relations}
\label{subsec:Borel}

Let $S$ be a Polish space and $E$ be an equivalence relation on $S$. It is called a \defstyle{countable Borel equivalence relation} (CBER) if it is a Borel subset of $S\times S$ and every equivalence class is countable. A probability measure $\nu$ on $S$ is \defstyle{invariant} under $R$ if for all measurable functions $f:S\times S\to\mathbb R^{\geq 0}$, one has $\int \sum_{y\in R(x)} f(x,y) d\nu(x) = \int \sum_{y\in R(x)} f(y,x)d\nu(x)$, where $R(x)$ is the equivalence class containing $x$. As discussed in Example~9.9 of~\cite{processes}, this notion is tightly connected to unimodular graphs: 
if one has a \textit{graphing} of $R$ and $\bs o\in X$ is a random point with distribution $\nu$, then the component of the graphing containing $\bs o$ is unimodular. Conversely, if $\mathbb P$ is a unimodular probability measure on $\mathcal G_*$, where $\mathcal G_*\subseteq\mstar$ is the space of connected rooted graphs, then $\mathbb P$ is invariant under the equivalence relation on $\mathcal G_*$ defined by $(G,o)\sim (G,v)$ for all $v\in G$. The last claim is an \textit{if and only if} in the case where $\mathbb P$ is supported on graphs with no nontrivial automorphism. As mentioned in~\cite{processes}, there is a substantial overlap between the two theories, but their viewpoints and motivations are different. In fact, graphings of CBERs are quite more general and they can capture some features of graph limits that unimodular graphs don't (see \textit{local-global convergence} in~\cite{HaLoSz14}). Also, some of the results for unimodular graphs are proved by the results on CBERs; e.g., ergodic decomposition and amenability. It should be noted that, due to the possibility of automorphisms, some geometry is lost when passing to graphings. This issue should be dealt with; e.g., by adding extra randomness to break the automorphisms. 

For unimodular \rmm spaces, the analogous equivalence class on $\mstar$ defined by $(X,o,\mu)\sim (X,v,\mu)$ for $v\in X$ is not countable. Hence, the theory of CBERs is not directly applicable. In Section~\ref{sec:Palm-application}, we will construct a CBER by introducing the \textit{Poisson point process} on unimodular \rmm spaces. Also, by the Palm theory developed in Section~\ref{sec:Palm}, we construct an invariant measure for the CBER. This enables us to use the results in the theory of CBERs. 

\section{Further Properties of Unimodular \rmm Spaces}
\label{sec:further}

\subsection{Allowing Extra Randomness}
\label{subsec:extra}

Lemma~\ref{lem:happensatroot} deals with factor subsets, which are functions of the underlying measured metric space. The lemma still holds if one allows extra randomness in choosing the subset suitably if unimodularity is preserved. However, this is not straightforward to formalize due to measurabiliy issues (the space of all $(X,o,\mu,S)$, where $S$ is a Borel subset of $X$, does not have a natural topology).\footnote{This is similar to the issue of defining random Borel subsets of $\mathbb R^d$, where one has to use a random field instead. The following idea is similar to the use of random fields.} 

To do this generalization, assume there exists already a random geometric structure $\bs m$ on $\bs X$ and $[\bs X, \bs o, \bs \mu, \bs m]$ is a random \rmm space equipped with some additional structure. This makes sense as soon as there is a suitable generalization of the GHP metric. For instance, $\bs m$ can be a random measure on $\bs X$, a random closed subset of $\bs X$, etc. In the previous work~\cite{Kh19generalization}, a quite general framework is presented for extending the GHP metric using the notion of functors. This can be applied to various types of additional geometric structures and sufficient criteria for Polishness are also provided (it is necessary that for every deterministic $(X,o,\mu)$, the set of additional structures on $(X,o,\mu)$ is Polish, but more assumptions are needed). Here, we assume that Polishness holds as well.

\begin{definition}
	$[\bs X, \bs o, \bs \mu, \bs m]$ is unimodular if the MTP~\eqref{eq:unimodular} holds even if $g$ depends on the additional structure $\bs m$.
\end{definition}

In fact, this means that there exists a version of the conditional distribution of $\bs m$ given $[\bs X, \bs o, \bs \mu]$ such that the conditional law does not depend on the root. This will be formalized in Section~\ref{sec:Palm} (see Definition~\ref{def:equivmeasure}, Lemma~\ref{lem:equivmeasure} and Remark~\ref{rem:disintegration}).

Once we have such an additional structure, then one can extend the notion of factor subsets by allowing it to be a function of $(\bs X, \bs \mu, \bs m)$. Then, the results of this paper like Lemmas~\ref{lem:happensatroot} and~\ref{lem:weaklimit} can be generalized to this more general setting.

Additional geometric structures are interesting in their own and various special cases have been considered in the literature separately (see~\cite{Kh19generalization} for an account of the literature and for a unification). In this work, we use this setting in various places; e.g., for developing Palm theory in Section~\ref{sec:Palm} (where $\bs m$ is a tuple of $k$ random measures on $\bs X$), for studying random walks on unimodular spaces in Subsection~\ref{subsec:randomwalk} (where $\bs m$ is a sequence of points of $\bs X$), for ergodicity (Subsection~\ref{subsec:ergodic}), for hyperfiniteness (Subsection~\ref{subsec:amenable}), and in the proofs in Section~\ref{sec:Palm-application} (where $\bs m$ is a marked measure or a closed subset).

\subsection{Random Walk}
\label{subsec:randomwalk}

In Subsection~\ref{subsec:rerooting}, re-rooting a unimodular \rmm space is defined by means of equivariant Markovian kernels. By iterating a re-rooting, one obtains a random walk on the unimodular \rmm space, as formalized below. 

Let $\mathcal M_{\infty}$ be the space of all $(X,o,\mu, (y_n)_{n\in\mathbb Z})$, where $(y_n)_n$ is a sequence in $X$ and $y_0=o$. By the discussion in Subsection~\ref{subsec:extra}, one can show that $\mathcal M_{\infty}$ can be turned into a Polish space. Consider the following shift operator on $\mathcal M_{\infty}$:
\begin{equation}
	\label{eq:shift}
	\mathcal S(X,o,\mu,(y_n)_n):=(X,y_1,\mu,(y_{n+1})_n).
\end{equation}
	
An \defstyle{initial bias} is a measurable function $b:\mstar\to\mathbb R^{\geq 0}$. Then, for every deterministic $(X,\mu)$, one obtains a measure $b\mu$ on $X$ defined by
\begin{equation}
	\label{eq:b mu}
	b\mu(A):=\int_A b(X,y,\mu)d\mu(y).
\end{equation}

Let $k$ be an equivariant Markovian kernel (Subsection~\ref{subsec:rerooting}). Given every deterministic $(X,o,\mu)$, the kernel $k^{(X,\mu)}$ defines a Markov chain $(\bs x_n)_{n\in\mathbb Z}$ on $X$ such that $\bs x_0=o$ and $\bs x_1$ has law $k_o$. 
Let $\theta_{(X,o,\mu)}$ be the law of $(X,o,\mu,(\bs x_n)_n)$ on $\mathcal M_{\infty}$. 
Let $[\bs X, \bs o, \bs \mu]$ be a random \rmm space such that $\omid{b(\bs o)}<\infty$ and let $Q$ be the distribution of $[\bs X, \bs o, \bs \mu, (\bs x_n)_n]$ biased by $b(\bs o)$. 

\begin{theorem}
	\label{thm:randomwalk}
	Assume $[\bs X, \bs o, \bs \mu]$ is a unimodular \rmm space. 
	Under the above assumptions, if for almost every sample $[X,o,\mu]$ of $[\bs X, \bs o, \bs \mu]$, $b{\mu}$ is a stationary (resp. reversible) measure for the Markovian kernel $k^{(X, \mu)}$ on $X$, then $Q$ is a stationary (resp. reversible) measure under the shift $\mathcal S$. 
\end{theorem}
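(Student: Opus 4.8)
The plan is to reduce the statement to the re-rooting invariance already proved in Theorem~\ref{thm:rerooting}, applied not to $[\bs X,\bs o,\bs\mu]$ directly but to the unimodular \rmm space obtained after the initial biasing. First I would observe that, by Example~\ref{ex:biasing}, if $[\bs X,\bs o,\bs\mu]$ is unimodular and $\omid{b(\bs o)}<\infty$ (say, after normalizing, $=1$), then the triple $[\bs X',\bs o',\bs\mu']$ obtained by changing the measure to $b\bs\mu$ and biasing the probability by $b(\bs o)$ is again a unimodular \rmm space. The hypothesis that $b\mu$ is a stationary measure for $k^{(X,\mu)}$ for a.e.\ sample is exactly the statement that $b\bs\mu'(=b\bs\mu)$ is a stationary measure for the equivariant kernel $k$ on the biased space, so Theorem~\ref{thm:rerooting} applies to $[\bs X',\bs o',\bs\mu']$ and $k$: re-rooting via $k_{\bs o'}$ preserves the distribution of $[\bs X',\bs o',\bs\mu']$.

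Next I would make the bookkeeping precise. The measure $Q$ on $\mathcal M_\infty$ is, by construction, the law of the Markov chain $(\bs x_n)_n$ started at the biased root, i.e.\ $Q = \int \theta_{(X,o,b\mu)}\,d(\text{law of }[\bs X',\bs o',\bs\mu'])$ — one must check that the chain governed by $k^{(X,\mu)}$ and the chain governed by $k^{(X,b\mu)}$ coincide, which holds because $k$ is a kernel on $X$ that does not read off $\mu$ beyond equivariance, and the statement only concerns the measure label; alternatively one simply works throughout with the pair $(X,b\mu)$. Then applying $\mathcal S$ to $(X,o,b\mu,(\bs x_n)_n)$ yields $(X,\bs x_1, b\mu, (\bs x_{n+1})_n)$; by the Markov property the conditional law of $(\bs x_{n+1})_{n\ge 0}$ given $\bs x_1$ is again $\theta_{(X,\bs x_1,b\mu)}$ restricted to the forward coordinates, and the backward coordinates $(\bs x_n)_{n<1}$ re-index consistently. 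So $\mathcal S_* Q$ is the law of the same chain but started from the re-rooted space $[\bs X',\bs x_1,\bs\mu']$; since that re-rooted space has the same law as $[\bs X',\bs o',\bs\mu']$ by the previous paragraph, and the chain law $\theta$ is an equivariant (hence measurable) function of the rooted space, we get $\mathcal S_* Q = Q$. For the reversible case, the same argument using the stronger conclusion of Theorem~\ref{thm:rerooting} (or rather: reversibility of $b\mu$ means the two-sided chain is itself stationary under the one-step shift together with its time-reversal being equidistributed) upgrades stationarity of $Q$ to $\mathcal S$-reversibility, i.e.\ $Q$ is invariant under the combined operation of $\mathcal S$ and reflecting the index; I would phrase reversibility of $Q$ as: the law of $(X,o,\mu,(y_n)_n)$ under $Q$ equals the law of $(X,o,\mu,(y_{-n})_n)$, and derive it from detailed balance of $k$ w.r.t.\ $b\mu$ by the same disintegration.

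The main obstacle I expect is not any deep probabilistic content but the careful two-sided indexing: one must define $\theta_{(X,o,\mu)}$ as a law on $\mathbb Z$-indexed paths (the two-sided chain), verify that this is consistent — i.e.\ that a stationary measure for $k$ actually makes the two-sided chain well-defined and shift-compatible — and check that the shift $\mathcal S$ in \eqref{eq:shift} interacts correctly with the re-rooting map. A secondary technical point is the measurability/equivariance of $(X,o,\mu)\mapsto\theta_{(X,o,\mu)}$ and of the whole construction of $Q$, so that "same law of the rooted space" transfers to "same law on $\mathcal M_\infty$"; this is routine given the Polishness of $\mathcal M_\infty$ asserted in Subsection~\ref{subsec:extra} and the equivariance of $k$. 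Once these are in place, the theorem follows by combining Example~\ref{ex:biasing} and Theorem~\ref{thm:rerooting} as above.
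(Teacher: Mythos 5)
Your reduction to Theorem~\ref{thm:rerooting} on the biased space $[\bs X',\bs o',\bs\mu']$ correctly establishes that the \emph{one-time marginal} of $\mathcal S_*Q$ on $\mstar$ agrees with that of $Q$: the law of the rooted space seen from $\bs x_1$ equals the law seen from $\bs x_0$. But that is strictly weaker than $\mathcal S_*Q=Q$, and the step where you pass from the marginal to the full trajectory law is exactly where the content of the theorem sits. The forward coordinates are indeed handled by the Markov property, but the claim that ``the backward coordinates $(\bs x_n)_{n<1}$ re-index consistently'' is false conditionally on a fixed realization $(X,o,\mu)$: given $\bs x_1=y$, the law of $\bs x_0$ is $\delta_o$, not the backward kernel $\hat k_y$ (the $b\mu$-adjoint of $k$) that $\theta_{(X,y,\mu)}$ would use. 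The identity you need — that the biased doubly-rooted law of $[\bs X,\bs x_1,\bs x_0,\bs\mu]$ (one step forward, then look back) coincides with that of $[\bs X,\bs o,\bs x_{-1},\bs\mu]$ (one step backward via $\hat k$) — holds only after averaging over the unimodular law, and proving it requires applying the MTP to a doubly-rooted transport function involving $k$ and $\hat k$. Theorem~\ref{thm:rerooting}, being a statement about single-rooted laws, does not supply this.

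This is also where a second difficulty arises that your plan does not address: the MTP manipulation just described goes through cleanly only when $k_{\bs o}$ is absolutely continuous w.r.t.\ $\bs\mu$ (so that one can work with a Radon--Nikodym density as in Lemma~\ref{lem:rerooting-cont}). The paper explicitly warns that the composition trick used to prove Theorem~\ref{thm:rerooting} in the singular case (replacing $k$ by $h\circ k$) does \emph{not} work here, because composing with $h$ changes the trajectory law, not just the endpoint law. Its proof instead treats the absolutely continuous case by adapting the MTP computation of Theorem~4.1 of~\cite{processes} at the level of trajectories, and handles general $k$ by approximating it with the symmetric sandwich $\tilde h_n\circ k\circ\tilde h_n$, which preserves $b\bs\mu$, preserves reversibility, is absolutely continuous, and converges to $k$. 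To repair your argument you would need both ingredients: the doubly-rooted MTP identity for the backward step, and some device (such as this approximation) to cover kernels singular with respect to $\bs\mu$.
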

This generalizes Theorem~4.1 of~\cite{processes} which is for unimodular graphs. A further generalization is provided in Subsection~\ref{subsec:Palm-randomwalk} by allowing the stationary measure be singular with respect to $\bs \mu$.

\begin{proof}
	First, assume that $k_{\bs o}$ is absolutely continuous w.r.t. $\bs \mu$ almost surely. In this case, a slight modification of the proof of Theorem~4.1 of~\cite{processes} (similarly to Lemma~\ref{lem:rerooting-cont}) can be used to prove the claim. But in the general case, the idea of the proof of Theorem~\ref{thm:rerooting} (composing $k$ with a continuous kernel) does not work. 
	In this case, it is enough to approximate $k$ by a sequence of equivariant kernels that have the same properties as $k$ (stationarity or reversibility) plus absolute continuity. For this, let $\tilde h_n$ be a sequence of kernels converging to the trivial kernel given by Remark~\ref{rem:balacingkernel}. Note that the kernel $\tilde h_n \circ k \circ \tilde h_n$ converges to $k$ as $n\to\infty$, preserves $b\bs\mu$ if $k$ does, is reversible if $k$ is reversible (since $h_n$ is symmetric) and has the absolute continuity property. This proves the claim.
%
%
%
\end{proof}

\begin{theorem}[Characterization of Unimodularity]
	\label{thm:randomwalk-characterization}
	Let $h$ be a fixed symmetric transport function as in Lemma~\ref{lem:balancingkernel} ($h>0$ and $h^+(\cdot)=h^-(\cdot)=1$). Let $(\bs x_n)_n$ be the random walk given by the kernel $k$ defined by $k_o:=h(o,\cdot)\mu$. Then, a random \rmm space $[\bs X, \bs o, \bs \mu]$ is unimodular if and only if the law of $[\bs X, \bs o, \bs \mu, (\bs x_n)_n]$ (defined above) is stationary and reversible under the shift $\mathcal S$. The condition $h>0$ can also be relaxed to $\sum_n h^{(n)}>0$, where $h^{(n)}$ is given by the $n$-fold composition of the kernel with itself.
\end{theorem}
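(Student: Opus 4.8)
The plan is to prove the two implications separately; the forward one is essentially Theorem~\ref{thm:randomwalk}, and the converse carries the new content.

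\emph{Unimodular implies stationary and reversible.} Since $h$ is symmetric, $\bs\mu$ is almost surely a reversible (hence stationary) measure for the kernel $k_o=h(o,\cdot)\bs\mu$: for measurable $A,B\subseteq\bs X$, both $\int_A k_x(B)\,d\bs\mu(x)$ and $\int_B k_x(A)\,d\bs\mu(x)$ equal $\int\int_{A\times B} h(x,y)\,d\bs\mu(x)\,d\bs\mu(y)$ by Fubini and $h(x,y)=h(y,x)$ (take $B=\bs X$ for stationarity). So Theorem~\ref{thm:randomwalk} applied with $b\equiv 1$ (so $b\bs\mu=\bs\mu$ and $\omid{b(\bs o)}=1<\infty$) gives that the law of $[\bs X,\bs o,\bs\mu,(\bs x_n)_n]$ is reversible, and a fortiori stationary, under $\mathcal S$.

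\emph{Stationary and reversible implies unimodular.} Assume the law of $[\bs X,\bs o,\bs\mu,(\bs x_n)_n]$ is stationary and reversible under $\mathcal S$. I would first derive, for every bounded measurable $\Phi:\mdoublestar\to\mathbb R^{\geq 0}$ (and then all measurable $\Phi\geq 0$ by monotone convergence), the one-step symmetry
\[
	\omid{\Phi(\bs X,\bs x_0,\bs x_1,\bs\mu)} = \omid{\Phi(\bs X,\bs x_1,\bs x_0,\bs\mu)}.
\]
Indeed, reversibility (invariance of the law under $(y_n)_n\mapsto(y_{-n})_n$ with the root kept at $y_0$), applied to the functional $(y_n)_n\mapsto\Phi(X,y_0,y_1,\mu)$, gives $\omid{\Phi(\bs x_0,\bs x_1)}=\omid{\Phi(\bs x_0,\bs x_{-1})}$, while stationarity, applied through $\mathcal S^{-1}$ to $(y_n)_n\mapsto\Phi(X,y_1,y_0,\mu)$, gives $\omid{\Phi(\bs x_1,\bs x_0)}=\omid{\Phi(\bs x_0,\bs x_{-1})}$; combining the two yields the display. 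Conditioning on $[\bs X,\bs o,\bs\mu]$ and using that, by construction of the path, $\bs x_0=\bs o$ and the conditional law of $\bs x_1$ is $h(\bs o,\cdot)\bs\mu$, this becomes
\[
	\omid{\int_{\bs X}\Phi(\bs o,y)\,h(\bs o,y)\,d\bs\mu(y)} = \omid{\int_{\bs X}\Phi(y,\bs o)\,h(\bs o,y)\,d\bs\mu(y)}.
\]
Given an arbitrary transport function $g$, I would then substitute $\Phi:=g/h$ (well defined as $h>0$): the left side collapses to $\omid{\int g(\bs o,y)\,d\bs\mu(y)}=\omid{g^+(\bs o)}$ and, using $h(y,\bs o)=h(\bs o,y)$, the right side to $\omid{\int g(y,\bs o)\,d\bs\mu(y)}=\omid{g^-(\bs o)}$, which is the MTP~\eqref{eq:unimodular}.

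\emph{The relaxed hypothesis.} For $\sum_n h^{(n)}>0$ in place of $h>0$, the same argument runs with $h$ replaced by $H:=\sum_{n\geq 1}2^{-n}h^{(n)}$: each $h^{(n)}$ is symmetric (induction on $n$, as $h$ is symmetric and commutes with $h^{(n-1)}$), conditionally on $[\bs X,\bs o,\bs\mu]$ the law of $\bs x_n$ is $h^{(n)}(\bs o,\cdot)\bs\mu$, and the $n$-step analogue of the one-step symmetry follows from stationarity and reversibility via $\mathcal S^{-n}$. Summing against the weights $2^{-n}$ gives the identity with $H$ in place of $h$, and since $\{H(\bs o,y)>0\}=\{\sum_n h^{(n)}(\bs o,y)>0\}$, substituting $\Phi:=g/H$ on that set recovers the full MTP. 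The main obstacle I anticipate --- really the only delicate point --- is the bookkeeping of how $\mathcal S^{\pm n}$ and the reversal act on the coordinates $y_0,y_{\pm n}$, together with justifying the conditioning step (measurability of the regular conditional law $\theta_{(X,o,\mu)}$ of the path given $[\bs X,\bs o,\bs\mu]$) and the routine truncation needed to pass from bounded $\Phi$ to the possibly unbounded $g/h$.
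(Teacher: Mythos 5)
Your proof is correct and follows essentially the same route as the paper: the forward direction is delegated to Theorem~\ref{thm:randomwalk}, and the converse divides an arbitrary transport function $g$ by $h$ and invokes the distributional symmetry of $(\bs x_0,\bs x_1)$ coming from stationarity plus reversibility (your explicit derivation of that symmetry via $\mathcal S^{-1}$ and time-reversal is a welcome elaboration of what the paper leaves implicit). The only cosmetic difference is in the relaxed hypothesis: the paper partitions $g$ according to the first $n$ with $h^{(n)}>0$ and reruns the one-step argument with each $h^{(n)}$, whereas you aggregate the $n$-step kernels into a single strictly positive $H=\sum_n 2^{-n}h^{(n)}$ and divide once --- both are valid instances of the same idea.
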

\begin{remark}
	One can similarly extend this theorem to have an arbitrary initial bias $b$ by having $h^+=h^-=b$. 
	This generalizes the fact that for random rooted graphs, unimodularity is equivalent to stationarity and reversibility of the simple random walk after biasing by the degree of the root (see Section~4 of~\cite{processes}). The benefit of the above theorem is that it characterizes all unimodular \rmm spaces, not those with the moment condition $\omid{b(\bs o)}<\infty$. This seems to be new even for graphs (see also the proof of Lemma~4 of~\cite{shift-coupling}).
\end{remark}

\begin{proof}[Proof of Theorem~\ref{thm:randomwalk-characterization}]
	The only if part is implied by Theorem~\ref{thm:randomwalk}. For the if part, note that if $h>0$, then the two sides of~\eqref{eq:unimodular} are $\omid{g'(\bs o, \bs x_1)}$ and $\omid{g'(\bs x_1, \bs o)}$, where $g':=g/h$. So, since $[\bs X, \bs o, \bs x_1,\bs\mu]$ has the same distribution as $[\bs X, \bs x_1,\bs o, \bs \mu]$ by reversibility, the MTP holds; i.e., $[\bs X, \bs o, \bs \mu]$ is unimodular.
	
	Under the weaker condition $\sum_n h^{(n)}>0$, let $A_n$ be the event $h^{(n)}>0$ and write $g=\sum_n g_n$, where $g_n$ is the restriction of $g$ to $A_n\setminus (\cup_{i=1}^{n-1} A_i)$. Each $g^{(n)}$ satisfies the MTP by the first part of the proof. Hence, $g$ also satisfies the MTP.
\end{proof}

\begin{proposition}[Speed Exists]
	Under the setting of Theorem~\ref{thm:randomwalk}, the \textit{speed} of the random walk $(\bs x_n)_n$, defined by $\lim_n d(\bs o, \bs x_n)/n$, exists.
\end{proposition}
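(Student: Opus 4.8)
The plan is to realize this as an application of Kingman's subadditive ergodic theorem to the measure-preserving system furnished by Theorem~\ref{thm:randomwalk}. Under the hypotheses there, $b\bs\mu$ is a.s.\ a stationary measure for $k^{(\bs X,\bs\mu)}$, so Theorem~\ref{thm:randomwalk} gives that the law $Q$ of $[\bs X,\bs o,\bs\mu,(\bs x_n)_n]$ (biased by $b(\bs o)$, which is legitimate since $\omid{b(\bs o)}<\infty$) is invariant under the shift $\mathcal S$ of~\eqref{eq:shift}. Thus $(\mathcal M_\infty,Q,\mathcal S)$ is a measure-preserving system, and it suffices to show that $d(\bs x_0,\bs x_n)/n$ converges $Q$-a.s.

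Writing $\bs x_n$ for the coordinate maps on $\mathcal M_\infty$ and $a_n:=d(\bs x_0,\bs x_n)$, one has $\bs x_n\circ\mathcal S^m=\bs x_{n+m}$ by~\eqref{eq:shift}, so the triangle inequality gives
\[
	a_{n+m}=d(\bs x_0,\bs x_{n+m})\le d(\bs x_0,\bs x_n)+d(\bs x_n,\bs x_{n+m})=a_n+a_m\circ\mathcal S^n .
\]
Hence $(a_n)_n$ is a nonnegative subadditive sequence over $(\mathcal M_\infty,Q,\mathcal S)$, and Kingman's subadditive ergodic theorem yields that $a_n/n$ converges $Q$-a.s.\ to an $\mathcal S$-invariant random variable $v$, which is the desired speed. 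If $d(\bs x_0,\bs x_1)\in L^1(Q)$, equivalently $\omid{b(\bs o)\int_{\bs X}d(\bs o,x)\,dk_{\bs o}(x)}<\infty$, then $v$ is finite, the convergence holds in $L^1(Q)$ as well, and $\omidPalm{Q}{v}=\inf_n\frac1n\omidPalm{Q}{a_n}$; without a moment assumption one invokes the extension of the subadditive ergodic theorem to nonnegative subadditive sequences to obtain an a.s.\ limit valued in $[0,\infty]$. Finally, since $Q$ is absolutely continuous with respect to the un-biased law of $[\bs X,\bs o,\bs\mu,(\bs x_n)_n]$, the limit also exists a.s.\ for the random walk started from the root whenever $b(\bs o)>0$ a.s.\ (as is the case, e.g., for the kernel of Theorem~\ref{thm:randomwalk-characterization}).

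The genuinely substantive input is the stationarity of $Q$, which is exactly the content of Theorem~\ref{thm:randomwalk}; once that is in hand, subadditivity is just the triangle inequality and the rest is the ergodic theorem. The two points needing care are therefore (i) the integrability of the first step $d(\bs x_0,\bs x_1)$ under $Q$ --- not automatic here because equivariant kernels may take long jumps --- which is what separates a finite speed (with $L^1$ convergence) from a merely $[0,\infty]$-valued a.s.\ limit; and (ii) the passage from $Q$-a.s.\ to almost-sure statements for the original walk, which requires $b(\bs o)>0$ a.s. No ergodicity is needed, since $v$ is a priori only a shift-invariant random variable; combining with the ergodic decomposition of Subsection~\ref{subsec:ergodicDecomposition}, one may further reduce to the ergodic case if one wants $v$ to be (conditionally) constant.
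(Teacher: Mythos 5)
Your proof is correct and is essentially the paper's own argument: the paper simply invokes the proof of Proposition~4.8 of~\cite{processes}, i.e.\ applies Kingman's subadditive ergodic theorem to $a_n=d(\bs x_0,\bs x_n)$ over the shift-invariant measure $Q$ supplied by Theorem~\ref{thm:randomwalk}, exactly as you do. Your additional remarks on the integrability of the first step (needed here since equivariant kernels, unlike the simple random walk on a graph, may take long jumps) and on passing from $Q$ back to the unbiased law are sensible refinements rather than deviations.
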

This is a generalization of Proposition~4.8 of~\cite{processes} and can be proved by the same argument using Kingman's subadditive ergodic theorem. In fact, Theorem~\ref{thm:ergodicity-events} implies that the speed does not depend on $\bs o$ and is measurable with respect to the invariant sigma-field (up to modifying a null event).

\subsection{Ergodicity}
\label{subsec:ergodic}

An even $A\subseteq\mstar$ is \defstyle{invariant} if it does not depend on the root; i.e., if $[X,o,\mu]\in A$, then $\forall y\in X: [X,y,\mu]\in A$. Let $I$ be the sigma-field of invariant events. A unimodular \rmm space $[\bs X, \bs o, \bs \mu]$ (or a unimodular probability measure on $\mstar$) is  \defstyle{ergodic} if $\myprob{A}\in\{0,1\}$ for all $A\in I$.

One can express ergodicity in terms of ergodicity of random walks as follows. Let $h$ be a fixed symmetric transport function such that $h^+(\cdot)=h^-(\cdot)=1$ and $h>0$ (given by Lemma~\ref{lem:balancingkernel}). Let $(\bs x_n)_{n\in\mathbb Z}$ be the resulting two-sided random walk as in Theorem~\ref{thm:randomwalk-characterization} and let $Q$ be the distribution of $[\bs X, \bs o, \bs \mu, (\bs x_n)_n]$. Let $\Gamma$ be the group of automorphisms of the time axis $\mathbb Z$; i.e., transforms of the form $t\mapsto t+t_0$ or $t\mapsto-t+t_0$. By Theorem~\ref{thm:randomwalk-characterization}, $[\bs X, \bs o, \bs \mu]$ is unimodular if and only if $Q$ is invariant under the action of $\Gamma$ on $\mathcal M_{\infty}$ defined by shifts and time-reversion (note that considering time-reversions is important at this point). Here, we prove:

\begin{theorem}
	\label{thm:ergodicity-equivalence}
	$[\bs X, \bs o, \bs \mu]$ is ergodic if and only if the law of $[\bs X, \bs o, \bs \mu, (\bs x_n)_n]$ is ergodic under the action of $\Gamma$.
\end{theorem}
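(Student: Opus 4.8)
The plan is to relate ergodicity of $[\bs X, \bs o, \bs \mu]$ (as a unimodular \rmm space, i.e. triviality of the invariant sigma-field $I$ on $\mstar$) to ergodicity of the enriched object $[\bs X, \bs o, \bs \mu, (\bs x_n)_n]$ under the $\Gamma$-action on $\mathcal M_\infty$ by shifts and time-reversal. The key structural fact, already established via Theorem~\ref{thm:randomwalk-characterization}, is that the map $[\bs X, \bs o, \bs \mu] \mapsto \operatorname{law}\bigl([\bs X, \bs o, \bs \mu, (\bs x_n)_n]\bigr)$ sets up a correspondence between unimodular probability measures on $\mstar$ and $\Gamma$-invariant measures on $\mathcal M_\infty$ of the appropriate form; what I want is that this correspondence respects the two notions of ergodicity. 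The natural route is through the ergodic decomposition on both sides, or more elementarily, through a bijection (mod null sets) between the two invariant sigma-fields.

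The first step is the easy direction: if $[\bs X, \bs o, \bs \mu, (\bs x_n)_n]$ is $\Gamma$-ergodic, then $[\bs X, \bs o, \bs \mu]$ is ergodic. Indeed, if $A \subseteq \mstar$ is invariant (does not depend on the root), then the event $\{[\bs X, \bs o, \bs \mu] \in A\}$ pulls back to the event $\{[\bs X, \bs x_n, \bs \mu] \in A \text{ for all } n\}$ (equal a.s.\ to $\{[\bs X, \bs o, \bs \mu]\in A\}$ since $A$ is root-independent and $\bs x_0 = \bs o$), which is manifestly $\Gamma$-invariant on $\mathcal M_\infty$; so it has $Q$-probability in $\{0,1\}$, hence $\mathbb P$-probability in $\{0,1\}$. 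For the converse, suppose $[\bs X, \bs o, \bs \mu]$ is ergodic and let $B \subseteq \mathcal M_\infty$ be $\Gamma$-invariant. The point is that a $\Gamma$-invariant event of the walk must essentially be a root-independent event of $[\bs X, \bs \mu]$: informally, shift-invariance forces $\indic B$ not to depend on the starting position along the trajectory, and since $h > 0$ makes the walk "irreducible" on $\supp(\bs\mu) = \bs X$ (Corollary~\ref{cor:support}), the event cannot distinguish different roots either. I would make this precise by taking the conditional expectation $\phi := Q\bigl[B \mid [\bs X, \bs o, \bs \mu]\bigr]$, a measurable function on $\mstar$, and showing that $\Gamma$-invariance of $B$ together with stationarity and reversibility of $Q$ forces $\phi$ to be (a.s.) constant along the support of the walk, i.e.\ $\phi(\bs X, \bs x_1, \bs \mu) = \phi(\bs X, \bs o, \bs \mu)$ a.s., and then using Lemma~\ref{lem:happensatroot} (``everything happens at the root'') with the walk's ability to reach every point (because $h>0$ and $\mstar$ is proper after restricting to $\supp\bs\mu$) to conclude $\phi$ is invariant in the sense of $I$, hence constant by ergodicity, hence $Q(B)\in\{0,1\}$.

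A cleaner packaging, which I expect to be the main obstacle to get exactly right, is to identify the invariant sigma-field $I_\Gamma$ of the $\Gamma$-action on $(\mathcal M_\infty, Q)$ with (the pullback of) $I$ on $(\mstar, \mathbb P)$ up to $Q$-null sets; ergodicity of one is then literally ergodicity of the other. One inclusion is the easy direction above. For the other, given $B \in I_\Gamma$, the shift-invariance already gives that $\indic B$ is a function of the ``two-sided tail'' of the walk; the real work is showing this tail event is measurable with respect to $[\bs X, \bs \mu]$ alone (not the trajectory) and root-independent. Here I would invoke reversibility: since $Q$ is reversible under $\mathcal S$, the walk is a stationary reversible Markov chain on $(\bs X, b\bs\mu)$ with $b\equiv 1$, so its tail sigma-field coincides with the invariant sigma-field of the chain, and a standard argument (coupling two walks from $\bs o$ and from a $\mu$-typical point via the mass-transport / re-rooting invariance of Theorem~\ref{thm:rerooting}) shows any such invariant function is constant on $\supp(\bs\mu)$ — which, again by ``everything happens at the root,'' means it descends to a genuinely root-independent function on $\mstar$. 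The most delicate point is the measurability bookkeeping on $\mathcal M_\infty$ and ensuring the exceptional null sets match up under the correspondence of Theorem~\ref{thm:randomwalk-characterization}; I would handle this by working with the conditional expectation $\phi$ throughout rather than with the raw event $B$, which keeps everything on the Polish space $\mstar$ where Lemma~\ref{lem:happensatroot} applies directly.
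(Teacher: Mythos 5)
Your proposal is correct and follows essentially the same route as the paper: the paper likewise reduces the statement to identifying the $\Gamma$-invariant sigma-field on $\mathcal M_{\infty}$ with the pullback of $I$ modulo null sets (this is exactly Theorem~\ref{thm:ergodicity-events}), whose proof it attributes to the argument for Theorem~4.6 of~\cite{processes} and skips. Your ``cleaner packaging'' paragraph is that reduction, and your conditional-expectation/harmonic-function sketch (using $h>0$ and Lemma~\ref{lem:happensatroot} to make the limit function root-independent) is the standard argument behind the cited result.
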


The latter means that every $\Gamma$-invariant event has probability zero or one. This result is straightforwardly implied by the following theorem.

\begin{theorem}
	\label{thm:ergodicity-events}
	Under the above setting, for every shift-invariant event $B\subseteq\mathcal M_{\infty}$, there exists an invariant event $A\in I$ such that $B\Delta \{(X,o,\mu,(x_n)_n): (X,o,\mu)\in A\}$ has zero probability for every unimodular \rmm space equipped with the random walk $(\bs x_n)_n$.
\end{theorem}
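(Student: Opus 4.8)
The plan is to identify the shift-invariant $\sigma$-field of $(\mathcal M_{\infty},Q,\mathcal S)$, modulo $Q$-null sets, with the pullback of the invariant $\sigma$-field $I$ on $\mstar$; one inclusion being immediate, I fix a shift-invariant $B\subseteq\mathcal M_{\infty}$ and must produce $A\in I$ with $Q(B\triangle\widetilde A)=0$, where $\widetilde A:=\{(X,o,\mu,(x_n)_n):[X,o,\mu]\in A\}$, with $A$ the same for all unimodular spaces (the degenerate case $\bs\mu=0$, where the walk is undefined, being vacuous). Keep $h$ and the kernel $k_o=h(o,\cdot)\mu$ fixed as in the statement, and recall from Theorem~\ref{thm:randomwalk-characterization} that $Q$ is stationary and reversible under $\mathcal S$. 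For $[X,o,\mu]\in\mstar$ let $\theta_{(X,o,\mu)}$ be the law on $\mathcal M_{\infty}$ of the two-sided $h$-walk from $o$, and put $\psi(X,o,\mu):=\theta_{(X,o,\mu)}(B)\in[0,1]$; this is a measurable function on $\mstar$ depending only on $B$ and $h$. The goal reduces to showing that $Q$-a.s. one has $\mathbf 1_B=\psi(\bs X,\bs o,\bs\mu)\in\{0,1\}$ and $\psi(\bs X,\cdot,\bs\mu)=\psi(\bs X,\bs o,\bs\mu)$ $\bs\mu$-a.e.; then $A:=\{[X,o,\mu]\in\mstar:\psi(X,y,\mu)=1\text{ for }\mu\text{-a.e. }y\in X\}$ lies in $I$ (the condition does not involve $o$) and does the job.

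First I would show $y\mapsto\psi(X,y,\mu)$ is a bounded $h$-harmonic function on each $(X,\mu)$. Using $\mathbf 1_B=\mathbf 1_B\circ\mathcal S$, conditioning on $\bs x_1$, and using the Markov property together with the fact that symmetry of $h$ makes $\mu$ reversible (so the time-reversed kernel is $k$ itself), one checks that given $\bs x_1=v$ the shifted trajectory has law $\theta_{(X,v,\mu)}$; hence $\psi(X,o,\mu)=\int\psi(X,v,\mu)\,h(o,v)\,d\mu(v)$. Next I would prove that $\mathbf 1_B$ is a forward-tail event. Set $\mathcal G_n:=\sigma(\bs X,\bs\mu,\bs x_k:k\ge n)$, a decreasing family whose union as $n\to-\infty$ generates the whole $\sigma$-field of $\mathcal M_{\infty}$. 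Since $\mathcal S^{-n}\mathcal G_0=\mathcal G_n$ and $\mathbf 1_B$ is $\mathcal S$-invariant, $N_n:=\omidCond{\mathbf 1_B}{\mathcal G_n}=N_0\circ\mathcal S^{-n}$ is a stationary sequence; it is also a reverse martingale, so $\omid{(N_{n-1}-N_n)^2}=\omid{N_{n-1}^2}-\omid{N_n^2}=0$ by stationarity, giving $N_n=N_0$ a.s. for all $n$. Since $N_n\to\mathbf 1_B$ as $n\to-\infty$ by L\'evy's theorem, $\mathbf 1_B=N_n$ for every $n$, i.e.\ $\mathbf 1_B$ is $\bigcap_n\mathcal G_n$-measurable, hence a function of $(\bs X,\bs\mu,(\bs x_k)_{k\ge0})$.

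Combining these: by the Markov property and the previous step, $\omidCond{\mathbf 1_B}{\mathcal F_n^+}=\omidCond{\mathbf 1_B}{\bs X,\bs\mu,\bs x_n}=\psi(\bs X,\bs x_n,\bs\mu)$, where $\mathcal F_n^+:=\sigma(\bs X,\bs\mu,\bs x_0,\dots,\bs x_n)$; this is a martingale in $n$, and a stationary sequence because the environment-seen-from-the-particle process $([\bs X,\bs x_n,\bs\mu])_n$ is stationary under $Q$. A stationary martingale is a.s.\ constant ($\omid{(M_{n+1}-M_n)^2}=\omid{M_{n+1}^2}-\omid{M_n^2}=0$), so $\psi(\bs X,\bs x_n,\bs\mu)=\psi(\bs X,\bs o,\bs\mu)$ for all $n$; letting $n\to\infty$, L\'evy's theorem gives $\omidCond{\mathbf 1_B}{\mathcal F_n^+}\to\mathbf 1_B$, hence $\mathbf 1_B=\psi(\bs X,\bs o,\bs\mu)$ $Q$-a.s., and since $\mathbf 1_B\in\{0,1\}$ this value is $0$ or $1$. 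Finally, harmonicity gives $\psi(\bs X,\bs o,\bs\mu)=\int\psi(\bs X,y,\bs\mu)\,h(\bs o,y)\,d\bs\mu(y)$ with $h(\bs o,\cdot)>0$, $\int h(\bs o,\cdot)\,d\bs\mu=1$, and $0\le\psi\le1$; since the left side is $0$ or $1$ this forces $\psi(\bs X,y,\bs\mu)=\psi(\bs X,\bs o,\bs\mu)$ for $\bs\mu$-a.e.\ $y$. On $\{\mathbf 1_B=1\}$ this gives $[\bs X,\bs o,\bs\mu]\in A$, and on $\{\mathbf 1_B=0\}$, using $\bs\mu\neq0$ a.s., $[\bs X,\bs o,\bs\mu]\notin A$; thus $\mathbf 1_B=\mathbf 1_{\widetilde A}$ $Q$-a.s., which is the claim.

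The hard part is the second half of the second paragraph: a two-sided shift-invariant event is a priori neither forward- nor backward-measurable, and everything rests on combining the reverse-martingale identity with stationarity and L\'evy's theorem to pin $\mathbf 1_B$ into the forward tail; after that the Markov property and a second stationary-martingale argument finish quickly. The remaining points are routine in the framework of Subsection~\ref{subsec:extra} (here the extra structure $\bs m=(\bs x_n)_{n\in\mathbb Z}$ is a two-sided sequence of points, and $\mathcal M_{\infty}$ is Polish): the reversibility facts used for harmonicity, and the measurability of $\psi$ on $\mstar$ and of the condition ``$\psi(X,y,\mu)=1$ for $\mu$-a.e.\ $y$'' defining $A$, the latter because $\psi$ is a measurable factor function integrated against $\mu$.
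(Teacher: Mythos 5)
Your overall architecture is correct and is essentially the classical argument behind Theorem~4.6 of~\cite{processes}, which is exactly what the paper invokes here (it skips the proof and says the same proof works); in particular your key steps --- the reverse-martingale-plus-stationarity reduction of a two-sided shift-invariant event to the forward tail, the stationary-martingale argument showing $\omidCond{\indic{B}}{\mathcal F_n^+}=\psi(\bs X,\bs x_n,\bs\mu)$ is a.s.\ constant and equal to $\indic{B}$, and the positivity of $h$ forcing $\psi$ to be $\bs\mu$-a.e.\ constant --- are the right ones, and your definition of $A$ correctly makes it depend only on $B$ and $h$, which is the ``additional point'' the theorem asks for.

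One step is misjustified as written. In your derivation of harmonicity you assert that, under $\theta_{(X,o,\mu)}$ and conditionally on $\bs x_1=v$, the shifted two-sided trajectory has law $\theta_{(X,v,\mu)}$. This is false: the shifted trajectory has its coordinate at time $-1$ pinned to $o$, whereas under $\theta_{(X,v,\mu)}$ that coordinate is random with law $h(v,\cdot)\mu$. (Writing out finite-dimensional densities, $\mathcal S_*\theta_{(X,o,\mu)}=\int h(o,v)\Lambda_{o,v}\,d\mu(v)$ while $\int h(o,v)\,\theta_{(X,v,\mu)}\,d\mu(v)=\int\int h(o,v)h(v,u)\Lambda_{u,v}\,d\mu(u)\,d\mu(v)$, where $\Lambda_{u,v}$ pins times $-1$ and $0$; these differ for events depending on the past.) Consequently, harmonicity of $\psi(X,\cdot,\mu):=\theta_{(X,\cdot,\mu)}(B)$ does not follow from shift-invariance of $B$ alone at the quenched level. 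The fix is already contained in your own proof: first carry out the reverse-martingale step to replace $\indic B$ by a forward-measurable representative $F(\bs X,\bs\mu,(\bs x_k)_{k\ge 0})$ ($Q$-a.s., hence, by Lemma~\ref{lem:happensatroot}, $\theta_{(X,y,\mu)}$-a.s.\ for $\bs\mu$-a.e.\ $y$), and only then derive the a.e.\ harmonicity identity --- most cleanly as $M_0=\omidCond{M_1}{\mathcal F_0^+}$, i.e.\ $\psi(\bs X,\bs o,\bs\mu)=\int\psi(\bs X,v,\bs\mu)h(\bs o,v)\,d\bs\mu(v)$ a.s., transferred to $\bs\mu$-a.e.\ point by Lemma~\ref{lem:happensatroot}. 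With the order of these two steps corrected (and harmonicity only claimed almost everywhere), the rest of your argument goes through as stated.
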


This extends Theorem~4.6 of~\cite{processes} with the additional point that $A$ does not depend on the choice of the unimodular probability measure (this is important in the next section). The same proof works here and is skipped for brevity. This results hold for an arbitrary random walk preserving an initial bias as well (as in Theorem~\ref{thm:randomwalk}). Also, one can relax the condition $h>0$ to $\sum_n h^{(n)}>0$ as in Theorem~\ref{thm:randomwalk-characterization}.

\begin{corollary}
	\label{cor:average}
	Let $[\bs X, \bs o, \bs \mu]$ and $(\bs x_n)_n$ be as above. Then, for every $f:\mstar\to\mathbb R$ such that $\omid{\norm{f(\bs o)}}<\infty$, $\mathrm{ave}(f):=\lim_n \frac 1 {2n}\sum_{i=-n}^n f(\bs x_n)$ exists and $\omid{\mathrm{ave}(f)}=\omid{f(\bs o)}$. In particular, if $[\bs X, \bs o, \bs \mu]$ is ergodic, then $\mathrm{ave}(f)=\omid{f(\bs o)}$ a.s.
\end{corollary}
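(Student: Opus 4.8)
The plan is to derive everything from Theorem~\ref{thm:randomwalk-characterization} (stationarity and reversibility of $Q$ under the shift $\mathcal S$) together with Birkhoff's pointwise ergodic theorem applied to the two-sided random walk, and then to use Theorem~\ref{thm:ergodicity-events} to identify the limit with a function measurable with respect to the invariant $\sigma$-field $I$. First I would observe that, by Theorem~\ref{thm:randomwalk-characterization}, the distribution $Q$ of $[\bs X, \bs o, \bs\mu,(\bs x_n)_n]$ on $\mathcal M_\infty$ is stationary under the shift $\mathcal S$ (indeed invariant under the full group $\Gamma$ of time-automorphisms). Define the observable $F:\mathcal M_\infty\to\mathbb R$ by $F(X,o,\mu,(y_n)_n):=f(X,o,\mu)$; then $F(\mathcal S^k(X,o,\mu,(y_n)_n))=f(X,y_k,\mu)$, so that $\frac{1}{2n}\sum_{i=-n}^{n}f(\bs x_i)$ is exactly the (two-sided) Birkhoff average of $F$ along the orbit under $\mathcal S$. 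Since $\omid{\norm{F}}=\omid{\norm{f(\bs o)}}<\infty$, Birkhoff's ergodic theorem (in its two-sided form, using that $\mathcal S$ is measure-preserving and invertible under $Q$) gives that $\mathrm{ave}(f):=\lim_n\frac{1}{2n}\sum_{i=-n}^n f(\bs x_i)$ exists $Q$-a.s.\ and equals $\omidCond{F}{\mathcal I_{\mathcal S}}$, where $\mathcal I_{\mathcal S}$ is the $\sigma$-field of $\mathcal S$-invariant events in $\mathcal M_\infty$; in particular $\omid{\mathrm{ave}(f)}=\omid{F}=\omid{f(\bs o)}$, which is the general statement.

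For the ergodic case, I would invoke Theorem~\ref{thm:ergodicity-events}: the event on which $\mathrm{ave}(f)$ exists and the value $\mathrm{ave}(f)$ itself are (up to $Q$-null sets) $\mathcal S$-invariant, hence by that theorem they agree a.s.\ with events and a function coming from $\mstar$ measurable w.r.t.\ the invariant $\sigma$-field $I$; more precisely $\{\mathrm{ave}(f)\le t\}$ differs by a $Q$-null set from $\{(X,o,\mu,(x_n)_n):(X,o,\mu)\in A_t\}$ for some $A_t\in I$, so $\mathrm{ave}(f)=\phi(\bs X,\bs o,\bs\mu)$ a.s.\ for some $I$-measurable $\phi$. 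When $[\bs X,\bs o,\bs\mu]$ is ergodic, every $A\in I$ has probability $0$ or $1$, so $\phi$ is a.s.\ constant, and the constant must be its expectation $\omid{\mathrm{ave}(f)}=\omid{f(\bs o)}$; thus $\mathrm{ave}(f)=\omid{f(\bs o)}$ a.s.

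One minor point that needs care — and the place I expect the only real friction — is the passage from the symmetric two-sided average $\frac{1}{2n}\sum_{i=-n}^n$ to a form to which the standard (one-sided) Birkhoff theorem applies: I would handle this by noting that $\frac{1}{2n}\sum_{i=-n}^{n}f(\bs x_i)=\frac{1}{2n}\big(\sum_{i=0}^{n}f(\bs x_i)+\sum_{i=1}^{n}f(\bs x_{-i})\big)$ and applying Birkhoff separately to $\mathcal S$ and to $\mathcal S^{-1}$ (both $Q$-preserving), whose invariant $\sigma$-fields coincide; the two one-sided limits are each $\omidCond{F}{\mathcal I_{\mathcal S}}$, so the symmetric average converges to the same limit. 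Everything else — measurability of $F$, finiteness of the first moment, and the identification of the limit — is routine given the results already established in the excerpt.
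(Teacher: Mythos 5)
Your proposal is correct and follows exactly the route the paper intends for this corollary (which it states without proof): stationarity of $Q$ under the shift from Theorem~\ref{thm:randomwalk-characterization}, the two-sided Birkhoff theorem for the observable $F(X,o,\mu,(y_n)_n)=f(X,o,\mu)$, and Theorem~\ref{thm:ergodicity-events} to transfer the $\mathcal S$-invariance of the limit to an $I$-measurable function on $\mstar$, which ergodicity then forces to be a.s.\ constant. Your handling of the symmetric average via the forward and backward one-sided averages, and your use of Theorem~\ref{thm:ergodicity-events} rather than naively invoking ergodicity under the full group $\Gamma$ (which would not by itself give ergodicity of the $\mathbb Z$-subaction), are both exactly the right points of care.
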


\subsection{Ergodic Decomposition}
\label{subsec:ergodicDecomposition}

In this subsection, we will prove \textit{ergodic decomposition} for unimodular \rmm spaces; i.e., expressing the distribution as a mixture of ergodic probability measures. This does not follow immediately from the existing results in the literature; e.g., for measure preserving semi-group actions (see~\cite{Fa62}) or for countable Borel equivalence relations. 
We will deduce the claim from ergodic decomposition for the random walk.

Let $\mathcal U$ (resp. $\mathcal E$) denote the set of unimodular (resp. ergodic) probability measures on $\mathcal M_*$. These are subsets of the set of probability measures on $\mstar$ and can be equipped with the topology of weak convergence. Also, $\mathcal U$ is a closed and convex subset by Lemma~\ref{lem:weaklimit}. 

\begin{proposition}
	\label{prop:extreme}
	$\mathcal E$ is the set of \textit{extreme points} of $\mathcal U$.
\end{proposition}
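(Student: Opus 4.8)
The plan is to prove the two inclusions separately, using the characterization of unimodularity via the two-sided random walk $(\bs x_n)_n$ from Theorem~\ref{thm:randomwalk-characterization} to transfer the problem to the more familiar setting of a measure-preserving action of the group $\Gamma$ of automorphisms of the time axis $\mathbb Z$. Concretely, fix once and for all a symmetric transport function $h>0$ with $h^+(\cdot)=h^-(\cdot)=1$ (Lemma~\ref{lem:balancingkernel}), let $k_o:=h(o,\cdot)\mu$, and for a unimodular probability measure $P$ on $\mstar$ let $\hat P=\Theta(P)$ denote the law of $[\bs X,\bs o,\bs\mu,(\bs x_n)_n]$ on $\mathcal M_\infty$. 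By Theorem~\ref{thm:randomwalk-characterization}, $P$ is unimodular iff $\hat P$ is $\Gamma$-invariant; by Theorem~\ref{thm:ergodicity-equivalence}, $P$ is ergodic iff $\hat P$ is $\Gamma$-ergodic. The map $\Theta$ is injective (forget the walk) and, crucially, affine: $\Theta(tP_1+(1-t)P_2)=t\Theta(P_1)+(1-t)\Theta(P_2)$, since the walk kernel is fixed and applied $P$-conditionally. So $\mathcal U$ is affinely embedded in the space of $\Gamma$-invariant probability measures on $\mathcal M_\infty$.

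For the inclusion ``ergodic $\Rightarrow$ extreme'': suppose $P\in\mathcal E$ and $P=tP_1+(1-t)P_2$ with $P_1,P_2\in\mathcal U$ and $0<t<1$. I want to show $P_1=P_2=P$. On an invariant event $A\in I$, $\myprob[P]{A}\in\{0,1\}$; since $P_i\ll P$ is not automatic, instead argue directly: for $A\in I$, the two-sided walk started anywhere stays in the fiber over $A$, so $\tilde A:=\{(X,o,\mu,(x_n)_n):(X,o,\mu)\in A\}$ is $\Gamma$-invariant and $\hat P_i(\tilde A)=P_i(A)$. Ergodicity of $P$ gives $P(A)\in\{0,1\}$, hence (from $P(A)=tP_1(A)+(1-t)P_2(A)$) $P_1(A)=P_2(A)=P(A)$ for all $A\in I$. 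This shows $P_1,P_2$ agree with $P$ on $I$; but a unimodular measure is \emph{determined} by its restriction to $I$ together with... no — more carefully, I should instead use the standard fact that an extreme point of the $\Gamma$-invariant measures is exactly a $\Gamma$-ergodic one, applied to $\hat P$. Since $\hat P$ is $\Gamma$-ergodic and $\hat P=t\hat P_1+(1-t)\hat P_2$ is a convex combination of $\Gamma$-invariant measures, ergodicity forces $\hat P_1=\hat P_2=\hat P$, and injectivity of $\Theta$ gives $P_1=P_2=P$. Thus $P$ is extreme in $\mathcal U$.

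For the inclusion ``extreme $\Rightarrow$ ergodic'': suppose $P\in\mathcal U$ is not ergodic, so there is $A\in I$ with $0<c:=\myprob[P]{A}<1$. Let $P_1:=P(\,\cdot\mid A)$ and $P_2:=P(\,\cdot\mid A^c)$, so $P=cP_1+(1-c)P_2$ with $P_1\neq P_2$ (they are supported on disjoint events $A,A^c$). It remains to check $P_1,P_2\in\mathcal U$; equivalently, that conditioning a unimodular measure on an invariant event preserves unimodularity. This is immediate from the MTP: for a transport function $g$, restricting $g$ to $A$ gives a transport function $g\indic{A}$ (here $A$ is invariant, so $g\indic{A}$ still only depends on the doubly-rooted isomorphism class), and $\omid[P]{\int g\indic{A}(\bs o,x)d\bs\mu(x)}=\omid[P]{\int g\indic{A}(x,\bs o)d\bs\mu(x)}$ divided by $c$ is exactly the MTP for $P_1$. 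Hence $P$ is a nontrivial convex combination of two distinct elements of $\mathcal U$, so $P$ is not extreme. The contrapositive gives extreme $\Rightarrow$ ergodic.

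The main obstacle is the ``ergodic $\Rightarrow$ extreme'' direction, and specifically the need to have $\hat P_1,\hat P_2$ absolutely continuous (or at least comparable) with respect to $\hat P$ in order to invoke the textbook ``ergodic = extreme among invariant measures'' fact on $\mathcal M_\infty$; this absolute continuity does hold in a convex decomposition $P=tP_1+(1-t)P_2$ (one gets $P_i\ll \frac1{\min(t,1-t)}P$ hence $\hat P_i\ll\frac1{\min(t,1-t)}\hat P$), so the argument goes through, but I want to state this carefully rather than wave at it. A secondary technical point is confirming that $\Theta$ is genuinely affine and that $\Gamma$-invariance/ergodicity transfer both ways exactly as in Theorems~\ref{thm:randomwalk-characterization} and~\ref{thm:ergodicity-equivalence} — in particular that no measurability pathology arises from the fixed kernel $k$ — but these are all covered by the results already established above.
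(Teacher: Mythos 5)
Your proof is correct, and it takes a genuinely more direct route than the paper. The paper does not prove Proposition~\ref{prop:extreme} on its own: it deduces it from the full ergodic decomposition theorem (Theorem~\ref{thm:ergodicDecomposition}), whose proof in turn invokes Farrell's decomposition for the $\Gamma$-action on $\mathcal M_{\infty}$ for existence and a separate uniqueness argument (Lemma~\ref{lem:ergodicDecomposition-b}). You bypass all of that. Your ``extreme $\Rightarrow$ ergodic'' direction --- conditioning on a nontrivial invariant event $A$ and checking via the MTP applied to $g\indic{A}$ (legitimate because $A$ is root-independent) that the conditioned measures remain unimodular --- is exactly the elementary argument one expects here. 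Your ``ergodic $\Rightarrow$ extreme'' direction shares with the paper the transfer to the random-walk space via Theorems~\ref{thm:randomwalk-characterization} and~\ref{thm:ergodicity-equivalence}, but then closes with the textbook fact that a $\Gamma$-ergodic measure is extreme among $\Gamma$-invariant probability measures (the Radon--Nikodym derivative $d\hat P_1/d\hat P$ exists since $\hat P_1\leq t^{-1}\hat P$, is $\Gamma$-invariant, hence a.e.\ constant by ergodicity; this uses only that $\Gamma$ is countable), together with affinity and injectivity of $\Theta$, both immediate since the walk kernel is a fixed conditional law and the projection $\pi$ recovers $P$. What your approach buys is a self-contained proof of the proposition that presupposes neither existence nor uniqueness of the decomposition; what it does not give is the decomposition theorem itself, which the paper needs anyway. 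The one cosmetic flaw is the abandoned first attempt in your second paragraph (agreement on $I$), which you should simply delete, since the argument you actually complete is the $\Gamma$-ergodicity one.
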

An \textit{extreme point} means a point that cannot be expresses as a a convex combination of two other points. This claim is implied by the following stronger result and the proof is skipped.

\begin{theorem}[Ergodic Decomposition]
	\label{thm:ergodicDecomposition}
	For every unimodular probability measure $\nu$ on $\mstar$, there exists a unique probability measure $\lambda$ on $\mathcal E$ such that $\nu = \int_{\mathcal E} \varphi d\lambda(\varphi)$.
\end{theorem}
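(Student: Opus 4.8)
The strategy is to transfer the problem to the random walk $(\bs x_n)_n$, where we already have ergodic decomposition available from the classical theory, and then pull the decomposition back to $\mstar$. Let $\nu$ be a unimodular probability measure on $\mstar$, and let $h$ be the fixed symmetric transport function from Lemma~\ref{lem:balancingkernel} with $h^+=h^-=1$, $h>0$; let $Q=Q_\nu$ be the law on $\mathcal M_\infty$ of $[\bs X,\bs o,\bs\mu,(\bs x_n)_n]$, where $(\bs x_n)_n$ is the two-sided random walk driven by the kernel $k_o=h(o,\cdot)\mu$. By Theorem~\ref{thm:randomwalk-characterization}, $Q$ is invariant under the $\Gamma$-action (shifts and time-reversion) on $\mathcal M_\infty$, and by Theorem~\ref{thm:ergodicity-equivalence} it is $\Gamma$-ergodic precisely when $\nu$ is ergodic. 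The assignment $\nu\mapsto Q_\nu$ is affine and injective (one recovers $\nu$ by projecting $\mathcal M_\infty\to\mstar$, forgetting the trajectory), so it identifies $\mathcal U$ with a subset of the $\Gamma$-invariant probability measures on $\mathcal M_\infty$ and $\mathcal E$ with the ergodic ones among them.

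\textbf{Key steps.} First I would invoke the classical ergodic decomposition theorem for a measure-preserving action of a countable amenable (here virtually $\mathbb Z$) group on a Polish space — equivalently, Farrell--Varadarajan-type ergodic decomposition, or simply ergodic decomposition of the $\mathbb Z$-action generated by $\mathcal S$ together with handling the order-two time-reversal. This yields a probability measure $\tilde\lambda$ on the $\Gamma$-ergodic measures on $\mathcal M_\infty$ with $Q=\int \tilde Q\,d\tilde\lambda(\tilde Q)$, unique because the ergodic measures are mutually singular. Second, I must check that $\tilde\lambda$-a.e.\ ergodic component $\tilde Q$ is of the form $Q_{\varphi}$ for some $\varphi\in\mathcal E$ — i.e.\ that the fibre structure ``$(\bs x_n)_n$ is the $h$-random walk started at the root'' is preserved under taking ergodic components. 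This follows because the defining conditional law of $(\bs x_n)_n$ given $[\bs X,\bs o,\bs\mu]$ is a fixed kernel, hence survives disintegration; this is exactly the content one needs from Theorem~\ref{thm:ergodicity-events}, whose point was that the invariant event $A$ witnessing shift-invariance does not depend on the chosen unimodular measure, so the ergodic components inherit both unimodularity (each $\tilde Q$ is $\Gamma$-invariant, hence comes from a unimodular $\nu$-component) and the Markov-fibre structure. Third, pushing forward under the projection $\mathcal M_\infty\to\mstar$ gives $\nu=\int \varphi\,d\lambda(\varphi)$ with $\lambda$ the pushforward of $\tilde\lambda$ under $\tilde Q\mapsto\varphi$; uniqueness of $\lambda$ follows from uniqueness of $\tilde\lambda$ together with injectivity of $\varphi\mapsto Q_\varphi$ and the fact (Proposition~\ref{prop:extreme}/Theorem~\ref{thm:ergodicity-equivalence}) that distinct ergodic $\varphi$ give mutually singular $Q_\varphi$, so that $\lambda$ is forced to be supported on $\mathcal E$ and is determined by $\nu$.

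\textbf{Main obstacle.} The delicate point is not the existence of \emph{some} decomposition of $Q$ but showing that each ergodic component is again ``of random-walk type'' over a unimodular base — in other words, that ergodicity of components is compatible with the rigid coupling between $[\bs X,\bs o,\bs\mu]$ and its driving random walk. Concretely one must argue that for $\tilde\lambda$-a.e.\ $\tilde Q$, the $\tilde Q$-conditional distribution of $(\bs x_n)_n$ given $[\bs X,\bs o,\bs\mu]$ coincides with $\theta_{(X,o,\mu)}$; this is where the uniformity clause of Theorem~\ref{thm:ergodicity-events} (the witnessing invariant event being chosen once and for all, independently of the unimodular measure) is essential, since it lets one assert the identity on a single full-measure set that serves all components simultaneously. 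A secondary nuisance is the time-reversal symmetry: $\Gamma$ is not $\mathbb Z$ but the infinite dihedral group, so one should either run ergodic decomposition directly for this (still amenable) group action or first decompose under $\mathbb Z$ and then average over the order-two symmetry, checking that reversibility of $Q$ makes this averaging consistent. Once these are in place, measurability of $\varphi\mapsto Q_\varphi$ and the standard uniqueness argument for mutually singular decompositions finish the proof.
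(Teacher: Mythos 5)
Your existence argument coincides with the paper's: lift $\nu$ to the law $Q_\nu$ of $[\bs X,\bs o,\bs\mu,(\bs x_n)_n]$ on $\mathcal M_{\infty}$, apply the classical ergodic decomposition for the $\Gamma$-action (the paper cites \cite{Fa62}, which covers the dihedral action directly, so no separate averaging over the time-reversal is needed), and push forward along $\pi$. The ``main obstacle'' you isolate --- that each ergodic component retains the random-walk fibre structure, hence is of the form $Q_\varphi$ with $\varphi=\pi_*\tilde Q\in\mathcal E$ --- is exactly what the paper compresses into the assertions $\pi_*\mathcal U'=\mathcal U$ and $\pi_*\mathcal E'=\mathcal E$, and your resolution via the uniformity clause of Theorem~\ref{thm:ergodicity-events} is the right one. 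Where you genuinely diverge is uniqueness. The paper works downstairs: by Lemma~\ref{lem:ergodicDecomposition-b} (itself from Theorems~1 and~3 of \cite{Fa62} plus Theorem~\ref{thm:ergodicity-events}) every generating set $\{\varphi\in\mathcal E:\varphi(C)\le p\}$ of the Borel structure of $\mathcal E$ equals $\{\varphi\in\mathcal E:\varphi(A)=1\}$ for some invariant $A\in I$, whence $\nu(A)=\lambda(\{\varphi:\varphi(C)\le p\})$ and $\lambda$ is read off from $\nu$. You instead work upstairs: affinity and injectivity of $\varphi\mapsto Q_\varphi$ turn two decompositions of $\nu$ into two ergodic decompositions of $Q_\nu$, which must coincide, forcing $\lambda_1=\lambda_2$. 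This is valid, but the last step needs one ingredient you left implicit: to deduce $\lambda_1=\lambda_2$ from equality of the pushforwards under the injective Borel map $\varphi\mapsto Q_\varphi$ you must invert that map measurably (Lusin--Souslin), which here is painless because $\pi_*$ is an explicit Borel left inverse, $\pi_*Q_\varphi=\varphi$. With that remark your uniqueness argument closes and is arguably more direct for this theorem alone; the paper's detour through invariant events has the side benefit of verifying condition (b) of \cite{Fa62}, which it reuses to obtain the stronger disintegration statement of Theorem~\ref{thm:ergodicDecomposition2}.
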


See also Theorem~\ref{thm:ergodicDecomposition2} for another formulation.
The proof is by using random walks. 
Let $h$ be an arbitrary symmetric transport function as in Lemma~\ref{lem:balancingkernel} ($h>0$ and $h^+(\cdot)=h^-(\cdot)=1$) and let $(\bs x_n)_n$ be the resulting random walk as in Subsection~\ref{subsec:randomwalk}. Let $I'$ be the sigma field of $\Gamma$-invariant events in $\mathcal M_{\infty}$, where $\Gamma$ is the automorphism group of $\mathbb Z$ (as in Subsection~\ref{subsec:ergodic}). Let $\mathcal U'$ (resp. $\mathcal E'$) be the set of $\Gamma$-invariant probability measures on $\mathcal M_{\infty}$ (resp. ergodic under the action of $\Gamma$). Let $\pi:\mathcal M_{\infty}\to\mstar$ be the projection defined by forgetting the trajectory of points. By using Theorems~\ref{thm:randomwalk-characterization} and~\ref{thm:ergodicity-equivalence}, it is straightforward to deduce that $\pi_* \mathcal U' = \mathcal U$ and $\pi_*\mathcal E'=\mathcal E$.

\begin{proof}[Proof of Theorem~\ref{thm:ergodicDecomposition} (Existence)]
	Let $\mu$ be the distribution of a unimodular \rmm space $[\bs X, \bs o, \bs \mu]$. Let $\hat{\nu}$ be the distribution of $[\bs X,\bs o, \bs \mu, (\bs x_n)_n]$. By Theorem~\ref{thm:randomwalk}, $\hat{\nu}\in\mathcal U'$. Therefore, by the ergodic decomposition for the action of $\Gamma$ (see e.g., \cite{Fa62}), there exists a probability measure $\mathcal \alpha$ on $\mathcal E'$ such that $\hat{\nu} = \int_{\mathcal E'} \varphi d\alpha(\varphi)$. This implies that $\nu = \int_{\mathcal E'}(\pi_*\varphi) d\alpha(\varphi)$. Use the change of variables $\psi:=\pi_*\varphi$ and note that $\pi_*\varphi\in\mathcal E$. One gets $\nu = \int_{\mathcal E} \psi d\lambda(\psi)$, where $\lambda = (\pi_*)_*\alpha$. So, the existence of ergodic decomposition is proved.
\end{proof}

\begin{lemma}
	\label{lem:ergodicDecomposition-b}
	For every event $C\subseteq\mstar$ and every $0\leq p\leq 1$, there exists an invariant event $A\in I$ such that $\{\nu\in\mathcal E: \nu(C)\leq p \} = \{\nu\in\mathcal E: \nu(A)=1 \}$.
\end{lemma}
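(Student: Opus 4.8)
The plan is to reduce the statement to the ergodic averaging result in Corollary~\ref{cor:average} and then to transport a shift-invariant event back to an invariant event using Theorem~\ref{thm:ergodicity-events}. Fix once and for all a symmetric transport function $h$ as in Lemma~\ref{lem:balancingkernel} and let $(\bs x_n)_{n\in\mathbb Z}$ be the associated two-sided random walk, as in Subsection~\ref{subsec:ergodic}; recall that with this fixed choice the conclusions of Corollary~\ref{cor:average} and Theorem~\ref{thm:ergodicity-events} hold simultaneously for every unimodular probability measure on $\mstar$. Regarding $\identity{C}$ as a bounded measurable function on $\mstar$, I would define $L\colon\mathcal M_\infty\to[0,1]$ by
\[
	L(X,o,\mu,(x_n)_n):=\limsup_{n\to\infty}\frac{1}{2n}\sum_{i=-n}^{n}\identity{C}(X,x_i,\mu),
\]
which is a measurable function satisfying $L\circ\mathcal S=L$ on the nose (shifting the trajectory merely relabels the sum), so that $\{L\le p\}$ is a genuinely shift-invariant subset of $\mathcal M_\infty$ for every $p$.

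The key point is then that for an ergodic unimodular $\nu$, the value of $L$ detects $\nu(C)$. Indeed, applying Corollary~\ref{cor:average} with $f=\identity{C}$ (which is integrable since $\nu(C)\le 1$) to $[\bs X,\bs o,\bs\mu]\sim\nu$ equipped with its walk, the $\limsup$ defining $L$ is almost surely a true limit equal to $\mathrm{ave}(\identity{C})=\omid{\identity{C}(\bs o)}=\nu(C)$. Hence, writing $\hat\nu$ for the law of $[\bs X,\bs o,\bs\mu,(\bs x_n)_n]$, the event $\{L\le p\}$ has $\hat\nu$-probability $1$ if $\nu(C)\le p$ and $0$ if $\nu(C)>p$.

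Finally, I would apply Theorem~\ref{thm:ergodicity-events} to the shift-invariant event $\{L\le p\}$ to obtain an invariant event $A\in I$ --- crucially not depending on $\nu$ --- such that $\{L\le p\}$ and the pullback $\pi^{-1}(A)$ (where $\pi$ forgets the trajectory) differ by a $\hat\nu$-null set for every unimodular $\nu$. Then $\nu(A)=\hat\nu(\pi^{-1}(A))=\hat\nu(\{L\le p\})$, which by the previous paragraph equals $\mathbf 1[\nu(C)\le p]$ for every $\nu\in\mathcal E$; in particular $\nu(A)\in\{0,1\}$, consistently with ergodicity, and $\nu(A)=1$ exactly when $\nu(C)\le p$. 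This is the asserted equality of subsets of $\mathcal E$. The only real subtlety --- that a single $A$ works for all ergodic (indeed all unimodular) measures at once --- is precisely the strengthening built into Theorem~\ref{thm:ergodicity-events}, so no extra work is needed there; the one thing to be careful about is to define $L$ via $\limsup$ rather than $\lim$, so that it is an everywhere-defined, everywhere shift-invariant function before invoking that theorem.
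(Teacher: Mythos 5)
Your proof is correct, and it shares the overall architecture of the paper's argument --- pass to $\mathcal M_\infty$, produce a shift-invariant event there that detects $\{\nu'(C')\le p\}$ on ergodic measures, and pull it back to an invariant event $A\in I$ via the uniform (measure-independent) statement of Theorem~\ref{thm:ergodicity-events} --- but it differs in how the invariant event upstairs is obtained. The paper simply lifts $C$ to $C':=\pi^{-1}(C)\subseteq\mathcal M_\infty$ and invokes Theorems~1 and~3 of~\cite{Fa62} to get a $\Gamma$-invariant $B\in I'$ with $\{\nu'\in\mathcal E':\nu'(C')\le p\}=\{\nu'\in\mathcal E':\nu'(B)=1\}$, whereas you construct that event explicitly as the sublevel set $\{L\le p\}$ of the two-sided Birkhoff average and verify its defining property using Corollary~\ref{cor:average}. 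Your route is more self-contained (it replaces the external citation to Farrell by an explicit ergodic-averaging construction already available in the paper), and your attention to defining $L$ via $\limsup$ so that it is everywhere defined and exactly shift-invariant before invoking Theorem~\ref{thm:ergodicity-events} is exactly the right care to take; the cost is that you rely on Corollary~\ref{cor:average}, whose proof is itself only sketched in the paper, while the paper's citation to~\cite{Fa62} needs no such input. Both arguments hinge identically on the same two nontrivial facts: that ergodic $\nu$ lift to ergodic $\hat\nu$, and that the event $A$ produced by Theorem~\ref{thm:ergodicity-events} does not depend on the unimodular measure.
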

\unwritten{Equivalently, sample intensity of $A$ can be defined simultaneously for all unimodular measures.}

\begin{proof}
	Let $C':=\{(X,o,\mu,(x_n)):(X,o,\mu)\in A\}\subseteq\mathcal M_{\infty}$. By Theorems~1 and~3 of~\cite{Fa62}, there exists $B\in I'$ such that $\{\nu'\in \mathcal E': \nu'(C')\leq p \} = \{\nu'\in \mathcal E': \nu'(B)=1 \}$. Now, let $A$ be the event given by Theorem~\ref{thm:ergodicity-events}.
\end{proof}

\begin{proof}[Proof of Theorem~\ref{thm:ergodicDecomposition} (Uniqueness)]
	Let $\nu_1=\int_{\mathcal E} \varphi d\lambda_1(\varphi)$ and $\nu_2= \int_{\mathcal E} \varphi d\lambda_2(\varphi)$, where $\lambda_1$ and $\lambda_2$ are distinct probability measures on $\mathcal E$. The sets $E_{C,p}:=\{\nu\in\mathcal E: \nu(C)\leq p \}$, where $C\subseteq\mstar$ and $0\leq p\leq 1$, generate the weak topology and its corresponding Borel sigma-field. Therefore, there exists $C$ and $p$ such that $\lambda_1(E_{C,p}) \neq \lambda_2(E_{C,p})$. By Lemma~\ref{lem:ergodicDecomposition-b}, there exists an invariant event $A\in I$ such that $E_{C,p} = \{ \nu\in\mathcal E': \nu(A)=1\}$. Since $\varphi(A)\in\{0,1\}$, one gets $\nu_i (A) = \int \varphi(A)d\lambda_i(\varphi)= \lambda_i(E_{C,p})$. Thus, $\nu_1(A)\neq \nu_2(A)$, and hence, $\nu_1\neq \nu_2$.
\end{proof}

Lemma~\ref{lem:ergodicDecomposition-b} proves condition (b) of~\cite{Fa62}. Condition (c) is also implied by the proof of existence in Theorem~\ref{thm:ergodicDecomposition}. One can similarly prove condition (a) of~\cite{Fa62} using random walks. Therefore, one can leverage the direct construction of the ergodic decomposition in~\cite{Fa62} to prove the following formulation of ergodic decomposition (see the paragraph of~\cite{Fa62} after defining the condition (c)). 

\begin{theorem}
	\label{thm:ergodicDecomposition2}
	There exists an $I$-measurable map $\beta:\mstar\to \mathcal E$ such that $\nu=\int_{\mstar} \beta(\xi)d\nu(\xi)$ for every unimodular probability measure $\nu$ on $\mstar$. In addition, every ergodic $\varphi\in\mathcal E$ is concentrated on $\beta^{-1}(\varphi)$ and no other ergodic measure is concentrated on $\beta^{-1}(\varphi)$. Such a map $\beta$ is unique in the sense that two such maps are equal $\nu$-a.s., for every unimodular probability measures $\nu$.
\end{theorem}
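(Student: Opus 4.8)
The plan is to lift everything to the random walk, apply the explicit ergodic-decomposition construction of \cite{Fa62} there, and then transfer the resulting map back to $\mstar$ using Theorem~\ref{thm:ergodicity-events}. Fix a symmetric transport function $h$ with $h>0$ and $h^+(\cdot)=h^-(\cdot)=1$ (Lemma~\ref{lem:balancingkernel}), let $(\bs x_n)_n$ be the associated two-sided random walk, and keep the notation $\pi:\mathcal M_\infty\to\mstar$, the $\Gamma$-action on $\mathcal M_\infty$ by shifts and time-reversal, and the sets $\mathcal U'$, $\mathcal E'$ with $\pi_*\mathcal U'=\mathcal U$ and $\pi_*\mathcal E'=\mathcal E$ from Subsection~\ref{subsec:ergodicDecomposition}. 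Note that every $\nu\in\mathcal U$ has a canonical lift $\hat\nu\in\mathcal U'$, obtained by attaching the fixed equivariant walk, with $\pi_*\hat\nu=\nu$.

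The first substantive step is to check Farrell's hypotheses (a), (b), (c) for the $\Gamma$-action on $\mathcal M_\infty$ with the family $\mathcal U'$. Condition (b) is precisely what is established inside the proof of Lemma~\ref{lem:ergodicDecomposition-b} (the invariant event $B\in I'$ furnished by Theorems~1 and~3 of \cite{Fa62}); condition (c) is contained in the existence half of Theorem~\ref{thm:ergodicDecomposition}, which already decomposes every $\hat\nu\in\mathcal U'$ over $\mathcal E'$; and condition (a) is verified analogously via the random-walk representation (cf. the remark following Theorem~\ref{thm:ergodicDecomposition}), using that $\mathcal M_\infty$ is Polish. With (a)--(c) in hand, the direct construction of \cite{Fa62} (the paragraph after the definition of condition (c)) produces an $I'$-measurable map $\tilde\beta:\mathcal M_\infty\to\mathcal E'$ with $\hat\nu=\int_{\mathcal M_\infty}\tilde\beta(\omega)\,d\hat\nu(\omega)$ for every $\hat\nu\in\mathcal U'$, each $\varphi'\in\mathcal E'$ concentrated on $\tilde\beta^{-1}(\varphi')$ and no other ergodic measure concentrated there, and $\tilde\beta$ unique up to $\hat\nu$-null sets.

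The main obstacle is to descend $\tilde\beta$ to an $I$-measurable map on $\mstar$ valid for all unimodular measures simultaneously, since $\tilde\beta$ is only $I'$-measurable and a priori $I'$ is larger than the pullback of $I$; here the key is that the invariant event produced by Theorem~\ref{thm:ergodicity-events} does not depend on the unimodular measure. I would set $\hat\beta:=\pi_*\circ\tilde\beta$, an $I'$-measurable map into the standard Borel space $\mathcal E$ (which is a Borel subset of the Polish space $\mathcal U$ by Proposition~\ref{prop:extreme}); fix a countable family $\{D_k\}_k$ of Borel subsets of $\mathcal E$ generating its Borel $\sigma$-field; for each $k$ apply Theorem~\ref{thm:ergodicity-events} to the shift-invariant event $\hat\beta^{-1}(D_k)\in I'$ to obtain $A_k\in I$ with $\hat\beta^{-1}(D_k)\,\Delta\,\pi^{-1}(A_k)$ null under every $\hat\nu\in\mathcal U'$; and reassemble $\{A_k\}_k$, through a Borel coding $\mathcal E\hookrightarrow\{0,1\}^{\mathbb N}$, into an $I$-measurable $\beta:\mstar\to\mathcal E$ satisfying $\beta\circ\pi=\hat\beta$ $\hat\nu$-a.s. for every $\hat\nu\in\mathcal U'$. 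Applying $\pi_*$ to $\hat\nu=\int\tilde\beta\,d\hat\nu$ and using $\pi_*\circ\tilde\beta=\hat\beta=\beta\circ\pi$ then yields $\nu=\int_{\mstar}\beta(\xi)\,d\nu(\xi)$ for every $\nu\in\mathcal U$.

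The remaining assertions are then formal consequences of this together with Theorem~\ref{thm:ergodicDecomposition}. For ergodic $\varphi\in\mathcal E$ the $I$-measurable map $\beta$ is $\varphi$-a.s. equal to a constant $c\in\mathcal E$, and integrating gives $\varphi=\int\beta\,d\varphi=c$; hence $\varphi(\beta^{-1}(\varphi))=1$. If some $\psi\in\mathcal E$ were also concentrated on $\beta^{-1}(\varphi)$, the same argument would give $\psi(\beta^{-1}(\psi))=1$, forcing $\psi=\varphi$ since $\beta^{-1}(\varphi)$ and $\beta^{-1}(\psi)$ are disjoint when $\varphi\neq\psi$. Uniqueness follows likewise: if $\beta_1,\beta_2$ both satisfy the statement, the above shows $\beta_1=\beta_2=\varphi$ $\varphi$-a.s. for every ergodic $\varphi$, and writing an arbitrary $\nu\in\mathcal U$ as $\int_{\mathcal E}\varphi\,d\lambda(\varphi)$ via Theorem~\ref{thm:ergodicDecomposition} gives $\beta_1=\beta_2$ $\nu$-a.s. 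Thus the only genuine difficulty is the descent step, i.e.\ making a single choice of ergodic component work for every unimodular measure at once; everything else is bookkeeping on top of Theorems~\ref{thm:randomwalk-characterization}, \ref{thm:ergodicity-events}, \ref{thm:ergodicDecomposition} and Lemma~\ref{lem:ergodicDecomposition-b}.
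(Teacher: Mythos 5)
Your proposal is correct and rests on the same pillars as the paper's (very terse) argument: Farrell's conditions (a)--(c) and his direct construction, the random-walk lift, Theorem~\ref{thm:ergodicity-events}, and Lemma~\ref{lem:ergodicDecomposition-b}. The one organizational difference is where Farrell's machinery is invoked. The paper verifies (a)--(c) directly for the system $(\mstar, I, \mathcal U)$ --- Lemma~\ref{lem:ergodicDecomposition-b} \emph{is} condition (b) downstairs, condition (c) is the existence half of Theorem~\ref{thm:ergodicDecomposition}, and (a) is checked ``similarly using random walks'' --- and then runs Farrell's construction on $\mstar$ itself, so the map $\beta$ is produced already $I$-measurable and no descent is needed. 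You instead run Farrell upstairs on $(\mathcal M_\infty, I', \mathcal U')$, where his hypotheses are immediate for a group action, and then descend $\tilde\beta$ through Theorem~\ref{thm:ergodicity-events} via a countable generating family and a Borel coding of $\mathcal E$ into $\{0,1\}^{\mathbb N}$. Both routes work; yours trades the (unwritten) verification of condition (a) downstairs for an explicit reassembly argument, and it is worth noting that the crucial feature making your descent legitimate is exactly the one the paper emphasizes: the invariant event produced by Theorem~\ref{thm:ergodicity-events} does not depend on the unimodular measure, so the countably many exceptional null sets can be discarded simultaneously for all $\nu\in\mathcal U$. The derivation of the concentration and uniqueness claims from the integral identity is standard and correct as you present it.
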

See Theorem~4.11 of~\cite{Ke19} for a similar statement for countable Borel equivalence relations.


\subsection{Ends}


Ends are defined for every topological space~\cite{Fr31}. In particular, if $X$ is boundedly-compact and \textit{locally-connected} and $o\in X$, every end of $X$ can be represented uniquely by a sequence $(U_n)_n$, where $U_n$ is an unbounded connected component of $\oball{n}{o}^c$ and $U_1\supseteq U_2\supseteq\cdots$. There is also a natural compact topology on the union of $X$ and the set of its ends.

For unimodular graphs, it is proved that the number of ends (defined similarly) is either 0, 1, 2 or $\infty$. In addition, in the last case, there is no isolated end (Proposition~6.10 of~\cite{processes}). Here, we extend these results to unimodular \rmm spaces. 

\begin{definition}
	Let $(X,o,\mu)$ be a locally-connected \rmm space. An end of $X$, represented by a sequence $(U_n)_n$ as above, is \defstyle{light} if $\mu(U_n)<\infty$ for some $n$. Otherwise, it is called \defstyle{heavy}. It is called \defstyle{isolated} if $U_n$ has only one end for some $n$.
\end{definition}

If $\bs \mu(\bs X)<\infty$, there is no heavy end, but the number of light tails can be arbitrary (Example~\ref{ex:finitemeasure}). Also, even if $\bs \mu(\bs X)=\infty$, there might exist light isolated ends (e.g., when $\bs X=\cup_{n\in\mathbb Z} \left(\{n\}\times \mathbb R\right) \cup \left(\mathbb R\times \{0\}\right)$ and $\bs \mu$ is the sum of the Lebesgue measure on $\mathbb R\times \{0\}$ and some finite measures on the vertical lines). Here, we focus on the other cases.

\begin{proposition}
	\label{prop:ends}
	Assume $[\bs X, \bs o, \bs \mu]$ is a unimodular \rmm space that is connected and locally-connected a.s. and assume $\bs \mu(\bs X)=\infty$ a.s. 
	\begin{enumerate}[(i)]
		\item The number of heavy ends of $\bs X$ is either 1, 2 or $\infty$. 
		In the last case, there is no isolated heavy end.
		\item The set of light ends is either empty or is an open dense subset of the set of ends of $\bs X$ (and hence, is infinite).
	\end{enumerate}
\end{proposition}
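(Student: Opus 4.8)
The plan is to adapt the proof of Proposition~6.10 of~\cite{processes}, replacing "number of ends" with "number of heavy ends" and using the factor-subset lemmas (Lemmas~\ref{lem:happensatroot}, \ref{lem:subset}) in place of their graph analogues. First I would set up the end structure as a factor: for each $n$, the decomposition of $\oball{n}{\bs o}^c$ into connected components is a function of $(\bs X, \bs o, \bs \mu)$, but to make the count of heavy ends a \emph{root-independent} quantity one observes that the number of heavy ends of $\bs X$ depends only on $(\bs X, \bs \mu)$ (it is an invariant quantity), so it takes a deterministic value on each ergodic component; by the ergodic decomposition (Theorem~\ref{thm:ergodicDecomposition}) it suffices to treat the ergodic case, where the number $N$ of heavy ends is an a.s.\ constant in $\{0,1,2,\ldots,\infty\}$. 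Since $\bs\mu(\bs X)=\infty$ a.s.\ and $\bs X$ is connected, $\bs X$ is unbounded, and one checks that at least one end must be heavy (if every end were light, then for $n$ large each unbounded component of $\oball{n}{\bs o}^c$ would have finite measure, forcing $\bs\mu(\bs X)<\infty$, a contradiction); hence $N\geq 1$.

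The heart is to rule out $3\le N<\infty$ and to rule out isolated heavy ends when $N=\infty$. For the first, suppose $N$ is finite. Then for all large $n$ the ball $\oball{n}{\bs o}$ separates $\bs X$ into exactly $N$ heavy "branches" (plus possibly finitely many light pieces of finite total measure). I would define, for a point $p$, a transport sending mass toward a canonically chosen "central" region: concretely, use the tree-like structure of the finitely many heavy branches to define, via a factor rule, the "heavy-median" separating ball or separating finite set, and send unit mass from each $p$ to (the uniform measure on, normalized by total measure of) the piece of this central region. Because $N\ge 3$ the central object is a genuine nonempty bounded set $\bs C=\bs C(\bs X,\bs\mu)$ of positive finite measure (this is where $N\ge 3$ is used: with $N\le 2$ there is no well-defined finite "branch point"), so $g(p,\cdot)$ integrates to $1$ for every $p$ while $g^-(\bs o)$ is a.s.\ infinite on the positive-probability event $\{\bs o\in\bs C\}$ (infinitely much mass, from the whole unbounded $\bs X$, flows into the bounded set $\bs C$). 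This contradicts the MTP exactly as in Lemma~\ref{lem:subset}. For isolated heavy ends in the case $N=\infty$: if with positive probability some heavy end $\xi$ is isolated, then $\bs S:=\{p: p \text{ lies in the component } U_n(\xi) \text{ witnessing isolation, for the minimal such }n\}$, refined to a factor subset by choosing $\xi$ canonically (e.g.\ the heavy isolated end closest to the root in the obvious combinatorial sense; ties broken measurably), gives a factor subset which is nonempty with positive probability; but an isolated heavy end has a "cut point/cut ball" of finite measure beyond which lies a half-line-like heavy piece with exactly one end, and a Lemma~\ref{lem:subset}-type mass-transport into that finite-measure cut set again yields infinite incoming mass with positive probability, contradiction.

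For part (ii): the set of light ends is an invariant quantity of $(\bs X,\bs\mu)$, so again reduce to the ergodic case and assume it is nonempty with positive probability; then by invariance it is nonempty a.s. Openness is topological and automatic: if $\xi$ is light, witnessed by $\mu(U_n(\xi))<\infty$, then every end $\xi'$ with $\xi'\in U_n(\xi)$ (an open neighborhood of $\xi$ in the space of ends) is also light, since $U_m(\xi')\subseteq U_n(\xi)$ for large $m$. Density is the substantive point: I would show that no heavy end $\eta$ can have a neighborhood consisting only of heavy ends, i.e.\ that light ends accumulate on every heavy end. Suppose not; then there is an $n$ and a component $U_n$ of $\oball{n}{\bs o}^c$ all of whose ends are heavy, and $U_n$ has finitely many ends (else, if it had infinitely many ends all heavy and locally there's no isolated one, we'd be in the $N=\infty$ case restricted to $U_n$ — handle by the same cut-set argument), so $U_n$ contributes only finitely many heavy ends attached at a finite-measure boundary region; making this into a factor subset and transporting mass into the finite "attachment" set once more produces infinite incoming mass at the root with positive probability, contradicting the MTP. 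Hence light ends are dense, and being an open dense subset of a compact set with more than one point, the set of light ends is infinite.

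The main obstacle I anticipate is \textbf{measurability and canonical choice}: "the central region", "the heavy-median", "the isolated end closest to the root", and "the attachment set of a branch" must all be defined as measurable factors of $(\bs X,\bs\mu)$ (not depending on $\bs o$ where required), and the end space, though compact, is only defined up to the locally-connected structure; making the combinatorial tree of heavy branches into a genuine factor graph on which one can run a Lemma~\ref{lem:subset} argument requires some care, exactly the kind of care flagged in Subsection~\ref{subsec:extra}. A secondary technical point is justifying that a component $U_n$ with infinitely many ends, all heavy and none isolated, cannot occur — this is really the same statement as part (i) applied "locally" and should be extracted as an auxiliary claim rather than reproved each time. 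Modulo these bookkeeping issues, every contradiction is obtained by the same mechanism: a bounded factor set of positive finite measure receiving mass from an unbounded space, which is forbidden by the mass transport principle.
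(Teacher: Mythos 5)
Your treatment of the case $3\le N<\infty$ in part (i) is essentially the paper's argument: one builds a bounded factor subset separating the (at least three) heavy ends --- the paper takes the union of all open connected sets of diameter at most $n$ that separate all heavy ends, using that any two such sets must intersect to keep the union bounded --- and a nonempty bounded factor subset then contradicts Lemma~\ref{lem:subset}. The ergodic reduction and the explicit MTP computation are harmless variants. (Your argument for the existence of at least one heavy end is also slightly off as stated --- "all ends light" does not imply that each unbounded component of $\oball{n}{\bs o}^c$ eventually has finite measure --- but the inductive/K\"onig construction of a nested sequence $U_n$ with $\bs\mu(U_n)=\infty$ repairs it.)

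The genuine gap is in the "no isolated heavy end" step and, especially, in the density of light ends. In both you transport mass from an infinite-measure region into a finite-measure "cut/attachment set" bounding it, and you defer the root-independence of this construction to bookkeeping. It is not bookkeeping: as written, your density argument never uses the hypothesis that light ends exist, and it applies verbatim to $[\mathbb R^d,0,\mathrm{Leb}]$ (one heavy end; the complement of any ball is a component with only heavy ends; transport it into the sphere), thereby "proving" that $\mathbb R^d$ is not unimodular. Once you make the cut set a genuine factor of $(\bs X,\bs\mu)$ by taking the union over all admissible choices, that union is typically unbounded and the contradiction evaporates unless the admissibility condition localizes it. The paper's fix has two ingredients. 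First, an intermediate statement (Lemma~\ref{lem:ends}): a nonempty factor subset must have every heavy end as a limit point; its proof is the transport-into-the-boundary argument done once and carefully. Second, the separating sets are given admissibility conditions that use the contradiction hypothesis: in part (ii), a set $C$ qualifies only if, besides cutting off a heavy-ends-only component, it has on its other side a component $U'$ containing light ends with $\bs\mu(U')\le 1$ --- this clause, which invokes the assumed existence of light ends, is what keeps $\bigcup C$ away from the heavy ends of the heavy-only component and yields the violation of Lemma~\ref{lem:ends}; similarly, $S(e)$ must separate the isolated end $e$ from all other heavy ends \emph{and} separate two further heavy ends from each other. You need to isolate Lemma~\ref{lem:ends} (or an equivalent) and build such conditions into your factor sets; without them the argument is incorrect rather than merely incomplete.
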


If $\bs X$ is disconnected but locally-connected, the same result holds for almost every connected component of $\bs X$ by Lemma~\ref{lem:subset} (note that every component is clopen by local connectedness).

The proof of the claim for heavy ends is a modification of that of Proposition~3.9 of~\cite{LySc99} for unimodular graphs. So this part is only sketched for brevity. 
First, we provide the following generalization of Lemma~\ref{lem:subset}.

\begin{lemma}
	\label{lem:ends}
	Let $[\bs X, \bs o, \bs \mu]$ be as in Proposition~\ref{prop:ends} and $\bs S$ be a factor subset. Then, almost surely, if $\bs S\neq\emptyset$, then every heavy end of $(\bs X, \bs \mu)$ is a limiting point of $\bs S$.
\end{lemma}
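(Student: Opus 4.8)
The plan is to argue by contradiction in the style of the proof of Lemma~\ref{lem:subset}: if a heavy end fails to be a limit point of $\bs S$ with positive probability, then one can build a transport function whose outgoing mass from the root is finite but whose incoming mass is infinite on a positive-probability event, contradicting the MTP. First I would reduce to the case $\myprob{\bs S\neq\emptyset}>0$ and, using Lemma~\ref{lem:happensatroot}, to the case $\myprob{\bs o\in\bs S}>0$ (replacing $\bs S$ by a neighborhood if needed, as in the proof of Lemma~\ref{lem:subset}, so that without loss $\bs\mu(\bs S)>0$ on the event $\bs S\neq\emptyset$, and in fact $\bs\mu(\bs S)=\infty$ a.s.\ by Lemma~\ref{lem:subset}). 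Next I would make precise the bad event: there is a heavy end $\xi$ of $\bs X$, represented by $(U_n)_n$ relative to $\bs o$, and an index $m$ such that $\cball{m}{\bs o}\supseteq \bs X\setminus U_m$ contains $\bs S\cap U_m=\emptyset$ — i.e.\ $\bs S$ stays bounded away from $\xi$ in the sense that $\bs S\cap U_m=\emptyset$ for some $m$. Since this is a factor event (it depends only on $(\bs X,\bs\mu)$ and, through $U_n$, on the root, but the existence of some such heavy end avoiding $\bs S$ in some $U_m$ is root-independent), I would apply Lemma~\ref{lem:happensatroot} to transfer it to the root: on a positive-probability event, $\bs o$ lies in a ``pocket'' cut off by a heavy end along which $\bs S$ is absent.

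The key step is the transport. Fix the first coordinate direction: for a pair $(u,v)$, let $u$ send mass to $v$ only when $v$ lies ``between $u$ and the offending heavy end'' and $v\in\bs S$, with the mass normalized so that $u$ ships total mass at most $1$. Concretely I would take, for each $u$, the infimum $r(u)$ of radii $n$ such that $\oball{n}{u}^c$ has a heavy component $U$ with $U\cap\bs S=\emptyset$, pick the relevant component $U(u)$, and define $g(\bs X,u,v,\bs\mu):=c(u)\indic{v\in \bs S,\ d(u,v)\le d(u', v)\ \forall u'\ \text{closer to}\ \xi}$ — but this geometric selection is delicate, so instead I would follow the cleaner device used for unimodular graphs: transport along a one-ended ``skeleton'' pointing at the heavy end. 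The robust route is: since $\bs S\cap U_m=\emptyset$ yet $\bs S$ is unbounded and $\bs\mu(\bs S)=\infty$, the set $\bs S$ lies entirely in $\bs X\setminus U_m=\cball{m}{\bs o}\cup(\text{other components})$; now run the mass from each point of $\bs S$ out towards $\xi$ a bounded distance, say distribute unit mass from each $s\in\bs S$ over $\bs S\cap \cball{1}{s}$ weighted against $\bs\mu$, and compare with mass collected at points deep inside $U_n$ — these receive nothing, giving an imbalance when summed over the infinitely many ``levels'' of $U_n$. The precise normalization mirrors the proof of Lemma~\ref{lem:subset}: arrange $g^+(\bs o)\le 1$ a.s.\ while $g^-(\bs o)=\infty$ on a positive-probability event.

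The main obstacle, I expect, is defining the transport function measurably and in an isomorphism-invariant way: unlike graphs, there is no canonical spanning tree or distinguished direction towards an end, and the decomposition of $\oball{n}{\bs o}^c$ into connected components must be handled via local connectedness with care (each component is a factor subset, and ``the heavy component avoiding $\bs S$'' need not be unique). I would address this by summing over all such components and all rational radii, using the fact that there are countably many and that heaviness ($\bs\mu(U_n)=\infty$) and the condition $U_n\cap\bs S=\emptyset$ are both measurable. Once the transport is in place, plugging into~\eqref{eq:unimodular} and using Lemma~\ref{lem:happensatroot} to promote the positive-probability bad event at a generic point to a positive-probability bad event at the root yields the contradiction, exactly as in Lemmas~\ref{lem:subset} and~\ref{lem:happensatroot}. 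Finally I would note that the statement is a.s.\ over the random space, and that it upgrades Lemma~\ref{lem:subset} (which is the special case where ``every end is heavy'', e.g.\ $\bs\mu$ comparable to a length/volume measure).
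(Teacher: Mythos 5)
Your overall strategy (contradiction via a transport with $g^+\le 1$ everywhere but $g^-=\infty$ on a positive-probability event, plus a union over components and radii to make the construction a measurable factor) is the right one and matches the paper's. But the one step that actually carries the proof — the definition of the transport function — is never correctly pinned down, and every concrete candidate you offer points the mass in the wrong direction. Sending mass from $\bs S$ ``towards $\xi$'', or redistributing unit mass from each $s\in\bs S$ over $\bs S\cap\cball{1}{s}$, produces a transport whose incoming mass is bounded (or at least not forced to be infinite anywhere); the observation that points deep inside $U_n$ ``receive nothing'' is not a violation of the MTP, which only equates the two expectations. Likewise, sending to points of $\bs S$ ``between $u$ and the end'' is vacuous precisely on the bad event, where $\bs S\cap U_m=\emptyset$. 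The device that works is the reverse flow: let each point of the heavy component $U'$ (which has infinite $\bs\mu$-measure and avoids $\bs S$) send \emph{unit} mass into the compact set $\partial U'$ (or a positive-measure neighborhood of it, if $\bs\mu(\partial U')=0$). Then $g^+\le 1$ everywhere, while the finite-measure receiving set absorbs infinite incoming mass; since that receiving set is a factor subset of positive measure with positive probability, Lemma~\ref{lem:happensatroot} puts the root there with positive probability, and the MTP fails. This is exactly where the hypothesis $\bs\mu(U')=\infty$ and the boundedness of the separating set $C$ are used, and no ``skeleton'' or direction towards the end is needed.

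Two smaller points. First, $\bs X\setminus U_m$ is not contained in $\cball{m}{\bs o}$: the complement of a ball may have several unbounded components, so the bad event should be phrased as in the paper — there exist a bounded (open, connected, diameter $\le n$) set $C$ meeting $\bs S$ and a component $U$ of $\bs X\setminus C$ with $U\cap\bs S=\emptyset$ and $\bs\mu(U)=\infty$ — and then $C'$ is taken to be the union of all such $C$ to obtain a factor. Second, Lemma~\ref{lem:happensatroot} is not needed to ``transfer'' the existence of a bad heavy end to the root (that existence is an invariant event); it is needed only to place the root in the positive-measure receiving set, as above.
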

\begin{proof}
	If not, with positive probability, there exists a bounded subset $C$ and a component $U$ of $\bs X\setminus C$ such that $C\cap \bs S\neq \emptyset$, $U\cap \bs S=\emptyset$ and $\bs \mu(U)=\infty$. One might assume that $C$ is open and connected and has diameter at most $n$ (for some fixed $n$). Let $C'$ be the union of all such sets $C$. Then, $C'$ is a factor subset, intersects $\bs S$ and one can see that $\bs X\setminus C'$ has some components $U'$ with infinite mass. For every such $U'$, send unit mass from every $x\in U'$ to the compact set $\partial U'$ (or if $\bs \mu(\partial U')=0$, to some neighborhood of $\partial U'$). This contradicts the MTP since the outgoing mass is at most 1 and the incoming mass is $\infty$ at some points.
\end{proof}

\begin{proof}[Proof of Proposition~\ref{prop:ends}]
	The existence of heavy tails is proved by constructing $U_n$'s inductively such that $\bs \mu(U_n)=\infty$\unwritten{ (notice that, due to local connectedness, by removing $\oball{n}{\bs o}$, only finitely many unbounded component can be added)}.
	If the number of heavy ends is finite and at least three, then one can construct a compact subset $S\subseteq \bs X$, as a factor of $\bs X$, that separates all of the heavy ends (consider the union of all open connected subsets with diameter at most $n$ that separate all ends, and note that every two such subsets intersect).
	This contradicts Lemma~\ref{lem:subset}. 
	Also, if $e$ is an isolated heavy end and there are at least three heavy ends, one can assign to $e$ a bounded set $S(e)\subseteq X$ \textit{equivariantly} that separates $e$ from all other heavy ends and separates at least two other heavy ends from each other (consider the union of all such sets that are open and connected and have diameter at most $n$ and note that every two of them intersect). The set $\cup_e S(e)$ violates Lemma~\ref{lem:ends}. 
	If the set of light ends is nonempty and non-dense, there exists some open connected set $C$ with diameter at most $n$ (for some fixed $n$) such that some component of $\bs X\setminus C$ has only heavy ends, and some other component $U'$ of $\bs X\setminus C$ has light ends and $\bs \mu(U')\leq 1$. One can see that the union of all such $C$ violates Lemma~\ref{lem:ends}.
\end{proof}

\subsection{Amenability}
\label{subsec:amenable}

The notion of amenability is originally defined for countable groups. It is also extended to locally-compact topological groups and also to countable Borel equivalence relations. The latter is extended to unimodular graphs in~\cite{processes}. There are various equivalent definitions, some functional-analytic ones and some combinatorial ones. To extend them to unimodular \rmm spaces, the existing definitions do not apply directly, since no group or countable Borel equivalence relation is present. 

In this subsection, we extend some of the definitions of amenability of countable Borel equivalence relations (in~\cite{CoFeWi81} and~\cite{Ka97}) to unimodular \rmm spaces. 
In Theorem~\ref{thm:amenable}, we prove that these definitions are equivalent by reducing them to analogous conditions for some specific countable Borel equivalence relation. This reduction requires Palm theory developed in Section~\ref{sec:Palm}. So the proof of the main result is postponed to Subsection~\ref{subsec:amenable-proofs}.

\subsubsection{Definition by Local Means}

A definition of amenability of groups is the existence of an \textit{invariant mean}. That is, the existence of a map that assigns a \textit{mean value} to every bounded measurable function such that the map is group-invariant and finitely-additive. For countable Borel equivalence relations, two definitions of global mean and local mean are provided (see~\cite{Ka97} and~\cite{CoFeWi81}). Here, we extend the local mean to unimodular \rmm spaces (global means are based on \textit{partial bijections} and seem more difficult to be extended to the continuum setting).

\begin{definition}[Local Mean]
	Let $[\bs X, \bs o, \bs \mu]$ be a unimodular discrete space. A \defstyle{local mean} $m$ is a map that assigns to (some class of) deterministic \rmm spaces $(X,o,\mu)$, a \textit{state} $m_{(X,o,\mu)}=:m_o$ on $(X,\mu)$ (i.e.,  $m_o:L^{\infty}(X,\mu)\to\mathbb R$ is a positive linear functional such that $m_o(1)=||m_o||_{\infty}=1$), such that:
	\begin{enumerate}[(i)]
		\item $m$ is isomorphism-invariant and is defined for a.e. realization of $[\bs X, \bs o, \bs \mu]$,
		\item If $m_o$ is defined, then $m_y$ is defined for all $y\in X$ and $m_o=m_y$,
		\item For all bounded measurable functions $f:\mdoublestar\to\mathbb R$, the map $[X,o,\mu]\mapsto m_o(f(o,\cdot))$ is measurable.
	\end{enumerate}
%
%
%
%
%
%
\end{definition}

Then, the following condition is a definition of amenability of a unimodular \rmm space $[\bs X, \bs o, \bs \mu]$ and is analogous to the condition~(AI) of~\cite{Ka97}:

\begin{equation}
	\tag{LM}\label{LM}
	\text{There exists a local mean.}
\end{equation}

\subsubsection{Definition by Approximate Means}

\begin{definition}
	Let $[\bs X, \bs o, \bs \mu]$ be a unimodular \rmm space. An \defstyle{approximate mean} is a sequence of measurable functions $\lambda_n:\mdoublestar\to\mathbb R^{\geq 0}$ such that, almost surely, $\forall y\in \bs X: \int_{\bs X}\lambda_n(y,\cdot)d\bs{\mu}=1$ and $\forall y\in \bs X: ||\lambda_n(\bs o, \cdot)-\lambda_n(y,\cdot)||_1\to 0$.
\end{definition}
Here, $\lambda_n(y,\cdot)$ is regarded as an element of $L^1(\bs X, \bs \mu)$. Then, the following condition is another definition of amenability and is analogous to the condition (AI) of~\cite{Ka97} for Borel equivalence relations:

\begin{equation}
	\tag{AM}\label{AM}
	\text{There exists an approximate mean.}
\end{equation}

The name \textit{approximate mean} come from the fact that a local mean is obtained by taking an \textit{ultra limit} of $\int \lambda_n(o,\cdot)f(\cdot)d\mu$ as $n\to\infty$.

\subsubsection{Definition by Hyperfiniteness}

Roughly speaking, a countable Borel equivalence relation is hyperfinite if it can be approximated by finite equivalence sub-relations. The following is analogous to equivalence sub-relations of the natural equivalence relation on $\mstar$ (Subsection~\ref{subsec:Borel}):

\begin{definition}
	A \defstyle{factor partition} $\Pi$ is a map that assigns to every $(X,\mu)$ a partition of $X$ such that the map is invariant under isomorphisms and, if $\Pi(o)$ denotes the element containing $o\in X$, then $\{(X,o,y,\mu): y\in \Pi(o)\}$ is a measurable subset of $\mdoublestar$. It is called \defstyle{finite} if every element of $\Pi$ has finite measure under $\mu$ (but need not be a finite or bounded set). A \defstyle{factor sequence of nested partitions} $(\Pi_n)_n$ is defined similarly by the condition that the map $(X,o,y,\mu)\mapsto \min\{n: y\in \Pi_n(o)\}$ is measurable.
\end{definition}

If $(X,\mu)$ has nontrivial automorphisms, not all partitions of $X$ can appear in the above definition. So, we allow extra randomness. But due to topological issues to define a \textit{random partition} on $X$ (see Subsection~\ref{subsec:extra}), we use another form of extra randomness as follows.

\begin{definition}
	An \defstyle{equivariant (random) partition} is defined similarly to factor partitions with the difference that the partition can be a factor of $(X,o,\Phi)$, where $\Phi$ is an equivariant random additional structure as in Subsection~\ref{subsec:extra}. An \defstyle{equivariant nested sequence of partitions} is defined similarly.
\end{definition}

\begin{remark}
	In fact, the arguments in Subsection~\ref{subsec:amenable-proofs} show that it is enough that the partition is a factor of the \textit{marked Poisson point process} (see Example~\ref{ex:poisson} and Subsection~\ref{subsec:amenable-proofs}). Also, if there is no nontrivial automorphism a.s., then factor partitions are enough.
\end{remark} 

These definitions allow us to define the following three forms of hyperfiniteness, which will be seen to be equivalent.

\begin{align}
	&&\nonumber\text{There exist equivariant nested finite partitions } \Pi_n\\ 
	\tag{HF1}\label{HF1}&& \text{such that } \myprob{\bigcup_n \Pi_n(\bs o)=\bs X}=1.
\end{align}

This does not imply that $\Pi_n(\bs o)$ contains a large neighborhood of $\bs o$. The following condition is another form of hyperfiniteness.

\begin{align}
	&&\nonumber\text{There exist equivariant nested finite partitions } \Pi_n \\ 
	\tag{HF2}\label{HF2}&& \text{such that }\forall r<\infty: \myprob{\exists n: B_r(\bs o)\subseteq \Pi_n(\bs o)}=1.
\end{align}

Hyperfiniteness can also be phrased in terms of a single partition as follows.

\begin{align}
	\nonumber\forall r<\infty, \forall \epsilon>0, \text{ there exists an equivariant finite partition } \Pi\\
	 \text{such that } \myprob{B_r(\bs o)\not\subseteq \Pi(\bs o)}<\epsilon. 
	\tag{HF3}\label{HF3}
\end{align}

\subsubsection{Definition by The Folner Condition}

The Folner condition is a combinatorial way to define amenability of groups (and deterministic graphs); i.e., the existence of a finite set $A$ such that the \textit{boundary} of $A$ is arbitrarily small compared to $A$. Modified versions of this condition are provided for countable Borel equivalence relations (based on equivariant graphings; see~\cite{CoFeWi81} and~\cite{Ka97}) and for unimodular graphs (Definition~8.1 of~\cite{processes}). In particular, $A$ is required to be an element of some factor partition. The extension to the continuum setting is not straightforward. Here, we provide two Folner-type conditions as follows.
For $A\subseteq \bs X$ and $r\geq 0$, let $\partial_r A:=\{y\in A: B_r(y)\not\subseteq A \}$ denote the \textit{inner $r$-boundary} of $A$. 

\begin{align}
	\nonumber\forall r<\infty, \forall \epsilon>0, \text{ there exists an equivariant finite partition } \Pi\\
	\text{such that } \omid{\frac{\bs \mu(\partial_r\Pi(\bs o))}{\bs \mu(\Pi(\bs o))}}<\epsilon. 
	\tag{FO1}\label{FO1}
\end{align}

The MTP implies easily that this condition is equivalent to~\eqref{HF3} (see the proof of Theorem~\ref{thm:amenable}).

\begin{align}
	&&\nonumber\text{There exist equivariant nested finite partitions } \Pi_n \text{ such that}\\ 
	\tag{FO2}\label{FO2}&& \forall r: \frac{\bs \mu(\partial_r\Pi_n(\bs o))}{\bs \mu(\Pi_n(\bs o))}\to 0,\quad a.s.
\end{align}

It is not clear to the author whether one can use the \textit{outer boundary} or the full \textit{boundary} in the above definitions or not.





\subsubsection{Equivalence of the Definitions}

The following is the main result of this subsection.

\begin{theorem}
	\label{thm:amenable}
	For unimodular \rmm spaces, the conditions~\eqref{LM}, \eqref{AM}, \eqref{HF1}, \eqref{HF2}, \eqref{HF3}, \eqref{FO1} and~\eqref{FO2} are equivalent. 
\end{theorem}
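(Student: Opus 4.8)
The seven conditions split into a block of \emph{elementary} implications, provable from the mass transport principle~\eqref{eq:unimodular} together with routine manipulations of partitions, and one \emph{hard} implication --- the analogue of the Connes--Feldman--Weiss theorem (amenable $\Rightarrow$ hyperfinite) --- which is obtained by transferring the problem to a genuine countable Borel equivalence relation built from a Poisson point process. In the trivial case $\bs\mu(\bs X)<\infty$ a.s.\ all seven conditions hold (take the one-cell partition $\Pi(\cdot)=\bs X$, for which every inner boundary is empty, and the mean $f\mapsto\bs\mu(\bs X)^{-1}\int f\,d\bs\mu$), so one may assume $\bs\mu(\bs X)=\infty$ a.s. All details beyond this sketch are deferred to Subsection~\ref{subsec:amenable-proofs}.

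For the elementary part, the key identity is the following consequence of~\eqref{eq:unimodular}. Let $\Pi$ be an equivariant finite partition and $r\geq 0$; cells of zero $\bs\mu$-measure being negligible by Lemma~\ref{lem:happensatroot}, apply the MTP to
\[
	g(X,u,v,\mu):=\frac{1}{\mu(\Pi(u))}\indic{v\in\partial_r\Pi(u)},
\]
for which $g^+(\bs o)=\bs\mu(\partial_r\Pi(\bs o))/\bs\mu(\Pi(\bs o))$ and, since $\partial_r\Pi(v)\subseteq\Pi(v)$, one checks $g^-(\bs o)=\indic{\bs o\in\partial_r\Pi(\bs o)}$. Hence
\[
	\omid{\frac{\bs\mu(\partial_r\Pi(\bs o))}{\bs\mu(\Pi(\bs o))}}=\myprob{\bs o\in\partial_r\Pi(\bs o)}=\myprob{B_r(\bs o)\not\subseteq\Pi(\bs o)}.
\]
This gives at once \eqref{HF3}$\Leftrightarrow$\eqref{FO1}; applied to a nested sequence $(\Pi_n)_n$, together with bounded convergence and the monotone continuity of probability along the increasing events $\{B_r(\bs o)\subseteq\Pi_n(\bs o)\}$, it gives \eqref{FO2}$\Rightarrow$\eqref{HF2}. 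The remaining elementary links are immediate: \eqref{HF2}$\Rightarrow$\eqref{HF1} and \eqref{HF2}$\Rightarrow$\eqref{HF3}; then \eqref{HF1}$\Rightarrow$\eqref{AM} by taking $\lambda_n(y,z):=\bs\mu(\Pi_n(y))^{-1}\indic{z\in\Pi_n(y)}$ (restricted to the $\bs\mu$-full set of $y$'s lying in a cell of positive finite measure), since for fixed $y$ one has $y\in\Pi_n(\bs o)$ for all large $n$, whence $\lambda_n(\bs o,\cdot)=\lambda_n(y,\cdot)$ eventually; and \eqref{AM}$\Rightarrow$\eqref{LM} by an ultralimit of the states $f\mapsto\int\lambda_n(o,\cdot)f\,d\mu$.

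It remains to close the cycle, e.g.\ via \eqref{LM}$\Rightarrow$\eqref{FO2}, and this is where Palm theory enters. From the marked Poisson point process $\Phi$ on $[\bs X,\bs o,\bs\mu]$ (Example~\ref{ex:poisson}) and its Palm version, Section~\ref{sec:Palm-application} produces a countable Borel equivalence relation $\mathcal R$ --- the re-rooting relation on the resulting unimodular discrete space, the Poisson marks breaking all automorphisms --- carrying the Palm probability measure, which is $\mathcal R$-invariant by the Mecke-type formula of Section~\ref{sec:Palm}. One then sets up a dictionary between the two settings: a state on $L^\infty(\bs X,\bs\mu)$ yields an $\mathcal R$-invariant mean by restricting to functions averaged over the Voronoi-type cells of $\Phi$, and conversely an $\mathcal R$-invariant mean is pushed to a local mean on $\bs X$ via the map sending each point of $\bs X$ to its nearest point of $\Phi$; likewise, equivariant finite partitions of $\bs X$ correspond to finite Borel subequivalence relations of $\mathcal R$ through the nearest-point map, and the inner $r$-boundaries measured by $\bs\mu$ are controlled, within bounded windows, by the a.s.\ locally finite combinatorics of $\Phi$. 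Consequently \eqref{LM} holds iff $\mathcal R$ admits an invariant mean, \eqref{HF1}--\eqref{HF3} iff $\mathcal R$ is hyperfinite, and \eqref{FO1}--\eqref{FO2} iff $\mathcal R$ satisfies the Folner condition; the classical equivalence of these properties for countable Borel equivalence relations with an invariant measure (\cite{CoFeWi81}, \cite{Ka97}; see also Section~8 of~\cite{processes}) then supplies all the missing implications simultaneously.

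The main obstacle is exactly this dictionary: one must check that replacing $\bs\mu$ by the counting measure on $\Phi$ does not distort the asymptotics that matter --- in particular that inner boundaries and Folner ratios computed with $\bs\mu$ are comparable to those of the induced partition of $\Phi$, uniformly enough to transfer limits in both directions --- that the Poisson marks genuinely make $\mathcal R$ a countable Borel equivalence relation with no surviving automorphisms, and that the Palm measure is the correct invariant measure. The first of these is the only point where the non-discrete nature of $\bs X$ requires real work; the other two are applications of the results of Section~\ref{sec:Palm}.
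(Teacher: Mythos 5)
Your overall strategy is the paper's: build a countable Borel equivalence relation from the i.i.d.-marked Poisson point process, make its Palm distribution the invariant measure, use the Voronoi map (the paper additionally uses a balancing transport density $k$ with $\sum_z k(x,z)=1$ and $\int k(\cdot,y)\,d\bs\mu=1$ to get the exact normalizations in the mean and approximate-mean transfers, but your Voronoi-cell averaging can be normalized to serve the same purpose), and invoke \cite{CoFeWi81}/\cite{Ka97}. Your extra elementary implications \eqref{FO2}$\Rightarrow$\eqref{HF2} and \eqref{HF1}$\Rightarrow$\eqref{AM}$\Rightarrow$\eqref{LM} are correct in substance and are shortcuts the paper does not take (it routes \eqref{LM} and \eqref{AM} through the CBER); note only that \eqref{AM} requires the $\lambda_n$ to be \emph{factor} functions on $\mdoublestar$ while \eqref{HF1} allows equivariant random partitions, so you must average $\bs\mu(\Pi_n(y))^{-1}\indic{z\in\Pi_n(y)}$ over the extra randomness (conditionally on $[\bs X,\bs o,\bs\mu]$) before it qualifies.

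There are two genuine gaps. First, your plan has no secured route \emph{into} \eqref{FO2}: your elementary block only gives \eqref{FO2}$\Rightarrow$\eqref{HF2} and \eqref{HF3}$\Leftrightarrow$\eqref{FO1}, so the only way you ever derive \eqref{FO2} is the claimed dictionary entry ``\eqref{FO1}--\eqref{FO2} iff $\mathcal R$ is Folner,'' which requires exactly the comparison you flag as the hard point, namely relating boundary ratios measured by $\bs\mu$ to boundary ratios measured by the counting measure on $\Phi$; there is no uniform comparability between these two measures in bounded windows, and the paper deliberately never transfers the Folner conditions to the CBER. Instead it closes the Folner block entirely on the continuum side: \eqref{FO1}$\Rightarrow$\eqref{FO2} by a refinement/subsequence argument producing nested partitions with almost-sure convergence, and entry into the block through \eqref{HF3}$\Leftrightarrow$\eqref{FO1} (your MTP identity), with the CBER used only to produce \eqref{HF2}-type partitions via $\tau^{-1}$. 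You should either supply \eqref{FO1}$\Rightarrow$\eqref{FO2} directly or drop the Folner half of the dictionary. Second, you assume throughout that the Poisson process is simple and that nearest points are well defined, which fails when $\bs\mu$ has atoms; the paper handles this by first replacing $[\bs X,\bs o,\bs\mu]$ with $[\bs X\times[0,1],\bs o',\bs\mu\times\mathrm{Leb}]$ (keeping $\bs X\times\{0\}$ as a distinguished subset) to destroy the atoms, and this reduction needs to appear somewhere in your argument.
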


\begin{definition}[Amenability]
	A unimodular \rmm space is called \defstyle{amenable} if the equivalent conditions in Theorem~\ref{thm:amenable} hold.
\end{definition}

The proof of the analogous results for countable groups and for countable Borel equivalence relations (Theorem~1 of~\cite{Ka97}) is heavily based on the discreteness.
So, these proofs cannot be directly extended to unimodular \rmm spaces. We will prove the above theorem by reducing it to the analogous result for an specific countable Borel equivalence relation. The reduction is based on Palm theory developed in Section~\ref{sec:Palm} and the proof is postponed to Subsection~\ref{subsec:amenable-proofs}. In short, a countable Borel equivalence relation is constructed using the \textit{marked Poisson point process} on $\bs X$ (Example~\ref{ex:poisson}). An invariant measure is constructed by the Palm distribution (Example~\ref{thm:PalmofPoisson}). Then, the reduction is proved using the \textit{Voronoi tessellation} and \textit{balancing transport kernels} (later: cross ref).
	
Here, we only prove the implications that do not rely on Section~\ref{sec:Palm}:

\begin{lemma}
	\label{lem:hyperfinite}
	 \eqref{HF2} $\Rightarrow$ \eqref{HF3} $\Rightarrow$ \eqref{HF1} and \eqref{HF3} $\Leftrightarrow$ \eqref{FO1} $\Leftrightarrow$ \eqref{FO2}.
%
\end{lemma}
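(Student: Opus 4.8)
The plan is to close the cycle of implications, separating the two that require an actual construction from the four that are essentially immediate.

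\emph{The immediate implications.} For \eqref{HF2}$\Rightarrow$\eqref{HF3}: since the $\Pi_n$ are nested, for a fixed $r$ the events $\{B_r(\bs o)\subseteq\Pi_n(\bs o)\}$ increase in $n$ and their union has probability one, so choosing $n$ large and putting $\Pi:=\Pi_n$ works. For \eqref{FO2}$\Rightarrow$\eqref{FO1}: the ratio $\bs\mu(\partial_r\Pi_n(\bs o))/\bs\mu(\Pi_n(\bs o))$ is bounded by $1$, so its a.s.\ convergence to $0$ upgrades to convergence in $L^1$, and again $\Pi:=\Pi_n$ for large $n$ does the job. The equivalence \eqref{HF3}$\Leftrightarrow$\eqref{FO1} is even more direct: for a fixed equivariant finite partition $\Pi$, apply the MTP~\eqref{eq:unimodular} to the transport function $g(x,y):=\bs\mu(\Pi(x))^{-1}\indic{y\in\partial_r\Pi(x)}$. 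Since $\partial_r\Pi(x)\subseteq\Pi(x)$, one gets $g^+(\bs o)=\bs\mu(\partial_r\Pi(\bs o))/\bs\mu(\Pi(\bs o))$, while in $g^-(\bs o)$ only those $x$ with $\Pi(x)=\Pi(\bs o)$ contribute and the integral of $\bs\mu(\Pi(\bs o))^{-1}$ over that cell equals $1$, so $g^-(\bs o)=\indic{\bs o\in\partial_r\Pi(\bs o)}=\indic{B_r(\bs o)\not\subseteq\Pi(\bs o)}$. Hence $\omid{\bs\mu(\partial_r\Pi(\bs o))/\bs\mu(\Pi(\bs o))}=\myprob{B_r(\bs o)\not\subseteq\Pi(\bs o)}$; the conditions in \eqref{HF3} and \eqref{FO1} are literally the same condition on the triple $(\Pi,r,\epsilon)$.

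\emph{The constructions.} What remains is \eqref{HF3}$\Rightarrow$\eqref{HF1} and \eqref{FO1}$\Rightarrow$\eqref{FO2}; these I would prove simultaneously by manufacturing one nested sequence. Fix a summable sequence $\epsilon_n\downarrow 0$ and build equivariant finite partitions $\Pi_1\leq\Pi_2\leq\cdots$ recursively. Given $\Pi_{n-1}$, invoke \eqref{HF3} (equivalently \eqref{FO1}) with a radius $R_n$ much larger than $n$ and than a diameter bound for the cells of $\Pi_{n-1}$, obtaining an equivariant finite partition $Q$ with $\myprob{B_{R_n}(\bs o)\not\subseteq Q(\bs o)}<\epsilon_n$, and let $\Pi_n$ be the coarsening of $\Pi_{n-1}$ that merges all $\Pi_{n-1}$-cells lying inside a common $Q$-cell, leaving every other $\Pi_{n-1}$-cell untouched. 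This is a genuine equivariant partition, it is coarser than $\Pi_{n-1}$, and since each new cell is contained in a $Q$-cell it is finite. On the event $\{B_{R_n}(\bs o)\subseteq Q(\bs o)\}$, if $R_n$ dominates the reach of the $\Pi_{n-1}$-cells meeting $B_n(\bs o)$, then $B_n(\bs o)\subseteq\Pi_n(\bs o)$. By Borel--Cantelli, almost surely $B_n(\bs o)\subseteq\Pi_n(\bs o)$ for all large $n$; since $\bs X=\bigcup_n B_n(\bs o)$ (distances from $\bs o$ are finite), this gives $\bigcup_n\Pi_n(\bs o)=\bs X$ a.s., which is \eqref{HF1}. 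Moreover, for each fixed $r$ the MTP identity above gives $\omid{\bs\mu(\partial_r\Pi_n(\bs o))/\bs\mu(\Pi_n(\bs o))}=\myprob{B_r(\bs o)\not\subseteq\Pi_n(\bs o)}\leq\epsilon_n$ for $n\geq r$, so by Markov's inequality and Borel--Cantelli the ratio tends to $0$ a.s., which is \eqref{FO2}.

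\emph{The main obstacle.} The delicate point is the coarsening step. One cannot simply pass to the partition generated by $\Pi_{n-1}$ together with $Q$: this transitive closure can merge an unbounded chain of cells and produce a cell of infinite $\bs\mu$-measure even though $\Pi_{n-1}$ and $Q$ have only finite cells (already visible for two interleaved interval partitions of $\mathbb Z$). Merging only the $\Pi_{n-1}$-cells that lie \emph{entirely} inside a single $Q$-cell keeps all cells finite, but then, to be sure $B_n(\bs o)$ is captured, one needs the $\Pi_{n-1}$-cells touching $B_n(\bs o)$ to be small compared with $R_n$; propagating such a diameter control along the recursion — each $\Pi_n$ must remain coarser than $\Pi_{n-1}$ yet have cells of controlled diameter — is the real work. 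This is handled by interleaving at each stage a refinement of the merged partition along an equivariant net of a suitably large scale and checking, once more via the MTP, that the root falls close to a net boundary (hence risks losing the intended capture) only with small probability. This is bookkeeping rather than a new idea, but it is why the lemma is not a one-line consequence of \eqref{HF3}; everything else is the MTP together with Borel--Cantelli.
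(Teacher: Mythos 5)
Your treatment of the four ``immediate'' implications is correct and matches the paper: \eqref{HF2}$\Rightarrow$\eqref{HF3} and \eqref{FO2}$\Rightarrow$\eqref{FO1} are as you say, and your MTP identity $\omid{\bs\mu(\partial_r\Pi(\bs o))/\bs\mu(\Pi(\bs o))}=\myprob{B_r(\bs o)\not\subseteq\Pi(\bs o)}$ (the paper uses the transposed transport function) is exactly the right bridge between \eqref{HF3} and \eqref{FO1}.

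The gap is in your construction for \eqref{HF3}$\Rightarrow$\eqref{HF1} and \eqref{FO1}$\Rightarrow$\eqref{FO2}, and it comes from going in the wrong direction: you try to build the nested sequence by recursive \emph{coarsening}, which forces you to control cell diameters, and that control is not available. A cell of an equivariant finite partition has finite $\bs\mu$-measure but need not be bounded (the paper's definition says so explicitly), so there is no ``diameter bound for the cells of $\Pi_{n-1}$'' to feed into the choice of $R_n$; and your proposed repair --- refining the merged partition along an equivariant net --- is incompatible with the requirement that $\Pi_n$ be coarser than $\Pi_{n-1}$, so the recursion as described cannot be closed. The missing idea is that nestedness should be obtained by \emph{refinement of tails}, not coarsening: take $\Pi_n$ from \eqref{HF3} with $r:=n$, $\epsilon:=2^{-n}$, and set $\Pi'_n(y):=\bigcap_{m\geq n}\Pi_m(y)$ (the superposition, or common refinement, of $\Pi_n,\Pi_{n+1},\dots$). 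Since ``nested'' in this paper means $\Pi'_n(o)\subseteq\Pi'_{n+1}(o)$, intersecting over fewer partitions as $n$ grows gives exactly the right monotonicity; each cell of $\Pi'_n$ sits inside a cell of $\Pi_n$, so finiteness is automatic and no merging step (and hence no infinite-chain pathology) ever arises. Borel--Cantelli gives an a.s.\ finite $N$ with $B_m(\bs o)\subseteq\Pi_m(\bs o)$ for all $m\geq N$, whence $\Pi'_n(\bs o)\supseteq B_n(\bs o)$ for $n\geq N$ and $\bigcup_n\Pi'_n(\bs o)=\bs X$, which is \eqref{HF1}; the same estimate gives $\myprob{B_r(\bs o)\not\subseteq\Pi'_n(\bs o)}\leq 2^{-n+1}$ for $n\geq r$, and your MTP identity plus summability then yields \eqref{FO2}. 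With this replacement the rest of your argument goes through verbatim.
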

\begin{proof}
	The implication \eqref{HF2} $\Rightarrow$ \eqref{HF3} is clear.
	
	\eqref{HF3} $\Rightarrow$ \eqref{HF1}. Let $\Pi_n$ be the equivariant partition given by~\eqref{HF3} for $r:=n$ and $\epsilon:=2^{-n}$. Then, one can let $\Pi'_n$ be the superposition of $\Pi_n,\Pi_{n+1},\ldots$ and use the Borel Cantelli lemma to deduce~\eqref{HF1}.
	
	\eqref{HF3} $\Leftrightarrow$ \eqref{FO1}. By letting $g(o, y):= 1/{\mu(\Pi(o))}\identity{\{y\in \Pi(o)\}}\identity{\{B_r(o)\not\subseteq \Pi(o) \}}$, the MTP gives that $\myprob{B_r(\bs o)\not\subseteq \Pi(\bs o)} = \omid{\bs \mu(\partial_r\Pi(\bs o))/\bs \mu(\Pi(\bs o))}$. 
	
	\eqref{FO1} $\Leftrightarrow$ \eqref{FO2}. The $\Rightarrow$ part is obtained by taking a subsequence such that the almost sure convergence holds. The $\Leftarrow$ part is obtained by the bounded converge theorem noting that $\partial_r A\subseteq A$.
	%
%
%
%
%
%
%
%
%
%
\end{proof}

\begin{remark}
	Theorem~1 of~\cite{Ka97} assumes ergodicity, but the claim holds for non-ergodic cases as well. In fact, in each of the definitions, $[\bs X, \bs o, \bs \mu]$ is amenable if and only if almost all of its ergodic components are amenable.
\end{remark}

\section{Palm Theory on Unimodular \rmm Spaces}
\label{sec:Palm}

Recall from Subsection~\ref{subsec:pointprocess} that the Palm version of a stationary point process (or random measure) is defined and means heuristically re-rooting to a \textit{typical point} of the point process.
In this section, we generalize Palm theory to unimodular \rmm spaces. Here, a unimodular \rmm space $[\bs X, \bs o, \bs \mu]$ is thought of as a generalization of the Euclidean or hyperbolic spaces. The Palm theory is intended for random measures $\Phi$ on $\bs X$ which are chosen \textit{equivariantly} (e.g., the Poisson point process with intensity measure $\bs \mu$). The notion of equivariant random measures is discussed in Subsection~\ref{subsec:equivmeasure} and the Palm theory is given in the next subsections.

\subsection{Additional Random Measures on a Unimodular Random \rmm Space}
\label{subsec:equivmeasure}
For $k\geq 1$, let $\mkstar{k}$ be the space of all tuples $(X,o,\mu_1,\ldots,\mu_k)$, where $(X,o,\mu_1)$ is a \rmm space and $\mu_2,\ldots,\mu_k$ are measures on $X$. Likewise, let $\mkdoublestar{k}$ be the space of all doubly-rooted tuples $(X,o_1,o_2,\mu_1,\ldots,\mu_k)$. By the discussion in Subsection~\ref{subsec:extra} and using the results of~\cite{Kh19generalization}, one can equip $\mkstar{k}$ and $\mkdoublestar{k}$ with generalizations of the GHP metric such that they are Polish spaces.

Let $[\bs X, \bs o, \bs \mu]$ be a unimodular \rmm space. Roughly speaking, an \textit{equivariant random measure on $\bs X$} is assigning an additional measure $\Phi$ on $\bs X$ (in every realization of $[\bs X, \bs o, \bs \mu]$), possibly using extra randomness, in a way that unimodularity is preserved. Intuitively, the law of $\Phi$ (given $[\bs X, \bs o, \bs \mu]$) should not depend on the root, should be isomorphism-invariant and satisfy some measurability condition. More precisely:

\begin{definition}
	\label{def:equivmeasure}
	An \defstyle{equivariant random measure} $\Phi$ is a map that assigns to every deterministic \rmm space $(X,o,\mu)$ a random measure $\Phi_{(X,o,\mu)}$ on $X$ such that:
	\begin{enumerate}[(i)]
		\item \label{def:equivmeasure:i} For all $(X,o,\mu)$ and all $y\in X$, $\Phi_{(X,y,\mu)}\sim \Phi_{(X,o,\mu)}$,
		\item If $\rho:(X,o,\mu)\to(X',o',\mu')$ is an isomorphism, then $\rho_*\Phi_{(X,o,\mu)}\sim \Phi_{(X',o',\mu')}$,
		\item For all Borel subsets $A\subseteq \mkstar{2}$, the map $(X,o,\mu)\mapsto \myprob{[X,o,\mu,\Phi_{(X,o,\mu)}]\in A}$ is measurable.
	\end{enumerate}
	In addition, if $[\bs X, \bs o, \mu]$ is a unimodular \rmm space, an \defstyle{equivariant random measure on $\bs X$} is a map with the above conditions relaxed to be defined on an invariant event with full probability. We denote $\Phi_{(\bs X, \bs o, \bs \mu)}$ simply by $\Phi$ for brevity.
	
	As usual, if for all $(X,o,\mu)$, $\Phi_{(X,o,\mu)}$ is a counting measure a.s., $\Phi$ is called an \defstyle{equivariant (simple) point process} and if $\Phi_{(X,o,\mu)}(\cdot)\in\mathbb Z$ a.s., it is called an \defstyle{equivariant (non-simple) point process}.
\end{definition}

By integrating the distribution of $\Phi$ over the distribution of $[\bs X, \bs o, \bs \mu]$, one obtains a probability measure on $\mkstar{2}$ (similarly to (2.3) of~\cite{I}). This determines a random object of $\mkstar{2}$. By an abuse of notation, we denote the latter by $[\bs X, \bs o, \bs \mu, \Phi]$ and we use the same symbols $\mathbb P$ and $\mathbb E$ for its distribution. It is easy to deduce that $[\bs X, \bs o, \bs \mu, \Phi]$ is unimodular in the sense of Subsection~\ref{subsec:extra} (see Lemma~\ref{lem:equivmeasure}); i.e., 


\begin{definition}
	A random element $[\bs Y, \bs p, \bs \mu_1, \bs \mu_2]$ on $\mkstar{2}$ is \defstyle{unimodular} if the following MTP holds for all measurable functions $g:\mkdoublestar{2}\to\mathbb R^{\geq 0}$:
	\[
		\omid{\int_{\bs Y} g(\bs p, z)d\bs\mu_1(z)} = \omid{\int_{\bs Y} g(z,\bs p)d\bs\mu_1(z)}.
	\]
	Note that the integral is against $\bs \mu_1$, but $g$ can depend on $\bs \mu_2$ as well.
\end{definition}

One can also extend the above definition to define a \defstyle{jointly equivariant} pairs (or tuples) of random measures $(\Phi_{(X,o,\mu)},\Psi_{(X,o,\mu)})$ and the results of this section remain valid.

\subsubsection{An Equivalent Definition}
\label{subsec:equivmeas2}

A simpler definition of equivariant random measures on $\bs X$ would be a unimodular tuple $[\bs Y, \bs p, \bs \mu_1, \bs \mu_2]$ such tat $[\bs Y, \bs p, \bs \mu_1]$ has the same distribution as $[\bs X, \bs o, \bs \mu]$. Indeed, the two definitions are equivalent in the following sense.

\begin{lemma}
	\label{lem:equivmeasure}
	Let $[\bs X, \bs o, \bs \mu]$ be a nontrivial unimodular \rmm space.
	If $\Phi$ is an equivariant random measure on $\bs X$, then $[\bs X, \bs o, \bs \mu, \Phi]$ is unimodular. Conversely, if $[\bs Y, \bs p, \bs \mu_1, \bs \mu_2]$ is unimodular and $[\bs Y, \bs p, \bs \mu_1]\sim [\bs X, \bs o, \bs \mu]$, then there exists an equivariant random measure $\Phi$ such that $[\bs Y, \bs p, \bs \mu_1, \bs \mu_2]\sim [\bs X, \bs o, \bs \mu, \Phi]$.
\end{lemma}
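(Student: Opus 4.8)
For the first implication, the plan is to reduce the mass transport principle for $[\bs X,\bs o,\bs\mu,\Phi]$ on $\mkdoublestar{2}$ to the one for $[\bs X,\bs o,\bs\mu]$ by averaging out $\Phi$. Given a transport function $g:\mkdoublestar{2}\to\mathbb R^{\geq 0}$, I would set
\[
\tilde g(X,u,v,\mu):=\omid{g\bigl(X,u,v,\mu,\Phi_{(X,u,\mu)}\bigr)},
\]
and first check that $\tilde g$ is a bona fide transport function on $\mdoublestar$: nonnegativity is clear, isomorphism invariance follows from part~(ii) of Definition~\ref{def:equivmeasure}, measurability from part~(iii), and part~(i) guarantees that the value does not change if $\Phi$ is generated from a different root, so $\tilde g$ is well defined. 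Then, using Tonelli's theorem together with the fact that the conditional law of $\Phi$ given $[\bs X,\bs o,\bs\mu]=[X,o,\mu]$ is the law of $\Phi_{(X,o,\mu)}$ (which is how $[\bs X,\bs o,\bs\mu,\Phi]$ was constructed), I would obtain
\[
\omid{\int_{\bs X}g(\bs o,x)\,d\bs\mu(x)}=\omid{\int_{\bs X}\tilde g(\bs o,x)\,d\bs\mu(x)},\qquad
\omid{\int_{\bs X}g(x,\bs o)\,d\bs\mu(x)}=\omid{\int_{\bs X}\tilde g(x,\bs o)\,d\bs\mu(x)},
\]
the second identity again invoking part~(i) (conditioning on the base rooted at $\bs o$ and generating $\Phi$ there has the same effect as generating $\Phi$ from a generic point). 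Applying~\eqref{eq:unimodular} to $\tilde g$ then finishes this implication; nontriviality is not needed here.

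For the converse, let $\mathbb Q$ be the law of $[\bs Y,\bs p,\bs\mu_1,\bs\mu_2]$ on $\mkstar{2}$, let $\mathbb P$ be the common law of $[\bs Y,\bs p,\bs\mu_1]$ and $[\bs X,\bs o,\bs\mu]$ on $\mstar$, and let $\pi:\mkstar{2}\to\mstar$ be the forgetful map, so $\pi_*\mathbb Q=\mathbb P$. Since the spaces are Polish, $\mathbb Q$ disintegrates as $\mathbb Q=\int_{\mstar}\rho_\xi\,d\mathbb P(\xi)$ with $\xi\mapsto\rho_\xi$ measurable and $\rho_\xi$ supported on $\pi^{-1}(\xi)$; thus $\rho_{[X,o,\mu]}$ is a regular version of the conditional law of $\bs\mu_2$ given that the underlying rooted space is $[X,o,\mu]$. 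The next step is to turn each $\rho_{[X,o,\mu]}$ into an honest random measure on a representative $X$. The fibre $\pi^{-1}([X,o,\mu])$ is the quotient of the Polish space of boundedly-finite Borel measures on $X$ by the action of $\mathrm{Aut}(X,o,\mu)$, the group of self-isometries of $X$ fixing $o$ and preserving $\mu$; the key observation is that $\mathrm{Aut}(X,o,\mu)$ is \emph{compact}, because it embeds continuously into the product of the (compact) isometry groups of the spheres $\{d(o,\cdot)=r\}$, $r\geq 0$. Hence $\rho_{[X,o,\mu]}$ lifts to an $\mathrm{Aut}(X,o,\mu)$-invariant probability measure on the space of measures on $X$ (lift arbitrarily and average over the normalized Haar measure of $\mathrm{Aut}(X,o,\mu)$; transitivity of the action on each fibre makes the result independent of the arbitrary lift), and I would declare this to be the law of $\Phi_{(X,o,\mu)}$. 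With this choice, conditions~(ii) and~(iii) of Definition~\ref{def:equivmeasure} hold by construction, and $[\bs X,\bs o,\bs\mu,\Phi]$ has law $\mathbb Q$ because we have merely reassembled the disintegration. The only thing left to verify is condition~(i).

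To establish condition~(i) — that the law just constructed does not depend on the root — I would fix a symmetric transport function $h$ with $h>0$ and $h^+=h^-=1$ from Lemma~\ref{lem:balancingkernel} (this uses nontriviality), a bounded continuous $\phi:\mkstar{2}\to[0,1]$, and bounded measurable weights $F_1$ on $\mstar$ and $F_2$ on $\mdoublestar$, and apply the mass transport principle for $[\bs Y,\bs p,\bs\mu_1,\bs\mu_2]$ to the transport function
\[
g(X,u,v,\mu,\nu):=F_1(X,u,\mu)\,F_2(X,u,v,\mu)\,\phi(X,v,\mu,\nu)\,h(X,u,v,\mu).
\]
Computing $g^{\pm}$, using $h^-=1$, and comparing with the parallel application of the MTP for $[\bs Y,\bs p,\bs\mu_1]$ to the $\nu$-free transport function in which $\phi(X,v,\mu,\nu)$ is replaced by its $\rho_{[X,v,\mu]}$-average $\Psi_\phi([X,v,\mu])$, I expect to arrive at
\[
\omid{F_1(\bs p)\int_{\bs Y}F_2(\bs p,v)\left(\Psi'_\phi(\bs p,v)-\Psi_\phi(v)\right)h(\bs p,v)\,d\bs\mu_1(v)}=0,
\]
where $\Psi'_\phi([X,o,v,\mu]):=\int\phi(X,v,\mu,\nu)\,d\rho_{[X,o,\mu]}(\nu)$ (well defined since $\rho_{[X,o,\mu]}$ is $\mathrm{Aut}(X,o,\mu)$-invariant). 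Because this holds for all bounded $F_1$, hence for $\mathbb P$-a.e.\ $[X,o,\mu]$ and all bounded $F_2$, and because the bracketed integrand times $h$ is constant on the fibres of $v\mapsto[X,o,v,\mu]$ while $h>0$, I would conclude $\Psi'_\phi([X,o,v,\mu])=\Psi_\phi([X,v,\mu])$ for $\mathbb P$-a.e.\ $[X,o,\mu]$ and $\mu$-a.e.\ $v$. Running this over a countable law-determining family of test functions $\phi$, and then propagating the a.e.\ statement to an invariant full-$\mathbb P$-measure event (using Lemma~\ref{lem:happensatroot}, after modifying $\rho$ on a null set and picking a root-consistent version by means of the re-rooting structure), would give $\Phi_{(X,y,\mu)}\sim\Phi_{(X,o,\mu)}$ for all $y$ on that event, which is condition~(i).

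The step I expect to be the main obstacle is not any single computation but the bookkeeping of the converse: choosing the disintegration, performing the compact-group average, and upgrading the almost-everywhere identities of the previous paragraph to an everywhere-valid, jointly measurable, root-consistent assignment $(X,o,\mu)\mapsto\Phi_{(X,o,\mu)}$, all at once. This is where Polishness of the marked GHP spaces and the measurable-selection machinery of~\cite{Kh19generalization} come in. One also has to check that conditions~(i) and~(ii) are mutually compatible: together they force the law of $\Phi_{(X,o,\mu)}$ to be invariant under \emph{every} automorphism of $(X,\mu)$, not just the compact stabilizer of the root, and it is precisely the MTP identity above that supplies this extra invariance. Granting these points, the remaining verifications are routine.
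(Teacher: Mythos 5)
Your first direction is essentially the intended one: average out $\Phi$ to get a transport function $\tilde g$ on $\mdoublestar$, check that property~(i) of Definition~\ref{def:equivmeasure} makes $\tilde g$ well defined and lets you condition on either root, and apply~\eqref{eq:unimodular}. That part is fine.

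The converse is where there is a genuine gap, and it sits exactly at the step you defer to "the MTP identity above." Two distinct problems arise. First, your MTP computation yields, for $\mathbb P$-a.e.\ $[X,o,\mu]$ and $\mu$-a.e.\ $v$, equality of the laws of the \emph{isomorphism classes} $[X,v,\mu,\Phi_{(X,o,\mu)}]$ and $[X,v,\mu,\Phi_{(X,v,\mu)}]$; condition~(i) requires equality of the laws of the measures themselves on $M(X)$, for \emph{every} $y\in X$, and upgrading from "a.e.\ $v$" and "isomorphism-class law" to "all $y$" and "law on $M(X)$" requires a canonical, measurable, root-consistent choice of version — which is precisely the thing being constructed, so the argument is circular at that point. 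Second, as you note yourself, conditions~(i) and~(ii) together force the law of $\Phi_{(X,o,\mu)}$ to be invariant under the full group $\mathrm{Aut}(X,\mu)$, which is generally non-compact (think of $\mathbb R^d$ with Lebesgue measure); averaging over the compact stabilizer $\mathrm{Aut}(X,o,\mu)$ cannot produce this, and no amount of a.e.\ identities in $v$ assembles the stabilizers at different roots into invariance under the whole group. The paper's proof does not attempt this route: it first destroys all automorphisms by superposing an i.i.d.-marked Poisson point process, uses the Palm distribution of that process to pass to a \emph{countable} Borel equivalence relation with an invariant measure, realizes that relation as the orbit equivalence relation of a countable group action via Feldman--Moore, and only then invokes Kallenberg's invariant disintegration theorem to get a single equivariant, root-consistent kernel (Lemmas~\ref{lem:disintegration1} and~\ref{lem:disintegration2}), finally averaging out the auxiliary Poisson marks. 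The invariant disintegration theorem is the external input that replaces your "bookkeeping," and your proposal has no substitute for it.
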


The proof of this lemma is based on the Palm theory developed in the next subsections and will be given in Subsection~\ref{subsec:disintegration} (the lemma will not be used in this section).
To prove the converse, one basically needs to consider the regular conditional distribution of $[\bs Y, \bs p, \bs \mu_1, \bs \mu_2]$ w.r.t. $[\bs Y, \bs p, \bs \mu_1]$. However, a difficult step is proving that a version of the conditional law exists which does not depend on the root (property~\eqref{def:equivmeasure:i} in Definition~\ref{def:equivmeasure}). 
This will be proved using the Palm theory and \textit{invariant disintegration}~\cite{disintegration}.

\begin{remark}
	The above alternative definition of equivariant measures is useful for weak convergence of equivariant random measures and tightness. See the following definition and corollary. It also enables us to regard $\bs \mu$ as an equivariant measure on (the Palm version of) $[\bs X, \bs o, \Phi]$, which will be explained in Subsection~\ref{subsec:Palm-unimodular}. In contrast, Definition~\ref{def:equivmeasure} is useful for explicit constructions and also for dealing with couplings of two equivariant random measures; e.g., to define the \textit{independent coupling} (conditional to $[\bs X, \bs o, \bs \mu]$) of two equivariant random measures (Example~\ref{ex:independentcoupling}).
\end{remark}


\begin{definition}
	Two equivariant random measures $\Phi$ and $\Psi$ on $\bs X$ are equivalent if $[\bs X, \bs o, \bs \mu, \Phi]\sim [\bs X, \bs o, \bs \mu, \Psi]$ (equivalently, on almost every realization of $[\bs X, \bs o, \bs \mu]$, one has $\Phi_{(\cdot)}\sim \Psi_{(\cdot)}$). Also, we say that $\Phi_n$ converges to $\Phi$ if $[\bs X, \bs o, \bs \mu, \Phi_n]$ converges weakly to $[\bs X, \bs o, \bs \mu, \Phi]$.
\end{definition}

\begin{corollary}
	Let $b:\mathcal M_*\times\mathbb N\to\mathbb R$ be a lower semi-continuous function (e.g., $(X,o,\mu,n)\mapsto \mu(B_n(o))$). Then, the set of equivariant random measures $\Phi$ on $\bs X$ such that $\forall n: \Phi(\oball{n}{\bs o})\leq b([\bs X, \bs o, \bs \mu], n)$ a.s. is compact.
\end{corollary}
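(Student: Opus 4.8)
The plan is to realize the set in question as a subset of the space $\mathcal P(\mkstar{2})$ of probability measures on $\mkstar{2}$ (with the topology of weak convergence) and to show that this subset is closed and tight; compactness then follows from Prokhorov's theorem, since $\mkstar{2}$ is Polish (Subsection~\ref{subsec:equivmeasure}), hence $\mathcal P(\mkstar{2})$ is Polish. Concretely, let $\mathbb P$ be the fixed law of $[\bs X, \bs o, \bs \mu]$ and let $\pi:\mkstar{2}\to\mstar$ be the continuous projection forgetting the second measure. By Lemma~\ref{lem:equivmeasure}, sending an equivariant random measure $\Phi$ on $\bs X$ to the law of $[\bs X, \bs o, \bs \mu, \Phi]$ puts the equivalence classes of equivariant random measures on $\bs X$ in bijection with the unimodular $Q\in\mathcal P(\mkstar{2})$ having $\pi_* Q=\mathbb P$; moreover, by the definition of convergence of equivariant random measures, this bijection is a homeomorphism onto its image. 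Writing
\[
	D:=\bigl\{(X,o,\mu_1,\mu_2)\in\mkstar{2}:\ \mu_2(\oball{n}{o})\leq b\bigl([X,o,\mu_1],n\bigr)\ \text{ for all }n\bigr\},
\]
the set to be shown compact corresponds to $\mathcal Q:=\{Q\in\mathcal P(\mkstar{2}):\ Q\ \text{unimodular},\ \pi_* Q=\mathbb P,\ Q(D)=1\}$, so it suffices to prove that $\mathcal Q$ is closed and tight.

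For closedness, suppose $Q_k\to Q$ weakly with each $Q_k\in\mathcal Q$. Unimodularity on $\mkstar{2}$ is preserved under weak limits by the version of Lemma~\ref{lem:weaklimit} for spaces with an extra measure (Subsection~\ref{subsec:extra}), so $Q$ is unimodular; and $\pi_* Q=\lim_k\pi_*Q_k=\mathbb P$ because $\pi$ is continuous and $\pi_*Q_k\equiv\mathbb P$. For the mass condition I would observe that $\mu_2\mapsto\mu_2(\oball{n}{o})$ is lower semi-continuous on $\mkstar{2}$ (it is the mass of a fixed open ball under vague convergence, with the usual care because the base point and the ambient space move) and that $b(\cdot,n)$ is semi-continuous by hypothesis, so that $D$ is closed; hence $Q\mapsto Q(D)$ is upper semi-continuous and $Q(D)\geq\limsup_k Q_k(D)=1$. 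Thus $Q\in\mathcal Q$, and by the identification above $Q$ is again the law of $[\bs X, \bs o, \bs \mu, \Phi]$ for some equivariant random measure $\Phi$ satisfying the bound, so the original set is closed.

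For tightness, fix $\epsilon>0$. Since $\mathbb P$ is a single probability measure on the Polish space $\mstar$, there is a compact $K_0\subseteq\mstar$ with $\mathbb P(\mstar\setminus K_0)<\epsilon$. By semi-continuity, each $b(\cdot,n)$ is bounded above on $K_0$; set $M_n:=\max\{0,\sup_{K_0}b(\cdot,n)\}<\infty$. On the event $\{[X,o,\mu_1]\in K_0\}\cap D$ one has $\mu_2(\cball{n}{o})\leq\mu_2(\oball{n+1}{o})\leq b([X,o,\mu_1],n+1)\leq M_{n+1}$ for every $n$. I would then set
\[
	K:=\bigl\{(X,o,\mu_1,\mu_2)\in\mkstar{2}:\ [X,o,\mu_1]\in K_0,\ \mu_2(\cball{n}{o})\leq M_{n+1}\ \text{ for all }n\bigr\},
\]
which is closed (using that $\mu_2\mapsto\mu_2(\cball{n}{o})$ is upper semi-continuous and that $K_0$ is closed) and, by the compactness criterion for the generalized GHP topology of~\cite{Kh19ghp,Kh19generalization}, relatively compact: its $\pi$-image lies in the relatively compact $K_0$, so the underlying metric spaces have uniformly precompact balls, and on each such ball the measures $\mu_2$ have mass at most $M_{n+1}$, hence form a tight family. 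Thus $K$ is compact, and $Q(K)\geq Q\bigl(\{[X,o,\mu_1]\in K_0\}\cap D\bigr)=\pi_*Q(K_0)=\mathbb P(K_0)>1-\epsilon$ for every $Q\in\mathcal Q$ (using $Q(D)=1$). So $\mathcal Q$ is tight, and combined with closedness it is compact.

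The step I expect to be the main obstacle is the construction of the compact set $K$: invoking the compactness criterion for the generalized GHP topology on $\mkstar{2}$ and, relatedly, handling carefully the moving-base-point and moving-ambient-space technicalities behind the semi-continuity of $\mu_2\mapsto\mu_2(\oball{n}{o})$ and $\mu_2\mapsto\mu_2(\cball{n}{o})$. One should also pin down the direction of semi-continuity required of $b$: what the argument actually uses is that each $b(\cdot,n)$ is bounded above on compact subsets of $\mstar$ and that $D$ is closed.
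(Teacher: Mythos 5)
The paper states this corollary without proof, and your overall strategy---identifying the set, via Lemma~\ref{lem:equivmeasure} and the definition of convergence of equivariant random measures, with $\mathcal Q=\{Q\in\mathcal P(\mkstar{2}):Q\text{ unimodular},\ \pi_*Q=\mathbb P,\ Q(D)=1\}$ and proving closedness plus tightness---is surely the intended one. However, both places where you invoke the hypothesis on $b$ use the \emph{wrong direction} of semi-continuity, and this is a genuine gap, not a cosmetic one. (1)~Closedness: from ``$\mu_2\mapsto\mu_2(\oball{n}{o})$ is l.s.c.\ and $b(\cdot,n)$ is l.s.c.'' it does \emph{not} follow that $D=\{\mu_2(\oball{n}{o})\leq b(\cdot,n)\ \forall n\}$ is closed; that would require $b$ upper semi-continuous. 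Indeed $D$ fails to be closed already for the motivating example $b=\mu_1(\oball{n}{o})$: take $\mu_1^k=\dirac{1-1/k}\to\dirac{1}$ and $\mu_2^k=\dirac{1/2}$ on $\mathbb R$ with $n=1$. (2)~Tightness: an l.s.c.\ real-valued function is bounded \emph{below}, not above, on a compact set, so $M_n:=\sup_{K_0}b(\cdot,n)$ may be $+\infty$. Your closing remark correctly senses the mismatch but resolves it the wrong way: strengthening the hypothesis to u.s.c.-type conditions would exclude the stated example, whereas the corollary is in fact true as written, with l.s.c.\ being exactly the property the correct argument needs.

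Both steps can be repaired by exploiting the two features you already have in hand but do not use: the first marginal is \emph{pinned} to $\mathbb P$, and l.s.c.\ of $b$ makes the sublevel sets $\{b(\cdot,n)\leq c\}$ \emph{closed}. For tightness: since $b(\cdot,n)$ is finite, choose $M_n<\infty$ with $\mathbb P(b(\cdot,n)>M_n)<\epsilon 2^{-n}$ and replace $K_0$ by $K_0':=K_0\cap\bigcap_n\{b(\cdot,n)\leq M_n\}$, which is still compact precisely because these sublevel sets are closed; then your construction of $K$ over $K_0'$ goes through verbatim. For closedness: it suffices to show $Q\bigl(\{\mu_2(\oball{n}{o})>q\}\cap\pi^{-1}(C_{q'})\bigr)=0$ for all rationals $q'<q$, where $C_{q'}:=\{b(\cdot,n)\leq q'\}$ is closed; writing $N_\delta$ for the open $\delta$-neighborhood of $C_{q'}$ in $\mstar$, the set $\{\mu_2(\oball{n}{o})>q\}\cap\pi^{-1}(N_\delta)$ is open, so the portmanteau theorem together with $Q_k(D)=1$ and $\pi_*Q_k=\mathbb P$ gives $Q\bigl(\{\mu_2(\oball{n}{o})>q\}\cap\pi^{-1}(C_{q'})\bigr)\leq\liminf_kQ_k\bigl(\{\mu_2(\oball{n}{o})>q\}\cap\pi^{-1}(N_\delta)\bigr)\leq\mathbb P(N_\delta\setminus C_{q'})$, which tends to $0$ as $\delta\downarrow 0$ since $C_{q'}$ is closed; summing over $q'<q$ and $n$ yields $Q(D)=1$. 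With these two substitutions your proof is complete and matches the hypothesis as stated.
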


\begin{remark}
	\label{rem:disintegration}
	Lemma~\ref{lem:equivmeasure} can be generalized to the case where $\Phi$ is any other type of additional random structures on $\bs X$, discussed in Subsection~\ref{subsec:extra}, as long as the Polishness property holds. The same proof works in the general case, but for simplicity, we provide the proof for random measures only.
\end{remark}

\subsubsection{Examples}

For basic examples of equivariant random measures, one can mention $\Phi=0$, $\Phi=\mu$, $\Phi=b\mu$ (see~\eqref{eq:b mu}), or more generally, any \defstyle{factor measures}; i.e., when $\Phi_{(X,o,\mu)}$ is a deterministic function of $(X,\mu)$ satisfying the assumptions in Definition~\ref{def:equivmeasure}. The following are further examples.

\begin{example}[Stationary Random Measure]
	\label{ex:euclidean-equivariant}
	When $[\bs X, \bs o, \bs \mu]:=[\mathbb R^d, 0, \mathrm{Leb}]$, every stationary point process or random measure on $\mathbb R^d$ is an equivariant random measure on $\bs X$ in the sense of Subsection~\ref{subsec:equivmeas2}. However, in order to have the conditions in Definition~\ref{def:equivmeasure}, one should  assume that it is isometry-invariant (or just apply a random isometry fixing 0). By the modification mentioned in Remark~\ref{rem:lostaxis}, one can say that stationarity is equivalent to being an equivariant random measure on $\mathbb R^d$.
\end{example}

\begin{example}[Intensity Measure]
	If $\Phi$ is an equivariant random measure, then the \defstyle{intensity measure} of $\Phi$, defined by $\Psi_{(X,o,\mu)}(\cdot):= \omid{\Phi_{(X,o,\mu)}(\cdot)}$, is also equivariant. In fact, the intensity measure is a {factor measure}.
\end{example}

\begin{example}[Poisson Point Process]
	\label{ex:poisson}
	Let $\Phi$ be an equivariant random measure. The \defstyle{Poisson point process} with intensity measure $\Phi$ is an equivariant random measure (defined by considering in every realization of $\Phi_{(X,o,\mu)}$, the classical definition of the Poisson point process with intensity measure $\Phi_{(X,o,\mu)}$). In addition, $\Phi$ and the Poisson point process are jointly-equivariant. Note that if $\Phi$ has atoms, the the Poisson point process has multiple points and is not a simple point process. 
	\\
	One can prove that if $[\bs X, \bs o, \bs \mu]$ is ergodic and $\bs \mu(\bs X)=\infty$ a.s., then $[\bs X, \bs o, \bs \mu, \Phi]$ is also ergodic (this fails if $0<\bs \mu(\bs X)<\infty$). The proof is similar to Lemma~4 of~\cite{shift-coupling} and is skipped.
\end{example}

\begin{example}[Independent Coupling]
	\label{ex:independentcoupling}
	Let each of $\Phi$ and $\Psi$ be an equivariant random measure on $\bs X$. Using the definition of Subsection~\ref{subsec:equivmeas2}, it is not trivial to define a coupling of $\Phi$ and $\Psi$, but it is easy using Definition~\ref{def:equivmeasure}: For every deterministic $(X,o,\mu)$, consider an independent coupling of $\Phi_{(X,o,\mu)}$ and $\Psi_{(X,o,\mu)}$. Then, $(\Phi,\Psi)$ becomes jointly-equivariant and is called the \defstyle{independent coupling} (conditional to $[\bs X, \bs o, \bs \mu]$) of $\Phi$ and $\Psi$.
\end{example}

\subsection{Palm Distribution and Intensity}
\label{subsec:Palm}


Let $[\bs X, \bs o, \bs \mu]$ be a nontrivial unimodular \rmm space and $\Phi$ be an equivariant random measure on $\bs X$ (in the sense of either Definition~\ref{def:equivmeasure} or Subsection~\ref{subsec:equivmeas2}). Inspired by the analogous notions in stochastic geometry (Subsection~\ref{subsec:pointprocess}), we are going to define the Palm distribution and the intensity. The approach~\eqref{eq:PalmEuclidean} does not work here since the base space is not fixed. We will extend two other approaches by using the Campbell measure and tessellations.

The \defstyle{Campbell measure} $C_\Phi$ is the measure on $\mkdoublestar{2}$ defined by
\begin{equation}
	\nonumber
	C_{\Phi}(A):=\omid{\int_{\bs X}\identity{A}(\bs o, y) d\Phi(y)},
\end{equation}
where, as before, we use the abbreviation $\identity{A}(\bs o, y)$ for $\identity{A}(\bs X, \bs o, y, \bs \mu, \Phi)$. It is straightforward to see that $C_{\Phi}$ is a sigma-finite measure. The Palm distribution will be obtained by a disintegration of the Campbell measure as follows.

\begin{theorem}
	\label{thm:Palm}
	There exists a unique sigma-finite measure $Q_{\Phi}$ on $\mkstar{2}$ such that for all measurable functions $g:\mkdoublestar{2}\to\mathbb R^{\geq 0}$,
	\begin{equation}
		\label{eq:Palm1}
		\omid{\int_{\bs X}g(\bs o, y) d\Phi(y)} = \int g d C_{\Phi} = \int_{\mkstar{2}} \left(\int_{X}g(y,o) d\mu(y)\right) dQ_{\Phi}([X,o,\mu,\varphi]).
	\end{equation}
\end{theorem}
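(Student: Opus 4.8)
The plan is to build $Q_\Phi$ by hand as the law of $[\bs X,\bs o,\bs\mu,\Phi]$ ``re\nobreakdash-rooted to a point of $\Phi$ and reweighted by a probability kernel relative to $\bs\mu$'', and then to read off the displayed identity from a single use of the mass transport principle for $[\bs X,\bs o,\bs\mu,\Phi]$, which is unimodular on $\mkstar{2}$ in the sense of Subsection~\ref{subsec:extra} (the easy direction of Lemma~\ref{lem:equivmeasure}, noted after Definition~\ref{def:equivmeasure}). Note first that $\int g\,dC_\Phi=\omid{\int_{\bs X}g(\bs o,y)\,d\Phi(y)}$ is merely the definition of $C_\Phi$ together with linearity and monotone convergence, so the content is the third equality. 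The key gadget is a good kernel: since $[\bs X,\bs o,\bs\mu]$ is nontrivial, $\bs\mu\neq 0$ a.s., so in a.e.\ realization, for every $y\in\bs X$ the integer $N(y):=\min\{N\geq 0:\ \mu(\cball{N}{y})>0\}$ is finite with $0<\mu(\cball{N(y)}{y})<\infty$; define $f:\mkdoublestar{2}\to\mathbb R^{\geq 0}$ by $f(X,y,z,\mu,\varphi):=\mu(\cball{N(y)}{y})^{-1}\indic{d(y,z)\leq N(y)}$. Then $f$ is measurable, isomorphism-invariant, bounded by $m$ on $\{N(y)\leq m,\ \mu(\cball{N(y)}{y})\geq 1/m\}$, essentially localized near the first root ($f(y,z)=0$ unless $d(y,z)\leq N(y)$), and a \emph{probability kernel from every point relative to $\mu$}: $\int_X f(X,y,z,\mu,\varphi)\,d\mu(z)=1$ for all $y$ a.s. (This is the nested-ball construction of Lemma~\ref{lem:balancingkernel}, specialized.) These three features are exactly what the computation and the $\sigma$-finiteness argument need.

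For Borel $E\subseteq\mkstar{2}$ put
\[
Q_\Phi(E):=\omid{\int_{\bs X}\identity{E}\big([\bs X,y,\bs\mu,\Phi]\big)\,f(y,\bs o)\,d\Phi(y)}.
\]
Since $f$, $\identity{E}$ and $\Phi$ are isomorphism-equivariant, the integrand is a function of the isomorphism class $[\bs X,\bs o,\bs\mu,\Phi]$ alone, so $Q_\Phi$ is well defined; it is a measure by monotone convergence, and by approximation with simple functions $\int h\,dQ_\Phi=\omid{\int_{\bs X}h([\bs X,y,\bs\mu,\Phi])\,f(y,\bs o)\,d\Phi(y)}$ for every measurable $h\geq 0$ on $\mkstar{2}$. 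Now fix a measurable $g\geq 0$ on $\mkdoublestar{2}$ and set $G(u,z):=\int_{\bs X}g(u,y)f(y,z)\,d\Phi(y)$, a transport function for $[\bs X,\bs o,\bs\mu,\Phi]$. Using $\int_{\bs X}f(y,z)\,d\bs\mu(z)=1$, Tonelli, and the MTP (the integral being against $\bs\mu$, with $g$ and $G$ allowed to depend on $\Phi$),
\[
\omid{\int_{\bs X}g(\bs o,y)\,d\Phi(y)}=\omid{\int_{\bs X}G(\bs o,z)\,d\bs\mu(z)}=\omid{\int_{\bs X}G(z,\bs o)\,d\bs\mu(z)}=\omid{\int_{\bs X}\Big(\int_{\bs X}g(z,y)\,d\bs\mu(z)\Big)f(y,\bs o)\,d\Phi(y)}.
\]
The last expression equals $\int h\,dQ_\Phi$ for $h([X,o,\mu,\varphi]):=\int_X g(y,o)\,d\mu(y)$, i.e.\ the incoming-mass functional $g^-$, whose measurability on $\mkstar{2}$ is of the kind used throughout the paper; and $\int h\,dQ_\Phi=\int_{\mkstar{2}}\big(\int_X g(y,o)\,d\mu(y)\big)\,dQ_\Phi([X,o,\mu,\varphi])$, which is the asserted identity.

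It remains to see that $Q_\Phi$ is $\sigma$-finite and unique. For $\sigma$-finiteness, let $E_m:=\{[X,o,\mu,\varphi]:\ N(o)\leq m,\ \mu(\cball{N(o)}{o})\geq 1/m,\ \varphi(\cball{2m}{o})\leq m\}$; these increase to $\mkstar{2}$ off a $Q_\Phi$-null set, since any class in the support of $Q_\Phi$ has $N(o)<\infty$, $\mu(\cball{N(o)}{o})>0$ and $\varphi$ boundedly finite. On the support of the integrand defining $Q_\Phi(E_m)$ one has $f(y,\bs o)\leq m$ and $d(\bs o,y)\leq N(y)\leq m$, whence $\cball{m}{\bs o}\subseteq\cball{2m}{y}$ and so $\Phi(\cball{m}{\bs o})\leq\varphi(\cball{2m}{y})\leq m$; therefore $Q_\Phi(E_m)\leq m\cdot\omid{\int_{\cball{m}{\bs o}}\identity{E_m}\,d\Phi}\leq m\cdot m<\infty$. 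For uniqueness, if $Q$ satisfies the identity, then testing with $g(o_1,o_2):=f(o_2,o_1)\,h(o_2)$ for arbitrary bounded measurable $h\geq 0$ on $\mkstar{2}$ gives $\int_X g(y,o)\,d\mu(y)=h([X,o,\mu,\varphi])$ on $\{\mu\neq 0\}$, so $\int_{\{\mu\neq 0\}}h\,dQ$ is pinned down by $C_\Phi$; since the constructed $Q_\Phi$ assigns no mass to $\{\mu= 0\}$ (being built from a space with $\bs\mu\neq 0$ a.s.), it is the unique such measure.

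I expect the transport-principle computation to be the short part; the main obstacle is making $Q_\Phi$ come out $\sigma$-finite. An ordinary probability-kernel weight only yields local finiteness of $C_\Phi$, not of $Q_\Phi$, so the kernel $f$ must be simultaneously a $\mu$-probability density from each point, bounded on ``bounded-complexity'' sets, and localized near the root — designing $f$ and running the $E_m$ estimate is the crux. The remaining load is routine but genuine measurability bookkeeping: that $f$, $G$ and $g^-$ are measurable and isomorphism-invariant, and that the integrand defining $Q_\Phi$ descends to the isomorphism class so that $Q_\Phi$ really lives on $\mkstar{2}$.
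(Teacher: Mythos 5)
Your proposal is correct and follows essentially the same route as the paper: the paper (Theorem~\ref{thm:Palmconstruction}) also defines $Q_\Phi$ by re-rooting to a $\Phi$-point weighted by a transport function whose $\mu$-integral out of (equivalently, into) each point is $1$, verifies~\eqref{eq:Palm1} by one application of the MTP to the composed kernel, and proves uniqueness by testing with $g=h\cdot\identity{A}$; your $f(y,z)$ is just the transpose of the paper's $h$, built from the same nested-ball construction as Lemma~\ref{lem:balancingkernel}. Your explicit $\sigma$-finiteness estimate via the sets $E_m$ is a welcome addition that the paper leaves implicit.
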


Note that the left hand side is just the definition of $C_{\Phi}$.

\begin{definition}[Intensity and Palm Distribution]
	\label{def:Palm}
	The \defstyle{intensity} $\lambda_{\Phi}$ of $\Phi$ (w.r.t. $\bs \mu$) is the total mass of $Q_{\Phi}$. If $0<\lambda_{\Phi}<\infty$, then one can normalize $Q_{\Phi}$ to find a probability measure $\mathbb P_{\Phi}:=\frac 1{\lambda_{\Phi}}Q_{\Phi}$ on $\mkstar{2}$, which is called the \defstyle{Palm distribution} of $\Phi$. 
	\\
	By an abuse of notation, we use the same symbol $[\bs X, \bs o, \bs \mu, \Phi]$ when dealing with $\mathbb P_{\Phi}$; e.g., we use formulas like $\probPalm{\Phi}{[\bs X, \bs o, \bs \mu, \Phi]\in A}$ and $\omidPalm{\Phi}{g(\bs o)}$ by keeping in mind that the probability measure has changed.
\end{definition}

By the above notation, we can rewrite~\eqref{eq:Palm1} as follows:

\begin{equation}
	\label{eq:Campbell}
			\omid{\int_{\bs X}g(\bs o, y) d\Phi(y)} = \lambda_{\Phi} \omidPalm{\Phi}{\int_{\bs X}g(y,\bs o) d\bs \mu(y)}.	
\end{equation}

Example~\ref{ex:euclidean-equivariant} shows that the above definition generalizes the Palm distribution of stationary random measures on $\mathbb R^d$. In addition, \eqref{eq:Campbell} is a generalization of the \defstyle{refined Campbell Theorem} (see~\cite{ThLa09} or~\cite{bookScWe08}).

The intuition of the Palm distribution is that, under $\mathbb P_{\Phi}$, the root is a typical point of $\Phi$. The equation~\eqref{eq:Campbell} means that, if $\bs \mu$ (resp. $\Phi$) is interpreted as a measure on a set of senders (resp. receivers), then the expected mass sent from a typical sender is equal to $\lambda_{\Phi}$ times the expected mass received by a typical receiver.

We will prove Theorem~\ref{thm:Palm} by constructing the Palm distribution directly. The construction~\eqref{eq:PalmEuclidean} does not work since the underlying space $\bs X$ is not deterministic and one cannot fix a subset $B$. We replace it by a transport function $h$ such that $h^-(\cdot)=1$ as follows. This extends Mecke's construction in~(2.8) and Satz~2.3 of~\cite{Me67}. This also extends the construction of a \textit{typical cell} for equivariant tessellations of stationary point processes.


\begin{theorem}[Construction of Palm]
	\label{thm:Palmconstruction}
	Let $h:\mdoublestar\to\mathbb R^{\geq 0}$ be a transport function such that $\forall y\in \bs X: h^-(y)=1$ a.s. Then, for every equivariant random measure $\Phi$, the intensity of $\Phi$ and the measure $Q_{\Phi}$ are obtained as follows and satisfy~\eqref{eq:Palm1}:
	\begin{eqnarray}
		\label{eq:intensity}
		\lambda_{\Phi} &=& \omid{h^+_{\Phi}(\bs o)}:= \omid{\int_{\bs X} h(\bs o, z) d\Phi(z)},\\
		\label{eq:Q_Phi}
		Q_{\Phi}(A) &=& \omid{\int_{\bs X} \identity{A}(\bs X, z, \bs \mu, \Phi)h(\bs o, z) d\Phi(z)}.
	\end{eqnarray}
	So, if $0<\lambda_{\Phi}<\infty$, the Palm distribution is obtained by biasing by $h^+_{\Phi}(\bs o)$, and then, re-rooting to a point chosen with law proportional to $h(\bs o, \cdot) \Phi$; i.e.,
	\begin{equation}
		\label{eq:Palmconstruction}
		\mathbb P_{\Phi}(A) = \frac{1}{\lambda_{\Phi}} \omid{\int_{\bs X} \identity{A}(\bs X, z, \bs \mu, \Phi) h(\bs o, z)d\Phi(z)}.
	\end{equation}
\end{theorem}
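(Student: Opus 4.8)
The plan is to read \eqref{eq:Q_Phi} as the \emph{definition} of a measure $Q_\Phi$ on $\mkstar{2}$, check that it is sigma-finite, and verify that it satisfies the disintegration identity \eqref{eq:Palm1}; once this is done, setting $A=\mkstar{2}$ in \eqref{eq:Q_Phi} gives $\lambda_\Phi=Q_\Phi(\mkstar{2})=\omid{\int_{\bs X}h(\bs o,z)\,d\Phi(z)}=\omid{h^+_\Phi(\bs o)}$, which is \eqref{eq:intensity}, and \eqref{eq:Palmconstruction} is merely \eqref{eq:Q_Phi} normalized by $\lambda_\Phi\in(0,\infty)$. This construction also supplies the existence half of Theorem~\ref{thm:Palm}; its uniqueness half drops out of the same computation (see below). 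Throughout I use that $[\bs X,\bs o,\bs\mu,\Phi]$ is unimodular on $\mkstar{2}$ in the sense of Subsection~\ref{subsec:extra}, i.e., that the MTP holds for transport functions on $\mkdoublestar{2}$ that may depend on $\Phi$ as well as on $\bs\mu$; this is the easy direction of Lemma~\ref{lem:equivmeasure}.

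The core is the verification of \eqref{eq:Palm1}, a direct generalization of Mecke's computation. Fix measurable $g:\mkdoublestar{2}\to\mathbb R^{\geq0}$, expand the right-hand side of \eqref{eq:Palm1} using \eqref{eq:Q_Phi} and Tonelli, and introduce the transport function $\tilde g$ on $\mkdoublestar{2}$ given by $\tilde g(u,v):=\int_{\bs X}g(u,z)\,h(v,z)\,d\Phi(z)$; this yields
\[
\int_{\mkstar{2}}\!\Big(\int_X g(y,o)\,d\mu(y)\Big)dQ_\Phi
=\omid{\int_{\bs X}\int_{\bs X}g(y,z)\,h(\bs o,z)\,d\Phi(z)\,d\bs\mu(y)}
=\omid{\int_{\bs X}\tilde g(y,\bs o)\,d\bs\mu(y)}.
\]
Now apply the MTP for $[\bs X,\bs o,\bs\mu,\Phi]$ to $\tilde g$ to replace $\tilde g(y,\bs o)$ by $\tilde g(\bs o,y)$, use Tonelli once more, and collapse the inner $\bs\mu$-integral via the hypothesis $\int_{\bs X}h(y,z)\,d\bs\mu(y)=h^-(z)=1$ for all $z$ (a.s.):
\[
\omid{\int_{\bs X}\tilde g(\bs o,y)\,d\bs\mu(y)}
=\omid{\int_{\bs X}g(\bs o,z)\Big(\int_{\bs X}h(y,z)\,d\bs\mu(y)\Big)d\Phi(z)}
=\omid{\int_{\bs X}g(\bs o,z)\,d\Phi(z)}=\int g\,dC_\Phi,
\]
which is the left-hand side of \eqref{eq:Palm1}. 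Specializing to $g(u,v):=h(u,v)\,\identity{A}(\bs X,v,\bs\mu,\Phi)$, so that $\int_X g(y,o)\,d\mu(y)=\identity{A}(X,o,\mu,\varphi)$ by $h^-\equiv1$, shows that \eqref{eq:Palm1} forces $Q_\Phi(A)=\int g\,dC_\Phi$ for every $A$; hence any sigma-finite solution of \eqref{eq:Palm1} equals the measure just constructed, which is the uniqueness asserted in Theorem~\ref{thm:Palm}.

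What remains is the sigma-finiteness of $Q_\Phi$, which I expect to be the only step requiring genuine care: one covers $\mkstar{2}$, up to a $Q_\Phi$-null set, by countably many measurable sets $E_m$ on which a suitable weight built from $\bs\mu$- and $\Phi$-masses of small balls around the root is bounded, and then bounds $Q_\Phi(E_m)$ by feeding a transport function supported over $E_m$ into the MTP — the same device by which $C_\Phi$ is seen to be sigma-finite. The remaining technical points — measurability of $\tilde g$ as a function on $\mkdoublestar{2}$, and the legitimacy of the two Tonelli swaps and of substituting $h^-\equiv1$ at the $\Phi$-random point $z$ — are routine once the measurable structure of $\mkdoublestar{2}$ from~\cite{Kh19generalization} is in hand. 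Modulo these, the identity \eqref{eq:Palm1} is a one-line consequence of the MTP together with the normalization $h^-\equiv1$.
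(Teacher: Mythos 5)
Your proposal is correct and follows essentially the same route as the paper's proof: define $Q_\Phi$ by \eqref{eq:Q_Phi}, expand the right-hand side of \eqref{eq:Palm1}, apply the MTP to the composed transport function $\tilde g(u,v)=\int g(u,z)h(v,z)\,d\Phi(z)$, and collapse the inner integral via $h^-\equiv 1$; your specialization $g(u,v)=h(u,v)\identity{A}(v)$ for uniqueness is also exactly the paper's argument for Theorem~\ref{thm:Palm}. The only difference is cosmetic (the paper writes the MTP with the roles of the two arguments of $\tilde g$ transposed), and your explicit attention to sigma-finiteness is a point the paper passes over silently.
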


Note that such a transport function $h$ exists by Lemma~\ref{lem:balancingkernel}. 
 
\begin{proof}
	It is enough to prove that $Q_{\Phi}$ satisfies~\eqref{eq:Palm1}. Let $g$ be a transport function on $\mkdoublestar{2}$. The right hand side of~\eqref{eq:Palm1} equals to:
	\begin{eqnarray*}
		\int_{\mkstar{2}} g^-(o) dQ_{\Phi}([X,o,\mu,\varphi]) &=&\omid{\int_{\bs X} g^-(z) h(\bs o, z) d\Phi(z)}\\
		&=& \omid{\int_{\bs X} \int_{\bs X} g(y,z) h(\bs o, z)d\Phi(z) d\bs \mu(y) }\\
		&=& \omid{\int_{\bs X} \int_{\bs X} g(\bs o,z) h(y, z)d\Phi(z) d\bs \mu(y) }\\
		&=& \omid{\int_{\bs X} g(\bs o,z) d\Phi(z)},
	\end{eqnarray*}
	where the first equality is by the definition of $Q_{\Phi}$, the third one is by the MTP, and the last one is because $h^-(z)=1$. This proves~\eqref{eq:Palm1}.
\end{proof}
 
\begin{proof}[Proof of Theorem~\ref{thm:Palm}]
	The existence is proved in Theorem~\ref{thm:Palmconstruction}. For uniqueness, assume $Q_{\Phi}$ is a measure satisfying~\eqref{eq:Palm1}. Let $h$ be a transport function such that $h^-(\cdot)=1$ a.s. For an arbitrary event $A\subseteq\mkstar{2}$, let $g(o,z):=h(o,z)\identity{A}(z)$. Inserting $g$ into~\eqref{eq:Palm1} gives $Q_{\Phi}(A) = \omid{\int \identity{A}(y)h(\bs o, y)d\Phi(y)}$; i.e., $Q_{\Phi}$ is the measure given in Theorem~\ref{thm:Palmconstruction}.
\end{proof}

As a first application of the Palm calculus, we prove the following extension of Lemma~\ref{lem:happensatroot}.

\begin{proposition}
	Let $\Phi$ be an equivariant random measure on $\bs X$. If $\bs{\mu}(\bs X)=\infty$ a.s., then $\Phi(\bs X)\in\{0,\infty\}$ a.s.
\end{proposition}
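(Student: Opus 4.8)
The plan is to argue by contradiction and to exploit the asymmetry built into the disintegration formula~\eqref{eq:Palm1} of Theorem~\ref{thm:Palm}: its left-hand side integrates against $\Phi$, while its right-hand side integrates against $\bs\mu$. So suppose the (root-independent) event $B:=\{0<\Phi(\bs X)<\infty\}$ has $\myprob{B}>0$. I would apply~\eqref{eq:Palm1} to the transport function $g:=\Phi(\bs X)^{-1}\identity{B}$ on $\mkdoublestar{2}$, which does not depend on the two roots and is set to $0$ off $B$. Its left-hand side becomes $\omid{\int_{\bs X}\Phi(\bs X)^{-1}\identity{B}\,d\Phi(y)}=\omid{\identity{B}}=\myprob{B}\le 1$, while its right-hand side becomes $\int_{\mkstar{2}}\varphi(X)^{-1}\mu(X)\,\identity{B}\,dQ_{\Phi}([X,o,\mu,\varphi])$.

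To reach the contradiction I would show this last integral is $+\infty$, using the explicit form of $Q_{\Phi}$ from Theorem~\ref{thm:Palmconstruction}. Fixing $h>0$ with $h^-(\cdot)=1$ (which exists by Lemma~\ref{lem:balancingkernel}), one has $Q_{\Phi}(A)=\omid{\int_{\bs X}\identity{A}(z)\,h(\bs o,z)\,d\Phi(z)}$. From this: (a) $Q_{\Phi}(\{\mu(X)<\infty\})=\omid{\identity{\{\bs\mu(\bs X)<\infty\}}\int_{\bs X}h(\bs o,z)d\Phi(z)}=0$ since $\bs\mu(\bs X)=\infty$ a.s.\ under $\mathbb P$, so $\mu(X)=\infty$ for $Q_{\Phi}$-a.e.\ element; and (b) $Q_{\Phi}(B)=\omid{\identity{B}\int_{\bs X}h(\bs o,z)d\Phi(z)}>0$, because on $B$ the measure $\Phi$ is nonzero and $h(\bs o,\cdot)>0$ everywhere, so the inner integral is strictly positive there. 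Combining (a) and (b), the integrand $\varphi(X)^{-1}\mu(X)\,\identity{B}$ equals $+\infty$ (a finite positive number divided into $\infty$) on the set $B\cap\{\mu(X)=\infty\}$, which has positive $Q_{\Phi}$-measure; hence the right-hand side of~\eqref{eq:Palm1} is $+\infty$, contradicting its equality with $\myprob{B}\le 1$. Therefore $\myprob{B}=0$, i.e.\ $\Phi(\bs X)\in\{0,\infty\}$ a.s. Note that no finiteness assumption on the intensity $\lambda_{\Phi}$ is needed, since $Q_{\Phi}$ itself (being $\sigma$-finite) always exists.

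The routine checks are measurability of $\varphi\mapsto\varphi(X)$ on $\mkstar{2}$ and $\mkdoublestar{2}$ — which holds since $\varphi(X)=\lim_n\varphi(\cball{n}{o})$ is an increasing limit of measurable maps, so $B$ and $g$ are admissible — together with the nontriviality of $[\bs X,\bs o,\bs\mu]$, which is immediate from $\bs\mu(\bs X)=\infty$, so that Theorems~\ref{thm:Palm} and~\ref{thm:Palmconstruction} apply. I expect the only genuinely delicate point to be step (a), the claim that $\mu(X)=\infty$ holds $Q_{\Phi}$-almost everywhere: since $Q_{\Phi}$ is merely $\sigma$-finite one cannot appeal to a vague "absolute continuity with $\mathbb P$" and must instead read the statement directly off the concrete formula~\eqref{eq:Q_Phi}; everything else is bookkeeping. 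An essentially equivalent route would avoid naming $Q_{\Phi}$ and invoke Theorem~\ref{thm:Palmconstruction} directly — bias by $h^+_{\Phi}(\bs o)$, re-root along $h(\bs o,\cdot)\Phi$, observe the resulting measure still sees $\mu(X)=\infty$ — and read off the same contradiction.
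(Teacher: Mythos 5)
Your proof is correct and is essentially the paper's own argument: the paper also assumes $\myprob{0<\Phi(\bs X)<\infty}>0$, feeds the root-independent transport function $g=\Phi(\bs X)^{-1}$ (restricted to that invariant event) into the disintegration formula~\eqref{eq:Palm1}, notes $g^+_\Phi\equiv 1$ while $g^-\equiv\infty$ there, and derives the same contradiction using~\eqref{eq:Q_Phi} to see that the event has positive $Q_{\Phi}$-measure. Your steps (a) and (b) just make explicit the two facts the paper compresses into the sentence ``Since $A$ is an invariant event, \eqref{eq:Q_Phi} implies that $Q_{\Phi}(A)>0$.''
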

\begin{proof}
	Let $A$ be the event that $\bs \mu(\bs X)=\infty$ and $0<\Phi(\bs X)<\infty$ and assume $\myprob{A}>0$. Since $A$ is an invariant event, \eqref{eq:Q_Phi} implies that $Q_{\Phi}(A)>0$. On $A$, let $g(u,v):=\Phi(\bs X)^{-1}$. Then, $g^+_{\Phi}(\cdot)=1$ and $g^-(\cdot)=\infty$ on $A$. This contradicts~\eqref{eq:Palm1}.
\end{proof}
 
 \subsection{Properties of Palm}
  
 \subsubsection{Unimodularity of the Palm Version}
\label{subsec:Palm-unimodular}
 
 The Palm distribution of an equivariant random measure is generally not unimodular. However, it satisfies the following mass transport principle similarly to the case of stationary random measures (Subsection~\ref{subsec:pointprocess}).
 
 \begin{definition}
 	\label{def:unimodularw.r.t}
	A random element $[\bs Y, \bs p, \bs \mu_1, \bs \mu_2]$ on $\mkstar{2}$ is \defstyle{unimodular with respect to $\bs \mu_2$} if the following MTP holds for all measurable functions $g:\mkdoublestar{2}\to\mathbb R^{\geq 0}$:
 	\[
 	\omid{\int_{\bs Y} g(\bs p, z)d\bs\mu_2(z)} = \omid{\int_{\bs Y} g(z,\bs p)d\bs\mu_2(z)}.
 	\]
 	This is equivalent to the unimodularity of $[\bs Y, \bs p, \bs \mu_2, \bs \mu_1]$, which is obtained by swapping the two measures.
 \end{definition}
 
 \begin{theorem}
 	\label{thm:Palm-unimodular}
 	Let $[\bs X, \bs o, \bs \mu]$ be a unimodular \rmm space and $\Phi$ be an equivariant random measure on $\bs X$ with positive finite intensity. Then, under the Palm distribution $\mathbb P_{\Phi}$, $[\bs X, \bs o, \bs \mu, \Phi]$ is unimodular w.r.t. $\Phi$. In other words, $[\bs X, \bs o, \Phi,\bs{\mu}]$ is unimodular under the Pam distribution.
 \end{theorem}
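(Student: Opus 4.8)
The plan is to deduce the claim directly from the refined Campbell theorem \eqref{eq:Campbell}, applied to two carefully chosen transport functions together with a fixed balancing transport. First I would fix, once and for all, a transport function $h:\mdoublestar\to\mathbb R^{\geq 0}$ with $h^-(\cdot)\equiv 1$ a.s., which exists by Lemma~\ref{lem:balancingkernel}. Here it is important that the $h$ produced by Lemma~\ref{lem:balancingkernel} satisfies $h^-(y)=\int_{\bs X}h(z,y)\,d\bs\mu(z)=1$ for \emph{every} point $y$ (not merely for $\bs\mu$-a.e.\ $y$): the event $\{\,h^-(y)=1\ \forall y\,\}$ is then invariant and has full $\mathbb P$-probability, and since $\mathbb P_\Phi$ is obtained from $\mathbb P$ by biasing and re-rooting (Theorem~\ref{thm:Palmconstruction}), this identity persists $\mathbb P_\Phi$-a.s., hence in particular holds $\Phi$-a.e.\ and at $\bs o$.

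Given an arbitrary transport function $g:\mkdoublestar{2}\to\mathbb R^{\geq 0}$, I would introduce the two transport functions on $\mkdoublestar{2}$
\[
	g_1(X,u,v,\mu,\varphi):=\int_X g(X,v,z,\mu,\varphi)\,h(X,u,z,\mu)\,d\varphi(z),
\]
\[
	g_2(X,u,v,\mu,\varphi):=h(X,u,v,\mu)\int_X g(X,y,v,\mu,\varphi)\,d\varphi(y),
\]
both non-negative, measurable and isomorphism-invariant, hence admissible in \eqref{eq:Campbell}. Applying \eqref{eq:Campbell} to $g_1$ and using Tonelli, the left-hand side becomes $\omid{\int_{\bs X} h(\bs o,z)\big(\int_{\bs X}g(y,z)\,d\Phi(y)\big)\,d\Phi(z)}$, while on the right-hand side the inner $\bs\mu$-integral of $h(y,z)$ collapses to $h^-(z)=1$, so the right-hand side equals $\lambda_\Phi\,\omidPalm{\Phi}{\int_{\bs X}g(\bs o,z)\,d\Phi(z)}$. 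This gives the first identity
\[
	\lambda_\Phi\,\omidPalm{\Phi}{\int_{\bs X}g(\bs o,z)\,d\Phi(z)}=\omid{\int_{\bs X} h(\bs o,z)\Big(\int_{\bs X}g(y,z)\,d\Phi(y)\Big)\,d\Phi(z)}.
\]

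Next I would apply \eqref{eq:Campbell} to $g_2$: on the right-hand side the $\bs\mu$-integral of $h(y,\bs o)$ collapses to $h^-(\bs o)=1$, leaving $\lambda_\Phi\,\omidPalm{\Phi}{\int_{\bs X}g(z,\bs o)\,d\Phi(z)}$; the left-hand side is $\omid{\int_{\bs X} h(\bs o,y)\big(\int_{\bs X}g(z,y)\,d\Phi(z)\big)\,d\Phi(y)}$, which after relabelling the two dummy $\Phi$-variables is exactly the right-hand side of the previous display. Comparing the two identities and dividing by $\lambda_\Phi\in(0,\infty)$ yields $\omidPalm{\Phi}{\int_{\bs X}g(\bs o,z)\,d\Phi(z)}=\omidPalm{\Phi}{\int_{\bs X}g(z,\bs o)\,d\Phi(z)}$, which is precisely the MTP with respect to $\Phi$; the final reformulation, that $[\bs X,\bs o,\Phi,\bs\mu]$ is unimodular, is immediate from Definition~\ref{def:unimodularw.r.t}. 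The Tonelli steps and the check that $g_1,g_2$ are genuine transport functions are routine; the one point that needs care is the transfer of $h^-\equiv1$ from $\mathbb P$ to $\mathbb P_\Phi$ — one must use an $h$ whose exceptional set is empty (on an invariant full-probability event), rather than only $\bs\mu$-null, because a $\bs\mu$-null factor subset of $\bs X$ need not be $\Phi$-null.
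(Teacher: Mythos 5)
Your proof is correct and follows essentially the same route as the paper's: both derive the MTP with respect to $\Phi$ under $\mathbb P_{\Phi}$ from the refined Campbell formula~\eqref{eq:Campbell} together with a balancing transport function $h$ with $h^-(\cdot)\equiv 1$ supplied by Lemma~\ref{lem:balancingkernel}. The only (cosmetic) difference is that the paper runs a single chain of equalities starting from $\omidPalm{\Phi}{g^+_{\Phi}(\bs o)}$ via the explicit construction~\eqref{eq:Palmconstruction} and one application of~\eqref{eq:Campbell}, whereas you apply~\eqref{eq:Campbell} twice to two auxiliary transport functions and match the middle terms; the substance, including your (justified) insistence that $h^-(y)=1$ hold for \emph{every} $y$ on an invariant full-probability event so that it survives the passage to $\mathbb P_{\Phi}$ and to $\Phi$-a.e.\ points, is the same.
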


 
 \begin{proof}
 	Let $g$ be a transport function and $h$ be as in Theorem~\ref{thm:Palmconstruction}. By~\eqref{eq:Palmconstruction},
 	\begin{eqnarray*}
 		\omidPalm{\Phi}{g^+_{\Phi}(\bs o)} &=& \frac{1}{\lambda_{\Phi}} \omid{\int_{\bs X}g^+_{\Phi}(y) h(\bs o, y) d\Phi(y) }\\
 		&=& \frac{1}{\lambda_{\Phi}} \omid{\int_{\bs X} \int_{\bs X} g(y,z) h(\bs o, y) d\Phi(y) d\Phi(z) }\\
 		&=& \frac{1}{\lambda_{\Phi}} \omidPalm{\Phi}{\int_{\bs X} \int_{\bs X} g(y,\bs o) h(z, y) d\Phi(y) d\bs\mu(z) }\\
 		&=& \frac{1}{\lambda_{\Phi}} \omidPalm{\Phi}{\int_{\bs X}  g(y,\bs o) d\Phi(y)},
 	\end{eqnarray*}
 	where in the third equality, we have swapped $\bs o$ and $z$ by the refined Campbell  formula~\eqref{eq:Campbell} and the last equality holds by $h^{-1}(y)=1$. This proves the claim.
 \end{proof}

\subsubsection{Exchange Formula}

Neveu's exchange formula (see Theorem~3.4.5 of~\cite{bookScWe08}) is a form of the MTP between two jointly-stationary random measures on $\mathbb R^d$. The refined Campbell formula~\eqref{eq:Campbell} can be thought of an exchange formula between $\bs \mu$ and $\Phi$. More generally, assume $(\Phi,\Psi)$ is a pair of random measures on $[\bs X, \bs o, \bs \mu]$ which are jointly equivariant. Assume the intensities $\lambda_{\Phi}$ and $\lambda_{\Psi}$ are positive and finite and consider the Palm distributions $\mathbb P_{\Phi}$ and $\mathbb P_{\Psi}$. 

\begin{proposition}[Exchange Formula]
	For jointly-equivariant random measures $\Phi$ and $\Psi$ on $\bs X$ as above, and for all transport functions $g:\mkdoublestar{3}\to\mathbb R^{\geq 0}$, 
	\begin{equation}
		\label{eq:neveu}
		\lambda_{\Phi} \omidPalm{\Phi}{\int_{\bs X}g(\bs o, y) d\Psi(y)} = \lambda_{\Psi} \omidPalm{\Psi}{\int_{\bs X}g(y,\bs o) d\Phi(y)},
	\end{equation}
	where $g(u,v)$ abbreviates $g(\bs X, u, v, \bs \mu, \Phi, \Psi)$ as usual.
\end{proposition}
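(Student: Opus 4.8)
The plan is to push both Palm expectations appearing in~\eqref{eq:neveu} back to expectations under the original unimodular measure $\mathbb P$ via the explicit construction of the Palm distribution (Theorem~\ref{thm:Palmconstruction}), and then to recognize the two resulting quantities as the outgoing and incoming masses of a single transport function, so that~\eqref{eq:neveu} becomes one instance of the mass transport principle for $\bs\mu$.

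Concretely, fix a transport function $h:\mdoublestar\to\mathbb R^{\geq 0}$ with $h^-(\cdot)=1$, which exists by Lemma~\ref{lem:balancingkernel}. Since $(\Phi,\Psi)$ are jointly equivariant, $\Phi$ (carrying $\Psi$ along) is an equivariant random measure on $[\bs X,\bs o,\bs\mu]$ with intensity $\lambda_{\Phi}$, so Theorem~\ref{thm:Palmconstruction} applies with transport functions on $\mkdoublestar{3}$. Applying~\eqref{eq:Palmconstruction} to the map $[X,z,\mu,\varphi,\psi]\mapsto\int_X g(z,y)\,d\psi(y)$ gives
\begin{equation*}
	\lambda_{\Phi}\,\omidPalm{\Phi}{\int_{\bs X}g(\bs o,y)d\Psi(y)}=\omid{\int_{\bs X}\int_{\bs X}g(z,y)\,h(\bs o,z)\,d\Psi(y)\,d\Phi(z)},
\end{equation*}
and the same construction for $\Psi$, after relabeling the integration variables, gives
\begin{equation*}
	\lambda_{\Psi}\,\omidPalm{\Psi}{\int_{\bs X}g(y,\bs o)d\Phi(y)}=\omid{\int_{\bs X}\int_{\bs X}g(z,y)\,h(\bs o,y)\,d\Phi(z)\,d\Psi(y)}.
\end{equation*}

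Next I would introduce the transport function $F:\mkdoublestar{3}\to\mathbb R^{\geq 0}$ on the tuple $[\bs X,\bs o,\bs\mu,\Phi,\Psi]$ defined by
\begin{equation*}
	F(u,v):=\int_{\bs X}\int_{\bs X}g(z,y)\,h(u,z)\,h(v,y)\,d\Phi(z)\,d\Psi(y).
\end{equation*}
Joint equivariance of $(\Phi,\Psi)$ makes $[\bs X,\bs o,\bs\mu,\Phi,\Psi]$ unimodular with respect to $\bs\mu$ (Lemma~\ref{lem:equivmeasure} together with its extension to jointly equivariant tuples noted after Definition~\ref{def:equivmeasure}), so the MTP holds for $F$ with the outer integrals taken against $\bs\mu$. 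Integrating out $v$ and using $\int_{\bs X}h(v,y)\,d\bs\mu(v)=h^-(y)=1$ yields $F^+(\bs o)=\int_{\bs X}\int_{\bs X}g(z,y)\,h(\bs o,z)\,d\Phi(z)\,d\Psi(y)$, while $\int_{\bs X}h(v,z)\,d\bs\mu(v)=h^-(z)=1$ yields $F^-(\bs o)=\int_{\bs X}\int_{\bs X}g(z,y)\,h(\bs o,y)\,d\Phi(z)\,d\Psi(y)$. These are exactly the right-hand sides of the two displays above, so $\omid{F^+(\bs o)}=\omid{F^-(\bs o)}$ is precisely~\eqref{eq:neveu}.

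All the integral manipulations are routine (Tonelli applies since everything is non-negative), so there is no hard analytic step; the points needing care are bookkeeping ones: checking that both $\Phi$-with-$\Psi$ and $\Psi$-with-$\Phi$ are equivariant with the stated intensities $\lambda_{\Phi},\lambda_{\Psi}$, that $F$ is a bona fide measurable transport function, and that the identities $h^-(\cdot)=1$ hold not merely $\bs\mu$-a.e.\ but also $\Phi$- and $\Psi$-a.e.\ (which is automatic for the $h$ produced in the proof of Lemma~\ref{lem:balancingkernel}, where $g^+\equiv1$ identically, hence $h^-\equiv1$ identically on nontrivial spaces). We also use $0<\lambda_{\Phi},\lambda_{\Psi}<\infty$ so that $\mathbb P_{\Phi}$ and $\mathbb P_{\Psi}$ are genuine probability measures. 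The one genuine idea — the step I would expect to be the obstacle before finding it — is to construct \emph{both} Palm distributions with the \emph{same} auxiliary transport function $h$ and to couple the two sides through the product transport function $F(u,v)$, so that a single application of the mass transport principle for $\bs\mu$ does all the work.
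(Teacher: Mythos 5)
Your proof is correct and follows essentially the same route as the paper's: both unfold the Palm expectations back to $\mathbb P$ via the explicit construction of Theorem~\ref{thm:Palmconstruction} with a single auxiliary kernel $h$ satisfying $h^-\equiv 1$, and both ultimately rest on one application of the MTP for $\bs\mu$. The only difference is presentational — the paper treats the two sides asymmetrically, converting the unfolded left-hand side to the right-hand side via the refined Campbell formula~\eqref{eq:Campbell} for $\Psi$, whereas you unfold both sides and connect them with the symmetric transport function $F$; unwinding the paper's use of~\eqref{eq:Campbell} reveals the same transport function up to swapping its two arguments.
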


\begin{proof}
	Let $h$ be as in Theorem~\ref{thm:Palmconstruction}. By~\eqref{eq:Palmconstruction}, the left hand side is equal to
	\begin{eqnarray*}
		\omid{\int \int h(\bs o, z) g(z,y) d\Psi(y) d\Phi(z)} = \omid{\int f(\bs o, y) d\Psi(y)},
	\end{eqnarray*}
	where $f(u,v):=\int_{\bs X} h(u,z)g(z,v)d\Phi(z)$. By \eqref{eq:Campbell}, the last term is equal to 
	\begin{eqnarray*}
		\lambda_{\Psi}\omidPalm{\Psi}{\int f(y,\bs o) d\bs \mu(y)} = \lambda_{\Psi} \omidPalm{\Psi}{\int\int h(y,z)g(z,\bs o)d\Phi(z) d\bs\mu(y)}.
	\end{eqnarray*}
	Since $h^-(z)=1$, the last term is equal to the right hand side of~\eqref{eq:neveu} and the claim is proved.
\end{proof}

Another interpretation of the exchange formula is as follows. By Theorem~\ref{thm:Palm-unimodular}, under the Palm distribution of $\Phi$, $[\bs X, \bs o, \bs \mu, \Phi, \Psi]$ is unimodular w.r.t. $\Phi$. Now, one may regard $\Psi$ as a random measure on the latter and think of $\bs \mu$ as a decoration. Then, the exchange formula~\eqref{eq:neveu} turns into the refined Campbell formula for $\Psi$ with respect to $\Phi$. More precisely, one can deduce the following.

\begin{corollary}
	\label{cor:exchange}
	Under the Palm distribution of $\Phi$, and by thinking of $\Phi$ as the base measure on $\bs X$, the intensity of $\Psi$ would be $\lambda_{\Psi}/\lambda_{\Phi}$ and the Palm distribution of $\Psi$ would be identical to $\mathbb P_{\Psi}$.
\end{corollary}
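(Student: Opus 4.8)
The plan is to recognise Neveu's exchange formula~\eqref{eq:neveu} as being precisely the defining Campbell-type identity~\eqref{eq:Palm1} for $\Psi$, once $\Phi$ has been promoted to the base measure, and then to conclude by the uniqueness statement in Theorem~\ref{thm:Palm}.

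First I would set up the change of base measure. By Theorem~\ref{thm:Palm-unimodular}, applied to the jointly-equivariant tuple so that transport functions may depend on $\bs\mu$ and $\Psi$ as inert decorations, under $\mathbb P_{\Phi}$ the tuple $[\bs X, \bs o, \Phi, \bs\mu, \Psi]$ is unimodular with respect to $\Phi$; equivalently, under $\mathbb P_{\Phi}$ the pair $[\bs X, \bs o, \Phi]$ is a unimodular \rmm space (nontrivial, since the Palm re-rooting lands in $\supp(\Phi)$) carrying the jointly equivariant pair of additional measures $(\bs\mu, \Psi)$. Hence the construction of Subsection~\ref{subsec:Palm} applies to $\Psi$ over this new base, using the extension of Theorem~\ref{thm:Palm} to decorated spaces and to jointly equivariant tuples (Remark~\ref{rem:disintegration} and the extension noted in Subsection~\ref{subsec:equivmeasure}). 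Thus there is a unique sigma-finite measure $Q'_{\Psi}$ on $\mkstar{3}$ (tuples $[X, o, \Phi, \mu, \Psi]$) with
\[
\omidPalm{\Phi}{\int_{\bs X} g(\bs o, y)\,d\Psi(y)} \;=\; \int_{\mkstar{3}}\Bigl(\int_{X} g(y,o)\,d\Phi(y)\Bigr)\,dQ'_{\Psi}
\]
for every transport function $g = g(X,u,v,\mu,\Phi,\Psi)\ge 0$, and, when $\lambda'_{\Psi} := Q'_{\Psi}(\mkstar{3})$ lies in $(0,\infty)$, the Palm distribution of $\Psi$ relative to $\Phi$ is $\tfrac{1}{\lambda'_{\Psi}} Q'_{\Psi}$.

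Next I would divide~\eqref{eq:neveu} by $\lambda_{\Phi}$ and rewrite its right-hand side via the definition of $\mathbb P_{\Psi}$: for every transport function $g$,
\[
\omidPalm{\Phi}{\int_{\bs X} g(\bs o, y)\,d\Psi(y)} = \frac{\lambda_{\Psi}}{\lambda_{\Phi}}\,\omidPalm{\Psi}{\int_{\bs X} g(y,\bs o)\,d\Phi(y)} = \int\Bigl(\int_{X} g(y,o)\,d\Phi(y)\Bigr)\,d\!\left(\frac{\lambda_{\Psi}}{\lambda_{\Phi}}\,\mathbb P_{\Psi}\right).
\]
So $\tfrac{\lambda_{\Psi}}{\lambda_{\Phi}}\mathbb P_{\Psi}$, regarded as a measure on tuples with $\Phi$ listed as the base measure and $\bs\mu,\Psi$ as decorations, satisfies the identity characterising $Q'_{\Psi}$. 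By the uniqueness in Theorem~\ref{thm:Palm} we obtain $Q'_{\Psi} = \tfrac{\lambda_{\Psi}}{\lambda_{\Phi}}\mathbb P_{\Psi}$; taking total masses and using that $\mathbb P_{\Psi}$ is a probability measure gives $\lambda'_{\Psi} = \lambda_{\Psi}/\lambda_{\Phi}\in(0,\infty)$, and normalising shows the Palm distribution of $\Psi$ relative to $\Phi$ equals $\mathbb P_{\Psi}$, as claimed.

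The computation itself is essentially a one-liner; the only genuine care is bookkeeping about state spaces. Note that $\mathbb P_{\Psi}$ was built as the Palm distribution of $\Psi$ with base $\bs\mu$, hence lives on tuples $[X,o,\mu,\Phi,\Psi]$, whereas $Q'_{\Psi}$ naturally lives on $[X,o,\Phi,\mu,\Psi]$; these are identified by the obvious relabelling homeomorphism. I expect the main (minor) obstacle to be verifying that Theorems~\ref{thm:Palm-unimodular} and~\ref{thm:Palm} genuinely apply in this decorated/tuple setting — that $\Psi$ really is an equivariant random measure on the unimodular space $[\bs X, \bs o, \Phi]$ under $\mathbb P_{\Phi}$ with $\bs\mu$ carried along as an inert mark — which is exactly the jointly-equivariant extension flagged in Subsection~\ref{subsec:equivmeasure} (of the same flavour as Lemma~\ref{lem:equivmeasure}); once that is granted the argument above is immediate. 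An alternative that avoids invoking uniqueness is to run the explicit construction of Theorem~\ref{thm:Palmconstruction} with a transport function $h$ satisfying $\int_{\bs X} h(x,\cdot)\,d\Phi(x)=1$ and compute directly against the exchange formula, but the route above is shorter.
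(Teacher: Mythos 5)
Your proposal is correct and follows essentially the same route the paper intends: the paper derives Corollary~\ref{cor:exchange} by observing that, under $\mathbb P_{\Phi}$, the tuple is unimodular w.r.t.\ $\Phi$ (Theorem~\ref{thm:Palm-unimodular}) and that the exchange formula~\eqref{eq:neveu} is then exactly the refined Campbell identity for $\Psi$ over the base measure $\Phi$, so uniqueness of the disintegration (Theorem~\ref{thm:Palm}) identifies the intensity as $\lambda_{\Psi}/\lambda_{\Phi}$ and the Palm distribution as $\mathbb P_{\Psi}$. Your write-up merely makes explicit the bookkeeping about state spaces and the jointly-equivariant extension that the paper leaves implicit.
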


\subsubsection{Palm Inversion = Palm}
\label{subsec:Palminversion}

\textit{Palm inversion} refers to the construction of the stationary distribution from the Palm distribution. Similarly, we desire to reconstruct the distribution of $[\bs X, \bs o, \bs \mu, \Phi]$ from the Palm distribution $\mathbb P_{\Phi}$. 

By Theorem~\ref{thm:Palm-unimodular}, $[\bs X, \bs o, \Phi, \bs \mu]$ is unimodular under $\mathbb P_{\Phi}$. Now, $\bs \mu$ can be regarded as a random measure (in the sense of Subsection~\ref{subsec:equivmeas2}) on the Palm version of $[\bs X, \bs o, \Phi]$. Therefore, it makes sense to speak of the Palm distribution of $\bs \mu$, namely, $\mathbb P'$. Using the symmetry of $\bs \mu$ and $\Phi$ in the refined Campbell formula~\eqref{eq:Campbell}, it is straightforward to deduce that $\mathbb P'$ is equal to the distribution of $[\bs X, \bs o, \bs \mu, \Phi]$ (see Corollary~\ref{cor:exchange}). An explicit construction of $\mathbb P'$ can also be provided by Theorem~\ref{thm:Palmconstruction}.

The above discussion shows that the generalization of Palm theory in this section unifies Palm and Palm-inversion as well. In particular, if $\Phi_0$ is the Palm version of a stationary point process in $\mathbb R^d$, the stationary version is obtained by considering the Palm distribution of the Lebesgue measure w.r.t. $\Phi_0$. Indeed, for Palm inversion, one desires to move the origin to a typical point of the Euclidean space.


\subsubsection{Stationary Distribution of Random Walks}
\label{subsec:Palm-randomwalk}
In Theorem~\ref{thm:randomwalk}, we considered random walks on $[\bs X, \bs o, \bs \mu]$ such that the Markovian kernel preserves a measure of type $b\bs \mu$ a.s. If the Markovian kernel has a stationary distribution which is not absolutely continuous w.r.t. $\bs \mu$, one can extend this result as follows.

Let $\Phi$ be an equivariant random measure with positive and finite intensity on  a unimodular \rmm space $[\bs X, \bs o, \bs \mu]$. Let $k$ a Markovian kernel (that might depend on $\Phi$ as well) and $(\bs x_n)_n$ be the random walk with kernel $k$ as in Theorem~\ref{thm:randomwalk}. Define a shift operator $\mathcal S$ similarly to~\eqref{eq:shift}. Since $[\bs X, \bs o, \bs \mu, \Phi, (\bs x_n)_n]$ is unimodular, one can define the Palm distribution of $\Phi$ on it, which we call $Q$.


\begin{proposition}
	In the above setting, if in almost every sample $[X,o,\mu,\varphi]$ of $[\bs X, \bs o, \bs \mu, \Phi]$, $\varphi$ is a stationary (resp. reversible) measure for the Markovian kernel, then the Palm distribution $Q$ of $\Phi$ is a stationary (resp. reversible) measure for the shift operator $\mathcal S$. 
\end{proposition}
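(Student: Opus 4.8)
The plan is to identify $Q$ with the law of a random walk on a unimodular space and then invoke Theorem~\ref{thm:randomwalk}. By Theorem~\ref{thm:Palm-unimodular}, under the Palm distribution $\mathbb P_{\Phi}$ the tuple $[\bs X, \bs o, \Phi, \bs\mu]$ is unimodular with respect to $\Phi$; equivalently, $[\bs X, \bs o, \Phi]$ is then a unimodular \rmm space (with $\bs\mu$ regarded as an equivariant additional structure in the sense of Subsection~\ref{subsec:extra}) and the root $\bs o$ is a $\Phi$-typical point. From this viewpoint $\Phi$ is the \emph{base} measure and $k$ is an equivariant Markovian kernel for $[\bs X, \bs o, \Phi]$, allowed to depend on the base measure $\Phi$ and on the decoration $\bs\mu$. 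The first step I would carry out is to make precise, using the construction of the Palm distribution in Theorem~\ref{thm:Palmconstruction} together with the Markov property of the chain, that $Q$ is exactly the law of $[\bs X, \bs o, \Phi, \bs\mu, (\bs x_n)_n]$ obtained by first passing to $\mathbb P_{\Phi}$ and then starting the walk with kernel $k$ at the new ($\Phi$-typical) root $\bs o$, so that $\bs x_0 = \bs o$.

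Once this identification is in place, the hypothesis that almost every sample $\varphi$ of $\Phi$ is a stationary (resp.\ reversible) measure for $k$ is precisely the hypothesis of Theorem~\ref{thm:randomwalk}, applied with base measure $\Phi$, trivial initial bias $b \equiv 1$ (so that $\omid{b(\bs o)} = 1 < \infty$; note $\lambda_{\Phi} \in (0,\infty)$ is assumed, so $\mathbb P_{\Phi}$ exists and $[\bs X, \bs o, \Phi]$ is nontrivial under it), and with $\bs\mu$ in the role of a decoration. Its conclusion --- in the extension to unimodular spaces carrying an additional structure, valid here by Subsection~\ref{subsec:extra} --- is exactly that $Q$ is stationary (resp.\ reversible) under the shift $\mathcal S$, which is the assertion. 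The only things left to verify are routine: that Theorem~\ref{thm:randomwalk} is being used in its marked version, and that when $k$ genuinely depends on $\Phi$ and $\bs\mu$ it still satisfies the equivariance and measurability requirements of a Markovian kernel there. I do not expect a real obstacle at this level: the proposition is in essence the remark that ``random walk stationarity relative to $\bs\mu$'' turns into ``random walk stationarity relative to $\Phi$'' once one re-roots to a $\Phi$-typical point.

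If instead one wants a proof that does not quote Theorem~\ref{thm:randomwalk} as a black box, I would unfold everything directly: fix a transport function $h$ with $h^-(\cdot) = 1$, so that Theorem~\ref{thm:Palmconstruction} gives $Q(A) = \frac{1}{\lambda_{\Phi}} \omid{\int_{\bs X} \identity{A}(\bs X, z, \Phi, \bs\mu, (\bs x^z_n)_n)\, h(\bs o, z)\, d\Phi(z)}$, where $(\bs x^z_n)_n$ denotes the walk with kernel $k$ started at $z$. To check $\mathcal S$-stationarity one evaluates $\omidPalm{Q}{F \circ \mathcal S}$ for bounded measurable $F$, uses the Markov property to advance the chain one step while reweighting by the one-step kernel, uses the $k$-stationarity of $\Phi$ to rewrite the resulting $\Phi$-integral, and finally applies the mass transport principle with respect to $\Phi$ under $\mathbb P_{\Phi}$ (Theorem~\ref{thm:Palm-unimodular}, equivalently the refined Campbell formula~\eqref{eq:Campbell}) to return to $\omidPalm{Q}{F}$; reversibility is handled the same way, using $k$-reversibility of $\Phi$ and symmetry of $h$. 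As in the proof of Theorem~\ref{thm:randomwalk}, the one genuinely delicate step --- the part I expect to be the main obstacle --- is reducing from the case where $k_{\bs o}$ is absolutely continuous with respect to $\Phi$ to the general case; this is done exactly as there, by approximating $k$ by $\tilde h_n \circ k \circ \tilde h_n$ with $\tilde h_n$ the near-trivial symmetric kernels of Remark~\ref{rem:balacingkernel} (which preserves $k$-stationarity/reversibility of $\Phi$ and absolute continuity) and passing to the limit. Everything else is bookkeeping with the Campbell and mass-transport identities.
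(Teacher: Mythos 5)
Your proposal is correct and follows exactly the paper's own argument: the paper's proof is the one-line reduction ``by Theorem~\ref{thm:Palm-unimodular}, the claim is reduced to Theorem~\ref{thm:randomwalk},'' which is precisely your first two paragraphs (viewing $\Phi$ as the base measure under $\mathbb P_\Phi$ and applying the random-walk stationarity theorem with $\bs\mu$ as a decoration). Your third paragraph unfolding the Campbell/MTP computation is extra detail the paper does not include, but it is not needed.
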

\begin{proof}
	By Theorem~\ref{thm:Palm-unimodular}, the claim is reduced to Theorem~\ref{thm:randomwalk}.
\end{proof}

\subsection{Examples}

We already mentioned that the Palm distribution defined in this section generalizes the case of stationary point processes and random measures (by Example~\ref{ex:euclidean-equivariant}). The following are further examples of the Palm distribution.

\begin{example}[Conditioning]
	Let $\bs S$ be a factor subset (Definition~\ref{def:subset}) and let $\Phi:=\restrict{\bs \mu}{\bs S}$. Then, the intensity of $\Phi$ is $\myprob{\bs o\in \bs S}$ and the Palm distribution is obtained by conditioning on $\bs o\in \bs S$.
\end{example}

\begin{example}[Biasing]
	\label{ex:biasing-Palm}
	Let $b:\mstar\to\mathbb R^{\geq 0}$ be an initial bias and $\Phi:=b\bs \mu$ defined in~\eqref{eq:b mu}. Then, the intensity of $\Phi$ is $\omid{b(\bs o)}$ and the Palm distribution of $\Phi$ is just biasing the probability measure by $b(\bs o)$ (if $0<\omid{b(\bs o)}<\infty$). In particular, Theorem~\eqref{thm:Palm-unimodular} implies that, after biasing by $b$, $[\bs X, \bs o, b\bs \mu]$ would be unimodular, which is already shown in Example~\ref{ex:biasing}. This fact can be used to reduce some results to the case without initial biasing (e.g., Theorem~\ref{thm:randomwalk}).
\end{example}


\begin{example}[Extending a Unimodular Graph]
	\label{ex:extension}
	In many examples in the literature, a unimodular graph is constructed by adding new vertices and edges to a given unimodular graph, and then applying a biasing and a root-change (see~\cite{shift-coupling} for various examples in the literature). Here, we will show that all this examples are instances of the Palm distribution developed in this work. As an example, the unimodularization of the dual of a unimodular planar graph (Example~9.6 of~ \cite{processes}) is reduced to the Palm distribution of the counting measure on the set of faces (see below for the details). 
	\\
	We use the most general construction of this form mentioned in Section~5 of~\cite{shift-coupling}. In short, assume $[\bs G, \bs o]$ is a random rooted graph that is not unimodular, but the MTP holds on a subset $\bs S$ of $\bs G$ (
	see Definition~10 of~\cite{shift-coupling}). In other words, $\bs G$ is obtained by adding vertices and edges to $[\bs S, \bs o]$. Let $\bs \mu$ and $\Phi$ be the counting measures of $\bs S$ and $\bs G$ respectively. In the setting of this section, $[\bs G, \bs o, \bs \mu]$ is unimodular (but not $[\bs G, \bs o]$) and $\Phi$ can be regarded as an equivariant random measure on $[\bs G, \bs o, \bs \mu]$. Therefore, one can define the Palm distribution of $\Phi$ using Definition~\ref{def:Palm} (if the intensity is finite). Then, Theorem~\ref{thm:Palm-unimodular} implies that, under the Palm distribution, $[\bs G, \bs o]$ is a unimodular graph, as desired. In this setting, the general construction in Theorem~5 of~\cite{shift-coupling} (which covers the similar examples in the literature) is reduced to the explicit construction of the Palm distribution given in Theorem~\ref{thm:Palmconstruction}.
	\\
	For a continuum example, it can be seen that the product of a unimodular \rmm space $[\bs X, \bs o, \bs \mu]$ with an arbitrary compact measured metric space can be made unimodular (Example~\ref{ex:product}) and this is an instance of the Palm distribution. See the proof of Theorem~\ref{thm:amenable} in Subsection~\ref{subsec:amenable-proofs} for more explanation.
\end{example}

The final example is the Poisson point process as follows.

\begin{theorem}[Palm of Poisson]
	\label{thm:PalmofPoisson}
	Let $c>0$ and $\Phi$ be the Poisson point process on $\bs X$ with intensity measure $c\bs\mu$ (Example~\ref{ex:poisson}). Then, the Palm version has the same distribution as $[\bs X, \bs o, \bs \mu, \Phi+\delta_{\bs o}]$.	
\end{theorem}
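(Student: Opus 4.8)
The plan is to use the explicit construction of the Palm distribution in Theorem~\ref{thm:Palmconstruction} together with the Mecke-type formula for Poisson processes. Let $h$ be a transport function with $h^-(\cdot)=1$ a.s., given by Lemma~\ref{lem:balancingkernel}. By \eqref{eq:Palmconstruction}, for a test event $A\subseteq\mkstar{2}$,
\[
	\lambda_{\Phi}\,\mathbb P_{\Phi}(A) = \omid{\int_{\bs X} \identity{A}(\bs X, z, \bs\mu, \Phi)\, h(\bs o, z)\, d\Phi(z)},
\]
and $\lambda_{\Phi}=\omid{\int_{\bs X} h(\bs o,z)\,d\Phi(z)}$. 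The key point is that $\Phi$ is a Poisson point process with intensity measure $c\bs\mu$ conditional on $[\bs X, \bs o, \bs\mu]$, so the classical Mecke equation applies in each realization: for nonnegative measurable $F$,
\[
	\omidCond{\int_{\bs X} F(z, \Phi)\,d\Phi(z)}{[\bs X, \bs o, \bs\mu]} = c\int_{\bs X} \omidCond{F(z,\Phi+\delta_z)}{[\bs X, \bs o, \bs\mu]}\,d\bs\mu(z).
\]

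First I would apply this with $F(z,\Phi) := \identity{A}(\bs X, z, \bs\mu, \Phi)\, h(\bs o, z)$ (everything outside $\Phi$ is held fixed inside the conditional expectation), which turns the right-hand side above into
\[
	\lambda_{\Phi}\,\mathbb P_{\Phi}(A) = c\,\omid{\int_{\bs X} \identity{A}(\bs X, z, \bs\mu, \Phi+\delta_z)\, h(\bs o, z)\, d\bs\mu(z)}.
\]
Now I would invoke the MTP for the \emph{unimodular} space $[\bs X, \bs o, \bs\mu]$ — noting that $\Phi+\delta_z$, as a measure-valued quantity, can be regarded as depending equivariantly on $(\bs X, \bs\mu)$ and the extra Poisson randomness, so that $g(u,v) := \identity{A}(\bs X, v, \bs\mu, \Phi+\delta_v)\, h(u,v)$ is a legitimate transport function in the sense of Subsection~\ref{subsec:extra}. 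Swapping $\bs o$ and $z$ via \eqref{eq:unimodular} gives
\[
	\lambda_{\Phi}\,\mathbb P_{\Phi}(A) = c\,\omid{\int_{\bs X} \identity{A}(\bs X, \bs o, \bs\mu, \Phi+\delta_{\bs o})\, h(z, \bs o)\, d\bs\mu(z)} = c\,\omid{\identity{A}(\bs X, \bs o, \bs\mu, \Phi+\delta_{\bs o})},
\]
where the last equality uses $h^-(\bs o)=\int h(z,\bs o)\,d\bs\mu(z)=1$ a.s. Taking $A = \mkstar{2}$ identifies $\lambda_{\Phi}=c$ (which also follows directly from \eqref{eq:intensity} and the Mecke formula), and dividing through yields $\mathbb P_{\Phi}(A)=\myprob{[\bs X, \bs o, \bs\mu, \Phi+\delta_{\bs o}]\in A}$, which is the claim.

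The main obstacle I anticipate is the careful bookkeeping of what is fixed and what is random when applying Mecke's equation conditionally: one must check that the conditional law of $\Phi$ given $[\bs X, \bs o, \bs\mu]$ is genuinely Poisson with intensity $c\bs\mu$ (this is exactly the definition in Example~\ref{ex:poisson}), and that $z\mapsto \Phi+\delta_z$ together with $h$ assembles into a jointly measurable, isomorphism-invariant transport function so that the MTP in the extended sense of Subsection~\ref{subsec:extra} is applicable. Secondarily, one should confirm $\sigma$-finiteness is never an issue — everything is controlled since $h$ can be chosen bounded on bounded sets and $\lambda_{\Phi}=c<\infty$, so $\mathbb P_{\Phi}$ is a genuine probability measure. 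Once these measurability checks are in place, the computation is a two-line chain: Mecke, then MTP, then $h^-=1$.
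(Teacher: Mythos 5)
Your proof is correct and follows essentially the same route as the paper's: construct the Palm distribution via \eqref{eq:Palmconstruction}, apply a Mecke-type identity to convert integration against $\Phi$ into integration against $c\bs\mu$ with $\Phi+\delta_z$ inserted, swap the roles of $\bs o$ and $z$ by the MTP for $[\bs X,\bs o,\bs\mu,\Phi]$, and finish with $h^-(\bs o)=1$. The only place you are lighter than the paper is the Mecke step: rather than citing the classical Mecke equation conditionally on $[\bs X,\bs o,\bs\mu]$, the paper states and proves its quotient-space version \eqref{eq:Poisson-Mecke} (assuming no automorphisms, or breaking them with extra marks, and reducing to product-form test functions $g(\bs X,\bs o,y,\bs\mu)\identity{\{\Phi(S)=k\}}$), which is precisely the measurability/invariance bookkeeping you correctly flag as the main thing to check.
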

Note that $\Phi+\delta_{\bs o}$ is different from $\Phi\cup\{\bs o\}$ if $\bs \mu$ has atoms. This result generalizes Mecke's theorem for the ordinary Poisson point process (see Theorem~3.3.5 of~\cite{bookScWe08}). We guess that this property characterizes the Poisson point process (which generalizes Slivnyak's theorem), but we do not discuss this here.

\begin{proof}
	By~\eqref{eq:intensity}, it is straightforward to show that $\lambda_{\Phi}=c$. We claim that for every measurable function $f:\mkdoublestar{2}\to\mathbb R^{\geq 0}$, one has
	\begin{equation}
		\label{eq:Poisson-Mecke}
		\omid{\sum_{y\in \Phi} f(\bs X, \bs o, y,\bs \mu, \Phi)} = c \omid{\int_{\bs X}f(\bs X, \bs o, y, \bs \mu, \Phi+\delta_{y})d\bs \mu(y)},
	\end{equation}
	which generalizes Mecke's theorem (see Theorem~3.2.5 of~\cite{bookScWe08}).
	Assuming this, Let $B\subseteq \mkstar{2}$ be any event. By~\eqref{eq:Palmconstruction}, \eqref{eq:Poisson-Mecke}, the MTP and the fact $h^-(\bs o)=1$ respectively,
	\begin{eqnarray*}
		\probPalm{\Phi}{B} &=& \frac 1 c \omid{\sum_{x\in \Phi} \identity{B}(\bs X, z,\bs \mu, \bs \Phi)h(\bs o, z)}\\
		&=& \omid{\int_{\bs X} \identity{B}(\bs X, z,\bs \mu, \bs \Phi+\delta_{z})h(\bs o, z)d\bs \mu(z)}\\
		&=& \omid{\int_{\bs X} \identity{B}(\bs X, \bs o,\bs \mu, \bs \Phi+\delta_{\bs o})h(z, \bs o)d\bs \mu(z)}\\
		&=& \myprob{[\bs X, \bs o, \bs \mu, \Phi+\delta_{\bs o}]\in B}
	\end{eqnarray*}
	and the theorem is proved. To prove~\eqref{eq:Poisson-Mecke}, for simplicity, we assume that $(\mathrm{supp}(\bs \mu), \bs \mu)$ has no automorphism a.s. (otherwise, one may add an extra randomness like an independent \textit{Poisson rain} to break the automorphisms). In this case, it is enough to prove this claim when the function $f$ is of the form $f=g(\bs X, \bs o, y, \bs \mu) \identity{\{\Phi(S)=k \}}$, where $k\in\mathbb N\cup\{0\}$, $S:=S(\bs X, \bs o, \bs \mu):=\{y\in \bs X: (\bs X, \bs o, y, \bs \mu)\in A\}$ and $A\subseteq\mdoublestar$ is measurable. The left hand side of~\eqref{eq:Poisson-Mecke} is
	\begin{eqnarray*}
		\omid{(g\Phi)(\bs X) \identity{\{\Phi(S)=k \}}} &=& \omid{(g\Phi)(\bs X\setminus S) \identity{\{\Phi(S)=k \}}} + \omid{(g\Phi)(S) \identity{\{\Phi(S)=k \}}}
		\\&=:&\omid{a_1}+\omid{a_2}.
	\end{eqnarray*}
	Conditioned on $[\bs X, \bs o, \bs \mu]$, the random variables $\Phi(S)$ and $(g\Phi)(\bs X\setminus S)$ are independent. In addition, by Mecke's formula, $\omidCond{(g\Phi)(\bs X\setminus S)}{[\bs X, \bs o, \bs \mu]}= c(g\bs \mu)(\bs X\setminus S)$. 
	Hence, 
	\[
		\omid{a_1}=c\omid{(g\bs \mu)(\bs X\setminus S) \identity{\{\Phi(S)=k \}}} = c \omid{\int_{\bs X\setminus S}f(\bs X, \bs o, y, \bs \mu, \Phi+\delta_{y})d\bs \mu(y)}.
	\]
	For the second term, conditioned on $[\bs X, \bs o, \bs \mu]$ and on $\Phi(S)=k$, $\restrict{\Phi}{S}$ is distributed as the set of $k$ random points with distribution proportional to $\restrict{\bs \mu}{S}$. So, $\omidCond{a_2}{[\bs X, \bs o, \bs \mu], \Phi(S)=k}=k(g\bs \mu)(S)/\bs \mu(S) \identity{\{\Phi(S)=k \}}$. 
	By the formula of the Poisson distribution with parameter $c\bs \mu(S)$, one can obtain that 
	$\omidCond{a_2}{[\bs X, \bs o, \bs \mu]} = c(g\bs \mu)(S) \identity{\{\Phi(S)=k-1\}}$. Thus,
	\[
		\omid{a_2}=c\omid{(g\bs \mu)(S) \identity{\{\Phi(S)=k-1\}}} = c \omid{\int_{S}f(\bs X, \bs o, y, \bs \mu, \Phi+\delta_{y})d\bs \mu(y)}.
	\]
	This proves~\eqref{eq:Poisson-Mecke} and the theorem is proved.
\end{proof}

\section{Some Applications of Palm Theory}
\label{sec:Palm-application}

In Subsections~\ref{subsec:disintegration} and~\ref{subsec:amenable-proofs}, we prove {equivariant disintegration} and the amenability theorem  (Lemma~\ref{lem:equivmeasure} and Theorem~\ref{thm:amenable}) using the Palm theory developed in Section~\ref{sec:Palm}. The proof is by constructing a countable Borel equivalence relation using the Poisson point process, and then, constructing an invariant measure using the Palm theory. Then, the results on Borel equivalence relations are used to prove the claims. Before that, we study the existence of \textit{balancing transport kernels} in Subsection~\ref{subsec:balancing}, which will be used in Subsection~\ref{subsec:amenable-proofs}.

\subsection{Balancing Transport Kernels}
\label{subsec:balancing}

Let $\Phi$ and $\Psi$ be jointly-equivariant random measures on a unimodular \rmm space $[\bs X, \bs o, \bs \mu]$ with positive and finite intensities. A \defstyle{balancing transport kernel} from $\Phi$ to $\Psi$ is a Markovian kernel on $\bs X$ (in the sense of Subsection~\ref{subsec:rerooting}, but might depend on $\Phi$ and $\Psi$ as well)  that transports $\Phi$ to $\Psi$; i.e., $\int_{\bs X} k(x,\cdot) d\Phi(x) = \Psi(\cdot), \forall x\in\bs X$ a.s. This extends the notion of \textit{invariant transport} for stationary random measures~\cite{ThLa09} and \textit{fair tessellations} for point processes~\cite{HoPe05}. On the Euclidean space, using the shift-coupling result of Thorisson~\cite{Th96}, a necessary and sufficient condition for the existence of invariant transports is provided in~\cite{ThLa09}. 
An analogous result is provided for unimodular graphs in~\cite{shift-coupling}. Here, we extend these results to unimodular \rmm spaces.


An event $A\subseteq \mkstar{3}$ is called \defstyle{invariant} if it does not depend on the root. Let $I$ denote the invariant sigma-field. The \defstyle{sample intensity} of $\Phi$ is defined by $\omidCond{h^+_{\Phi}(\bs o)}{I}$, where $h$ is the function in~\eqref{eq:intensity}.

\begin{theorem}
	\label{thm:balancing}
	If $[\bs X, \bs o, \bs \mu, \Phi, \Psi]$ is ergodic, then the existence of balancing transport kernels is equivalent to the condition that $\Phi$ and $\Psi$ have the same intensities. In the non-ergodic case, the condition is that the sample intensities of $\Phi$ and $\Psi$ are equal a.s.
\end{theorem}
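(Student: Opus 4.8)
The plan is to reduce the statement to the corresponding result for countable Borel equivalence relations, following the strategy announced in the introduction to Section~\ref{sec:Palm-application}: first introduce an auxiliary Poisson point process so that the continuum space carries a discrete structure, then transfer the problem to a CBER equipped with an invariant measure, and finally invoke the known shift-coupling/balancing criterion there (as in~\cite{ThLa09} and~\cite{shift-coupling}). Concretely, I would first treat the \emph{if} direction. Assume $[\bs X,\bs o,\bs\mu,\Phi,\Psi]$ is ergodic with $\lambda_\Phi=\lambda_\Psi$ (and reduce to this case in general by the ergodic decomposition of Theorem~\ref{thm:ergodicDecomposition}, noting that a balancing kernel can be assembled from balancing kernels on the ergodic components since the components form an $I$-measurable partition; one must check measurability of the assembled kernel). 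By Example~\ref{ex:poisson}, adjoin to $[\bs X,\bs o,\bs\mu]$ an independent Poisson point process $N$ of unit intensity, jointly equivariant with $(\Phi,\Psi)$; by Example~\ref{ex:poisson} again $[\bs X,\bs o,\bs\mu,\Phi,\Psi,N]$ remains ergodic when $\bs\mu(\bs X)=\infty$ (the case $\bs\mu(\bs X)<\infty$ is compact and finite, where a balancing kernel exists directly by re-rooting invariance, Example~\ref{ex:finitemeasure}). Pass to the Palm version $\mathbb P_N$ of $N$: by Theorem~\ref{thm:PalmofPoisson} the root is a typical Poisson point, and the relation $[\bs X,p,\ldots]\sim[\bs X,q,\ldots]$ for $p,q\in N$ is a countable Borel equivalence relation $R$ on the relevant subspace of $\mkstar{\cdot}$, which by Theorem~\ref{thm:Palm-unimodular} carries the invariant probability measure $\mathbb P_N$.

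Next I would transfer $\Phi$ and $\Psi$ to measures on the Poisson points. Using a balancing transport kernel from $\Phi$ (resp.\ $\Psi$) to the Poisson counting measure — which exists because the Voronoi tessellation induced by $N$ partitions $\bs X$ into cells of finite $\bs\mu$-measure and hence finite $\Phi$- and $\Psi$-measure a.s.\ (here the finiteness of the cells uses $\bs\mu$-boundedly-finiteness and the fact that Poisson cells are a.s.\ bounded, together with Lemma~\ref{lem:ends}/Lemma~\ref{lem:subset} type arguments) — one replaces $\Phi$ and $\Psi$ by equivariant measures $\tilde\Phi,\tilde\Psi$ supported on $N$ with the \emph{same} intensities. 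A balancing transport kernel between $\Phi$ and $\Psi$ exists if and only if one exists between $\tilde\Phi$ and $\tilde\Psi$: compose with the two auxiliary kernels and their reversals, exactly as in the proof of Lemma~\ref{lem:balancingkernel} and Theorem~\ref{thm:rerooting} (composition of Markovian kernels with time-reversals preserves the relevant measures). Now $\tilde\Phi$ and $\tilde\Psi$ are functions on the vertex set of the graphing of $R$, and by the construction of $\mathbb P_N$ their intensities w.r.t.\ $\mathbb P_N$ are $\lambda_\Phi/\lambda_N$ and $\lambda_\Psi/\lambda_N$ by Corollary~\ref{cor:exchange}, hence equal. The existence of a balancing transport kernel on the CBER under equality of intensities (sample intensities in the non-ergodic case) is the shift-coupling theorem for CBERs; I would cite the relevant result (the analogue of~\cite{Th96},\cite{ThLa09} in the measure-preserving countable-equivalence-relation setting, also used in~\cite{shift-coupling}). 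Pulling the resulting vertex-to-vertex kernel back through the Voronoi/balancing kernels above, and then back from $\mathbb P_N$ to $\mathbb P$ via Palm inversion (Subsection~\ref{subsec:Palminversion}, which identifies the Palm of $\bs\mu$ w.r.t.\ $N$ with the original law), yields the desired balancing transport kernel for $\Phi,\Psi$ on $[\bs X,\bs o,\bs\mu]$.

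For the \emph{only if} direction, suppose a balancing transport kernel $k$ from $\Phi$ to $\Psi$ exists. Apply the refined Campbell formula~\eqref{eq:Campbell} to the transport function $g(u,v):=k(u,\{v\})$-type test functions; more precisely, for a bounded measurable $f=f(\bs X,\bs o,\bs\mu,\Phi,\Psi)\ge 0$ depending only on the non-rooted data, compute $\omidPalm{\Phi}{f}$ and $\omidPalm{\Psi}{f}$ via Theorem~\ref{thm:Palmconstruction}: one has $\lambda_\Phi\,\omidPalm{\Phi}{f/\lambda_\Phi}$ expressed through an $h$-weighted integral against $\Phi$, and since $\int k(x,\cdot)\,d\Phi(x)=\Psi$ one can rewrite the corresponding $h$-weighted integral against $\Psi$, giving $\lambda_\Phi=\lambda_\Psi$ (and, disintegrating over $I$, equality of the sample intensities $\omidCond{h^+_\Phi(\bs o)}{I}=\omidCond{h^+_\Psi(\bs o)}{I}$ a.s.). The main obstacle I anticipate is not any single estimate but the bookkeeping of the reductions: verifying that the Voronoi cells of the auxiliary Poisson process have finite $\Phi$- and $\Psi$-measure a.s.\ (so that the intermediate balancing kernels are genuinely Markovian and measure-transporting), checking equivariance/measurability is preserved under each composition, and correctly matching intensities through the Palm operations of Corollary~\ref{cor:exchange} and Subsection~\ref{subsec:Palminversion}. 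A secondary delicate point is the non-ergodic case, where one must ensure the $I$-measurable gluing of the per-component kernels is itself an equivariant Markovian kernel, which follows from Theorem~\ref{thm:ergodicDecomposition2} providing an $I$-measurable choice of ergodic component.
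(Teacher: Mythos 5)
Your ``only if'' direction is essentially the paper's: the paper plugs the balancing density $K$ directly into the Palm machinery (taking $h:=K$ in \eqref{eq:intensity} under $\mathbb P_{\Phi}$, so that the intensity of $\Psi$ w.r.t.\ $\Phi$ becomes $1$, and then invoking Corollary~\ref{cor:exchange}); your Campbell-formula computation is the same idea in a less tidy form. For the converse, however, you take exactly the route the paper announces and then declines: it remarks that ``one might try to extend the shift-coupling result of~\cite{Th96} (as done in~\cite{shift-coupling} for unimodular graphs)'' but instead gives a different proof by extending the constructive argument of~\cite{stable}. The paper's actual argument builds the balancing density directly as a \emph{stable constrained density}, i.e., an infinite Gale--Shapley stable-marriage scheme between $\Phi$ and $\Psi$ (extending Algorithm~4.4 and Theorem~4.8 of~\cite{stable}), with no discretization and no appeal to CBER theory. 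Your reduction-to-CBER route, if completed, would have the advantage of reusing discrete shift-coupling results; the paper's route is constructive and avoids the transfer step below.

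That transfer step is where your plan has a genuine gap. You transport $\Phi$ and $\Psi$ onto the Poisson points via the Voronoi tessellation of $N$ and assert that the cells have finite $\Phi$- and $\Psi$-mass because Poisson--Voronoi cells are a.s.\ bounded. Neither half of this holds in general: in a boundedly-compact space a region of zero $\bs\mu$-measure carries no Poisson points, so a Voronoi cell can be unbounded (e.g., the space after Proposition~\ref{prop:ends}, $\bs X=\bigcup_n(\{n\}\times\mathbb R)\cup(\mathbb R\times\{0\})$ with finite measures on the vertical lines); and since $\Phi$ and $\Psi$ need not be absolutely continuous w.r.t.\ $\bs\mu$, such a cell can carry infinite $\Phi$- or $\Psi$-mass, in which case $\tilde\Phi,\tilde\Psi$ are not boundedly finite and the reverse kernel is not Markovian. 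Lemmas~\ref{lem:subset} and~\ref{lem:ends} do not repair this. A secondary issue is that even after a successful transfer you need a \emph{weighted} balancing result on the discrete structure ($\tilde\Phi,\tilde\Psi$ are general atomic measures on $N$, not counting measures), so the CBER statement you invoke must be the weighted version as in~\cite{shift-coupling}. These obstacles are why the discretization route is not mere bookkeeping; the paper sidesteps them by running the stable-marriage construction directly in the continuum.
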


The latter is equivalent to the condition that after conditioning to any invariant event, $\Phi$ and $\Psi$ have the same intensities. It is also equivalent to the condition that $\mathbb P_{\Phi}$ and $\mathbb P_{\Psi}$ agree on $I$. 

This theorem is an extension of Theorem~5.1 of~\cite{ThLa09}. To prove it, one might try to extend the \textit{shift-coupling} result of~\cite{Th96} (as done in~\cite{shift-coupling} for unimodular graphs), but we give another proof by extending the constructive proof of~\cite{stable}. The latter is an extension of~\cite{HoHoPe06} for point processes, which is an infinite version of the Gale-Shapley stable marriage algorithm.

First, note that the existence of a balancing transport kernel is equivalent to the existence of a \defstyle{balancing transport density}; i.e., a measurable function $K:\mkdoublestar{3}\to\mathbb R^{\geq}$ such that $\forall x,y\in X: \int_{\bs X}K(x,\cdot)d\Psi=\int_{\bs X}K(\cdot,y)d\Phi=1$ on an invariant event with full probability. To show this, if $k(x,\cdot)$ is absolutely continuous w.r.t. $\Psi$ for all $x$, it is enough to consider the Radon-Nikodym derivative equivariantly (as in Lemma~\ref{lem:rerooting-cont}) and to modify it on a null event. In the singular case, it is enough to \textit{smoothify} $k$ by composing it with another kernel that preserves $\Psi$ given by Lemma~\ref{lem:balancingkernel}.

\begin{proof}[Sketch of the Proof of Theorem~\ref{thm:balancing}]
	For simplicity, we assume ergodicity. Assume a balancing transport density $K$ exists. Under $\mathbb P_{\Phi}$, by letting $h:=K$ in~\eqref{eq:intensity}, one obtains that the intensity of $\Psi$ is equal to 1. So, the exchange formula in Corollary~\ref{cor:exchange} implies that $\lambda_{\Phi}=\lambda_{\Psi}$.
	For the converse, it is straightforward to extend Algorithm~4.4 of~\cite{stable} to construct a density $K$ (the \textit{stable constrained density}).
	Assuming $\lambda_{\Phi}=\lambda_{\Psi}$, one can use the Palm theory developed in Section~\ref{sec:Palm} and mimic the proof of Theorem~4.8 of~\cite{stable} to prove that $K$ is balancing. The proof is not short, but the extension is straightforward and is skipped for brevity.
\end{proof}

Theorem~\ref{thm:balancing} allows us to prove the following result, which generalizes a result of~\cite{Th96} (see the end of Section~1 of~\cite{Th96}).

\begin{theorem}
	\label{thm:shiftcouplingPalm}
	Let $\Phi$ be an equivariant random measure on a unimodular \rmm space $[\bs X, \bs o, \bs \mu]$. Then, the following are equivalent:
	\begin{enumerate}[(i)]
		\item \label{thm:shiftcouplingPalm-1} The Palm distribution of $\Phi$ is obtained by a random re-rooting of $[\bs X, \bs o, \bs \mu, \Phi]$. 
		\item \label{thm:shiftcouplingPalm-2} There exists a balancing transport kernel between $c\bs \mu$ and $\Phi$ for some $c$.
		\item \label{thm:shiftcouplingPalm-3} The sample intensity of $\Phi$ is constant.
	\end{enumerate}
	In particular, these conditions hold if $[\bs X, \bs o, \bs \mu, \Phi]$ is ergodic.
\end{theorem}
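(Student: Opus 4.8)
The plan is to read all three conditions as instances, applied to the jointly equivariant pair $(\Phi,c\bs\mu)$, of the balancing-transport result (Theorem~\ref{thm:balancing}) and of the explicit Palm construction (Theorem~\ref{thm:Palmconstruction}); throughout I assume $0<\lambda_\Phi<\infty$, which is implicit in the phrase ``Palm distribution of $\Phi$''. First I would record two elementary facts about $\bs\mu$, viewed now as an equivariant random measure on $[\bs X,\bs o,\bs\mu]$ in its own right. Fixing a transport function $h$ with $h^-(\cdot)=1$ (Lemma~\ref{lem:balancingkernel}), formula~\eqref{eq:intensity} and the MTP give $\lambda_{\bs\mu}=\omid{h^+(\bs o)}=\omid{h^-(\bs o)}=1$; moreover, restricting a unimodular measure to an invariant event and renormalizing preserves unimodularity, so the same MTP holds conditionally on the invariant $\sigma$-field $I$, whence $\omidCondL{h^+(\bs o)}{I}=1$ a.s., and consequently the sample intensity of $c\bs\mu$ is the deterministic constant $c$. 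I would also note that $\mathbb P_{c\bs\mu}=\mathbb P_{\bs\mu}$ for all $c>0$ (rescaling $\bs\mu$ rescales $Q_{\bs\mu}$ and $\lambda_{\bs\mu}$ by the same factor), and that $\mathbb P_{\bs\mu}$ agrees on $I$ with the law of $[\bs X,\bs o,\bs\mu,\Phi]$: by~\eqref{eq:Q_Phi} and $\lambda_{\bs\mu}=1$, for $A\in I$ the integrand $\identity{A}[\bs X,z,\bs\mu,\Phi]$ is independent of $z$, so $\mathbb P_{\bs\mu}(A)=\omid{\identity{A}\,h^+(\bs o)}=\omid{\identity{A}\,\omidCondL{h^+(\bs o)}{I}}=\myprob{[\bs X,\bs o,\bs\mu,\Phi]\in A}$.

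Next, for (iii)$\Leftrightarrow$(ii): by Theorem~\ref{thm:balancing} applied to $(\Phi,c\bs\mu)$, a balancing transport kernel between $c\bs\mu$ and $\Phi$ exists iff $\Phi$ and $c\bs\mu$ have the same sample intensity a.s.; since that of $c\bs\mu$ is the constant $c$ and the sample intensity of $\Phi$ has mean $\lambda_\Phi$, this can hold for some $c>0$ exactly when the sample intensity of $\Phi$ is the constant $\lambda_\Phi$, i.e.\ (iii). For (ii)$\Rightarrow$(i): first upgrade the balancing transport kernel to a balancing transport density $K$, with $\int K(x,\cdot)\,d\Phi=1$ and $\int K(\cdot,y)\,d(c\bs\mu)=1$, and set $h:=cK$, so that $h^-(z)=\int cK(y,z)\,d\bs\mu(y)=1$. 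Feeding this $h$ into Theorem~\ref{thm:Palmconstruction} yields $\lambda_\Phi=\omid{\int cK(\bs o,z)\,d\Phi(z)}=c$ and $\mathbb P_\Phi(A)=\omid{\int \identity{A}(\bs X,z,\bs\mu,\Phi)\,K(\bs o,z)\,d\Phi(z)}$; since $k_{\bs o}(\cdot):=K(\bs o,\cdot)\Phi$ is a probability kernel, this exhibits $\mathbb P_\Phi$ as the law of $[\bs X,\bs o',\bs\mu,\Phi]$ obtained by the random re-rooting $\bs o'\sim k_{\bs o}$, which is (i).

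For (i)$\Rightarrow$(ii): if $\mathbb P_\Phi$ is the law of $[\bs X,\bs o',\bs\mu,\Phi]$ with $\bs o'$ drawn from some equivariant probability kernel $k$, then for $A\in I$ the integrand $\identity{A}[\bs X,y,\bs\mu,\Phi]$ does not depend on $y$, so $\mathbb P_\Phi(A)=\omid{\identity{A}}=\myprob{[\bs X,\bs o,\bs\mu,\Phi]\in A}$; hence $\mathbb P_\Phi$ agrees with the law of $[\bs X,\bs o,\bs\mu,\Phi]$ on $I$, and by the first paragraph so does $\mathbb P_{\lambda_\Phi\bs\mu}$. Therefore $\mathbb P_\Phi$ and $\mathbb P_{\lambda_\Phi\bs\mu}$ agree on $I$, and by the characterization recorded after Theorem~\ref{thm:balancing} (agreement of Palm distributions on $I$ is equivalent to the existence of a balancing transport kernel) such a kernel between $\lambda_\Phi\bs\mu$ and $\Phi$ exists, which is (ii) with $c=\lambda_\Phi$. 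Finally, if $[\bs X,\bs o,\bs\mu,\Phi]$ is ergodic then $I$ is trivial, so the $I$-measurable sample intensity of $\Phi$ is a.s.\ constant and (iii) holds.

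I expect the genuinely substantive inputs to be exactly Theorem~\ref{thm:balancing} together with the fact (stated there as a remark) that agreement of the two Palm distributions on $I$ is equivalent to the existence of a balancing transport kernel; everything else is bookkeeping. The main subtlety to get right is the dual role of $\bs\mu$ — as the reference measure of the ambient unimodular space and simultaneously as an equivariant random measure whose Palm distribution $\mathbb P_{\bs\mu}$ is being compared with $\mathbb P_\Phi$ — together with reading ``random re-rooting'' in (i) as a genuine Markov kernel with no accompanying change of measure, which is precisely what makes $\mathbb P_\Phi(A)=\myprob{A}$ hold on $I$. One should also check that all intensities in play ($\lambda_\Phi$, and $c=\lambda_\Phi$ for $c\bs\mu$) are positive and finite so that Theorem~\ref{thm:balancing} applies, which is guaranteed by the standing assumption on $\Phi$.
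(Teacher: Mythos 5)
Your proposal is correct and follows essentially the same route as the paper: Theorem~\ref{thm:balancing} for (ii)$\Leftrightarrow$(iii), the explicit construction of Theorem~\ref{thm:Palmconstruction} with $h$ built from the balancing density for (ii)$\Rightarrow$(i), and the observation that a pure re-rooting preserves invariant events for closing the cycle (the paper closes it as (i)$\Rightarrow$(iii) where you close it as (i)$\Rightarrow$(ii), but both pass through the same fact that $\mathbb P_{\Phi}$ agrees with $\mathbb P$ on $I$). Your normalization $h:=cK$ is the consistent one given the paper's convention for balancing densities, and you usefully spell out the steps the paper labels ``straightforward.''
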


\begin{proof}
	The equivalence~\eqref{thm:shiftcouplingPalm-3}$\Leftrightarrow$\eqref{thm:shiftcouplingPalm-2} is implied by Theorem~\ref{thm:balancing}.

	\eqref{thm:shiftcouplingPalm-1}$\Rightarrow$\eqref{thm:shiftcouplingPalm-3}. 
	It is straightforward to deduce from~\eqref{thm:shiftcouplingPalm-1} that $\mathbb P_{\Phi}(A)=\myprob{A}$ for every invariant event $A$. This implies~\eqref{thm:shiftcouplingPalm-3}.
	
	\eqref{thm:shiftcouplingPalm-2}$\Rightarrow$\eqref{thm:shiftcouplingPalm-1}. Let $K$ be such a balancing transport density. In this case, Theorem~\ref{thm:balancing} for $h:=\frac 1 c K$ shows that the Palm version is obtained by re-rooting according to the kernel $h(\bs o, \cdot)\Phi$ (and there is no biasing since $h^+_{\Phi}=1$).
\end{proof}

\begin{example}
	\label{ex:balancing-Poisson}
	Assume $\Psi$ is the Poisson point process with intensity measure $\Phi$. Then, if $\Phi(\bs X)=\infty$ a.s., the ergodicity mentioned in Example~\ref{ex:poisson} implies that the conditions of Theorem~\ref{thm:balancing} are satisfied (if $[\bs X, \bs o, \bs \mu, \Phi]$ is not ergodic, use its ergodic decomposition). So, there exists a balancing transport density. In addition, in the case $\Phi=\bs \mu$, Theorem~\ref{thm:shiftcouplingPalm} and~\ref{thm:PalmofPoisson} imply that there exists a random re-rooting that is equivalent (in distribution) to adding a point at the origin to $\Psi$. This is an extension of \textit{extra head schemes}~\cite{HoPe05}.
\end{example}

\subsection{Proof of Equivariant Disintegration}
\label{subsec:disintegration}

In this section, we will prove Lemma~\ref{lem:equivmeasure} in the following steps.

\begin{lemma}
	\label{lem:disintegration1}
	The claim of Lemma~\ref{lem:equivmeasure} holds if $\bs \mu$ is a counting measures a.s. and every automorphism of $(\bs X, \bs \mu)$ fixes every atom of $\bs \mu$ a.s.
\end{lemma}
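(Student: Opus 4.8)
The forward implication of Lemma~\ref{lem:equivmeasure} needs nothing special: given an equivariant random measure $\Phi$ as in Definition~\ref{def:equivmeasure}, the law of $[\bs X, \bs o, \bs \mu, \Phi]$ is the integral of the conditional laws over the law of $[\bs X, \bs o, \bs \mu]$, and its MTP (in the sense of Subsection~\ref{subsec:extra}) follows by conditioning on $[\bs X, \bs o, \bs \mu]$, using the root-independence of $\Phi$, and then applying the MTP for $[\bs X, \bs o, \bs \mu]$ to the transport function $(X,o,p,\mu)\mapsto\omid{g(X,o,p,\mu,\Phi_{(X,o,\mu)})}$. The content is therefore the converse, and the plan is to observe that, under the hypotheses of this lemma, re-rooting becomes a \emph{countable} Borel equivalence relation and the unimodular laws become \emph{invariant} measures for it, so that the converse is an instance of invariant disintegration.

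Concretely: let $\mathcal D\subseteq\mstar$ be the invariant Borel set of classes $[X,o,\mu]$ with $\mu$ a counting measure, $o\in\supp(\mu)$, and every automorphism of $(X,\mu)$ fixing every atom of $\mu$; by Corollary~\ref{cor:support} and the hypotheses the law $\mathbb P$ of $[\bs X, \bs o, \bs \mu]$ is carried by $\mathcal D$. On $\mathcal D$ the re-rooting relation $R$, defined by $[X,o,\mu]\,R\,[X,y,\mu]$ for $y\in\supp(\mu)$, is a countable Borel equivalence relation, since each class equals the countable set $\{[X,y,\mu]:y\in\supp(\mu)\}$ and the ``no essential automorphism'' hypothesis makes these representatives pairwise distinct; moreover the MTP~\eqref{eq:unimodular} for the counting measure $\bs\mu$ is, verbatim, the statement that $\mathbb P$ is an $R$-invariant measure in the sense of Subsection~\ref{subsec:Borel}. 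The same remarks apply on $\mkstar{2}$: the hypotheses pass to $(\bs X,\bs\mu_1,\bs\mu_2)$ because an automorphism of it is an automorphism of $(\bs X,\bs\mu_1)$ and hence fixes all atoms of $\bs\mu_1$; so re-rooting defines a countable Borel equivalence relation $R'$ on an invariant Borel set $\mathcal D'\subseteq\mkstar{2}$, unimodularity of $[\bs Y, \bs p, \bs \mu_1, \bs \mu_2]$ with respect to $\bs\mu_1$ says exactly that its law $\mathbb Q$ is $R'$-invariant, and the forgetful map $\pi:\mathcal D'\to\mathcal D$ is Borel, carries $R'$ onto $R$, is injective on $R'$-classes, and satisfies $\pi_*\mathbb Q=\mathbb P$.

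It then remains to produce an $R$-invariant Borel kernel disintegrating $\mathbb Q$ over $\pi$; this is where I would invoke the invariant disintegration theorem of~\cite{disintegration}, or argue directly as follows. Take any Borel disintegration $\kappa_0$ of $\mathbb Q$ over $\pi$ (it exists by Polishness and is $\mathbb P$-a.s.\ unique), write $R$ as the orbit equivalence relation of a countable group $\{g_n\}$ of $\mathbb P$-preserving Borel automorphisms of $\mathcal D$ (Feldman--Moore, using $R$-invariance of $\mathbb P$), and lift each $g_n$ to a $\mathbb Q$-preserving Borel automorphism $\tilde g_n$ of $\mathcal D'$ over $\pi$ (well defined since $\pi$ is injective on classes). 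Then $\eta\mapsto(\tilde g_n)_*\kappa_0(g_n^{-1}\eta)$ is again a disintegration of $\mathbb Q$ over $\pi$, so by a.s.-uniqueness it agrees with $\kappa_0$ off a $\mathbb P$-null set; intersecting over $n$, $\kappa_0$ is equivariant under re-rooting off some $\mathbb P$-null set $N$, and---this being the \emph{discrete} case---$N$ may be enlarged to an invariant null set, because the $R$-saturation of a null set is null: this is Lemma~\ref{lem:happensatroot} applied to the factor subset $\bs S:=\{y\in\supp(\bs\mu):[\bs X,y,\bs\mu]\in N\}$. Redefining $\kappa_0$ on this invariant null set (say by a fixed factor measure) yields a genuinely re-rooting-invariant Borel kernel; after the harmless bookkeeping of choosing deterministic representatives $(X,o,\mu)$ of the classes (harmless precisely because there are no essential automorphisms), this kernel is an equivariant random measure $\Phi$ satisfying the three conditions of Definition~\ref{def:equivmeasure}, and $[\bs X, \bs o, \bs \mu, \Phi]\sim[\bs Y, \bs p, \bs \mu_1, \bs \mu_2]$ because $\Phi$ agrees with $\kappa_0$ off a null set and $\int\kappa_0\,d\mathbb P=\mathbb Q$. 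The main obstacle is the equivariance of the disintegration, and inside it the passage to an invariant exceptional set: this is exactly where discreteness is essential---in the non-discrete case the saturation of a null set can carry positive measure (cf.\ the remark after Lemma~\ref{lem:happensatroot})---which is also why the general Lemma~\ref{lem:equivmeasure} must be reduced to the present discrete case by a different device (adding a Poisson point process, Section~\ref{sec:Palm-application}).
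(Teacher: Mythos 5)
Your proposal is correct and follows essentially the same route as the paper: turn re-rooting over the atoms into a countable Borel equivalence relation, note that unimodularity is exactly invariance of the laws, generate the relation by a countable group via Feldman--Moore, and obtain a re-rooting-equivariant disintegration (the paper cites Kallenberg's invariant disintegration theorem where you also sketch the standard direct argument via a.s.-uniqueness and saturation of null sets, which is fine in this discrete setting). The one place you are slightly glib is the final "bookkeeping": the hypothesis only forces automorphisms of $(\bs X,\bs\mu)$ to fix the atoms, not to be trivial, so to pass from a kernel on isomorphism classes to a concrete automorphism-invariant random measure on $X$ one should, as the paper does, symmetrize the kernel over the (compact, since it fixes a point) automorphism group rather than appeal to there being "no essential automorphisms".
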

In this case, we will use results on countable Borel equivalence relations to deduce the claim from invariant disintegration for group actions.
\begin{proof}
	Let $A$ be the event that $\bs \mu$ is a counting measure and every automorphism of $(\bs X, \bs \mu)$ fixes every atom of $\bs \mu$.
	Consider the following countable equivalence relation on $\mstar$: Outside $A$ or if $o$ is not an atom of $\mu$, $[X,o,\mu]$ is equivalent to only itself. On $A$ and if $o_1$ and $o_2$ are atoms of $\mu$, let $[X,o_1,\mu]$ be equivalent to $[X,o_2,\mu]$. Then, this is a Borel equivalence relation on $\mstar$ and every equivalence class is countable. Therefore, by Theorem~1 of~\cite{FeMo77}, it is generated by the action of some countable group $G$ on $\mstar$. Note that on $A$, the map $y\mapsto [X,y,\mu]$ maps the atoms of $\mu$ bijectively to an equivalence class. So, on $A$, $G$ acts on the set of atoms of $\mu$. Let $g  y$ denote this action if $g\in G$ and $y$ is an atom of $\mu$ (outside $A$ or if $y$ is not an atom, one has $g  y = y$). This property allows us to extend the action of $G$ to $\mkstar{2}$ as follows (the assumption on automorphisms is essential for this goal): Given $[Y,p,\mu_1,\mu_2]\in\mkstar{2}$ and $g\in G$, let $g  [Y,p,\mu_1,\mu_2]:= [Y,g  p, \mu_1,\mu_2]$, where $g  p$ is defined by forgetting $\mu_2$ and using the above definition. 
	
	Let $Q$ and $\tilde Q$ be the distributions of $[\bs X, \bs o, \bs \mu]$ and $[\bs Y, \bs p, \bs \mu_1, \bs \mu_2]$ respectively.
	The two actions of $G$ on $\mstar$ and $\mkstar{2}$ are compatible with the projection $\pi:\mkstar{2}\to\mstar$, which is defined by forgetting the second measure. In addition, since every $g\in G$ acts bijectively on the set of atoms of $\bs \mu$ and $\bs \mu_1$, the distributions $Q$ and $\tilde Q$ are invariant under these actions (by Theorem~\ref{thm:rerooting}). Hence, we can use Kallenberg's invariant disintegration theorem (Theorem~3.5 of~\cite{disintegration}). This gives a disintegration kernel $k$ from $\mstar$ to $\mkstar{2}$ such that for all events $B\subseteq \mkstar{2}$,
	\begin{equation}
		\label{eq:disintegration1}
		\tilde Q(B) = \int_{\mstar} k([X,o,\mu],B) dQ([X,o,\mu])
	\end{equation}
	and $k(g  [X,o,\mu], g  B)=k([X,o,\mu],B)$ for all $g\in G$. This implies that $k([X,o,\mu],\cdot)$ is concentrated on $\{[X,o,\mu,\varphi]: \varphi\in M(X)\}$, where $M(X)$ is the set of boundedly-finite measures on $X$. Assuming $[X,o,\mu]\in A$, by applying a random element of the automorphism group of $(X,\mu)$ (which is a compact group since the atoms are fixed points) to $k([X,o,\mu],\cdot)$, one obtains an automorphism-invariant probability measure on $M(X)$, which we call it $k'(o, \cdot)$. Invariance under the action of $G$ implies that $k'(o_1,\cdot) = k'(o_2,\cdot)$ for all atoms $o_1$ and $o_2$ of $\mu$. So, for all $y\in X$, one can let $k'(y,\cdot):=k'(o,\cdot)$, where $o$ is an arbitrary atom of $\mu$. Now, by choosing $\Phi_{(X,y,\mu)}$ randomly with law $k'(y,\cdot)$, one obtains an equivariant random measure which satisfies all of the assumptions of Definition~\ref{def:equivmeasure}. In addition, \eqref{eq:disintegration1} implies that $[\bs X, \bs o,\bs \mu, \Phi]$ has law $\tilde Q$ and the claim is proved.
\end{proof}

\begin{lemma}
	\label{lem:disintegration2}
	The claim of Lemma~\ref{lem:equivmeasure} holds if there exists a factor point process $\bs S$ (depending only on $\bs X$ and $\bs \mu$) with finite intensity such that every automorphism of $(\bs X, \bs \mu)$ fixes every element of $\bs S$ a.s. and $\bs S$ is nonempty a.s.
\end{lemma}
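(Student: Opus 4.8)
The plan is to reduce to Lemma~\ref{lem:disintegration1} by using $\bs S$ to pass to a ``discretized'' base measure via Palm theory, and then to undo this passage by Palm inversion. By coupling we may assume $[\bs Y, \bs p, \bs\mu_1]=[\bs X, \bs o, \bs\mu]$, so that we are given a tuple $[\bs X, \bs o, \bs\mu, \bs\nu]$ (with $\bs\nu:=\bs\mu_2$) that is unimodular with respect to $\bs\mu$, and we must produce an equivariant random measure $\Phi$ on $[\bs X, \bs o, \bs\mu]$ with $[\bs X, \bs o, \bs\mu, \Phi]\sim[\bs X, \bs o, \bs\mu, \bs\nu]$. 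Since $\bs S=\bs S(\bs X,\bs\mu)$ is a factor point process of $(\bs X,\bs\mu)$, it is well defined on this tuple and $[\bs X, \bs o, \bs\mu, \bs\nu, \bs S]$ is still unimodular with respect to $\bs\mu$ (adding a factor structure preserves the MTP). Regarding $\bs S$ as an equivariant point process on $[\bs X, \bs o, \bs\mu, \bs\nu]$, its intensity $\lambda_{\bs S}$ is finite by hypothesis and strictly positive because $\bs S\neq\emptyset$ a.s. (use an $h>0$ with $h^-(\cdot)=1$ in~\eqref{eq:intensity}: then $h^+_{\bs S}(\bs o)>0$ a.s.). Hence the Palm distribution $\mathbb P_{\bs S}$ of $\bs S$ is well defined, and by Theorem~\ref{thm:Palmconstruction} it acts by biasing by $h^+_{\bs S}(\bs o)$ and re-rooting to a point of $\bs S$.

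Under $\mathbb P_{\bs S}$ the root is a typical point of $\bs S$, so a.s.\ $\bs o\in\bs S$, $\bs S$ is a counting measure containing the root, and by Theorem~\ref{thm:Palm-unimodular} the tuple $[\bs X, \bs o, \bs S, \bs\mu, \bs\nu]$ is unimodular with respect to $\bs S$. I would now apply the tuple version of Lemma~\ref{lem:disintegration1}, with base measure $\bs S$, carrying $\bs\mu$ along as a decoration, to disintegrate $\bs\nu$ (this extension is routine, cf.\ Remark~\ref{rem:disintegration}). The reason for carrying $\bs\mu$ is precisely to make the automorphism hypothesis transfer: because $\bs S$ is a factor of $(\bs X,\bs\mu)$, every isometry preserving $\bs\mu$ preserves $\bs S$, so the automorphism group of $(\bs X,\bs S,\bs\mu)$ equals that of $(\bs X,\bs\mu)$, and by hypothesis these automorphisms fix every atom of $\bs S$; thus the hypotheses of Lemma~\ref{lem:disintegration1} hold. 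This yields a measurable, isomorphism-invariant kernel $k'$ from deterministic triples $(X,S,\mu)$ to measures on $X$, not depending on the chosen atom of $S$, together with an equivariant random measure $\Theta$ of law $k'$ satisfying $[\bs X, \bs o, \bs S, \bs\mu, \Theta]\sim_{\mathbb P_{\bs S}}[\bs X, \bs o, \bs S, \bs\mu, \bs\nu]$. Since $\bs S=\bs S(\bs X,\bs\mu)$ is determined by $(\bs X,\bs\mu)$, we may rewrite $k'$ as a kernel $k'_{(X,\mu)}$ depending only on $(X,\mu)$ (defined on the invariant full-probability event $\{\bs S\neq\emptyset\}$), and define $\Phi_{(X,o,\mu)}$ to have law $k'_{(X,\mu)}$. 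This $\Phi$ satisfies all the requirements of Definition~\ref{def:equivmeasure}, so it is an equivariant random measure on $[\bs X, \bs o, \bs\mu]$; by the (already available) easy direction of Lemma~\ref{lem:equivmeasure}, $[\bs X, \bs o, \bs\mu, \Phi]$, and hence $[\bs X, \bs o, \bs\mu, \bs S, \Phi]$, is unimodular with respect to $\bs\mu$.

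It remains to identify the laws of $[\bs X, \bs o, \bs\mu, \Phi]$ and $[\bs X, \bs o, \bs\mu, \bs\nu]$. Both tuples induce the same law on the marginal $[\bs X, \bs o, \bs\mu, \bs S]$ (the original one, $\bs S$ being a factor of $(\bs X,\bs\mu)$). Fixing a factor transport function $h>0$ of $(\bs X,\bs\mu)$ with $h^-(\cdot)=1$ (Lemma~\ref{lem:balancingkernel}), I would compute the Palm distribution of $\bs S$ for each tuple via Theorem~\ref{thm:Palmconstruction}: because the biasing weight $h^+_{\bs S}(\bs o)$ and the re-rooting law $\propto h(\bs o,\cdot)\bs S$ depend only on $(\bs X,\bs\mu,\bs S)$, this Palm operation acts only on the $[\bs X, \bs o, \bs\mu, \bs S]$-part and leaves the conditional law of the last measure given $(\bs X,\bs\mu,\bs S)$ unchanged. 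That conditional law is $k'$ in both cases: $k'_{(X,\mu)}$ for $\Phi$ by construction, and $k'$ for $\bs\nu$ because $[\bs X, \bs o, \bs S, \bs\mu, \bs\nu]\sim_{\mathbb P_{\bs S}}[\bs X, \bs o, \bs S, \bs\mu, \Theta]$ with $\Theta$ of (root-independent) conditional law $k'$. Hence the Palm distributions of $\bs S$ for the two tuples coincide. Finally, Palm inversion --- i.e.\ applying Theorem~\ref{thm:Palmconstruction} to $\bs\mu$ regarded as a random measure under the Palm measure, using the symmetry of $\bs\mu$ and $\bs S$ in the refined Campbell formula~\eqref{eq:Campbell} (Subsection~\ref{subsec:Palminversion}) --- is a concrete operation recovering the original law from the Palm-of-$\bs S$ law; applying it gives $[\bs X, \bs o, \bs\mu, \bs S, \Phi]\sim[\bs X, \bs o, \bs\mu, \bs S, \bs\nu]$, and in particular $[\bs X, \bs o, \bs\mu, \Phi]\sim[\bs X, \bs o, \bs\mu, \bs\nu]$, as required.

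The main obstacle is the bookkeeping around changing the base measure: one must keep $\bs\mu$ as a decoration when passing to the Palm version so that the ``automorphisms fix the atoms'' hypothesis of Lemma~\ref{lem:disintegration1} still applies (the automorphism group of $(\bs X,\bs S)$ alone can be strictly larger), and one must verify that the ``undoing'' step is genuinely Palm inversion --- a concrete transformation derived only from the MTP and Theorem~\ref{thm:Palmconstruction} --- so that no circular appeal to the general converse of Lemma~\ref{lem:equivmeasure} is made.
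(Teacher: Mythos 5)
Your proposal is correct and follows essentially the same route as the paper's proof: take the Palm distribution with respect to $\bs S$ of both tuples, apply the tuple version of Lemma~\ref{lem:disintegration1} with the counting measure on $\bs S$ as the base measure and $\bs\mu$ carried along as a decoration, and then recover the original distributions by Palm inversion. Your additional care about why the automorphism hypothesis transfers and why only the easy direction of Lemma~\ref{lem:equivmeasure} is needed (avoiding circularity) matches the intent of the paper's (more terse) argument.
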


\begin{proof}
	Let $Q$ and $\tilde Q$ be the distributions of $[\bs X, \bs o,  \bs \mu, \bs S]$ and $[\bs Y, \bs p, \bs \mu_1, \bs \mu_2, \bs S]$, as random elements of $\mkstar{2}$ and $\mkstar{3}$ respectively, where in the latter, $\bs S=\bs S(\bs Y, \bs \mu_1)$. The Palm distributions of $\bs S$ in these two unimodular objects give probability measures $Q_0$ and $\tilde Q_0$ respectively. By Theorem~\ref{thm:Palm-unimodular}, under $Q_0$, $[\bs X, \bs o, \bs \mu, \bs S]$ is unimodular with respect to the counting measure on $\bs S$ (Definition~\ref{def:unimodularw.r.t}) and the same holds for $[\bs Y, \bs p, \bs \mu_1, \bs \mu_2, \bs S]$. In addition, it is straightforward to show that, under $\tilde Q_0$, $[\bs Y, \bs p, \bs \mu_1, \bs S]$ has the same distribution as $Q_0$. Now, we can use Lemma~\ref{lem:disintegration1} to find an equivariant disintegration of $\tilde Q_0$ w.r.t. $Q_0$ (the same argument works if the counting measure of $\bs S$ is regarded as the base measure and $\bs \mu$ and $\bs \mu_1$ are thought of as decorations). This gives an equivariant random measure $\Phi$ (whose distribution depends on $(X,\mu,S)$) such that
	\[
		\tilde Q_0 (B) = \int \myprob{[X,o,\mu, \Phi, S]\in B} dQ_0 ([X,o,\mu, S])
	\]
	for all events $B$. By using Palm inversion to reconstruct $Q$ and $\tilde Q$ (Subsection~\ref{subsec:Palminversion}), it is straightforward to deduce that the above equation holds if $\tilde Q_0$ and $Q_0$ are replaced by $\tilde Q$ and $Q$ respectively. This shows that $\Phi$ is the desired equivariant random measure and the claim is proved.
\end{proof}

We are now ready to prove Lemma~\ref{lem:equivmeasure}.

\begin{proof}[Proof of Lemma~\ref{lem:equivmeasure}]
	First assume that $\bs \mu(\bs X)=\infty$ a.s.	
	We start by adding a \textit{marked Poisson point process} to ensure that the assumptions of Lemma~\ref{lem:disintegration2} hold. Let $\Psi=\Psi(\bs X, \bs \mu)$ be the Poisson point process on $\bs X$ with intensity measure $\bs \mu$ and equip the points of $\Psi$ with i.i.d. marks in $[0,1]$ chosen with the uniform distribution. By a suitable extension of the GHP metric discussed in Subsection~\ref{subsec:extra}, one can think of $[\bs X, \bs o, \bs \mu, \Psi]$ and $[\bs Y, \bs p, \bs \mu_1, \bs \mu_2, \Psi]$ as unimodular \rmm spaces equipped with additional structures, where in the latter, $\Psi=\Psi(\bs Y, \bs \mu_1)$. 
	In the former, the probability space is the set $\mstar'$ of tuples $[X,o,\mu,\psi]$, where $(X,o,\mu)$ is a \rmm space and $\psi$ is a \textit{marked measure} on $X$; i.e., a boundedly-finite measure on $X\times [0,1]$.
	
	Note that the support of $\Psi$ is a factor point process (depending on $\bs \mu$ and $\Psi$) such that every point of the subset is fixed under every automorphism of $(\bs X, \bs \mu, \Psi)$ a.s. (due to the i.i.d. marks). In addition, the support is not empty a.s. since $\bs \mu(\bs X)=\infty$ a.s. So, we can use an argument similar to Lemma~\ref{lem:disintegration2} to obtain an equivariant random measure $\Phi$ such that $[\bs X, \bs o, \bs \mu, \Phi, \Psi]$ has the same distribution as $[\bs Y, \bs p, \bs \mu_1, \bs \mu_2,\Psi]$. 
	
	Note that $\Phi=\Phi(X,\mu,\psi)$ is defined for deterministic spaces $(X,\mu,\psi)$ and not for the spaces of the form $(X,\mu)$.
	To resolve this issue, one can take expectation w.r.t. $\Psi$ as follows. Let $(X,o,\mu)$ be a realization of $[\bs X, \bs o, \bs \mu]$. Given a realization $\psi$ of the marked Poisson point process on $(X,\mu)$, let $\alpha = \alpha_{(X,\mu, \psi)}$ be the distribution of $\Phi=\Phi(X,\mu,\psi)$. Here, $\alpha$ is a probability measure on $M(X)$. Let $\beta_{(X,\mu)}:=\omid{\alpha_{(X,\mu,\Psi)}}$, where the expectation is w.r.t. to the randomness of $\Psi$. It is straightforward to see that $\beta$ is invariant under the automorphisms of $(X,\mu)$. Now, by choosing a random measure $\Phi'$ on $X$ with distribution $\beta_{(X,\mu)}$, one obtains an equivariant random measure such that $[\bs X, \bs o, \bs \mu, \Phi']$ has the same distribution as $[\bs Y, \bs p, \bs \mu_1, \bs \mu_2]$ as desired. So the claim is proved.
	
	Finally, assume $\bs \mu(\bs X)<\infty$ with positive probability. Conditioned on the event $\bs \mu(\bs X)=\infty$, the above arguments provide the equivariant disintegration. Conditioned on the event $\bs \mu(\bs X)<\infty$, the situation is easier. Under this conditioning, $[\bs X, \bs \mu]$ and $[\bs Y, \bs \mu_1, \bs \mu_2]$ make sense as random non-rooted measured metric spaces and the corresponding probability spaces are Polish (under suitable topologies which are skipped here). So, it is enough to consider the regular conditional distribution of $[\bs Y, \bs \mu_1, \bs \mu_2]$ w.r.t. $[\bs Y, \bs \mu_1]$, and then apply a random automorphism of $(\bs Y, \bs \mu_1)$ to obtain an equivariant random measure. This automatically does not depend on the root and has the desired properties.
\end{proof}

\subsection{Proof of the Amenability Theorem}
\label{subsec:amenable-proofs}

In this subsection, we prove that the different notions of amenability defined in Subsection~\ref{subsec:amenable} are equivalent (Theorem~\ref{thm:amenable}). 

Let $[\bs X, \bs o, \bs \mu]$ be a unimodular \rmm space. First, we assume that $\bs \mu$ does not have atoms a.s. and $\bs \mu(\bs X)=\infty$ a.s. As in Subsection~\ref{subsec:disintegration}, let $\Phi$ be the Poisson point process on $\bs X$ with intensity measure $\bs \mu$ (Example~\ref{ex:poisson}), equipped with i.i.d. marks in $[0,1]$ chosen with the uniform distribution. Then, $\Phi$ is infinite, there are no multiple points and $\Phi$ has no nontrivial automorphism a.s.
Let $\mathbb P_{\Phi}$ be the Palm distribution of $\Phi$. By Theorem~\ref{thm:Palm-unimodular}, under $\mathbb P_{\Phi}$, $[\bs X, \bs o, \bs \mu, \Phi]$ is unimodular w.r.t. $\Phi$.

The above definitions give a countable equivalence relation as follows. The probability measure $\mathbb P_{\Phi}$ is concentrated on the subset $\mstar''\subseteq\mstar'$ of tuples $[X,o,\mu,\varphi]$ in which $\mu$ has no atom, $\mu(X)=\infty$, $\varphi$ is the counting measure on an infinite discrete subset of $X$, $o\in \varphi$ and the marks of the points of $\varphi$ are distinct. Let $R$ be the equivalence relation on $\mstar'$ defined as follows: Outside $\mstar''$, everything is equivalent to only itself. If $[X,o,\mu,\varphi]\in \mstar''$, then it is equivalent to $[X,y,\mu,\varphi]$ for all $y\in \varphi$. Then, $R$ is a countable equivalence relation on the Polish space $\mstar'$. In addition, unimodularity of $\mathbb P_{\Phi}$ is equivalent to invariance of $\mathbb P_{\Phi}$ under $R$ (see Subsection~\ref{subsec:Borel} and note that having no automorphism is important for this equivalence).

\begin{lemma}
	\label{lem:amenable}
	If $\bs \mu$ has infinite total mass and no atom a.s., then each of the conditions of the existence of local means, the existence of approximate means, and hyperfiniteness is equivalent to the analogous condition for the countable measured equivalence relation $(\mstar', R, \mathbb P_{\Phi})$ defined above.
\end{lemma}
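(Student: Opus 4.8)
The plan is to establish a dictionary between each amenability-type condition for $[\bs X, \bs o, \bs \mu]$ and the corresponding condition for the countable measured equivalence relation $(\mstar', R, \mathbb P_\Phi)$, and then transport the conditions back and forth along this dictionary. The crucial geometric device is the \emph{Voronoi tessellation} of the marked Poisson point process $\Phi$: since $\bs\mu$ has no atoms and $\Phi$ is an infinite simple point process with distinct marks and no nontrivial automorphism a.s., every point $x\in\bs X$ has a a.s.\ unique nearest point of $\Phi$ (ties broken by the marks), so the map $x\mapsto V(x)\in\Phi$ sending $x$ to the centre of its Voronoi cell is a well-defined factor map. This lets one turn a subset $W\subseteq\Phi$ into the factor subset $V^{-1}(W)\subseteq\bs X$ and conversely turn a factor subset $S\subseteq\bs X$ into $\{p\in\Phi: \bs\mu(V^{-1}(p)\cap S)>\tfrac12\bs\mu(V^{-1}(p))\}\subseteq\Phi$ (or use a Palm/exchange-formula bookkeeping of masses rather than a majority rule). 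Because the Voronoi cells have $\bs\mu$-mass that is $\mathbb P_\Phi$-integrable and uniformly comparable to $1$ after Palm-biasing, passing a partition through $V^{-1}$ multiplies cell masses by a controlled (integrable) factor, so \emph{finite} partitions go to \emph{finite} partitions and Følner ratios are distorted only by this factor.

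**First** I would treat hyperfiniteness. Given equivariant nested finite partitions $\Pi_n$ of $\bs X$ witnessing~\eqref{HF1}, push them forward to partitions $\bar\Pi_n$ of $\Phi$ by declaring two points of $\Phi$ equivalent iff their Voronoi cells lie in the same $\Pi_n$-class; the exhaustion property $\bigcup_n\Pi_n(\bs o)=\bs X$ and the fact that each cell meets only finitely many others force $\bigcup_n\bar\Pi_n$ to be the full relation $R$ a.s., and finiteness of $\bar\Pi_n$-classes follows since a $\Pi_n$-class of finite $\bs\mu$-mass contains finitely many Voronoi centres (cells have mass bounded below on compacts). This is exactly the standard definition of hyperfiniteness of $(\mstar',R,\mathbb P_\Phi)$. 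Conversely, given a hyperfinite exhaustion $\bar\Pi_n$ of $R$, define $\Pi_n$ on $\bs X$ by $x\sim y \iff V(x)\sim_{\bar\Pi_n} V(y)$; again nested, finite, exhausting. The mild subtlety — that~\eqref{HF2}/\eqref{HF3} (controlling balls $B_r(\bs o)$) also correspond — is handled because Lemma~\ref{lem:hyperfinite} already shows~\eqref{HF1}$\Leftrightarrow$\eqref{HF2}$\Leftrightarrow$\eqref{HF3}$\Leftrightarrow$\eqref{FO1}$\Leftrightarrow$\eqref{FO2} \emph{within} the $\mstar$ side, so I only need the weakest form \eqref{HF1} to cross over, and symmetrically only the analogous weakest form on the $R$ side.

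**Next**, for local and approximate means the translation is via functions on cells. An approximate mean $\lambda_n(\cdot,\cdot)$ on $\bs X$ (elements of $L^1(\bs X,\bs\mu)$ with $\|\lambda_n(\bs o,\cdot)-\lambda_n(y,\cdot)\|_1\to 0$) induces, after integrating each $\lambda_n(p,\cdot)$ over Voronoi cells, functions on $\Phi$ that form an approximate mean for $R$ in the sense of~\cite{Ka97} (the $L^1$-contraction is preserved since $\|f\|_{L^1(\bs\mu)}=\sum_{p\in\Phi}\|f\mathbf 1_{V^{-1}(p)}\|_{L^1(\bs\mu)}$ and the cell-integration is a contraction); conversely an approximate mean on $\Phi$ spread uniformly (by $\bs\mu$) over each cell gives one on $\bs X$, using again that the $\bs\mu$-mass of a cell is $\mathbb P_\Phi$-integrable so the normalisation is controlled. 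For local means, take an ultralimit of an approximate mean exactly as remarked after the definition of~\eqref{AM}; the isomorphism-invariance and measurability clauses transfer because the Voronoi map is a factor. The only real obstacle is the bookkeeping of the \emph{base measure}: on the $\Phi$ side the natural reference is the counting measure on $\Phi$, whereas the amenability conditions on $\bs X$ refer to $\bs\mu$; reconciling these requires the refined Campbell formula~\eqref{eq:Campbell} (equivalently Theorem~\ref{thm:Palm-unimodular} and Corollary~\ref{cor:exchange}), which says that $\mathbb E[\,\cdot\,]$ under $\mathbb P$ against $\bs\mu$ equals $\lambda_\Phi\,\mathbb E_\Phi[\,\cdot\,]$ against $\bs\mu$ after Palm-biasing — so every ratio of $\bs\mu$-masses I manipulate on $\bs X$ has a legitimate counterpart under $\mathbb P_\Phi$, and the finiteness/integrability facts I keep invoking about Voronoi cells are exactly the statements that $\bs\mu(V^{-1}(\bs o))$ is $\mathbb P_\Phi$-integrable, which follows from $\lambda_\Phi<\infty$ and~\eqref{eq:intensity} with $h$ the Voronoi transport kernel. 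I would carry out the mass-transport identity $\mathbb P_\Phi(B_r(\bs o)\not\subseteq\Pi(\bs o))=\mathbb E[\bs\mu(\partial_r\Pi(\bs o))/\bs\mu(\Pi(\bs o))]$-style comparisons once, cleanly, and then reuse them; the rest is routine once the Voronoi dictionary and the Campbell bookkeeping are in place.
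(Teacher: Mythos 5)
Your overall strategy (a Voronoi-based dictionary between $\bs X$ and $\Phi$, plus Palm/Campbell bookkeeping) is the same as the paper's, but there is one genuine gap in the hyperfiniteness part. You write that Lemma~\ref{lem:hyperfinite} ``already shows \eqref{HF1}$\Leftrightarrow$\eqref{HF2}$\Leftrightarrow$\eqref{HF3}'' within the $\mstar$ side, so that only the weakest form \eqref{HF1} needs to cross over. That lemma only proves \eqref{HF2}$\Rightarrow$\eqref{HF3}$\Rightarrow$\eqref{HF1}; the converse implication \eqref{HF1}$\Rightarrow$\eqref{HF2} is \emph{not} established there, and in the paper it is obtained precisely by the round trip through the CBER. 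If you only transport \eqref{HF1} back and forth, you never recover \eqref{HF2} or \eqref{HF3}, and Theorem~\ref{thm:amenable} would be left incomplete. The fix is what the paper does: CBER-hyperfiniteness of $(R,\mathbb P_\Phi)$ yields nested finite partitions of $\varphi$ which, because each $B_r(y)\cap\varphi$ is a \emph{finite} set and the classes are exhausted by nested sets, eventually contain $B_r(y)\cap\varphi$; pulling back through $\tau$ and using that $\tau(B_r(\bs o))$ is finite (it sits inside $B_{2r+s}(\bs o)\cap\varphi$ with $s=d(\bs o,\tau(\bs o))$) gives \eqref{HF2} directly, closing the cycle \eqref{HF1}$\Rightarrow$($R$ hyperfinite)$\Rightarrow$\eqref{HF2}$\Rightarrow$\eqref{HF3}$\Rightarrow$\eqref{HF1}. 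Two smaller points in the same part: your pushforward rule ``two points of $\Phi$ are equivalent iff their Voronoi cells lie in the same $\Pi_n$-class'' is ill-posed (a cell need not lie in any single class); the paper simply restricts $\Pi_n$ to the points of $\Phi$, and finiteness of the induced classes follows because a set of finite $\bs\mu$-mass carries a.s.\ finitely many Poisson points (your ``cells have mass bounded below on compacts'' is not the right justification, since finite-mass classes need not be bounded). One also has to take the extra randomness in $\Pi_n$ independent of $\Phi$, which you do not mention.

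For the means, your route differs mildly from the paper's: you normalize by Voronoi cell masses (cell averages and uniform spreading), whereas the paper uses the balancing transport density $k$ of Subsection~\ref{subsec:balancing}, which satisfies $\sum_{z\in\Phi}k(x,z)=1$ and $\int k(z,y)\,d\bs\mu(z)=1$ simultaneously, so that $f\mapsto f'$ and $g\mapsto g'$ are unital positive contractions without any per-cell normalization. Your version works (cell averaging is a unital positive contraction on $L^\infty$, and the $L^1$/$\ell^1$ computations you sketch for approximate means go through), but you should either carry the normalizing factors $\bs\mu(V^{-1}(\cdot))$ explicitly or, as the paper does, replace the Voronoi kernel by the balancing one, which is available here by Example~\ref{ex:balancing-Poisson} since $\bs\mu(\bs X)=\infty$.
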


\begin{proof}
	An essential ingredient of the proof is the Voronoi tessellation: For every $[X,o,\mu,\varphi]\in\mstar''$, let $\tau(o)$ be the closest point of $\varphi$ to $o$. If the closest point is not unique, choose the one with the smallest mark. For $y\in \varphi$, $\tau^{-1}(y)$ is the \textit{Voronoi cell} of $y$. Another ingredient is the balancing transport density constructed in Subsection~\ref{subsec:balancing} (Example~\ref{ex:balancing-Poisson}). This gives an equivariant function $k(x,y)$ for $x\in \bs X$ and $y\in \Phi$ such that $\sum_{z\in \Phi} k(x,z)=1$ and $\int k(z, y)d\bs \mu(z)=1$ a.s. (for all $x$ and $y$). 
	From now on, we always assume $[X,o,\mu,\varphi]\in \mstar''$ and the above balancing property of $k$ holds. In the next paragraphs, we prove each equivalence claimed in the lemma. 
	
	\textbf{Local Mean}.  Given $f\in L^{\infty}(X,\mu)$, define $f'\in L^{\infty}(\varphi)$ by $f'(y):=\int f(z) k(z,y) d\mu(z)$ (the balancing property of $k$ is important here). Conversely, given $g\in L^{\infty}(\varphi)$, define $g'\in L^{\infty}(X,\mu)$ by $g'(x):=\sum_{z\in \varphi} g(z)k(x,z)$. Using this, every mean on $L^{\infty}(\varphi)$ gives a mean on $L^{\infty}(X,\mu)$ and vice versa. This implies the claim.
	
	\textbf{Approximate mean}. Assume $(\lambda_n)_n$ is an approximate mean. For $x,y\in \varphi$, define $\Lambda_n(x,y):=\int \lambda_n(x,z)k(z,y)d\mu(z)$. This gives approximate means for $\varphi$. Equivalently, $(\Lambda_n)_n$ satisfies the condition (AI) of~\cite{Ka97} (it is important that $\varphi$ has no nontrivial automorphism). Conversely, given approximate means $\Lambda_n$ for $(R,\mathbb P_{\Phi})$, for $x,y\in X$, define $\lambda_n(x,y):= \sum_z\sum_t k(x,z)\Lambda_n(z,t)k(t,y)$. It is left to the reader to prove that $\lambda_n$ is an approximate mean.
	
	\textbf{Hyperfiniteness}. 
	In Lemma~\ref{lem:hyperfinite}, it is proved that~\eqref{HF2} $\Rightarrow$ \eqref{HF3} $\Rightarrow$ \eqref{HF1}. Assume $(\Pi_n)_n$ satisfies~\eqref{HF1} and, conditioned to $[\bs X, \bs o, \bs \mu]$, the extra randomness in the definition of $(\Pi_n)_n$ is independent of $\Phi$ (use Example~\ref{ex:independentcoupling}). Hence, since each element of $\Pi_n$ has finite mass, it has also finitely many points in the Poisson point process a.s. So, $(\Pi_n)_n$ induces equivariant nested finite partitions of $\Phi$. If there is no extra randomness (i.e., $(\Pi_n)_n$ is a factor), it also induces a nested sequence of Borel equivalence sub-relations of $R$ and $R$ is hyperfinite. If not, one can enlarge $\mstar'$ according to the extra randomness of the partitions, but this does not affect the hyperfiniteness of the Borel equivalence relation (using Theorem~1 of~\cite{Ka97}, find a local  and then take its expectation w.r.t. to the extra randomness to find a local mean for $(R,\mathbb P_{\Phi})$). 
	\\
	Finally, assume $(R,\mathbb P_{\Phi})$ is hyperfinite. This gives equivariant nested finite partitions $\Pi_n$ of $\varphi$ such that $\forall y\in \varphi, \forall  r<\infty, \exists n: B_r(y)\cap \varphi\subseteq \Pi_n(y)$. Define the partition $\Pi'_n$ of $X$ as follows: $x\in \Pi'_n(y)$ if and only if $\tau(x)\in \Pi_n(\tau(y))$. The sequence $(\Pi'_n)_n$ satisfies~\eqref{HF2} since $\tau(B_r(\bs o))$ is a finite set a.s. for every $r$ (since $\tau(B_r(\bs o))\subseteq B_{2r+s}(\bs o)$, where $s:=d(\bs o, \tau(\bs o))$, which is straightforward to verify). This finishes the proof.
\end{proof}

\begin{proof}[Proof of Theorem~\ref{thm:amenable}]
	The Folner conditions are already treated in Lemma~\ref{lem:hyperfinite}. Here, we prove the equivalence of the rest of the conditions.
	On the event $\bs \mu(\bs X)<\infty$, all of the conditions hold. So it is enough to assume $\bs \mu(\bs X)=\infty$ a.s. 
	If $\bs \mu$ has no atoms a.s., then Lemma~\ref{lem:amenable} implies that the different notions of amenability of $[\bs X, \bs o, \bs \mu]$ are equivalent to those for $(\mstar', R, \mathbb P_{\Phi})$, defined above. So, the equivalence of the conditions is implied by Theorem~1 of~\cite{Ka97}.
	
	Finally, if $\bs \mu$ is allowed to have atoms, we multiply $\bs X$ by $[0,1]$ to destroy the atoms as follows.
	Let $\bs X':=\bs X\times [0,1]$ equipped with the sum metric $d((x_1,t_1),(x_2,t_2)):=d(x_1,x_2)+\norm{t_1-t_2}$. 
	Let $\bs \mu':=\bs \mu\times \mathrm{Leb}$. Then, by choosing $\bs o'$ in $\{\bs o\}\times [0,1]$ uniformly, $[\bs X', \bs o', \bs \mu', \bs X\times \{0\}]$ is unimodular, where $\bs X\times \{0\}$ is kept as a distinguished closed subset of $\bs X'$ (Example~\ref{ex:product}; this is in fact the Palm version of $\bs \mu'$ by Theorem~\ref{thm:Palmconstruction}). Note that the product structure can be recovered from $(\bs X', \bs X\times \{0\})$. It is left to the reader to show that the different notions of amenability for $[\bs X, \bs o, \bs \mu]$ are equivalent to those for $[\bs X', \bs o', \bs \mu', \bs X\times \{0\}]$. Since $\bs \mu'$ has no atoms, the first part of the proof implies the claim.
\end{proof}

\section*{Acknowledgments}
This work was supported by the ERC NEMO grant, under the European Union's Horizon 2020 research and innovation programme, grant agreement number 788851 to INRIA. A major part of the work was done when the author was affiliated with IPM. The research was in part supported by a grant from IPM (No. 98490118).

\bibliography{bib} 

\begin{thebibliography}{10}

\bibitem{Al91-crtI}
D.~Aldous.
\newblock The continuum random tree. {I}.
\newblock {\em Ann. Probab.}, 19(1):1--28, 1991.

\bibitem{Al91crtII}
D.~Aldous.
\newblock The continuum random tree. {II}. {A}n overview.
\newblock In {\em Stochastic analysis ({D}urham, 1990)}, volume 167 of {\em
  London Math. Soc. Lecture Note Ser.}, pages 23--70. Cambridge Univ. Press,
  Cambridge, 1991.

\bibitem{processes}
D.~Aldous and R.~Lyons.
\newblock Processes on unimodular random networks.
\newblock {\em Electron. J. Probab.}, 12:no. 54, 1454--1508, 2007.

\bibitem{objective}
D.~Aldous and J.~M. Steele.
\newblock The objective method: probabilistic combinatorial optimization and
  local weak convergence.
\newblock In {\em Probability on discrete structures}, volume 110 of {\em
  Encyclopaedia Math. Sci.}, pages 1--72. Springer, Berlin, 2004.

\bibitem{eft}
F.~Baccelli, M.-O. Haji-Mirsadeghi, and A.~Khezeli.
\newblock Eternal family trees and dynamics on unimodular random graphs.
\newblock In {\em Unimodularity in randomly generated graphs}, volume 719 of
  {\em Contemp. Math.}, pages 85--127. Amer. Math. Soc., [Providence], RI,
  [2018] \copyright 2018.

\bibitem{I}
F.~Baccelli, M.-O. Haji-Mirsadeghi, and A.~Khezeli.
\newblock Unimodular {H}ausdorff and {M}inkowski dimensions.
\newblock {\em Electron. J. Probab.}, 26:Paper No. 155, 64, 2021.

\bibitem{BLPS}
I.~Benjamini, R.~Lyons, Y.~Peres, and O.~Schramm.
\newblock Group-invariant percolation on graphs.
\newblock {\em Geom. Funct. Anal.}, 9(1):29--66, 1999.

\bibitem{BeSc01}
I.~Benjamini and O.~Schramm.
\newblock Recurrence of distributional limits of finite planar graphs.
\newblock {\em Electron. J. Probab.}, 6:no. 23, 13, 2001.

\bibitem{bookBiPe17}
C.~J. Bishop and Y.~Peres.
\newblock {\em Fractals in probability and analysis}, volume 162 of {\em
  Cambridge Studies in Advanced Mathematics}.
\newblock Cambridge University Press, Cambridge, 2017.

\bibitem{Bu18}
Thomas Budzinski.
\newblock The hyperbolic {B}rownian plane.
\newblock {\em Probab. Theory Related Fields}, 171(1-2):503--541, 2018.

\bibitem{CaHa21brownianweb}
G.~Cannizzaro and M.~Hairer.
\newblock The {B}rownian web as a random {$\mathbb{R}$}-tree.
\newblock {\em arXiv preprint arXiv:2102.04068}, 2021.

\bibitem{CoFeWi81}
A.~Connes, J.~Feldman, and B.~Weiss.
\newblock An amenable equivalence relation is generated by a single
  transformation.
\newblock {\em Ergodic Theory Dynam. Systems}, 1(4):431--450 (1982), 1981.

\bibitem{DeHoMa17}
M.~Deijfen, A.~E. Holroyd, and J.~B. Martin.
\newblock Friendly frogs, stable marriage, and the magic of invariance.
\newblock {\em Amer. Math. Monthly}, 124(5):387--402, 2017.

\bibitem{DuLe02}
T.~Duquesne and J.~F. Le~Gall.
\newblock Random trees, {L}\'{e}vy processes and spatial branching processes.
\newblock {\em Ast\'{e}risque}, (281):vi+147, 2002.

\bibitem{DuLe05}
T.~Duquesne and J.~F. Le~Gall.
\newblock Probabilistic and fractal aspects of {L}\'{e}vy trees.
\newblock {\em Probab. Theory Related Fields}, 131(4):553--603, 2005.

\bibitem{Fa62}
R.~H. Farrell.
\newblock Representation of invariant measures.
\newblock {\em Illinois J. Math.}, 6:447--467, 1962.

\bibitem{FeMo77}
J.~Feldman and C.~C. Moore.
\newblock Ergodic equivalence relations, cohomology, and von {N}eumann
  algebras. {I}.
\newblock {\em Trans. Amer. Math. Soc.}, 234(2):289--324, 1977.

\bibitem{FoNeRa04}
L.~R.~G. Fontes, M.~Isopi, C.~M. Newman, and K.~Ravishankar.
\newblock The {B}rownian web: characterization and convergence.
\newblock {\em Ann. Probab.}, 32(4):2857--2883, 2004.

\bibitem{Fr31}
H.~Freudenthal.
\newblock \"{U}ber die {E}nden topologischer {R}\"{a}ume und {G}ruppen.
\newblock {\em Math. Z.}, 33(1):692--713, 1931.

\bibitem{Ha97}
O.~H\"aggstr\"om.
\newblock Infinite clusters in dependent automorphism invariant percolation on
  trees.
\newblock {\em Ann. Probab.}, 25(3):1423--1436, 1997.

\bibitem{stable}
M.~O. Haji-Mirsadeghi and A.~Khezeli.
\newblock Stable transports between stationary random measures.
\newblock {\em Electron. J. Probab.}, 21:Paper No. 51, 25, 2016.

\bibitem{HaLoSz14}
H.~Hatami, L.~Lov\'{a}sz, and B.~Szegedy.
\newblock Limits of locally-globally convergent graph sequences.
\newblock {\em Geom. Funct. Anal.}, 24(1):269--296, 2014.

\bibitem{HoHoPe06}
C.~Hoffman, A.~E. Holroyd, and Y.~Peres.
\newblock A stable marriage of {P}oisson and {L}ebesgue.
\newblock {\em Ann. Probab.}, 34(4):1241--1272, 2006.

\bibitem{HoPe03}
A.~E. Holroyd and Y.~Peres.
\newblock Trees and matchings from point processes.
\newblock {\em Electron. Comm. Probab.}, 8:17--27, 2003.

\bibitem{HoPe05}
A.~E. Holroyd and Y.~Peres.
\newblock Extra heads and invariant allocations.
\newblock {\em Ann. Probab.}, 33(1):31--52, 2005.

\bibitem{Ka97}
V.~A. Kaimanovich.
\newblock Amenability, hyperfiniteness, and isoperimetric inequalities.
\newblock {\em C. R. Acad. Sci. Paris S\'{e}r. I Math.}, 325(9):999--1004,
  1997.

\bibitem{disintegration}
O.~Kallenberg.
\newblock Invariant measures and disintegrations with applications to {P}alm
  and related kernels.
\newblock {\em Probab. Theory Related Fields}, 139(1-2):285--310, 2007.

\bibitem{Ke19}
A.~S. Kechris.
\newblock The theory of countable borel equivalence relations.

\bibitem{Kh19generalization}
A.~Khezeli.
\newblock A unified framework for generalizing the {G}romov-{H}ausdorff metric.
\newblock {\em preprint}.

\bibitem{shift-coupling}
A.~Khezeli.
\newblock Shift-coupling of random rooted graphs and networks.
\newblock {\em To appear in the special issue of Contemporary Mathematics on
  Unimodularity in Randomly Generated Graphs}, 2018.

\bibitem{Kh19ghp}
A.~Khezeli.
\newblock Metrization of the {G}romov-{H}ausdorff (-{P}rokhorov) topology for
  boundedly-compact metric spaces.
\newblock {\em Stochastic Process. Appl.}, 2019.

\bibitem{KhMe23}
Ali Khezeli and Samuel Mellick.
\newblock On the existence of balancing allocations and factor point processes.
\newblock {\em arXiv preprint arXiv:2303.05137}, 2023.

\bibitem{ThLa09}
G.~Last and H.~Thorisson.
\newblock Invariant transports of stationary random measures and
  mass-stationarity.
\newblock {\em Ann. Probab.}, 37(2):790--813, 2009.

\bibitem{Le19}
J.~F. Le~Gall.
\newblock Brownian geometry.
\newblock {\em Jpn. J. Math.}, 14(2):135--174, 2019.

\bibitem{Lo20}
L.~Lov\'{a}sz.
\newblock Compact graphings.
\newblock {\em Acta Math. Hungar.}, 161(1):185--196, 2020.

\bibitem{LySc99}
R.~Lyons and O.~Schramm.
\newblock Indistinguishability of percolation clusters.
\newblock {\em Ann. Probab.}, 27(4):1809--1836, 1999.

\bibitem{Me67}
J.~Mecke.
\newblock Station\"{a}re zuf\"{a}llige {M}asse auf lokalkompakten {A}belschen
  {G}ruppen.
\newblock {\em Z. Wahrscheinlichkeitstheorie und Verw. Gebiete}, 9:36--58,
  1967.

\bibitem{bookScWe08}
R.~Schneider and W.~Weil.
\newblock {\em Stochastic and integral geometry}.
\newblock Probability and its Applications (New York). Springer-Verlag, Berlin,
  2008.

\bibitem{Th96}
H.~Thorisson.
\newblock Transforming random elements and shifting random fields.
\newblock {\em Ann. Probab.}, 24(4):2057--2064, 1996.

\bibitem{Ti04}
A.~Tim\'{a}r.
\newblock Tree and grid factors for general point processes.
\newblock {\em Electron. Comm. Probab.}, 9:53--59, 2004.

\bibitem{vanderHo06}
R.~van~der Hofstad.
\newblock Infinite canonical super-{B}rownian motion and scaling limits.
\newblock {\em Comm. Math. Phys.}, 265(3):547--583, 2006.

\end{thebibliography}
\bibliographystyle{plain}

\end{document}